\newtheorem{thm}{Theorem}[section]
\newtheorem{prop}[thm]{Proposition}
\newtheorem{lem}[thm]{Lemma}
\newtheorem{cor}[thm]{Corollary}
\newtheorem{conj}[thm]{Conjecture} 
\theoremstyle{definition}
\newtheorem{example}[thm]{Example}
\theoremstyle{remark}
\newtheorem{remark}[thm]{Remark}
\newtheoremstyle{TheoremNum}
    {\topsep}{\topsep}              %%% space between body and thm
    {\itshape}                      %%% Thm body font
    {}                              %%% Indent amount (empty = no indent)
    {\bfseries}                     %%% Thm head font
    {.}                             %%% Punctuation after thm head
    { }                             %%% Space after thm head
    {\thmname{#1}\thmnote{ \bfseries #3}}%%% Thm head spec
\theoremstyle{TheoremNum}
\newenvironment{claim}[1]
    {\noindent\textit{Claim#1:} \itshape}
    {}
\newcommand{\Q}{\mathbb{Q}}  % The rational numbers.
\newcommand{\Z}{\mathbb{Z}}  % The integers.
\newcommand{\la}{\langle}  % Left angle.
\newcommand{\ra}{\rangle}  % Right angle.
\newcommand{\lf}{\lfloor}  % Left floor.
\newcommand{\rf}{\rfloor}  % Right floor.
\newcommand{\gr}{\mathsf{gr}}  % Grading.
\newcommand{\cl}{\mathsf{cl}}  % Closure.
\newcommand{\modop}{\,\mathsf{mod}\,}  % Modulus as oporator.
\newcommand{\oGamma}{\overline{\Gamma}}  % overline on Gamma.
\newcommand{\Gammatop}{\Gamma_{\mathsf{top}}}
\newcommand\bigzero{\makebox(0,0){\text{\huge0}}}
\begin{document}

\title{Residual Torsion-Free Nilpotence, Bi-Orderability and Two-Bridge Links}

\author{Jonathan Johnson}
\address{Department of Mathematics, University of Texas at Austin, 
Austin, TX}
\email{jonjohnson@utexas.edu}
%\urladdr{www.math.sc.edu/$\sim$howard} % Delete if not wanted.

\begin{abstract}
Residual torsion-free nilpotence has proven to be an important property for knot groups with applications to bi-orderability \cite{LRR08} and ribbon concordance \cite{Gor81}.
Mayland \cite{May74} proposed a strategy to show that a two-bridge knot group has a commutator subgroup which is a union of an ascending chain of parafree groups.
This paper proves Mayland's assertion and expands the result to the subgroups of two-bridge link groups that correspond to the kernels of maps to $\mathbb{Z}$. We call these kernels the Alexander subgroups of the links.
As a result, we show the bi-orderability of a large family of two-bridge link groups.
This proof makes use of a modified version of a graph theoretic construction of Hirasawa and Murasugi \cite{HirMur07} in order to understand the structure of the Alexander subgroup for a two-bridge link group.
\end{abstract}

\maketitle

%%%%%%%%%%%%%%%%%%%%%%%%%%%%%%%%%%%%%%%%%%%%%%%%%%%%%%%%%%%%%%%%%%%%%%
\section{Introduction}
%%%%%%%%%%%%%%%%%%%%%%%%%%%%%%%%%%%%%%%%%%%%%%%%%%%%%%%%%%%%%%%%%%%%%%

Given an oriented smooth link $L$ in $S^3$, the \emph{link group} of $L$, denoted $\pi(L)$, is the fundamental group of the complement of $L$ in $S^3$.
Also, let $\Delta_L(t)$ denote the Alexander polynomial of $L$; see \cite[Chapter 6]{Mur96} for details.

Let $h:\pi(L)\to H_1(S^3-L)$ be the Hurewicz map,
and let $\varphi:H_1(S^3-L) \to \Z$ be the map defined by identifying the oriented meridians of each component of $L$ with each other.
The group $\pi(L)$ is canonically an extension of $\Z$ by $\ker(\varphi\circ h)$ as follows.
\begin{equation} \label{extendingmap}
  \begin{tikzcd}
    1 \arrow{r} 
    & \ker(\varphi\circ h) \arrow{r}
    & \pi(L) \arrow[swap]{d}{h} \arrow{r}{\varphi\circ h}
    & \Z \arrow{r}
	& 1\\
	& & H_1(S^3-L) \arrow[swap]{ur}{\varphi}
  \end{tikzcd}
\end{equation}
We call the subgroup $\ker(\varphi\circ h)$ the \emph{Alexander subgroup} of the oriented link $L$.
When $L$ is a knot, the Alexander subgroup is the commutator subgroup of $\pi(L)$.

A group $G$ is \emph{residually torsion-free nilpotent} if for every nontrivial element $x\in G$, there is a normal subgroup $N\lhd G$ such that $x\notin N$ and $G/N$ is a torsion-free nilpotent group.
The residual torsion-free nilpotence of the Alexander subgroup of a link groups has applications to bi-orderability \cite{LRR08} and ribbon concordance \cite{Gor81}.
Several knots are known to have groups with residually torsion-free nilpotent commutator subgroups including fibered knots
(since free groups are residually torsion-free nilpotent \cite{Mal49} and the commutator subgroup of a fibered knot group is a finitely generated free group),
twist knots \cite{May72}, all knots in Reidemeister's knot table (see \cite{Reid32}) except $8_{13}$, $9_{25}$, $9_{35}$, $9_{38}$, $9_{41}$, and $9_{49}$ \cite{May72}, and pseudo-alternating links whose Alexander polynomials have prime power leading coefficients \cite{MayMur76}.
This paper confirms that many two-bridge links, including all two-bridge knots, have groups with residually-torsion free nilpotent Alexander subgroups.

\begin{thm} \label{mainthm}
If $L$ is an oriented two-bridge link with an Alexander polynomial with relatively prime coefficients (collectively, not pairwise),
then the Alexander subgroup of $\pi(L)$ is residually torsion-free nilpotent.
\end{thm}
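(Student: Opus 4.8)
The plan is to realize the Alexander subgroup $A:=\ker(\varphi\circ h)$ as the fundamental group of the infinite cyclic cover $X_\infty$ of $S^3-L$ determined by $\varphi\circ h$, and then, following Mayland's strategy, to exhibit $A$ as the union of an ascending chain of parafree groups. First I would build the modified Hirasawa--Murasugi graph for $L$: its role is to produce a connected, incompressible Seifert surface $S$ for $L$ whose complement $Y$ (the result of cutting $S^3-L$ along $S$) is a handlebody, and to encode $X_\infty=\bigcup_{i\in\Z}Y_i$, with $Y_i$ glued to $Y_{i+1}$ along a copy of $S$, as a graph of groups with all vertex groups free (each $\cong\pi_1(Y)$) and all edge groups free (each $\cong\pi_1(S)$). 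Setting $A_n:=\pi_1\bigl(Y_{-n}\cup\dots\cup Y_n\bigr)$, Bass--Serre theory then gives $A=\varinjlim_n A_n$, with $A_{n+1}$ obtained from $A_n$ by two successive amalgamated free products over copies of $\pi_1(S)$.

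Next I would reduce the theorem to two claims, proved together by induction on $n$: (i) each $A_n$ is parafree; and (ii) each inclusion $A_n\hookrightarrow A_{n+1}$ is \emph{$\gamma$-injective}, meaning $\gamma_c(A_{n+1})\cap A_n=\gamma_c(A_n)$ for every $c\ge 1$, where $\gamma_c(G)$ denotes the $c$th term of the lower central series of $G$. Given (i) and (ii) the theorem is formal. The base case is clear, since $A_0=\pi_1(Y)$ is free. For the conclusion, let $x\in A$ with $x\ne 1$ and choose $n$ with $x\in A_n$. Since parafree groups are residually torsion-free nilpotent (Baumslag), $x\notin\gamma_c(A_n)$ for some $c$; iterating (ii) gives $x\notin\gamma_c(A_m)$ for all $m\ge n$, hence $x\notin\gamma_c(A)=\bigcup_m\gamma_c(A_m)$. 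Moreover $A/\gamma_c(A)=\varinjlim_m A_m/\gamma_c(A_m)$ is a directed union, along injections by (ii), of torsion-free nilpotent groups of class $<c$, so it is itself torsion-free and nilpotent. Thus $A/\gamma_c(A)$ is a torsion-free nilpotent quotient of $A$ omitting $x$, and residual torsion-free nilpotence follows.

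The crux is the inductive step, and in particular the claim that an amalgamated free product $A_n*_{\pi_1(S)}\pi_1(Y)$ of parafree groups along the free group $\pi_1(S)$ is again parafree --- this is exactly where the hypothesis on $\Delta_L$ is used. By Gauss's lemma, the coefficients of $\Delta_L$ being collectively coprime says precisely that $\Delta_L$ is a primitive polynomial, equivalently that $H_1(X_\infty;\Z)$ (the cyclic Alexander module of the two-bridge link $L$) is torsion-free as an abelian group; and the graph-theoretic picture should refine this scalar statement into a uniform one, namely that the relative homology groups governing the edge inclusions $\pi_1(S)\hookrightarrow\pi_1(Y)$ and $\pi_1(S)\hookrightarrow A_n$ are free abelian at every stage of the graph. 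That freeness is the ``efficiency'' input needed to run a Baumslag-type criterion guaranteeing that an amalgam of parafree groups over a free edge group, with sufficiently efficient edge maps, is parafree, and the same homological computation simultaneously yields the $\gamma$-injectivity in (ii). I expect this step --- converting the coprimality of the coefficients of $\Delta_L$ into the uniform homological control over the entire graph required to propagate both parafreeness and $\gamma$-injectivity through the infinitely many amalgamations --- to be the main obstacle; by contrast the geometric decomposition, the passage to the direct limit, and the deduction of residual torsion-free nilpotence from parafreeness are comparatively routine.
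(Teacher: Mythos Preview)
Your geometric setup and the formal reduction to claims (i) and (ii) are fine, and indeed your chain $A_n$ is essentially the same as the paper's $Y_n$ (defined combinatorially via Reidemeister--Schreier, see (\ref{Ys1})--(\ref{Ys2})). But the crucial step has a genuine gap: there is no ``Baumslag-type criterion'' of the kind you invoke, guaranteeing that an amalgamated free product $A_n*_{\pi_1(S)}\pi_1(Y)$ of a parafree group with a free group over a free edge group of rank $>1$ is again parafree. Baumslag's results (Propositions \ref{baumextprop} and \ref{baumextpropr} in the paper) apply only to amalgams over \emph{infinite cyclic} subgroups --- adjoining a single root of a homologically primitive element --- and nothing in the literature lets you propagate parafreeness through a higher-rank amalgam merely from homological efficiency of the edge maps. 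Your ``efficiency'' condition is left undefined, and translating coprimality of $\Delta_L$ into torsion-freeness of $H_1(X_\infty;\Z)$ does not by itself yield parafreeness of the amalgam; it is much too coarse an invariant.

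The paper's entire technical core is devoted to circumventing exactly this obstacle. Lemma \ref{nestedword}, proved via the cycle-graph machinery of Sections \ref{CycleGraphs}--\ref{SecLemmaProof}, establishes a delicate combinatorial fact specific to two-bridge links: the single relator $R_0$ has a nested structure $R_0=\widehat{A}_0$, $\widehat{A}_{i-1}=\widehat{A}_i^{n_i}\widehat{V}_i$ (up to conjugation), terminating in $\widehat{A}_N=S_M^{\pm1}$. This is what allows the passage $Y_n\hookrightarrow Y_{n+1}$ to be \emph{factored} as a finite sequence of root adjunctions over cyclic subgroups, to each of which Baumslag's criterion applies. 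The coprimality hypothesis then enters as a matrix computation (Claims 1 and 2 in Section \ref{Mainproof}) showing that each element whose root is adjoined is homologically primitive; this uses the symmetry condition \ref{nwsym} in an essential way. Finally, the paper does not need your $\gamma$-injectivity claim (ii) at all: it instead verifies the finite-index condition of Baumslag's Proposition \ref{baumprop}, which is a direct computation once the chain is in place. In short, your outline correctly locates the difficulty but does not resolve it; the missing ingredient is precisely the two-bridge-specific word decomposition of Lemma \ref{nestedword}.
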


\begin{remark} \label{remrtfn}
The condition on the coeficients of the Alexander polynomial cannot be removed.
For example, if $L$ is the $(4,2)$-torus link, shown in Figure \ref{torus42}, then $L$ has Alexander subgroup isomorphic to
\[
    \la \{S_i\}_{i\in\Z} \; | \; S_i^2=S_{i+1}^2,i\in\Z \ra
\]
which is not residually nilpotent.
(For details on computing the Alexander subgroup, see section \ref{GroupPres}.)  The Alexander polynomial of the $L$ is $\Delta_L(t)=2t-2$.
\end{remark}

\begin{figure}[t]
\includegraphics{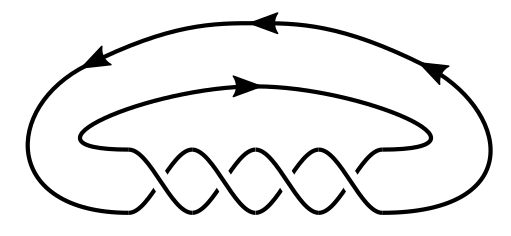}
\caption{The $(4,2)$-torus link.}
\label{torus42}
\end{figure}

It's a well known fact that $\Delta_K(1)=\pm 1$ for every knot $K$.
It follows that the coefficients of the Alexander polynomial of $K$ are relatively prime so we have the following corollary. 

\begin{cor} \label{maincor}
The commutator subgroup of a two-bridge knot group is residually torsion-free nilpotent.
\end{cor}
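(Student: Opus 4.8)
The plan is to derive this immediately from Theorem \ref{mainthm}. First I would observe, as already noted in the discussion following diagram \eqref{extendingmap}, that when the link is a knot $K$ the map $\varphi\colon H_1(S^3-K)\to\Z$ is an isomorphism, so $\varphi\circ h$ is just the abelianization map $\pi(K)\to\Z\cong H_1(S^3-K)$; hence its kernel, the Alexander subgroup, coincides with the commutator subgroup $[\pi(K),\pi(K)]$. Thus it suffices to check that $K$ satisfies the coefficient hypothesis of Theorem \ref{mainthm}.

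Next I would invoke the classical normalization $\Delta_K(1)=\pm1$, valid for every knot $K$ (see \cite[Chapter 6]{Mur96}; it reflects the fact that setting $t=1$ turns the Alexander module presentation matrix into a presentation of the trivial module). Writing $\Delta_K(t)=\sum_i a_i t^i$, if some prime $p$ divided every coefficient $a_i$, then $p$ would divide $\Delta_K(1)=\sum_i a_i=\pm1$, which is impossible. Therefore the coefficients of $\Delta_K$ are collectively relatively prime.

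Finally, since a two-bridge knot is in particular an oriented two-bridge link, Theorem \ref{mainthm} applies and shows that its Alexander subgroup, which we have identified with the commutator subgroup of $\pi(K)$, is residually torsion-free nilpotent. The hard part here is essentially nil: the only points needing attention are the identification of the Alexander subgroup with the commutator subgroup in the knot case and the standard fact $\Delta_K(1)=\pm1$, both of which are recorded above or are classical, so the corollary is a direct specialization of the main theorem.
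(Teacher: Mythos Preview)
Your proof is correct and follows essentially the same approach as the paper: the paper simply notes that $\Delta_K(1)=\pm1$ forces the coefficients of $\Delta_K$ to be relatively prime, and then invokes Theorem~\ref{mainthm}. Your additional remark identifying the Alexander subgroup with the commutator subgroup for knots is already stated earlier in the introduction, so nothing is missing.
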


The following conjecture is an analog of a question by Mayland in \cite{May72}.

\begin{conj}
    The link groups of alternating knots have residually torsion-free nilpotent Alexander subgroups when the link's Alexander polynomial has relatively prime coefficients.
\end{conj}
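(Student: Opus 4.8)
The plan is to carry out the strategy Mayland proposed: realize the Alexander subgroup $\mathcal A := \ker(\varphi\circ h)$ as an ascending union of parafree groups of a fixed rank, and then read residual torsion-free nilpotence off of that structure.

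First I would record an explicit presentation of $\mathcal A$. Beginning with a two-bridge presentation of $\pi(L)$ in which $\varphi\circ h$ is visible on the generators and applying the Reidemeister--Schreier rewriting process along a section of $\varphi\circ h$ produces a presentation of $\mathcal A$ whose generators are indexed by the $\varphi\circ h$-levels $\Z$ together with a finite alphabet, and whose relators are the $\Z$-translates of finitely many words; this is the content behind Section~\ref{GroupPres}, of which the $(4,2)$-torus link of Remark~\ref{remrtfn} is a sample output. The (modified) Hirasawa--Murasugi graph then serves as bookkeeping: using a generating set adapted to a spanning tree of the graph, one displays $\mathcal A = \bigcup_{n\ge 0} G_n$, where $G_n$ is the subgroup carried by a window of $n$ consecutive levels, and $G_n$ is obtained from $G_{n-1}$ by a single elementary move --- an amalgamation, or HNN extension, over a finitely generated free subgroup, with attaching data dictated by the graph and by the coefficients of $\Delta_L(t)$.

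The heart of the argument is to show that each $G_n$ is parafree (of the ambient rank). Free groups are parafree, which handles the base case (and the fibered situation outright); in general I would argue inductively that the elementary move from $G_{n-1}$ to $G_n$ preserves parafreeness of the same rank. This is where the hypothesis on $\Delta_L(t)$ is consumed: collective coprimality of the coefficients forces the word along which the $n$-th piece is attached to be primitive in the abelianization, and --- more to the point --- to have primitive image in $\gamma_2/\gamma_3$ of the relevant free group, which is exactly the hypothesis under which Baumslag's results on parafree amalgamated products keep the amalgam parafree. A common prime divisor $p$ of the coefficients wrecks this: as with the $(4,2)$-torus link, the attaching word becomes a $p$-th power modulo deeper terms and the group fails to be residually $p$-nilpotent. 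I expect this step --- checking that the graph genuinely presents $G_n$ by attaching words of this controlled, primitive form, and verifying the parafreeness criterion at each move --- to be the main obstacle.

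Finally I would pass to the union. By Baumslag's theorem parafree groups are residually torsion-free nilpotent, and, more usefully, $G_n/\gamma_c G_n \cong F_r/\gamma_c F_r$ is torsion-free nilpotent for every $c$ (here $\gamma_c$ denotes the $c$-th term of the lower central series). From the description of the elementary moves one checks that each inclusion $G_n \hookrightarrow G_{n+1}$ induces an injection on $c$-th nilpotent quotients; since $\gamma_c$ commutes with directed unions, this gives $\gamma_c\mathcal A \cap G_n = \gamma_c G_n$, so $\mathcal A/\gamma_c\mathcal A = \bigcup_n G_n/\gamma_c G_n$ is an ascending union of torsion-free nilpotent groups along injections, hence itself torsion-free nilpotent. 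Any nontrivial $x \in \mathcal A$ lies in some $G_n$, and $\bigcap_c \gamma_c G_n = 1$, so $x \notin \gamma_c G_n = \gamma_c\mathcal A \cap G_n$ for some $c$; thus $x$ has nontrivial image in the torsion-free nilpotent quotient $\mathcal A/\gamma_c\mathcal A$. This proves Theorem~\ref{mainthm}; Corollary~\ref{maincor} is then immediate, since $\Delta_K(1) = \pm 1$ makes the coefficients of a knot's Alexander polynomial collectively coprime.
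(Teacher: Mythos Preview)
The statement you were asked to prove is a \emph{conjecture}: the paper states it as an open problem and does not prove it. There is therefore no proof in the paper to compare your attempt against. What the paper does prove is Theorem~\ref{mainthm}, the special case of two-bridge links, and your write-up is really an outline of that argument (you explicitly begin with ``a two-bridge presentation of $\pi(L)$'' and conclude ``This proves Theorem~\ref{mainthm}''). So at a minimum there is a mismatch: you have not addressed the alternating case at all.

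The two-bridge machinery does not transfer to alternating links in any obvious way. The entire structure of the paper's proof rests on the Schubert normal form, which gives a one-relator presentation $\langle a,b\mid w\rangle$ and hence, after Reidemeister--Schreier, a presentation of the Alexander subgroup with a single $\Z$-orbit of relators $\{R_k\}_{k\in\Z}$. The crucial Lemma~\ref{nestedword} --- that $R_0$ decomposes as nested powers $\widehat{A}_{i-1}=\widehat{A}_i^{n_i}\widehat{V}_i$ with the symmetry conditions \ref{nwsym} --- is proved by a delicate induction on the continued-fraction data of $p/q$ via the cycle-graph reduction of Section~\ref{CycleGraphs}. None of this combinatorics is available for a general alternating link: there is no two-generator one-relator presentation, no continued fraction, and no analogue of the reduction $\oGamma(p,q)\mapsto R(\oGamma)(p,q)$.

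Even restricted to the two-bridge case, your outline glosses over exactly the step the paper flags as the hard one. You write that coprimality of the coefficients ``forces the word along which the $n$-th piece is attached to be primitive in the abelianization,'' but this is not automatic: it requires both the nested-word structure of Lemma~\ref{nestedword} and the palindromic symmetry $|b_{l+j}|=|b_{M-j}|$ to control the minors of the presentation matrix (the Claim~2 argument in Section~\ref{Mainproof}). Your final passage to the union via injectivity on nilpotent quotients is a reasonable alternative to the paper's use of Baumslag's Proposition~\ref{baumprop}, but it too depends on the finite-index condition $|Y_{n+1}:Y_n[Y_{n+1},Y_{n+1}]|<\infty$, which you have not verified.
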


\subsection{Summary of the Technique Used}

The proof of Theorem \ref{mainthm} relies on Baumslag's work on parafree groups \cite{Baum67,Baum69}.
Let $G$ be a group.
Define $\gamma_1G:=G$, and
for each positive integer $n$,
define $\gamma_{n+1}G:=[G,\gamma_nG]$.
A group $G$ is \emph{parafree of rank $r$} if
\begin{enumerate}
\item for some free group $F$ of rank $r$, $G/\gamma_nG\cong F/\gamma_n F$ for each $n$, and
\item $G$ is residually nilpotent.
\end{enumerate}

Baumslag provides a sufficient condition for a group to be residually torsion-free nilpotent.

\begin{prop}[{Baumslag \cite[Proposition 2.1(i)]{Baum69}}] \label{baumprop}
Suppose $G$ is a group which is the union of an ascending chain of subgroups as follows.
$$G_0<G_1<G_2<\cdots<G_n<\cdots<G=\bigcup_{n=1}^{\infty}G_n$$
Suppose each $G_n$ is parafree of the same rank.
If for each non-negative integer $n$, $|G_{n+1}:G_n[G_{n+1},G_{n+1}]|$ is finite then
$G$ is residually torsion-free nilpotent.
\end{prop}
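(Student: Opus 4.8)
The plan is to prove residual torsion-free nilpotence of $G$ by exhibiting, for each nontrivial $x\in G$, a torsion-free nilpotent quotient of $G$ built from the rational lower central series. For a group $H$ and $m\ge 1$, write $\sqrt[H]{\gamma_m H}$ for the set of $h\in H$ with $h^j\in\gamma_m H$ for some $j\ge 1$; since $H/\gamma_m H$ is nilpotent its torsion elements form a characteristic subgroup, so $\sqrt[H]{\gamma_m H}$ is normal in $H$ and $H/\sqrt[H]{\gamma_m H}$ is torsion-free nilpotent. Hence it suffices to show $\bigcap_m\sqrt[G]{\gamma_m G}=1$. Given $1\ne x\in G$, fix $k$ with $x\in G_k$. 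Because $G_k$ is parafree it is residually nilpotent, so $x\notin\gamma_m G_k$ for some $m$; moreover $G_k/\gamma_m G_k\cong F/\gamma_m F$ is torsion-free, so no power of $x$ lies in $\gamma_m G_k$ either. Thus the proof reduces to the key identity $\gamma_m G\cap G_k=\gamma_m G_k$: granting it, $x^j\in\gamma_m G$ would force $x^j\in\gamma_m G_k$, a contradiction, so $x\notin\sqrt[G]{\gamma_m G}$.

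To establish $\gamma_m G\cap G_k=\gamma_m G_k$, first observe $\gamma_m G=\bigcup_n\gamma_m G_n$: every iterated commutator in $G$ involves finitely many elements, which lie in a common $G_n$ by the chain condition, and the subgroups $\gamma_m G_n$ ascend. Intersecting with $G_k$ then reduces the claim, by induction, to showing $\gamma_m G_{n+1}\cap G_n=\gamma_m G_n$ for each $n$. Writing $K=G_n\le H=G_{n+1}$, this says exactly that the inclusion-induced homomorphism $K/\gamma_m K\to H/\gamma_m H$ is injective. Now both source and target are isomorphic to $F/\gamma_m F$ — finitely generated, torsion-free nilpotent, and of the same Hirsch length since $K$ and $H$ are parafree of the same rank. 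A homomorphism between finitely generated torsion-free nilpotent groups of equal Hirsch length whose image has finite index is automatically injective, because Hirsch length is additive along the extension $\ker\to K/\gamma_m K\to\mathrm{im}$ and a torsion-free nilpotent group of Hirsch length $0$ is trivial. So everything comes down to: the image of $K$ in $H/\gamma_m H$ has finite index, equivalently $|H:K\gamma_m H|<\infty$.

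This last point is where the hypothesis $|H:K[H,H]|<\infty$ is used, and I expect it to be the main obstacle — propagating "finite index in $H_1$" up the whole lower central series. From $|H:K[H,H]|<\infty$ the image of $K$ in $H/[H,H]\cong\Z^r$ has finite index, so we may fix $d\ge 1$ with $d\,\Z^r$ contained in that image; choose $x_1,\dots,x_r\in H$ whose images form a basis of $\Z^r$ (these generate $H$ modulo every $\gamma_i H$, since generation of a nilpotent group is detected on $H_1$) and $y_j\in K$ with $y_j\equiv x_j^{\,d}$ modulo $[H,H]$. The iterated commutator map $(\gamma_1 H/\gamma_2 H)^{\times i}\to\gamma_i H/\gamma_{i+1}H$ is multilinear and its values on left-normed weight-$i$ commutators in $x_1,\dots,x_r$ generate $\gamma_i H/\gamma_{i+1}H$; hence each $[y_{j_1},\dots,y_{j_i}]\in\gamma_i K\subseteq\gamma_i H\cap K$ represents $[x_{j_1},\dots,x_{j_i}]^{\,d^{\,i}}$ modulo $\gamma_{i+1}H$, so $(\gamma_i H\cap K)\gamma_{i+1}H$ has index at most $(d^{\,i})^{\operatorname{rank}(\gamma_i H/\gamma_{i+1}H)}$ in $\gamma_i H$. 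Telescoping these finite indices along $H=K\gamma_1 H\ge K\gamma_2 H\ge\cdots\ge K\gamma_m H$ yields $|H:K\gamma_m H|<\infty$, which closes the chain of reductions and proves the proposition. The technical heart is precisely this layer-by-layer bookkeeping, in which the free-nilpotent structure of the quotients $\gamma_i H/\gamma_{i+1}H$ (a consequence of parafreeness) converts finite index on the first layer into finite index modulo every $\gamma_m$.
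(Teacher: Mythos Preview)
The paper does not prove this proposition; it is quoted from Baumslag \cite[Proposition 2.1(i)]{Baum69} and used as a black box. So there is no in-paper proof to compare against.

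Judged on its own, your argument is sound and is essentially the standard route to this result. The reduction to $\gamma_m G\cap G_k=\gamma_m G_k$ via $\gamma_m G=\bigcup_n\gamma_m G_n$ and the one-step version $\gamma_m G_{n+1}\cap G_n=\gamma_m G_n$ is correct. The key point---that the map $G_n/\gamma_m G_n\to G_{n+1}/\gamma_m G_{n+1}$ is injective---is handled well: both groups are isomorphic to $F/\gamma_m F$ by parafreeness, hence finitely generated torsion-free nilpotent of equal Hirsch length, so injectivity follows once the image has finite index. Your propagation of the finite-index hypothesis from $\gamma_2$ to all $\gamma_m$ via the multilinear commutator map on the free-abelian layers $\gamma_i H/\gamma_{i+1}H$ is the right mechanism; the only detail worth making explicit is that although your chosen $x_1,\dots,x_r$ need not generate $H$ itself, they generate each nilpotent quotient $H/\gamma_m H$ (generation of nilpotent groups is detected on the abelianization), which is all you use. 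With that remark the telescoping bound on $|H:K\gamma_m H|$ goes through, and the conclusion follows.
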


Thus, Theorem \ref{mainthm} follows from the following lemma.

\begin{lem}\label{mainlem}
Suppose $L$ is an oriented two-bridge link whose Alexander polynomial has relatively prime coefficients.
The Alexander subgroup $Y$ of $L$ can be written as a union of an ascending chain of subgroups $Y_0<Y_1<Y_2<\cdots<Y$ such that
\begin{enumerate}[label=(\alph*)]
\item each $Y_n$ is parafree of the same rank and \label{mainind}
\item $|Y_{n+1}:Y_n[Y_{n+1},Y_{n+1}]|$ is finite for each $n$. \label{mainquot}
\end{enumerate}
\end{lem}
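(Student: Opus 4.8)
The plan is to build the ascending chain $Y_n$ concretely from a combinatorial model of the Alexander subgroup and then verify Baumslag's two hypotheses one at a time. First I would set up a Wirtinger-type presentation for the two-bridge link group $\pi(L)$ and, using the extension \eqref{extendingmap}, extract an infinite presentation for the Alexander subgroup $Y$ in which the stable letters coming from $\varphi\circ h$ act by shifting an index over $\Z$. The key geometric input here is the modified Hirasawa–Murasugi graph construction advertised in the abstract: I would use it to organize the generators and relators of $Y$ into a $\Z$-periodic pattern, so that $Y$ visibly becomes an infinite amalgam / graph of groups along which a natural exhaustion $Y_0 < Y_1 < \cdots$ can be read off. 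Each $Y_n$ should be a finitely generated subgroup obtained by truncating this $\Z$-periodic picture to a window of length roughly $n$, glued up along free subgroups.

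The heart of part~\ref{mainind} is to show each $Y_n$ is parafree of a fixed rank $r$. The natural route is Mayland's strategy: realize $Y_n$ as an iterated free product with amalgamation (or an HNN-type construction) over finitely generated free groups, and then invoke Baumslag's theorems on parafreeness of such amalgams from \cite{Baum67,Baum69}. Concretely I expect $Y_n$ to be an amalgamated product of free groups along free edge groups whose inclusions are given by the Alexander matrix data; the relative primality hypothesis on the coefficients of $\Delta_L(t)$ is exactly what is needed to guarantee that the relevant ``edge'' homomorphisms are injective with the right behavior modulo the lower central series, so that the $\gamma_n$-quotients of $Y_n$ agree with those of a free group of rank $r$, and that $Y_n$ is residually nilpotent. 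The rank $r$ is independent of $n$ because enlarging the window adds a free factor that is amalgamated along a subgroup of the same rank, so the Euler-characteristic bookkeeping leaves the parafree rank unchanged.

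For part~\ref{mainquot}, I would compute $H_1$ of the pair directly from the presentation: $Y_{n+1}/Y_n[Y_{n+1},Y_{n+1}]$ is the cokernel of the map on abelianizations induced by $Y_n \hookrightarrow Y_{n+1}$, and this cokernel is presented by an integer matrix built from the Alexander matrix of $L$ truncated to the window. Showing this cokernel is finite amounts to showing that matrix has nonzero determinant, which again comes down to the nonvanishing of (a power of) the leading coefficient of $\Delta_L(t)$ and the relative primality condition; this is a finite determinant computation rather than anything structural.

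The main obstacle, I expect, is part~\ref{mainind} — specifically, verifying that the amalgamating subgroups are free and that the amalgam actually satisfies the hypotheses of Baumslag's parafreeness criterion (injectivity of the edge maps into the lower central series quotients, and the residual nilpotence of the amalgam). Getting a clean $\Z$-periodic normal form for $Y$ from the two-bridge data, so that the truncations $Y_n$ have a manifestly nice amalgam structure, is where the modified Hirasawa–Murasugi graph does the real work, and making that precise — including checking that the coefficient hypothesis on $\Delta_L(t)$ translates into the algebraic conditions Baumslag needs — will be the technical core of the argument. Once the amalgam structure and the edge-group freeness are in hand, parts \ref{mainind} and \ref{mainquot} follow from Baumslag's theorems and a determinant computation respectively, and Theorem~\ref{mainthm} then follows by Proposition~\ref{baumprop}.
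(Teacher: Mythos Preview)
Your high-level architecture is right --- truncate a $\Z$-periodic presentation of $Y$ to get $Y_n$, then use Baumslag's machinery to push parafreeness up the chain --- but the specific mechanism you propose for part~\ref{mainind} is off, and without the correct one the argument does not go through. You describe $Y_n$ as an amalgam of free groups along free edge groups, with ``Euler-characteristic bookkeeping'' keeping the rank constant. That is not what happens: for $n\geq 1$ the groups $Y_n$ are not free, and the passage $Y_n\to Y_{n+1}$ is not an amalgam of this type. The actual step is Mayland's: $Y_{n+1}$ is obtained from $Y_n$ by repeatedly \emph{adjoining a root}, i.e.\ forming $H*_{h=x^m}\langle x\rangle$ for a specific element $h$, and Baumslag's theorem (Proposition~\ref{baumextpropr}) says this preserves parafreeness provided $h$ is \emph{homologically primitive}. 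The relative-primality hypothesis on $\Delta_L$ is used exactly to verify this primitivity, via a gcd-of-minors computation on an explicit presentation matrix for the abelianized quotient (Lemma~\ref{helpfullemma}); it is not about injectivity of edge maps into lower-central quotients.

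The second thing you have backwards is the role of the Hirasawa--Murasugi graph. It is not used to produce a graph-of-groups decomposition of $Y$. Its purpose is purely combinatorial: it proves a structural lemma (Lemma~\ref{nestedword}) asserting that the single relator $R_0$ in the Reidemeister--Schreier presentation decomposes as a nested pattern of repeated subwords, $R_0=\widehat A_0$, $\widehat A_{i-1}=\widehat A_i^{\,n_i}\widehat V_i$, terminating in $\widehat A_N=S_M^{\pm1}$. This nesting is precisely what lets you realize the addition of the new generator $S_{M+n}$ (and symmetrically $S_{m-n-1}$) as an iterated root-adjunction rather than a general amalgam, and it is the hard technical core of the paper. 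Your plan as written does not contain this ingredient, and without it there is no reason the step $Y_n\hookrightarrow Y_{n+1}$ should factor through anything Baumslag's criteria apply to. Your treatment of part~\ref{mainquot} is essentially correct: it is indeed a short presentation-matrix computation, and the answer is $\underline a_g^2$.
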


Let $H$ be a parafree group of rank $r$.
An element $h\in G$ is \emph{homologically primitive}
if the class of $h$ in $H/[H,H]\cong \Z^r$ can be extended to a basis.

\begin{prop}[{Baumslag \cite[Proposition 3]{Baum67}}] \label{baumextprop}
Let $H$ be a parafree group of rank $r$,
and let $\langle t \rangle$ be an infinite cyclic group generated by $t$.
Let $h$ be an element in $H$, and $n$ be a positive prime integer.
If $h$ generates its own centralizer and $h$ is homologically primitive in $H$,
then the group
\[
H\underset{h=x^n}{*}\langle x \rangle
\]
is parafree of rank $r$.
\end{prop}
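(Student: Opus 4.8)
Write $G=H\underset{h=x^n}{*}\la x\ra$; the plan is to verify the two defining clauses of ``parafree of rank $r$'': that $G/\gamma_k G\cong F/\gamma_k F$ for all $k$ (where $F$ is free of rank $r$), and that $G$ is residually nilpotent. Since $h$ is homologically primitive there is a retraction $\rho\colon H\to\la h\ra\cong\Z$; writing $K=\ker(\rho)$ gives $H=K\rtimes\la h\ra$. Taking abelianizations of the pushout $G=H\underset{\la h\ra}{*}\la x\ra$ replaces the free summand $\Z\bar h\le H^{\mathrm{ab}}$ by $\Z\bar x$ via $\bar h=n\bar x$, while the complementary $\Z^{r-1}$ (the image of $K$) survives, so $G^{\mathrm{ab}}\cong\Z^r$ with $\bar x$ primitive. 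Choosing $h_2,\dots,h_r\in K$ whose images form a basis of that $\Z^{r-1}$ and setting $\Theta\colon F=\la\xi_1,\dots,\xi_r\ra\to G$ by $\xi_1\mapsto x$, $\xi_i\mapsto h_i$, the map $\Theta$ is an isomorphism on abelianizations, hence induces surjections $\bar\Theta_k\colon F/\gamma_k F\twoheadrightarrow G/\gamma_k G$ for every $k$.

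To see $\bar\Theta_k$ is injective I would pass to Malcev completions. Let $N=F/\gamma_{k+1}F$ and let $\widehat N$ be its (uniquely divisible) Malcev completion. As $H/\gamma_{k+1}H\cong N$ and $\widehat N$ is divisible, the image of $h$ has a unique $n$-th root $\nu\in\widehat N$, so there is a homomorphism $\Phi\colon G\to\widehat N$ restricting to $H\to N\hookrightarrow\widehat N$ with $x\mapsto\nu$. The composite $\Phi\circ\bar\Theta_k\colon N\to\widehat N$ has image $\la N,\nu\ra\supseteq N$, so its canonical extension $\widehat N\to\widehat N$ is onto; a surjective endomorphism of a finite-dimensional nilpotent $\Q$-Lie algebra is an isomorphism, hence this extension and $\Phi\circ\bar\Theta_k$ are injective, forcing $\bar\Theta_k$ to be injective. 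Thus $G/\gamma_k G\cong F/\gamma_k F$ for all $k$. This step uses only homological primitivity: the adjunction of an $n$-th root disappears after rationalization, so no divisibility of $h$ inside $F/\gamma_{k+1}F$ is needed.

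For residual nilpotence, elements conjugate into $H$ or $\la x\ra$ are easy. The inclusion $H\hookrightarrow G$ is injective, and it induces injections $H/\gamma_k H\hookrightarrow G/\gamma_k G$: on associated graded this is the degree-wise map of free Lie rings induced by the monomorphism $H^{\mathrm{ab}}\hookrightarrow G^{\mathrm{ab}}$, which stays injective on tensor powers of free abelian groups and hence in every degree. So $\gamma_k G\cap H=\gamma_k H$, and residual nilpotence of the parafree group $H$ separates any nontrivial $g\in H$ (and, by conjugating, any $g$ in a conjugate of $H$); any $g$ with $\sigma(g)\neq0$, where $\sigma\colon G\to\Z$ sends $x\mapsto1$, already survives in $G^{\mathrm{ab}}$.

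The remaining elements are the crux. Here I would use the graph-of-groups decomposition coming from $\sigma$: it exhibits $G$ as $M\rtimes\la x\ra$ with $M=\ker(\sigma)$ a free product of $n$ conjugate copies of $K$, $x$ cyclically permuting the factors with one twist by the conjugation $\alpha$. Since $n$ is \emph{prime}, I expect $G$ to be residually a finite $n$-group: $H$ (parafree, hence residually torsion-free nilpotent) and $\la x\ra$ are residually-$n$, $K$ inherits this, and over $\mathbb F_n$ the $n$-cycle is unipotent, so everything reduces to showing the monodromy $\alpha$ acts unipotently on the relevant mod-$n$ lower central quotients of $K$ — which should follow from $H$ being parafree, since conjugation is trivial on every $\gamma_j H/\gamma_{j+1}H$. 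Carrying this out — equivalently, verifying the hypotheses of a residual-$n$ criterion for $H\underset{h=x^n}{*}\la x\ra$ over $\la h\ra$, in particular that the pro-$n$ topologies induced on $\la h\ra$ from the two sides coincide — is where I expect the real work. It is also here that the hypothesis that $h$ generates its own centralizer enters: it forces $C_G(x)=\la x\ra$, which together with $\bar x$ being primitive makes the hypotheses of the proposition available again for $G$ with the marked element $x$, and this is exactly what permits the construction to be iterated.
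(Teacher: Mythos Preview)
The paper contains no proof of this proposition: it is quoted verbatim as \cite[Proposition~3]{Baum67} and used as a black box, so there is nothing in the paper to compare your argument against.

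On the merits of your sketch: the first half (identifying the lower central quotients via Mal'cev completion, using that adjoining an $n$-th root becomes invisible over $\Q$) is a sound strategy and close to how Baumslag himself argues.  Your deduction that $\gamma_kG\cap H=\gamma_kH$ is also fine once $G/\gamma_kG\cong F/\gamma_kF$ is known, since a map of free nilpotent groups of equal rank and class that is injective on abelianizations is injective (embed both in their common Mal'cev completion).

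The residual-nilpotence half, however, is not a proof but a plan, and you say so yourself (``is where I expect the real work'').  Two concrete issues.  First, the structure of $M=\ker\sigma$ is not simply ``a free product of $n$ conjugate copies of $K$ with $x$ permuting them'': Bass--Serre theory for the edge-of-groups $H\ast_{\la h\ra}\la x\ra$ gives $M$ as the fundamental group of a graph of groups with $n$ edges and vertex groups coming from both $K$ and the trivial group, and one must actually write this down.  Second, and more seriously, the passage ``everything reduces to showing the monodromy $\alpha$ acts unipotently on the relevant mod-$n$ lower central quotients of $K$'' presupposes a residual-$p$ criterion for amalgamated products (of Higman/Gruenberg type) whose hypotheses --- in particular compatibility of the pro-$n$ topologies induced on $\la h\ra$ from the two sides, and closedness of $\la h\ra$ in $H$ for that topology --- you neither state nor verify.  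This is precisely the content of Baumslag's paper and is where the primality of $n$ and the centralizer hypothesis do their work; without it the argument is incomplete.
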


A theorem of Baumslag \cite[Theorem 4.2]{Baum69} states that any two-generator subgroup of a parafree group is free.
If follows that an element homologically primitive in a parafree group must generate its own centralizer.

Suppose $n$ from Proposition \ref{baumextprop} is composite, and let $n=p_1\cdots p_k$, be the prime decomposition of $n$, and define
\[
	G_j=\la H*\la x_1 \ra*\cdots*\la x_j\ra\; | \;h=x_1^{p_1},x_1=x_2^{p_2},\ldots,x_{j-1}=x_j^{p_j}\ra
\]
for $j=1,\ldots,k$ so
\[
	G_k\cong H\underset{h=x^n}{*}\la x \ra.
\]
For each $j=1,\ldots,k-1$, $x_j$ is homologically primitive in $G_j$.
Therefore, Proposition \ref{baumextprop} is strengthened to the following statement.

\begin{prop} \label{baumextpropr}
Let $H$ be a parafree group of rank $r$,
and let $\langle x \rangle$ be an infinite cyclic group generated by $x$.
Let $h$ be an element in $H$, and $n$ be any positive integer.
If $h$ is homologically primitive in $H$,
then
\[
H\underset{h=x^n}{*}\langle x \rangle
\]
is parafree of rank $r$.
\end{prop}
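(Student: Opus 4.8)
The plan is to reduce the claim to the prime-exponent case, Proposition \ref{baumextprop}, via the telescoping construction already introduced. Write $n = p_1 p_2 \cdots p_k$ with each $p_i$ a (not necessarily distinct) prime, put $G_0 := H$ and $x_0 := h$, and let $G_j$ be the group defined above, so that for $j = 1, \ldots, k$ one has the amalgamated free product $G_j = G_{j-1} \ast_{x_{j-1} = x_j^{p_j}} \langle x_j \rangle$ and, after Tietze-eliminating $x_1, \ldots, x_{k-1}$ using the relations $x_{j-1} = x_j^{p_j}$ (which telescope to $h = x_k^{n}$), one has $G_k \cong H \ast_{h = x^n} \langle x \rangle$ with $x$ playing the role of $x_k$. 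I will prove, by induction on $j$, that each $G_j$ is parafree of rank $r$ and that $x_j$ is homologically primitive in $G_j$; the proposition is then the case $j = k$. (The degenerate case $n = 1$ has $k = 0$: the relation identifies $x$ with $h \in H$, so $H \ast_{h=x}\langle x \rangle \cong H$, which is parafree of rank $r$ by hypothesis.)

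The base case $j = 0$ is exactly the hypothesis of the proposition. For the inductive step, suppose $G_{j-1}$ is parafree of rank $r$ and $x_{j-1}$ is homologically primitive in $G_{j-1}$. Since two-generator subgroups of parafree groups are free \cite[Theorem 4.2]{Baum69}, a homologically primitive element of a parafree group generates its own centralizer; in particular $x_{j-1}$ does. As $p_j$ is prime, Proposition \ref{baumextprop} --- applied with $(H, h, n, x)$ replaced by $(G_{j-1}, x_{j-1}, p_j, x_j)$ --- shows that $G_j = G_{j-1} \ast_{x_{j-1} = x_j^{p_j}} \langle x_j \rangle$ is parafree of rank $r$. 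It remains to verify that $x_j$ is homologically primitive in $G_j$. Abelianizing the amalgamated product gives
\[
    H_1(G_j) \;\cong\; \bigl( H_1(G_{j-1}) \oplus \Z\,\overline{x_j} \bigr) \big/ \bigl( \overline{x_{j-1}} - p_j \overline{x_j} \bigr),
\]
where the bar denotes the image in the abelianization. By the inductive hypothesis $H_1(G_{j-1}) \cong \Z^r$ has a basis of the form $\overline{x_{j-1}}, e_2, \ldots, e_r$, and in the quotient $\overline{x_{j-1}} = p_j \overline{x_j}$, so $\overline{x_j}, e_2, \ldots, e_r$ is a basis of $H_1(G_j) \cong \Z^r$; hence $\overline{x_j}$ is homologically primitive. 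This closes the induction, and $j = k$ gives the result.

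The only steps carrying genuine content are the propagation of homological primitivity along the chain --- handled by the explicit change of basis above --- and the identification of the iterated construction with the single amalgamated product $H \ast_{h=x^n}\langle x \rangle$. For the latter one must know that each edge map is injective, i.e.\ that $x_{j-1}$ has infinite order in $G_{j-1}$; this is automatic, since its class $\overline{x_{j-1}}$ is primitive in $\Z^r$ and hence of infinite order. I expect the homological bookkeeping to be the main thing to get right, but it is routine once the abelianization of a free product with amalgamation is written down.
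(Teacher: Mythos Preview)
Your argument is correct and follows essentially the same approach as the paper: factor $n$ into primes, build the chain $G_0 = H, G_1, \ldots, G_k$ by successively adjoining $p_j$th roots, and invoke Proposition~\ref{baumextprop} at each step after checking that $x_{j-1}$ remains homologically primitive in $G_{j-1}$. You have merely supplied more detail than the paper does --- the explicit abelianization computation, the $n=1$ case, and the verification that the edge maps are injective --- where the paper simply asserts that ``$x_j$ is homologically primitive in $G_j$'' and moves on.
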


In a talk, Mayland \cite{May74} proposes a strategy that uses the Reidemeister-Schreier rewriting process to describe the commutator subgroup of a two-bridge knot group as the union of an ascending chain of subgroups satisfying the conditions of Lemma \ref{mainlem}.
The first term $Y_0$ is a free group, and ideally, for each $n\geq 1$, $Y_n$ is isomorphic to $Y_{n-1}$ after adjoining roots of homologically primitive elements, in the manner of Proposition \ref{baumextpropr}, a finite number of times.
Mayland attempts to show that, for a given two-bridge knot, each $Y_n$ is obtained by adjoining roots to $Y_{n-1}$ using a recursive argument.
However, it is not at all obvious that Mayland's recursive argument is valid.
While it is straightforward to verify Mayland's argument on a case by case basis, proving his recursive argument works in general is quite difficult.
Also, in Mayland's talk notes, there are errors in the argument that the elements, whose roots are adjoined, are homologically primitive.
Unfortunately, Mayland never published a proof of his assertion.
In a later paper by Mayland and Murasugi \cite{MayMur76}, it is stated that Mayland plans to present a proof using a different strategy.
This paper has not appeared.

Here we use a slightly different approach. In this paper, we use a graph theoretic construction similar to one used by Hirasawa and Murasugi \cite{HirMur07} to relate the Alexander subgroups of more complicated two-bridge link groups to those of simpler two-bridge link groups.
Then, it is proven inductively that the Alexander subgroups of all two-bridge links can be described by adjoining roots to a free group,
and we show that when two-bridge links have Alexander polynomials with relatively prime coefficients, their Alexander subgroups satisfy Lemma \ref{mainlem} via Mayland's strategy.

\subsection{Application to Bi-Orderability}

Residually torsion-free nilpotence is useful for determining when a link group is \emph{bi-orderable} i.e. admits a total order invariant under both left and right multiplication \cite{PerRolf03, CDN16, Yam17}.
Let $L$ be a smooth link in $S^3$.
The link group $\pi(L)$ is an extension of $\la t\ra$ (an infinite cyclic group generated by $t$) by the Alexander subgroup $Y$.
Let $Y^{\mathsf{ab}}$ denote the abelianization of $Y$,
and let $L_t$ be the linear map induced on $\Q\otimes Y^{\mathsf{ab}}$ by conjugating $Y$ by $t$.
The following result is shown by Linnell, Rhemtulla, and Rolfsen in \cite{LRR08} and is stated more explicitly by Chiswell, Glass, and Wilson \cite{CGW15}.

\begin{thm}[{Chiswell-Glass-Wilson \cite[Theorem B]{CGW15}}]
Suppose $Y$ is residually torsion-free nilpotent.
If the dimension of $\Q\otimes Y^{\mathsf{ab}}$ is finite and all the eigenvalues of $L_t$ are real and positive, then $\pi(L)$ is bi-orderable.
\end{thm}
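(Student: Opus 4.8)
The plan is to build a bi-invariant order on $\pi(L)$ by combining the standard order on $\la t\ra\cong\Z$ with a bi-order on the Alexander subgroup $Y$ that is invariant under conjugation by $t$. Since $Y$ is residually torsion-free nilpotent it is already bi-orderable, so the real content of the eigenvalue hypothesis is that the bi-order on $Y$ can be chosen compatibly with the $t$-action. I would work with the rational lower central series $Y=\mathrm{D}_1Y\supseteq\mathrm{D}_2Y\supseteq\cdots$ (the isolators of the $\gamma_nY$): residual torsion-free nilpotence gives $\bigcap_n\mathrm{D}_nY=1$, each successive quotient $Q_n:=\mathrm{D}_nY/\mathrm{D}_{n+1}Y$ is torsion-free abelian, and since $\dim_\Q(\Q\otimes Y^{\mathsf{ab}})<\infty$ the associated graded Lie algebra $\bigoplus_n(\Q\otimes Q_n)$ is generated by its degree-one part $V=\Q\otimes Y^{\mathsf{ab}}$, so each $\Q\otimes Q_n$ is finite-dimensional. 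Conjugation by $t$ preserves every $\mathrm{D}_nY$ and induces $\tau_n\in\mathrm{GL}(\Q\otimes Q_n)$ with $\tau_1=L_t$; because the graded Lie algebra is generated in degree one, $\Q\otimes Q_n$ is a quotient of $V^{\otimes n}$ intertwining $L_t^{\otimes n}$ with $\tau_n$, so every eigenvalue of $\tau_n$ is a product of $n$ eigenvalues of $L_t$ and hence again real and positive.

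The technical core is the following lemma: if $A$ is a finitely generated torsion-free abelian group and $\alpha\in\mathrm{Aut}(A)$ has all of its eigenvalues on $\R\otimes A$ real and positive, then $A$ admits an $\alpha$-invariant linear order. I would prove it by choosing a complete $\alpha$-invariant flag $0=W_0\subset W_1\subset\cdots\subset W_k=\R\otimes A$ (possible since every eigenvalue is real); $\alpha$ acts on each line $W_i/W_{i-1}$ as multiplication by a positive scalar, so the usual order there is $\alpha$-invariant, whence the lexicographic order on $\R\otimes A$ determined by the flag is $\alpha$-invariant, and its restriction to $A$ is the desired order. Applying this to each $Q_n$ with $\alpha=\tau_n$ gives a linear order $<_n$ on $Q_n$ invariant under conjugation by $t$.

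Next, for $1\ne y\in Y$ let $n_0$ be the largest index with $y\in\mathrm{D}_{n_0}Y$ (it exists since $\bigcap_n\mathrm{D}_nY=1$) and declare $y>1$ iff $y\,\mathrm{D}_{n_0+1}Y>_{n_0}1$ in $Q_{n_0}$. A routine check — using that a sum of positive elements of an ordered abelian group is positive, and that $[Y,\mathrm{D}_nY]\subseteq\mathrm{D}_{n+1}Y$ so conjugation by elements of $Y$ is trivial on each $Q_n$ — shows this defines a bi-order $<_Y$ on $Y$, and it is invariant under conjugation by $t$ because each $<_n$ is. Finally, in the extension $1\to Y\to\pi(L)\to\Z\to1$ of \eqref{extendingmap}, every inner automorphism of $\pi(L)$ restricts to an order-preserving automorphism of $(Y,<_Y)$: conjugation by elements of $Y$ is trivial on each $Q_n$, and conjugation by a preimage of a generator of $\Z$ is $t$-conjugation. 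Hence the lexicographic order on $\pi(L)$ — compare images in $\Z$ first, then break ties with $<_Y$ — is a bi-invariant order.

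I expect the crux to be propagating the eigenvalue hypothesis up the filtration: one must justify carefully that the rational-lower-central associated graded is a Lie algebra generated in degree one (this is where finiteness of $\dim_\Q(\Q\otimes Y^{\mathsf{ab}})$ is used — both to make each $\Q\otimes Q_n$ finite-dimensional and to realize it as a quotient of $V^{\otimes n}$ intertwining the $t$-actions) and therefore that every eigenvalue of $\tau_n$ is a product of $n$ eigenvalues of $L_t$, so that reality and positivity are inherited at every level. The remaining ingredients — the invariant-flag lemma and the two lexicographic assemblies — are standard order-theoretic bookkeeping.
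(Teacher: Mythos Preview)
The paper does not prove this theorem; it is quoted from Chiswell--Glass--Wilson \cite{CGW15} (building on Linnell--Rhemtulla--Rolfsen \cite{LRR08}) and used as a black box. Your argument is correct and is essentially the one those references give: filter $Y$ by the rational lower central series, push the eigenvalue hypothesis to every graded piece via the Lie-algebra surjection $V^{\otimes n}\twoheadrightarrow \Q\otimes Q_n$, order each piece using a real $\tau_n$-invariant flag, and assemble lexicographically twice.

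One small imprecision worth fixing: in your flag lemma you assume $A$ is finitely generated, but the quotients $Q_n=\mathrm{D}_nY/\mathrm{D}_{n+1}Y$ need not be finitely generated here (the Alexander subgroup $Y$ is typically infinitely generated, and e.g.\ if $\Delta_L(t)=2t-1$ then already $Q_1\cong\Z[1/2]$). What you actually use, and what suffices, is only that $\Q\otimes Q_n$ is finite-dimensional: the flag and the lexicographic order live on $\R\otimes Q_n$, and the order restricts to $Q_n$ through the embedding $Q_n\hookrightarrow\R\otimes Q_n$ coming from torsion-freeness. With that adjustment the argument goes through unchanged.
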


The Alexander polynomial of $L$, $\Delta_L(t)$, is a scalar multiple of the characteristic polynomial of $L_t$,
and the dimension of $\Q\otimes Y^{\mathsf{ab}}$ is the degree of $\Delta_L(t)$ (see \cite[Chapter VIII]{Rolf76}) which implies the following corollary.

\begin{cor} \label{lrrcor}
Let $L$ be a link in $S^3$.
If the Alexander subgroup of $L$ is residually torsion-free nilpotent and $\Delta_L(t)$ has all real positive roots, then $\pi(L)$ is bi-orderable.
\end{cor}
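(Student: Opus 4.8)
The plan is to obtain this as an immediate consequence of \cite[Theorem B]{CGW15}, stated just above, together with the two classical facts recorded before the statement: that $\Delta_L(t)$ is a (nonzero) scalar multiple of the characteristic polynomial of the operator $L_t$ on $\Q\otimes Y^{\mathsf{ab}}$, and that $\dim_\Q(\Q\otimes Y^{\mathsf{ab}})=\deg\Delta_L(t)$ (see \cite[Chapter VIII]{Rolf76}). So the corollary is a matter of checking that the three hypotheses of \cite[Theorem B]{CGW15} are all in force.

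First I would observe that the hypothesis ``$\Delta_L(t)$ has all real positive roots'' presupposes that $\Delta_L(t)$ is a nonzero polynomial of some finite degree $d$; hence $\Q\otimes Y^{\mathsf{ab}}$ is finite-dimensional (of dimension $d$), which is the finiteness hypothesis of \cite[Theorem B]{CGW15}. Next, since the characteristic polynomial of $L_t$ and $\Delta_L(t)$ agree up to a nonzero scalar, they have the same multiset of complex roots; the eigenvalues of $L_t$ are exactly the roots of its characteristic polynomial, hence exactly the roots of $\Delta_L(t)$, which by hypothesis are real and positive. Finally, the Alexander subgroup $Y$ is residually torsion-free nilpotent by hypothesis. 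With all three hypotheses verified, \cite[Theorem B]{CGW15} yields that $\pi(L)$ is bi-orderable.

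I do not expect a genuine obstacle here: the only point needing care is the comparison of the two polynomials, and since passing to a nonzero scalar multiple leaves the root multiset unchanged, the reality and positivity of the roots of $\Delta_L(t)$ transfer directly to the eigenvalues of $L_t$. All of the substantive content sits in \cite[Theorem B]{CGW15} and in the standard identification of $\Delta_L(t)$ with a scalar multiple of the characteristic polynomial of $L_t$.
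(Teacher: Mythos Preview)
Your proposal is correct and matches the paper's approach exactly: the paper derives the corollary immediately from \cite[Theorem B]{CGW15} together with the two facts that $\Delta_L(t)$ is a scalar multiple of the characteristic polynomial of $L_t$ and that $\dim_{\Q}(\Q\otimes Y^{\mathsf{ab}})=\deg\Delta_L(t)$. Your verification of the three hypotheses is just a more explicit spelling-out of what the paper leaves to the reader.
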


\begin{remark}
Linnell, Rhemtulla, and Rolfsen actually show a weaker condition on the Alexander polynomial is sufficient for bi-orderability.
However, since two bridge links are alternating, the coefficients of their Alexander polynomials alternate sign \cite{Crow59}
so the signs of the even degree terms are all opposite to the signs of the odd degree terms.
It follows that the Alexander polynomials of two-bridge links cannot have negative roots.
Therefore, for a two-bridge link, having an Alexander polynomial which is ``special" in the sense of Linnell, Rhemtulla, and Rolfsen \cite{LRR08} is equivalent to the Alexander polynomial having all real and positive roots.
\end{remark}

By combining Theorem \ref{mainthm} with Corollary \ref{lrrcor}, we have the following result.

\begin{thm} \label{bocorthm} Let $L$ be an oriented two-bridge link with Alexander polynomial $\Delta_L(t)$. If all the roots of $\Delta_L(t)$ are real and positive and the coefficients of $\Delta_L(t)$ are relatively prime, then the link group of $L$ is bi-orderable.
In particular, if $K$ is a two-bridge knot and all the roots of $\Delta_K(t)$ are real and positive, then the knot group of $K$ is bi-orderable.
\end{thm}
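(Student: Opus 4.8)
The plan is to obtain this as an immediate consequence of Theorem \ref{mainthm} and Corollary \ref{lrrcor}; all of the real work is already contained in Theorem \ref{mainthm} (equivalently in Lemma \ref{mainlem} and the graph-theoretic machinery built to prove it), so the remaining argument is pure bookkeeping.

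First I would take an oriented two-bridge link $L$ whose Alexander polynomial $\Delta_L(t)$ has relatively prime coefficients and all of whose roots are real and positive. The coefficient hypothesis is exactly what Theorem \ref{mainthm} needs, so it tells me that the Alexander subgroup $Y$ of $\pi(L)$ is residually torsion-free nilpotent. Now I feed this, together with the hypothesis that every root of $\Delta_L(t)$ is real and positive, into Corollary \ref{lrrcor}: its conclusion is precisely that $\pi(L)$ is bi-orderable. Behind Corollary \ref{lrrcor} one is using that $\Delta_L(t)$ is a scalar multiple of the characteristic polynomial of $L_t$ and that $\dim_\Q(\Q\otimes Y^{\mathsf{ab}})=\deg\Delta_L(t)$ is finite, so the finiteness and the real-positive-eigenvalue hypotheses of the Chiswell--Glass--Wilson theorem hold; since two-bridge links are alternating, the alternating sign pattern of the coefficients of $\Delta_L$ rules out negative roots, so ``all roots real and positive'' is the faithful translation of the eigenvalue condition on $L_t$ — this is the content of the remark preceding the statement.

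For the knot specialization I would simply observe that the coefficient condition is automatic there: for any knot $K$, $\Delta_K(1)=\pm1$, so the coefficients of $\Delta_K(t)$ are collectively relatively prime (this is the observation underlying Corollary \ref{maincor}), and the previous paragraph applied to $K$ gives that $\pi(K)$ is bi-orderable whenever the roots of $\Delta_K(t)$ are all real and positive. The only thing to be careful about — the sole ``obstacle,'' such as it is — is checking that the hypotheses of Corollary \ref{lrrcor}, and through it of \cite[Theorem B]{CGW15}, are met on the nose (finiteness of $\dim_\Q(\Q\otimes Y^{\mathsf{ab}})$, reality and positivity of the eigenvalues of $L_t$); there is no genuinely hard step beyond invoking Theorem \ref{mainthm}.
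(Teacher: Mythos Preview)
Your proposal is correct and matches the paper's approach exactly: the theorem is obtained by combining Theorem~\ref{mainthm} with Corollary~\ref{lrrcor}, and the knot case follows because $\Delta_K(1)=\pm1$ forces the coefficients to be relatively prime.
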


\begin{remark}
    Theorem \ref{bocorthm} is not true if either condition on the Alexander polynomial is removed.
    The link group of the $(4,2)$-torus link has presentation
    \[
        \la x,y| x^{-1}y^{-2}xy^2\ra .
    \]
    Since $x$ and $y$ do not commute but $x$ and $y^2$ does, the $(4,2)$-torus link doe not have bi-orderable link group \cite[Lemma 1.1]{Neu49}.
    As stated in Remark \ref{remrtfn}, the $(4,2)$-torus link, oriented as in Figure \ref{torus42} has Alexander polynomial $2t-2$,
    which as only on real positive root but does not have relatively prime coefficients.
    If we reverse the orientation of one of the components, the Alexander polynomial is $t^3-t^2+t-1$, which has relatively prime coefficients but no real roots.
\end{remark}

\subsection{A Family of Bi-Orderable Two-Bridge Links}

Every oriented two-bridge link is the closure of rational tangle.
Thus, by Conway's correspondence,
we can associate a two-bridge link to a rational fraction $p/q$ with $p>0$; see \cite[Chapter 12]{BurZie85} for details.
Let $L(p/q)$ denote the two-bridge link represented by $p/q$.
Choose an orientation of $L(p/q)$ so that the two overstrands of Schubert’s projection of $L(p/q)$ are oriented away from each other as in Figure \ref{twobridge}.
This correspondence satisfies the following properties:

\begin{figure}[t]
\includegraphics[scale=0.9]{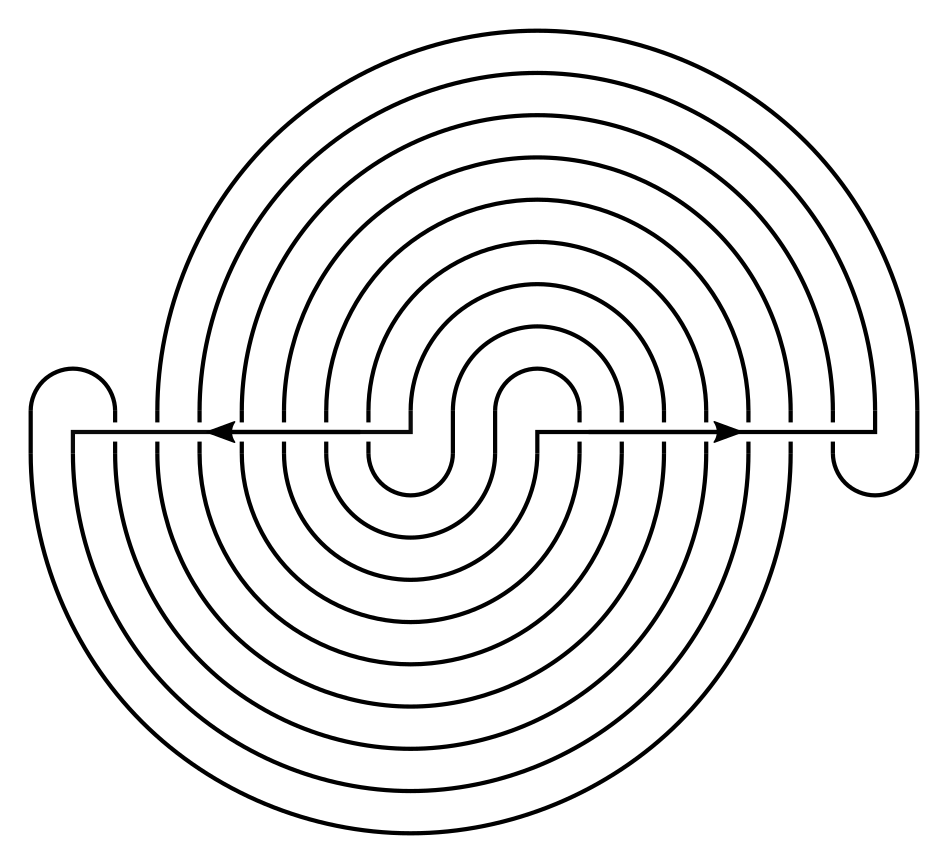}
\caption{Schubert’s projection of $L(8/3)$.}
\label{twobridge}
\end{figure}

\begin{enumerate}
    \item $L(p/q)$ and $L(p'/q')$ are equivalent as unoriented links if and only if
    \begin{enumerate}
        \item $p=p'$ and
        \item $q\cong q'\;(\mathsf{mod}\, p)$ or $qq'\cong 1\;(\mathsf{mod}\, p)$. 
    \end{enumerate}
    \item $L(p/q)$ and $L(p'/q')$ are equivalent as oriented links if and only if
    \begin{enumerate}
        \item $p=p'$ and
        \item $q\cong q'\;(\mathsf{mod}\, 2p)$ or $qq'\cong 1\;(\mathsf{mod}\, 2p)$. 
    \end{enumerate}
    \item $L(p/q)$ is a knot if and only if $p$ is odd.
    \item $L(p/q)$ and $L(-p/q)$ are mirrors.
    \item If $L(p/q)$ is a link, $L(p/(q\pm p))$ is the oriented link obtained by reversing the orientation of one of the components of $L(p/q)$.
\end{enumerate}

When $q$ is odd, there are non-zero integers $k_1,\ldots,k_n$
such that $p/(p-q)=[2k_1,\ldots,2k_n]$.
Here $[2k_1,\ldots,2k_n]$ denotes the continued fraction expansion
\[
[2k_1,\ldots,2k_n]=2k_1+\frac{1}{2k_2+\frac{1}{2k_3+\frac{1}{\cdots+\frac{1}{2k_n}}}}.
\]
The integers $2k_1,\ldots,2k_n$ correspond to the number of twist in the rational tangle $p/q$; see Figure \ref{twobridgeknot}.
For details on fraction expansions and rational tangles, see \cite[Chapter 9]{Mur96}.
When $n$ is even, $L(p/q)$ is a knot with genus $n/2$.
When $n$ is odd, $L(p/q)$ is a two-component link with genus $(n-1)/2$.

\begin{figure}[t]
\includegraphics[scale=0.7]{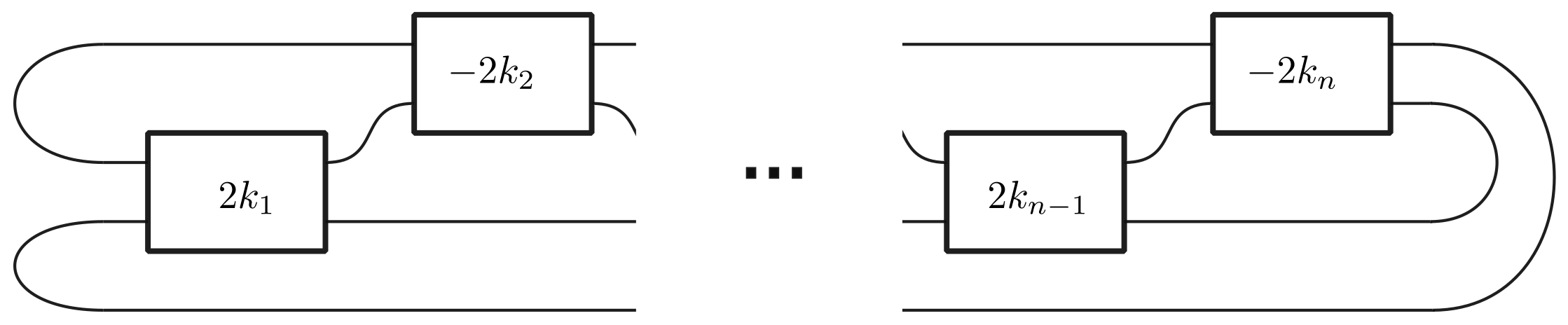}

\includegraphics[scale=0.7]{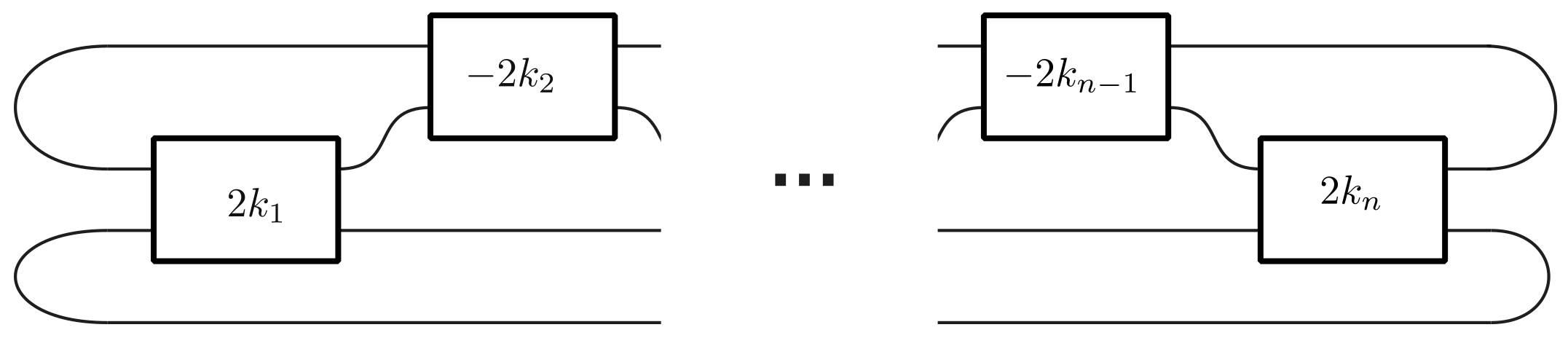}
\caption{Rational tangle form of a two-bridge knot (top) and link (bottom).}
\label{twobridgeknot}
\end{figure}

Every oriented two-bridge link is associated to a fraction $p/q$ with $q$ odd and $|p/q|>1$.
When $L(p/q)$ is a link, $p$ is always even and $q$ is always odd.
Suppose $L(p/q)$ is a knot with $q$ even.
Let $q'$ be the inverse $q$ modulo $2p$.
Since $q$ is even, $q'$ is odd, and
$L(p/q)$ is equivalent to $L(p/q')$.
Furthermore, since $L(p/q)$ is equivalent to $L(p/(q+2pk))$ for all integers $k$, $q$ can be chosen such that $-p<q<p$ so $|p/q|>1$.
Therefore, we adopt the the convention that $p>|q|>0$ and $q$ is odd.

Chiswell, Glass, and Wilson showed that groups which admit presentations with two generators and one relator satisfying certain conditions have residually torsion-free nilpotent commutator subgroups \cite{CGW15}.
Clay, Desmarius, and Naylor used this to show that twist knots (knots represented by $[2,2k]$ with $k>0$) have bi-orderable knot groups in \cite{CDN16}.
In \cite{Yam17}, Yamada used the same idea to extend this to the family of two-bridge links represented by $[2,2,\ldots,2,2k]$ where $k>0$.
Using the following result of Lyubich and Murasugi, this paper extends this family further.

\begin{thm}[{Lyubich-Murasugi \cite[Theorem 2]{LyuMur12}}] \label{lyumurthm}
    Let $p/q$ be a rational fraction,
    and let $L$ be the two-bridge link $L(p/q)$.
    If for some positive integer $n$, $p/q=[2k_1,\ldots,2k_n]$ with $k_i>0$ for each $i=1,\ldots,n$ then
    all the roots of $\Delta_L(t)$ are real and positive.
\end{thm}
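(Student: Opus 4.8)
The plan is to reduce the assertion to the location of the roots of a one-variable polynomial built from a Seifert matrix of $L$, to convert that polynomial — by a square-root substitution — into the characteristic polynomial of a real symmetric tridiagonal matrix, and then to transport real positivity back to the variable $t$. Write $p/q=[2k_1,\dots,2k_n]$ with every $k_i>0$. Since all partial quotients are even, $L$ bounds the standard genus-minimizing Seifert surface $F$ obtained by plumbing $n$ twisted annuli in a line, the $i$-th carrying $k_i$ full twists. A direct linking-number computation in the basis of $H_1(F)$ dual to the plumbing bands shows that the Seifert matrix $V$ is tridiagonal, with diagonal entries $(-1)^{i+1}k_i$, subdiagonal entries $1$, and superdiagonal entries $0$ (up to the usual orientation conventions). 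Hence $\Delta_L(t)\doteq\det(V-tV^{\mathsf T})$, and expanding this tridiagonal determinant along its last row — then absorbing the period-four sign pattern of the diagonal into a rescaling $\hat D_j=\pm D_j$ of the leading principal minors $D_j$ of $V-tV^{\mathsf T}$ — yields
\[
\hat D_j=k_j(1-t)\,\hat D_{j-1}-t\,\hat D_{j-2},\qquad \hat D_{-1}=0,\quad \hat D_0=1,
\]
with $\hat D_n\doteq\Delta_L(t)$. (The same recurrence can be extracted from a Burau-type product formula for two-bridge links, bypassing the surface entirely.)

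Next I would eliminate the $t$ in the trailing coefficient. Substituting $\hat D_j=t^{j/2}R_j$ and dividing by $t^{j/2}$ turns the recurrence into $R_j=-k_j u\,R_{j-1}-R_{j-2}$, where $u=t^{1/2}-t^{-1/2}$; passing to the monic polynomials $\tilde P_j:=R_j/\big((-1)^j k_1\cdots k_j\big)$ gives
\[
\tilde P_j=u\,\tilde P_{j-1}-\frac{1}{k_{j-1}k_j}\,\tilde P_{j-2},\qquad \tilde P_0=1,\quad \tilde P_1=u.
\]
This is exactly where the hypothesis $k_i>0$ enters: each coefficient $1/(k_{j-1}k_j)$ is positive, so $\{\tilde P_j\}$ is a sequence of orthogonal polynomials — equivalently, $\tilde P_n$ is the characteristic polynomial of the real symmetric tridiagonal (Jacobi) matrix with zero diagonal and nonzero off-diagonal entries $(k_{j-1}k_j)^{-1/2}$. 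By the classical three-term-recurrence criterion, $\tilde P_n$, and hence $R_n$, has all $n$ of its roots in $\R$.

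Finally I would translate this back to $t$. The recurrence gives $\hat D_n(0)=k_1\cdots k_n\neq0$, and since $R_n$ is even or odd in $u$ according to the parity of $n$ while $u(t)^2=(t-1)^2/t$, the polynomial $\hat D_n(t)$ is, up to a unit, $t^{n/2}R_n(u(t))$, a genuine element of $\Z[t^{\pm1}]$. Its roots are therefore $t=1$ (which appears only when $n$ is odd) together with the solutions of $(t-1)^2/t=u_*^2$ as $u_*$ ranges over the roots of $R_n$. For each real $u_*$ this is the quadratic $t^2-(2+u_*^2)t+1=0$, whose discriminant $u_*^2(u_*^2+4)$ is nonnegative and whose root-sum $2+u_*^2>0$ and root-product $1>0$; so both solutions are real and strictly positive. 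Hence every root of $\Delta_L(t)$ is real and positive.

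The only genuinely delicate step is the first one: making the plumbing description of the Seifert surface — and, above all, the sign pattern of the Seifert matrix — rigorous and uniform across the knot case ($n$ even) and the two-component link case ($n$ odd). Everything afterward is routine linear algebra together with the standard fact that a three-term recurrence with positive ``lower'' coefficients has only real roots, followed by the analysis of a quadratic with nonnegative discriminant $u_*^2(u_*^2+4)$.
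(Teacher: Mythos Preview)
The paper does not prove this theorem; it is quoted verbatim as a result of Lyubich and Murasugi \cite{LyuMur12} and is used only as input to Corollary~\ref{positivetwobridgecor}. So there is no ``paper's own proof'' to compare against.

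That said, your argument is essentially a reconstruction of the original Lyubich--Murasugi proof, and it is correct. The tridiagonal Seifert matrix for the plumbed surface gives the recurrence $\hat D_j = k_j(1-t)\hat D_{j-1} - t\hat D_{j-2}$; the substitution $\hat D_j = t^{j/2}R_j(u)$ with $u = t^{1/2}-t^{-1/2}$ does produce $R_j = -k_ju R_{j-1} - R_{j-2}$, and the normalization to $\tilde P_j$ yields a three-term recurrence with positive lower coefficients $1/(k_{j-1}k_j)$, which is exactly Favard's criterion for an orthogonal sequence (equivalently, the characteristic polynomials of a real symmetric Jacobi matrix). The back-translation via the quadratic $t^2 - (2+u_*^2)t + 1 = 0$ is clean and gives the positivity for free from the coefficient signs. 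Your caveat at the end is well placed: the only point requiring genuine care is pinning down the Seifert matrix with the correct sign pattern uniformly in $n$, but this is standard and appears in many sources on two-bridge links.
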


Combining this theorem with Corollary \ref{maincor} implies the following.

\begin{cor} \label{positivetwobridgecor}
    Suppose $p/q$ is a rational fraction, and
    $p/(p-q)=[2k_1,\ldots,2k_n]$ with $k_i>0$ for each $i=1,\ldots,n$.

    If the coefficients of the Alexander polynomial of $L(p/q)$ are relatively prime, then the link group of $L(p/q)$ is bi-orderable.
    In particular, when $L(p/q)$ is a knot, the knot group of $L(p/q)$ is bi-orderable.
\end{cor}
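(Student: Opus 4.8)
The plan is to deduce the corollary directly from Theorem~\ref{bocorthm} and the Lyubich--Murasugi theorem, Theorem~\ref{lyumurthm}. By Theorem~\ref{bocorthm}, to conclude that $\pi(L(p/q))$ is bi-orderable it suffices to verify two things about $\Delta_{L(p/q)}(t)$: that all of its roots are real and positive, and that its coefficients are relatively prime. The second of these is precisely the standing hypothesis, and it is automatic in the knot case, since if $L(p/q)$ is a knot $K$ then $\Delta_K(1)=\pm1$, so any common divisor of the coefficients of $\Delta_K(t)$ divides $1$. Thus the entire content of the corollary is the reality and positivity of the roots of $\Delta_{L(p/q)}(t)$, together with the assembly of cited results just described.

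To get the root condition I would invoke Theorem~\ref{lyumurthm}. The fraction carrying the all-positive even continued fraction expansion $[2k_1,\dots,2k_n]$ is $p/(p-q)$, and by the conventions fixed earlier in this section this is exactly the fraction recording the twist regions of the rational tangle of $L(p/q)$; so Theorem~\ref{lyumurthm}, applied with this fraction, yields that every root of $\Delta_{L(p/q)}(t)$ is real and positive. Combined with the coefficient hypothesis (automatic for knots, as above), Theorem~\ref{bocorthm} then gives that $\pi(L(p/q))$ is bi-orderable, which is the assertion of the corollary; the last sentence is the special case in which $L(p/q)$ is a knot.

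The one point that needs care — and the main obstacle — is aligning the continued-fraction and orientation conventions of \cite{LyuMur12} with those adopted here, so as to be certain that the fraction $p/(p-q)$ governs $\Delta_{L(p/q)}(t)$ itself and not, say, the Alexander polynomial of the mirror image $L(p/(p-q))$ (obtained from $L(p/q)$ via $q\mapsto -q\equiv p-q\pmod p$) or of a reorientation of one component of $L(p/q)$. This matters for links, because the one-variable Alexander polynomial of a two-component link depends on the orientations of its components --- reversing one component can change both the roots and the greatest common divisor of the coefficients of $\Delta$, as Remark~\ref{remrtfn} illustrates for the $(4,2)$-torus link. Should a discrepancy appear, it does no harm to the group-theoretic conclusion, since mirror images have homeomorphic complements and hence isomorphic link groups, so that bi-orderability of $\pi(L(p/q))$ is equivalent to bi-orderability of $\pi(L(p/(p-q)))$; but one must then make sure the Alexander-polynomial hypotheses of Theorem~\ref{bocorthm} are checked on the correct link. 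Once the conventions are reconciled, the corollary is the short combination of Theorem~\ref{bocorthm} and Theorem~\ref{lyumurthm} outlined above.
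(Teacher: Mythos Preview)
Your approach is correct and essentially identical to the paper's: the paper's entire proof is the single sentence ``Combining this theorem with Corollary~\ref{maincor} implies the following,'' i.e., Lyubich--Murasugi supplies the positive-real-root condition and Theorem~\ref{bocorthm} (equivalently Theorem~\ref{mainthm} plus Corollary~\ref{lrrcor}) finishes the job. Your discussion of the convention-matching between $p/q$ and $p/(p-q)$ is more careful than the paper itself, which does not spell this out; note that Theorem~\ref{lyumurthm} as stated uses $p/q$ directly while the paper's own twist-parameter convention uses $p/(p-q)$, so a silent relabeling is indeed happening, and your observation that mirror links have isomorphic groups is the right way to absorb any residual discrepancy.
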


Theorem \ref{lyumurthm} does not characterize all two-bridge links with Alexander polynomial that have all real and positive roots.

\begin{example}
Let $K=L(81/49)$.
$81/(81-49)=[2,2,-8,-2]$.
\[
\Delta_K(t)=4t^4-20t^3+33t^2-20t+4=(t-2)^2(2t-1)^2
\]
which has two real roots of multiplicity 2.
Thus, the knot group of $K$ is bi-orderable.
\end{example}

\subsection{Genus One Two-Bridge Links}

Suppose $L$ is an oriented genus one two-bridge link $L(p/q)$.
When $L$ is a genus one knot, $p/(p-q)=[2k_1,2k_2]$ for some non-zero integers $k_1$ and $k_2$.
The Alexander polynomial of $L$ is
\[
    \Delta_{L}(t)=k_1k_2t^2-(2k_1k_2+1)t+k_1k_2.
\]
When $k_1k_2>0$, $\Delta_L(t)$ has two positive real roots
so $\pi(L)$ is bi-orderable by Theorem \ref{bocorthm}.
When $k_1k_2<0$, $\Delta_L(t)$ has no real roots.
In this case, since $\deg\Delta_L=2$, an obstruction by Clay, Desmarais, and Naylor \cite[Theorem 3.3]{CDN16} implies that $\pi(L)$ is not bi-orderable.

\begin{prop}
    Suppose $L$ is the two-bridge knot $L(p/q)$ with $p/(p-q)=[2k_1,2k_2]$.
    The knot group $\pi(L)$ is bi-orderable if and only if $k_1k_2>0$.
\end{prop}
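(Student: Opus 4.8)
The plan is to prove the two implications separately, using the Alexander polynomial of $L$ as the bridge between the group-theoretic results established earlier and the arithmetic condition $k_1k_2>0$. First I would record the consequences of the genus-one hypothesis: since $L=L(p/q)$ is a \emph{knot} of genus one, the length of the continued fraction expansion of $p/(p-q)$ is $n=2$, so $p/(p-q)=[2k_1,2k_2]$ with $k_1,k_2$ nonzero integers, and a direct Seifert-matrix computation on the standard genus-one Seifert surface (equivalently, the formula quoted above) gives
\[
\Delta_L(t)=k_1k_2\,t^2-(2k_1k_2+1)\,t+k_1k_2 .
\]
Because $k_1k_2\neq 0$, this has degree exactly $2$; as a sanity check, $\Delta_L(1)=-1$, consistent with $L$ being a knot.

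For the forward direction I would assume $k_1k_2>0$ and compute the discriminant of $\Delta_L$, namely $(2k_1k_2+1)^2-4(k_1k_2)^2=4k_1k_2+1>0$, so both roots are real. By Vieta, their product is $k_1k_2/(k_1k_2)=1>0$ and their sum is $(2k_1k_2+1)/(k_1k_2)=2+1/(k_1k_2)>0$, hence both roots are real and positive. Since $L$ is a two-bridge knot, Theorem \ref{bocorthm} (equivalently Theorem \ref{mainthm} combined with Corollary \ref{lrrcor}, using that the coefficients of $\Delta_L$ are automatically relatively prime for a knot) then shows $\pi(L)$ is bi-orderable.

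For the converse I would assume $k_1k_2<0$, so that $k_1k_2\leq -1$ and the discriminant $4k_1k_2+1\leq -3<0$; thus $\Delta_L$ has no real root, and in particular no positive real root. Since $\deg\Delta_L=2$, the obstruction of Clay, Desmarais, and Naylor \cite[Theorem 3.3]{CDN16} — which forbids bi-orderability of a knot group whose Alexander polynomial has degree $2$ and no positive real root — shows $\pi(L)$ is not bi-orderable. Combining the two cases yields the stated equivalence.

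All of the computations here are routine; the two genuinely external inputs are Theorem \ref{bocorthm} (hence the bulk of this paper) for the ``if'' direction and the Clay--Desmarais--Naylor obstruction for the ``only if'' direction. The only point requiring care is the verification of the Alexander polynomial formula and, with it, the fact that $\deg\Delta_L=2$ exactly — this is what makes the degree-$2$ hypothesis of the obstruction applicable — so I would be careful to track the nonvanishing of $k_1$ and $k_2$, which is guaranteed by $p/(p-q)$ being a genuine length-two continued fraction of a genus-one two-bridge knot.
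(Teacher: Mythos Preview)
Your proposal is correct and follows essentially the same approach as the paper: compute $\Delta_L(t)=k_1k_2t^2-(2k_1k_2+1)t+k_1k_2$, use Theorem~\ref{bocorthm} when $k_1k_2>0$ gives two positive real roots, and invoke the Clay--Desmarais--Naylor degree-two obstruction when $k_1k_2<0$ gives no real roots. You simply make explicit the discriminant and Vieta computations that the paper leaves to the reader.
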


When $L$ is a genus one two-component link, $p/(p-q)=[2k_1,2k_2,2k_3]$ for some non-zero integers $k_1$, $k_2$, and $k_3$.
The Alexander polynomial of $L(p/q)$ is
\begin{align*}
    \Delta_L(t)=&k_1k_2k_3t^3-(3k_1k_2k_3+k_1+k_3)t^2+(3k_1k_2k_3+k_1+k_3)t-k_1k_2k_3\\
    =&(t-1)(k_1k_2k_3t^2-(2k_1k_2k_3+k_1+k_3)t+k_1k_2k_3).
\end{align*}
The discriminant, $D$, of the second factor is
\begin{align*}
    D=&4k_1k_2k_3(k_1+k_3)+(k_1+k_3)^2\\
    =&(k_1+k_3)(k_1(2k_2k_3+1)+k_3(2k_1k_2+1))
\end{align*}
so $D\geq 0$ if and only if $k_1k_2k_3(k_1+k_3)\geq 0$.
It follows that $\Delta_L(t)$ has three real positive roots when $k_1k_2k_3(k_1+k_3)\geq 0$.

Let $A=k_1k_2k_3$ and $B=3k_1k_2k_3+k_1+k_3$.
The coefficients of $\Delta_L$ are relatively prime precisely when $\gcd(A,B)=1$, and
$\gcd(A,B)=1$ if and only if $\gcd(k_1,k_3)=1$ and $\gcd(k_2,k_1+k_3)=1$.

Therefore, Theorem \ref{bocorthm} implies the following result.

\begin{prop}
    Suppose $L$ is the two-component two-bridge link $L(p/q)$ with $p/(p-q)=[2k_1,2k_2,2k_3]$.
    If $\gcd(k_1,k_3)=1$, $\gcd(k_2,k_1+k_3)=1$, and $k_1k_2k_3(k_1+k_3)\geq 0$ then $\pi(L)$ is bi-orderable.
\end{prop}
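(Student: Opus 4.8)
The plan is to verify the two hypotheses of Theorem \ref{bocorthm} for $L=L(p/q)$ with $p/(p-q)=[2k_1,2k_2,2k_3]$: that every root of $\Delta_L(t)$ is real and positive, and that the coefficients of $\Delta_L(t)$ are relatively prime. Both reduce to elementary arguments about the explicit factorization
\[
\Delta_L(t)=(t-1)\bigl(k_1k_2k_3\,t^2-(2k_1k_2k_3+k_1+k_3)t+k_1k_2k_3\bigr)
\]
recorded above.

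For the root condition, $t=1$ is a positive root, so it suffices to treat the quadratic factor $g(t)=a t^2-bt+a$ with $a=k_1k_2k_3$ and $b=2k_1k_2k_3+k_1+k_3$; note $a\ne 0$ since the $k_i$ are nonzero. First I would check that $k_1k_2k_3(k_1+k_3)\ge 0$ forces the discriminant $D=(k_1+k_3)\bigl(4k_1k_2k_3+k_1+k_3\bigr)$ to be non-negative: if $k_1+k_3=0$ then $D=0$, and if $k_1+k_3$ and $k_1k_2k_3$ are both nonzero with the same sign then $4k_1k_2k_3+(k_1+k_3)$ has that sign as well, so $D>0$. (For the converse implication stated above the proposition one also uses that the $k_i$ are nonzero integers, via $|k_1+k_3|\le|k_1|+|k_3|<4|k_1k_2k_3|$, but only this direction is needed here.) Hence the roots of $g$ are real; their product is $a/a=1>0$, so they share a sign, and their sum $b/a$ has the same sign as $b$, which in turn has the same sign as $a$ whenever $k_1k_2k_3(k_1+k_3)\ge 0$ (since $b=2a+(k_1+k_3)$). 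Therefore both roots of $g$, and hence all three roots of $\Delta_L$, are positive.

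For the coefficient condition, write $A=k_1k_2k_3$ and $B=3k_1k_2k_3+k_1+k_3$, so the coefficients of $\Delta_L$ are $A,-B,B,-A$ and their gcd is $\gcd(A,B)=\gcd(A,B-3A)=\gcd(k_1k_2k_3,k_1+k_3)$. I would then argue that a prime $\ell$ divides this gcd exactly when $\ell\mid k_1+k_3$ and $\ell\mid k_i$ for some $i$: if $\ell\mid k_1$ (or $\ell\mid k_3$) then $\ell\mid k_1+k_3$ forces $\ell\mid\gcd(k_1,k_3)$, while if $\ell\mid k_2$ then $\ell\mid\gcd(k_2,k_1+k_3)$; conversely any prime dividing $\gcd(k_1,k_3)$ or $\gcd(k_2,k_1+k_3)$ divides both $k_1k_2k_3$ and $k_1+k_3$. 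Thus $\gcd(A,B)=1$ if and only if $\gcd(k_1,k_3)=1$ and $\gcd(k_2,k_1+k_3)=1$, which are precisely the standing hypotheses.

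With both conditions established, Theorem \ref{bocorthm} gives that $\pi(L)$ is bi-orderable, finishing the proof. None of the steps is a serious obstacle; the only place demanding care is the gcd reduction, where one must check that the case analysis on primes dividing $\gcd(k_1k_2k_3,k_1+k_3)$ is exhaustive (and, if one wants the full ``if and only if'' for the discriminant rather than just the implication used here, the integrality estimate ruling out $D\ge 0$ when $k_1k_2k_3(k_1+k_3)<0$).
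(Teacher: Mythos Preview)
Your proposal is correct and follows essentially the same route as the paper: compute $\Delta_L(t)$ explicitly, use the discriminant and Vieta to see that $k_1k_2k_3(k_1+k_3)\ge 0$ forces all roots to be real and positive, reduce the coprimality of the coefficients to $\gcd(k_1,k_3)=\gcd(k_2,k_1+k_3)=1$, and then invoke Theorem~\ref{bocorthm}. One small wording slip: ``$b/a$ has the same sign as $b$'' is false when $a<0$; what you actually need (and correctly establish) is that $b$ and $a$ share a sign, whence $b/a>0$.
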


\subsection{Application to Ribbon Concordance}

The residual torsion-free nilpotence of the commutator subgroup of a knot group has an application to ribbon concordance as well.
Given two knots $K_0$ and $K_1$ in $S^3$,
A \emph{ribbon concordance from $K_1$ to $K_0$} is a smoothly embedded annulus $C$ in $[0,1]\times S^3$
such that $C$ has boundary $-(\{0\}\times K_0) \cup \{1\}\times K_1$ and $C$ has only index 0 and 1 critical points.
$K_1$ is said to be \emph{ribbon concordant} to $K_0$, denoted $K_1\geq K_0$, if there is a ribbon concordance from $K_1$ to $K_0$.
The relation $\geq$ is clearly reflexive and transitive.
Gordon \cite{Gor81} conjectures that $\geq$ is a partial order on knots in $S^3$.

Gordon gives conditions under which $\geq$ behaves anti-symmetrically.
\begin{thm}[Gordon \cite{Gor81}]
If $K_0\geq K_1$ and $K_1\geq K_0$ and the commutator subgroup of $\pi(K_0)$ is transfinitely nilpotent,
then $K_0$ and $K_1$ are ambient isotopic.
\end{thm}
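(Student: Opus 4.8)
The plan is to turn the two ribbon concordances into enough information about $\pi(K_0)$ and $\pi(K_1)$ to force these groups to be isomorphic by a meridian-preserving isomorphism, and then to recover an ambient isotopy by $3$-manifold topology. The basic geometric input is that a ribbon concordance $C$ from $K_1$ to $K_0$ gives tight control of the fundamental group of its exterior $W=([0,1]\times S^3)\setminus N(C)$. Reading the natural Morse function from the $K_0$-end, $C$ is built with only minima and saddles, so $W$ is obtained from $(S^3\setminus N(K_0))\times[0,1]$ by attaching only $1$- and $2$-handles, and the inclusion of the $K_0$-end is $\pi_1$-injective; reading from the $K_1$-end, $C$ is built with only saddles and maxima, so $W$ is obtained from $(S^3\setminus N(K_1))\times[0,1]$ by attaching only $2$- and $3$-handles, and the inclusion of the $K_1$-end is $\pi_1$-surjective. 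All of these maps carry meridian to meridian, hence are isomorphisms on $H_1\cong\Z$, and, $3$-handles being harmless, the inclusion $\pi(K_0)\hookrightarrow\pi_1 W$ is in addition onto on $H_2$.

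Next I would use anti-symmetry. Stacking a ribbon concordance from $K_0$ to $K_1$ on top of one from $K_1$ to $K_0$ produces a ribbon self-concordance $D$ of $K_0$, and applying the previous step to $D$ presents $\pi_1 W_D$ both as a quotient $\pi(K_0)/N$, via one end, and as the target of an injection $\pi(K_0)\hookrightarrow\pi(K_0)/N$ which is an isomorphism on $H_1$ and an epimorphism on $H_2$, via the other. Now I would invoke Stallings' theorem: such a map induces an isomorphism $\pi(K_0)/\gamma_n\pi(K_0)\to(\pi(K_0)/N)/\gamma_n(\pi(K_0)/N)$ for every finite $n$, so that the natural surjection $\pi(K_0)/\gamma_n\pi(K_0)\twoheadrightarrow\pi(K_0)/N\gamma_n\pi(K_0)$ is a surjection of a finitely generated nilpotent, hence Hopfian, group onto an isomorphic one, therefore an isomorphism, which forces $N\subseteq\gamma_n\pi(K_0)$. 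Restricting this comparison to the commutator subgroups, which are the kernels of the maps to $\Z$ and where the hypothesis of transfinite nilpotence lives, I would continue the induction through every ordinal: at each successor stage the image of $N$ is central in the relevant nilpotent quotient and is killed by the $H_1$-isomorphism property, and at limit stages one intersects. Since $[\pi(K_0),\pi(K_0)]$ is transfinitely nilpotent, its lower central series reaches $1$, and so $N=1$. Thus $\pi_1 W_D\cong\pi(K_0)\cong\pi(K_1)$ by an isomorphism taking meridian to meridian.

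Finally, because the peripheral subgroup of a knot group is (essentially) the centralizer of a meridian, a meridian-preserving isomorphism $\pi(K_0)\to\pi(K_1)$ respects peripheral structure, so Waldhausen's theorem realizes it by a homeomorphism $S^3\setminus N(K_0)\to S^3\setminus N(K_1)$ carrying meridian to meridian; filling in the solid tori, this extends to an orientation-preserving homeomorphism of $S^3$ taking $K_0$ to $K_1$, and hence $K_0$ and $K_1$ are ambiently isotopic. I expect two points to carry the real weight. The first is the geometric lemma, especially the $\pi_1$-injectivity of the lower knot exterior inside $W$: this is exactly where the ribbon hypothesis — no critical points of index $2$ from below — is essential, and the argument is genuinely delicate. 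The second is the transfinite induction: pushing Stallings' argument past $\gamma_\omega$ requires a Hopficity-type input at each ordinal stage, and it is precisely the transfinite nilpotence of the commutator subgroup that makes this available; if that hypothesis is dropped, $N$ could be a nontrivial subgroup lying in every term of the lower central series and the whole argument would break down.
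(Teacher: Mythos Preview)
The paper does not prove this theorem; it is quoted as a result of Gordon \cite{Gor81} and used as a black box to deduce the corollary about two-bridge knots. So there is no ``paper's own proof'' to compare against.

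That said, your outline is essentially Gordon's original argument from \cite{Gor81}, and the main beats are correct: the handle analysis of the ribbon concordance exterior giving $\pi_1$-injectivity at the $K_0$ end and $\pi_1$-surjectivity at the $K_1$ end, the stacking to a self-concordance, the Stallings-type argument on lower central series pushed transfinitely using the hypothesis on $[\pi(K_0),\pi(K_0)]$, and the appeal to Waldhausen. One small caution: the claim that the peripheral subgroup is ``(essentially) the centralizer of a meridian'' is not literally true for all knots (cable and torus knot complements behave differently), so in a full write-up you would need to treat those cases separately or argue more carefully, as Gordon does. But as an outline of why the cited theorem is true, this is sound.
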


\begin{remark}
Transfinite nilpotence follows from residual torsion-free nilpotence; see \cite{Gor81} for a definition of transfinitely nilpotent.
\end{remark}

Here we state the following corollary.
\begin{cor}
If $K_1\geq K_0$ and $K_0\geq K_1$ and $K_0$ is a two-bridge knot,
then $K_0$ and $K_1$ are ambient isotopic.
\end{cor}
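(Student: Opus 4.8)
The plan is simply to chain together the results already assembled in this section, since all of the substantive work has been done. First I would invoke Corollary \ref{maincor}: because $K_0$ is a two-bridge knot, the commutator subgroup of $\pi(K_0)$ is residually torsion-free nilpotent. Next I would use the observation recorded in the remark following Gordon's theorem—that transfinite nilpotence is implied by residual torsion-free nilpotence—to upgrade this to the statement that the commutator subgroup of $\pi(K_0)$ is transfinitely nilpotent. Finally, with the two hypotheses $K_1\geq K_0$ and $K_0\geq K_1$ in hand, I would apply Gordon's anti-symmetry theorem verbatim to conclude that $K_0$ and $K_1$ are ambient isotopic.

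Since every step is a direct citation, there is no genuine obstacle: the entire content of the corollary is carried by Corollary \ref{maincor} (hence ultimately by Theorem \ref{mainthm} and Lemma \ref{mainlem}) together with Gordon's criterion. The only subtlety is a bookkeeping one, and it is worth flagging explicitly: Gordon's theorem demands that the commutator subgroup of $\pi(K_0)$—the knot appearing on \emph{both} sides of the two ribbon concordances—be transfinitely nilpotent, and this is precisely the hypothesis we are able to supply, because it is $K_0$ (and not necessarily $K_1$) that is assumed to be two-bridge. No hypothesis on $K_1$ beyond the two ribbon concordances is needed. As an immediate consequence, Gordon's relation $\geq$ restricts to a genuine partial order on the set of two-bridge knots.
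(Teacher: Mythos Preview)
Your proposal is correct and follows exactly the reasoning implicit in the paper: the corollary is stated without proof precisely because it is an immediate consequence of Corollary~\ref{maincor}, the remark that residual torsion-free nilpotence implies transfinite nilpotence, and Gordon's theorem. Your observation about which knot must carry the transfinite-nilpotence hypothesis is apt and matches how the results are arranged.
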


\subsection{Outline}

The rest of this paper is devoted to the proof of Lemma \ref{mainlem}.
Section \ref{PresentationMatrices} covers some preliminaries about presentation matrices of modules over a PID.
In section \ref{ProofExample}, we illustrate the proof of Lemma \ref{mainlem} by verifying the lemma for the two-bridge knot $L(17/13)$.
Section \ref{GroupPres} investigates the properties of a presentation for the Alexander subgroup $Y$ obtained by the Reidemeister-Schreier rewriting procedure.
The proof of Lemma \ref{mainlem} is completed in section \ref{Mainproof}.
In section \ref{CycleGraphs}, we define the cycle graph of a two-bridge link.
Cycle graphs are used to prove a key lemma in section \ref{SecLemmaProof}.

\subsection{Acknowledgments}

The author would like to thank Cameron Gordon for his guidance and encouragement throughout this project.
The author would like to thank Ahmad Issa for providing the example of the knot with all real positive roots.
The author would like to thank Hannah Turner for many helpful writing suggestions and support.
The author would like to thank Jae Choon Cha, Charles Livingston and Allison Moore for creating and maintaining \textit{KnotInfo} \cite{Knotinfo} and \textit{LinkInfo} \cite{Linkinfo} which were invaluable to this project.
This research was supported in part by NSF grant DMS-1937215.

%%%%%%%%%%%%%%%%%%%%%%%%%%%%%%%%%%%%%%%%%%%%%%%%%%%%%%%%%%%%%%%%%%%%%%
\section{Preliminaries on Presentation Matrices}\label{PresentationMatrices}
%%%%%%%%%%%%%%%%%%%%%%%%%%%%%%%%%%%%%%%%%%%%%%%%%%%%%%%%%%%%%%%%%%%%%%

Let $R$ be a PID.
Suppose $X$ is an $R$-module
with presentation
\[
    \la x_1,\ldots,x_n | s_1,\ldots,s_m\ra .
\]
For each $i$,
\[
    s_i=\sum_{j=1}^n r_{i,j}x_j
\]
where each $r_{i,j}$ is in $R$.
The matrix of $r_{i,j}$ coefficients
\[
\left(
\begin{array}{ccc}
r_{1,1} & \cdots & r_{1,n} \\
\vdots & & \vdots \\
r_{m,1} & \cdots & r_{m,n} 
\end{array}
\right)
\]
is called a \emph{presentation matrix} of $X$.

Suppose $A$ is a presentation matrix of $X$.
Performing row and column operations on $A$ will always produce another presentation matrix of $X$.
In particular, using row and column operations,
$A$ can be diagonalized into the following form
\[
\left(
\begin{array}{ccc|c}
d_1 & & & \\
 & \ddots & & \text{\huge0}\\
& & d_k&  \\
\hline
& \bigzero{} & & \bigzero{}
\end{array}
\right)
\]
where each $d_i$ is nonzero and $d_i$ divides $d_{i+1}$ for each $i=1,\ldots,k-1$.
Therefore,
\begin{equation} \label{structure}
    X\cong R^{n-k}\oplus \frac{R}{d_1R}\oplus\cdots\oplus\frac{R}{d_kR}.
\end{equation}
The $d_i$ which are not units are the invariant factors of $X$.

The following lemma plays a key role in showing that elements in a parafree group are homologically primitive.

\begin{lem} \label{helpfullemma}
    Suppose $X$ is an $R$-module with an $m\times n$ presentation matrix $A$ of full rank.
    If the greatest common divisor of every $m\times m$ minor of $A$ is a unit, then $X$ is a free $R$-module.
    Otherwise, the greatest common divisor of every $m\times m$ minor of $A$ is equal to the product of the invariant factors of $X$ up to multiplication by a unit.
\end{lem}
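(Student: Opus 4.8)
The plan is to reduce to the Smith normal form of $A$ and track what happens to the maximal minors under row and column operations. First I would recall that since $R$ is a PID, $A$ can be brought to the diagonal form displayed above by invertible row and column operations, i.e. there are matrices $P\in GL_m(R)$ and $Q\in GL_n(R)$ with $PAQ=D$, where $D$ is the $m\times n$ matrix whose first $k$ diagonal entries are the nonzero $d_1,\ldots,d_k$ with $d_i\mid d_{i+1}$, and whose remaining entries vanish. Because $A$ has full rank $m$, in fact $k=m$, so $D$ has a genuine $m\times m$ diagonal block with entries $d_1,\ldots,d_m$ and then $n-m$ zero columns. By the Cauchy--Binet formula (or, concretely, by the observation that left/right multiplication by an invertible matrix multiplies the ideal generated by the $m\times m$ minors by a unit), the gcd of the $m\times m$ minors of $A$ equals, up to a unit, the gcd of the $m\times m$ minors of $D$.

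The second step is the direct computation for $D$. Every $m\times m$ submatrix of $D$ is obtained by choosing $m$ of the $n$ columns; if any chosen column is one of the $n-m$ zero columns the minor is $0$, and otherwise the only nonzero choice is the first $m$ columns, giving the minor $d_1d_2\cdots d_m$. Hence the gcd of all $m\times m$ minors of $D$ is exactly $d_1\cdots d_m$. Combining with the first step, the gcd of the $m\times m$ minors of $A$ is $d_1\cdots d_m$ up to a unit. Now I would invoke the structure statement \eqref{structure}: since $k=m$, we get $X\cong R^{n-m}\oplus R/d_1R\oplus\cdots\oplus R/d_mR$, and the invariant factors of $X$ are precisely those $d_i$ that are not units. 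If all $d_i$ are units then their product is a unit and $X\cong R^{n-m}$ is free, which is the first assertion; otherwise the product $d_1\cdots d_m$ differs from the product of the (non-unit) invariant factors only by the unit factors, so the two products agree up to a unit, which is the second assertion.

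There is essentially no serious obstacle here; the only point requiring a little care is the behavior of the ideal of maximal minors under the row/column operations used to reach Smith normal form, and I would handle this either by the Cauchy--Binet identity applied to $PA$ and then $(PA)Q$, or, to stay elementary, by checking directly that each of the three types of elementary operation (swap, unit scaling, adding a multiple of one row/column to another) leaves the gcd of the $m\times m$ minors unchanged up to a unit — multilinearity and alternation of the determinant make this immediate. The full-rank hypothesis is what guarantees $k=m$, so that ``maximal minor'' and ``$m\times m$ minor'' coincide and the free-versus-torsion dichotomy in \eqref{structure} is governed exactly by the product $d_1\cdots d_m$; I would flag explicitly where that hypothesis is used.
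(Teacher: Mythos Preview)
Your argument is correct and follows essentially the same route as the paper: diagonalize to Smith normal form, use that full rank forces $k=m$, note that the gcd of the maximal minors is preserved up to a unit by row and column operations, and then read off the conclusion from the product $d_1\cdots d_m$. The paper states the invariance of the gcd under row/column operations as a known fact rather than invoking Cauchy--Binet, but otherwise the proofs are the same.
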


\begin{proof}
Let $B$ be $A$ after diagonalization.
Since $A$ has full rank,
$B$ has no extra rows of zeros
so $B$ has the following form.
\[
B=\left(
\begin{array}{ccc|c}
d_1 & & & \\
 & \ddots & & \text{\huge0}\\
& & d_m& 
\end{array}
\right)
\]

For any $m\times n$ matrix with entries in $R$, the greatest common divisor of its $m\times m$ minors is invariant under row and column operations up to multiplication by a unit.
Therefore, up to a unit, the greatest common divisor of the $m\times m$ minors of $A$ is $\prod_{i=1}^m d_i$.
When $\prod_{i=1}^m d_i$ is a unit, each $d_i$ is a unit so
by (\ref{structure}), $X$ is a free $R$-module.
If $\prod_{i=1}^m d_i$ is not a unit, it is the product of the invariant factors of $X$.
\end{proof}

%%%%%%%%%%%%%%%%%%%%%%%%%%%%%%%%%%%%%%%%%%%%%%%%%%%%%%%%%%%%%%%%%%%%%%
\section{An Example}\label{ProofExample}
%%%%%%%%%%%%%%%%%%%%%%%%%%%%%%%%%%%%%%%%%%%%%%%%%%%%%%%%%%%%%%%%%%%%%%

In this section, we use the two-bridge knot $K:=L(17/13)$ to provide an example of the proof of Lemma \ref{mainlem}.
Using the Schubert normal form \cite{Schu56}, we obtain a presentation of $\pi(K)$.
\[
    \pi(K)=\la a,b\; | \; avb^{-1}v^{-1} \ra
\]
where
\[
    v = ba^{-1} ba^{-1} b^{-1}a b^{-1}a ba^{-1} ba^{-1} b^{-1}a b^{-1}a.
\]
Denote the Alexander subgroup of $\pi(K)$ by $Y$.
Using the Reidemeister-Schreier rewriting process, we obtain the following presentation of $Y$; see section \ref{GroupPres} for details.
\[
Y\cong\langle \{S_k\}_{k\in\Z}\; | \;\{R_k\}_{k\in\Z}\rangle
\]
where $S_k=a^kba^{-k-1}$ and the relators $R_k$ are defined as follows.
\[
	\vdots
\]		
\begin{align*}
    R_{-1}=& S_0 S_0 S_{-1}^{-1} S_{-1}^{-1}  S_0 S_0 S_{-1}^{-1} S_{-1}^{-1} S_{-1}^{-1}  S_{-2} S_{-2} S_{-1}^{-1} S_{-1}^{-1}  S_{-2} S_{-2} S_{-1}^{-1} S_{-1}^{-1} \\
    R_0=& S_1 S_1 S_0^{-1} S_0^{-1}  S_1 S_1 S_0^{-1} S_0^{-1} S_0^{-1}  S_{-1} S_{-1} S_0^{-1} S_0^{-1}  S_{-1} S_{-1} S_0^{-1} S_0^{-1} \\
    R_1=& S_2 S_2 S_1^{-1} S_1^{-1}  S_2 S_2 S_1^{-1} S_1^{-1} S_1^{-1}  S_0 S_0 S_1^{-1} S_1^{-1}  S_0 S_0 S_1^{-1} S_1^{-1}
\end{align*}
\[
    \vdots
\]

Define a sequence of groups $\{Y_n\}_{n=0}^{\infty}$ as follows.
\begin{align*}
	Y_0:=&\langle S_{-1}, S_0\rangle\\
	Y_1:=&\langle S_{-2}, S_{-1}, S_0, S_1\; | \; R_{-1}, R_0\rangle\\
	Y_2:=&\langle S_{-3}, S_{-2}, S_{-1}, S_0, S_1, S_2\; | \; R_{-2}, R_{-1}, R_0, R_1\rangle
\end{align*}
\[
	\vdots
\]

Define $\widehat{A}_1$, $\widehat{A}_2$, $\widehat{V}_1$ and $\widehat{V}_2$ as follows.
\begin{equation} \label{hats}
\begin{split}
	\widehat{A}_1=&S_1^2S_0^{-2}\\
	\widehat{A}_2=&S_1\\
	\widehat{V}_1=&S_0^{-1}  S_{-1}^2 S_0^{-2} S_{-1}^2 S_0^{-2}\\
	\widehat{V}_2=&S_0^{-2}
\end{split}
\end{equation}
Let $H_1$ be the group obtained by adjoining a square root of $\widehat{V}_1^{-1}$ to $Y_0$ as follows.
\[
	H_1:=Y_0\underset{\widehat{V}_1^{-1}=t_1^2}{*}\la t_1 \ra
\]
Similarly, let $H_2$ be the group obtained by adjoining a square root of $t_1\widehat{V}_2^{-1}$ to $H_1$.
\[
	H_2:=Y_1\underset{t_1\widehat{V}_2^{-1}=S_1^2}{*}\la S_1 \ra
\]
Thus, $H_2$ has the following group presentation.
\begin{align*}
	H_2\cong & \la S_{-1}, S_0, S_1, t_1\; | \; t_1^2\widehat{V}_1=1,t_1=S_1^2\widehat{V}_2\ra \\
	\cong & \la S_{-1}, S_0, S_1\; | \; (S_1^2\widehat{V}_2)^2\widehat{V}_1=1,\ra \\
	\cong & \la S_{-1}, S_0, S_1\; | \; R_0\ra
\end{align*}

Define $\widecheck{A}_1$, $\widecheck{A}_2$, $\widecheck{V}_1$ and $\widecheck{V}_2$ as follows.
\begin{equation} \label{checks}
\begin{split}
	\widecheck{A}_1=&S_{-2}^2S_{-1}^{-2}\\
	\widecheck{A}_2=&S_{-2}\\
	\widecheck{V}_1=&S_0^2 S_{-1}^{-2} S_0^2 S_{-1}^{-3}\\
	\widecheck{V}_2=&S_{-1}^{-2}
\end{split}
\end{equation}
Let $H_3$ be the group obtained by adjoining a square root of $\widecheck{V}_1^{-1}$ to $H_2$.
\[
	H_3:=H_2\underset{\widecheck{V}_1^{-1}=t_2^2}{*}\la t_2 \ra
\]
Let $H_4$ be the group obtained by adjoining a square root of $t_2\widecheck{V}_2^{-1}$ to $H_3$.
\[
	H_4:=H_3\underset{t_2\widecheck{V}_2^{-1}=S_{-2}^2}{*}\la S_{-2} \ra
\]
Therefore, $H_4$ is isomorphic to $Y_1$.
\begin{align*}
	H_4\cong & \la S_{-2}, S_{-1}, S_0, S_1, t_2\; | \; \widecheck{V}_1t_2^2=1,t_2=S_{-2}^2\widecheck{V}_2\ra \\
	\cong & \la S_{-2}, S_{-1}, S_0, S_1\; | \; R_{-1}, R_0\ra \\
	\cong & Y_1
\end{align*}

In conclusion, $Y_1$ is $Y_0$ after adjoining roots four times,
and since $R_{n\pm1}$ is $R_{n}$ with all the subscripts changed by $\pm1$,
$Y_{n+1}$ is $Y_n$ after adjoining roots four times.
Thus, for each $n$, $Y_n$ embeds into $Y_{n+1}$, and $|Y_{n+1}:Y_n[Y_{n+1},Y_{n+1}]|$ is finite.
Therefore, $Y$ is the union of an ascending chain of subgroups as follows.
\[
	Y_0< Y_1<\cdots< Y=\bigcup_{n=0}^{\infty}Y_n
\]

By Proposition \ref{baumprop}, if each $Y_n$ is parafree of the same rank then $Y$ is residually torsion-free nilpotent.
$Y_0$ is clearly parafree of rank 2 since it is a rank 2 free group.
We need to verify that each time we adjoin a root of an element, that element is homologically primitive.
Then, by Proposition \ref{baumextpropr}, we can conclude that each $Y_n$ is also parafree of rank 2.
\bigskip{}

\begin{claim}{}
    For each $n\geq0$, if $Y_n$ is parafree of rank 2, then so is $Y_{n+1}$.
\end{claim}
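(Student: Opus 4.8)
The plan is to carry out the four root-adjunctions described above one at a time, verifying at each stage that the element whose root we adjoin is homologically primitive in the parafree group to which it belongs; Proposition \ref{baumextpropr} then propagates parafreeness (of rank $2$) through the chain $Y_0 < H_1 < H_2 < H_3 < H_4 \cong Y_1$, and since the pattern of relators is shift-invariant the same argument gives $Y_n$ parafree $\Rightarrow$ $Y_{n+1}$ parafree for all $n$. So the real content is the four homological-primitivity checks.

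First I would pass to abelianizations. If $G$ is parafree of rank $2$, an element is homologically primitive iff its image in $G^{\mathsf{ab}}\cong\Z^2$ is part of a basis, i.e.\ its coordinate vector is primitive (entries have gcd a unit). For each of the four steps, the relevant abelian group is the abelianization of the current parafree group, which by the presentation computations above (and Proposition \ref{baumextpropr}, which guarantees the abelianization stays $\Z^2$ at every stage) is free of rank $2$ on a convenient pair of generators. So I would: (i) write the element $\widehat{V}_1^{-1}$ in terms of the generators of $Y_0^{\mathsf{ab}}=\langle S_{-1},S_0\rangle^{\mathsf{ab}}$, read off its coordinate vector, and check that vector is primitive; (ii) do the same for $t_1\widehat{V}_2^{-1}$ in $H_1^{\mathsf{ab}}$, using that $H_1^{\mathsf{ab}}$ has a basis coming from $\{S_{-1},S_0\}$ with $t_1$ expressed via the relation $2t_1 = -\overline{\widehat{V}_1}$ in the abelianization; (iii), (iv) repeat for $\widecheck{V}_1^{-1}$ in $H_2^{\mathsf{ab}}$ and $t_2\widecheck{V}_2^{-1}$ in $H_3^{\mathsf{ab}}$. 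The key structural input that makes these computations clean is that the Alexander subgroup presentation is engineered so that, in each abelianization, the "$V$-elements" being rooted have one of their exponents equal to $\pm 1$ (coming from the isolated single occurrence of a generator like $S_{-1}^{-1}$ or $S_0^{-1}$ inside a relator, cf.\ the shape of $\widehat{V}_1,\widehat{V}_2$ in (\ref{hats}) and $\widecheck{V}_1,\widecheck{V}_2$ in (\ref{checks})), which forces the coordinate vector to be primitive regardless of the other exponents. I would state this as the reason each check succeeds rather than grinding through all four vectors explicitly.

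The step I expect to be the main obstacle is the bookkeeping for the \emph{intermediate} abelianizations $H_1^{\mathsf{ab}}, H_2^{\mathsf{ab}}, H_3^{\mathsf{ab}}$: after adjoining $t_1$ with $t_1^2 = \widehat{V}_1^{-1}$, one must be careful that $H_1^{\mathsf{ab}}$ is genuinely free of rank $2$ (not $\Z\oplus\Z/2$), and identify the correct basis so that the next element $t_1\widehat{V}_2^{-1}$ can be checked — here Proposition \ref{baumextpropr} is doing real work, since a priori the amalgam could have torsion in its abelianization, but parafreeness of the resulting group rules that out. The cleanest way to organize this is to observe that at each stage the new presentation (as computed in the displayed isomorphisms for $H_2$ and $H_4$) reduces back to a subpresentation of the $\{S_k\mid R_k\}$ presentation of $Y$, whose abelianization is controlled directly by the Alexander matrix of $K$; the relative primality of the coefficients of $\Delta_K(t)$ (here $\Delta_K(t) = 4t^2 - 9t + 4$, with $\gcd(4,9)=1$) is exactly what guarantees the relevant minors have gcd a unit, so Lemma \ref{helpfullemma} applies and gives freeness of the abelian groups in play. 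With that in hand, each of the four primitivity checks is a one-line gcd computation, and the claim follows by induction on $n$.
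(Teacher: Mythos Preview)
Your overall architecture is correct---four root adjunctions, verify homological primitivity at each stage, apply Proposition~\ref{baumextpropr}---and this matches the paper. But the primitivity argument you sketch has a genuine gap.

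First, the concrete claim that one exponent is $\pm 1$ is false: already for $\widehat{V}_1^{-1}$ the abelianized image is $-4S'_{-1}+5S'_0$, with neither coordinate $\pm 1$. Coprimality here is $\gcd(4,5)=1$, not a unit entry.

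Second and more seriously, the appeal to shift-invariance does \emph{not} reduce the case of general $n$ to $n=0$. After shifting, the element whose primitivity you must check is $-4S'_{n-1}+5S'_n$ inside $Y_n^{\mathsf{ab}}$. For $n\geq 1$ the pair $\{S'_{n-1},S'_n\}$ is \emph{not} a $\Z$-basis of $Y_n^{\mathsf{ab}}\cong\Z^2$: for instance when $n=1$ the relation $4S'_{-1}-9S'_0+4S'_1=0$ shows $S'_{-1}$ is not an integral combination of $S'_0,S'_1$. So ``read off the coordinate vector and check the gcd'' is not available, and you cannot simply transport the $n=0$ computation.

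The paper's fix is to forgo any explicit basis and instead use the redundant generating set $S'_{-n-1},\ldots,S'_n$: the quotient $B_1=Y_n^{\mathsf{ab}}/\langle[\widehat{V}_1^{-1}]\rangle$ has the explicit $(2n+1)\times(2n+2)$ presentation matrix
\[
\begin{pmatrix}
4 & -9 & 4 & & & \\
 & 4 & -9 & 4 & & \\
 & & \ddots & \ddots & \ddots & \\
 & & & 4 & -9 & 4\\
 & & & & -4 & 5
\end{pmatrix},
\]
and by Lemma~\ref{helpfullemma}, $B_1$ is free iff the maximal minors have gcd $1$. Deleting the last column gives determinant $\pm 4^{2n+1}$; deleting the first column gives a matrix that is upper-triangular with $1$'s on the diagonal mod $2$, hence odd determinant. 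So the gcd is $1$ for every $n$. The analogous matrix for $B_2$ (and for the $\widecheck{\ }$ side) is handled the same way. Your closing remark about Lemma~\ref{helpfullemma} and $\gcd(4,9)=1$ is pointing in the right direction, but it is the actual argument, not a fallback---you should lead with the presentation-matrix computation rather than the coordinate-vector heuristic.
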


\begin{proof}
Let $n$ be a non-negative integer, and suppose $Y_n$ is parafree of rank 2.
In an abuse of notation, let $\widehat{A}_1$, $\widehat{A}_2$, $\widehat{V}_1$ and $\widehat{V}_2$ be as defined in (\ref{hats}) except with the subscripts of each $S_i$ increased by $n$.
Similarly, let $\widecheck{A}_1$, $\widecheck{A}_2$, $\widecheck{V}_1$ and $\widecheck{V}_2$ be as defined in (\ref{checks}) except with the subscripts of each $S_i$ decreased by $n$.
Also, let $H_1$, $H_2$, $H_3$ and $H_4$ be the groups obtained by adjoining square roots of $\widehat{V}_1^{-1}$, $t_1\widehat{V}_2^{-1}$, $\widecheck{V}_1^{-1}$ and $t_2\widecheck{V}_2^{-1}$ to $Y_n$ as before.

Let $Y_n^{\mathsf{ab}}$ denote the abelianization of $Y_n$,
and let $B_1$ be the quotient of $Y_n^{\mathsf{ab}}$ obtained by killing the class of $\widehat{V}_1^{-1}$ in $Y_n^{\mathsf{ab}}$.
Since $Y_n$ is parafree of rank 2, $Y_n^{\mathsf{ab}}\cong\Z\oplus\Z$.
Thus,
\[
    B_1\cong\Z\oplus\frac{\Z}{C\Z}
\]
for some integer $C$.

Now, we view $Y_n^{\mathsf{ab}}$ as a $\Z$-module and use addition as the group operation.
$Y_n^{\mathsf{ab}}$ is generated by
$S'_{-n-1},S_{-n}',\dots,S_n'$
where $S'_i$ donotes the class of $S_i$ in $Y_n^{\mathsf{ab}}$.
Using this generating set, $Y_n^{\mathsf{ab}}$ has a $(2n)\times (2n+2)$ presentation matrix:
\[
\left(
\begin{array}{cccccc}
4 & -9 & 4 & & & \\
 & 4 & -9 & 4 & & \\
 & & \ddots & \ddots & \ddots & \\
 & & & 4 & -9 & 4
\end{array}
\right) .
\]
The class of $\widehat{V}_1^{-1}$ in $Y_n^{\mathsf{ab}}$ is $-4S_{n-1}'+5S_n'$.
Thus, $B_1$ has the following $(2n+1)\times(2n+2)$ presentation matrix,
which we will also call $B_1$.
\[
B_1=\left(
\begin{array}{cccccc}
4 & -9 & 4 & & & \\
 & 4 & -9 & 4 & & \\
 & & \ddots & \ddots & \ddots & \\
 & & & 4 & -9 & 4\\
 & & & & -4 & 5
\end{array}
\right)
\]
By Lemma \ref{helpfullemma}, the integer $C$ is the greatest common divisor of the determinants of every $(2n+1)\times(2n+1)$ submatrix of $B_1$.
By deleting the last column, we get a square submatrix of $B_1$ with determinant $-4^{2n+1}$.
However, by deleting the first column, we see $B_1$ has a submatrix with odd determinant. (Modulo 2, $B_1$ is the identity matrix.)
Thus, $C=1$.

Therefore, $B_1$ is a rank 1 free abelian group.
It follows that $\widehat{V}_1^{-1}$ is homologically primitive in $Y_n$, and
$H_1$ is parafree of rank 2 by Proposition \ref{baumextpropr}.

Let $B_2$ be the quotient of $H_1^{\mathsf{ab}}$ obtained by killing the class of $t_1\widehat{V}_2^{-1}$ in $H_1^{\mathsf{ab}}$, the abelianization of $H_1$.
$H_1^{\mathsf{ab}}$ is generated by
$S'_{-n-1},S_{-n}',\dots,S_n',t'_1$
where $t'_1$ is the class of $t_1$ in $H_1^{\mathsf{ab}}$.
$H_1^{\mathsf{ab}}$ has a $(2n+1)\times (2n+3)$ presentation matrix:
\[
\left(
\begin{array}{ccccccc}
4 & -9 & 4 & & & & \\
 & 4 & -9 & 4 & & & \\
 & & \ddots & \ddots & \ddots & & \\
 & & & 4 & -9 & 4 & \\
 & & & & -4 & 5 & 2
\end{array}
\right) .
\]
The class of $t_1\widehat{V}_2^{-1}$ in $H_1^{\mathsf{ab}}$ is $2S_n'+t'_1$.
Thus, $B_2$ has the following $(2n+2)\times(2n+3)$ presentation matrix.
\[
B_2=\left(
\begin{array}{ccccccc}
4 & -9 & 4 & & & & \\
 & 4 & -9 & 4 & & & \\
 & & \ddots & \ddots & \ddots & & \\
 & & & 4 & -9 & 4 & \\
 & & & & 4 & -5 & 2\\
 & & & & & 2 & 1
\end{array}
\right)
\]

Using the 1 in the bottom right corner, we apply a row operation and kill the last row and column to get the following presentation matrix.
\[
B_2\cong\left(
\begin{array}{cccccc}
4 & -9 & 4 & & & \\
 & 4 & -9 & 4 & & \\
 & & \ddots & \ddots & \ddots & \\
 & & & 4 & -9 & 4\\
 & & & & 4 & -9 \\
\end{array}
\right)
\]

Thus, $B_2$ is a rank 1 free abelian group, by a argument similar to the one used for $B_1$.
It follows that $t_1\widehat{V}_2^{-1}$ is homologically primitive in $H_1$, and
$H_2$ is parafree of rank 2 by Proposition \ref{baumextpropr}.

Similarly, $\widecheck{V}_1^{-1}$ and $t_2\widecheck{V}_2^{-1}$ are homologically primitive in $H_2$ and $H_3$ respectively.
Therefore, $H_4\cong Y_{n+1}$ is parafree of rank 2. \end{proof}

Since $Y_0$ is parafree of rank 2, each $Y_n$ is parafree of rank 2 by induction.
Also, $|Y_{n+1}:Y_n[Y_{n+1},Y_{n+1}]|=16$.
Therefore, $Y$ is residually torsion-free nilpotent by Proposition \ref{baumprop}.

%%%%%%%%%%%%%%%%%%%%%%%%%%%%%%%%%%%%%%%%%%%%%%%%%%%%%%%%%%%%%%%%%%%%%%
\section{A Group Presentation of the Alexander Subgroup}\label{GroupPres}
%%%%%%%%%%%%%%%%%%%%%%%%%%%%%%%%%%%%%%%%%%%%%%%%%%%%%%%%%%%%%%%%%%%%%%

In this section, we give a group presentation of the Alexander subgroup of an arbitrary two-bridge link group using the Reidemeister-Schreier rewriting process.
From this presentation of the Alexander subgroup, we can describe the subgroup as the union of an ascending chain of subgroups which satisfy conditions \ref{mainind} and \ref{mainquot} of Lemma \ref{mainlem} when the Alexander polynomial of the link has relatively prime coefficients.

\subsection{A Presentation from Reidemeister-Schreier} \label{reidschpres}

Consider the 2-bridge link $L:=L(p/q)$ where $1\leq |q|<p$ with $q$ odd.
For each integer $i$, define
\begin{equation} \label{epsilons}
    \epsilon_i:=(-1)^{\lfloor \frac{iq}{p}\rfloor}.
\end{equation}

\begin{prop}[{Schubert \cite{Schu56}}]
    Given the 2-bridge link $L(p/q)$,
    \[
        \pi(L(p/q))\cong\langle a,b | w\rangle
    \]
    where $w=a^{\epsilon_0}b^{\epsilon_1}\ldots a^{\epsilon_{2p-2}}b^{\epsilon_{2p-1}}$.
\end{prop}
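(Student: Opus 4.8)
The plan is to put $L(p/q)$ into Schubert's normal form and read the presentation off the Wirtinger presentation of the resulting diagram, eliminating every meridional generator except the two coming from the bridges. First I would fix a $2$-bridge diagram of Schubert type: the link has two overarcs (bridges) $A$ and $B$, every crossing has $A$ or $B$ as its overstrand, and the remaining strands pass underneath; assign meridians $a$ and $b$ to $A$ and $B$. The combinatorial heart of the matter — which is really Schubert's contribution — is that, travelling along the underpassing strand, the $i$-th undercrossing ($i=0,1,\dots,2p-1$) alternates between lying under $A$ and under $B$, and has sign exactly $\epsilon_i=(-1)^{\lfloor iq/p\rfloor}$. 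This is extracted from the description of the diagram by the line of slope $q/p$ on the pillowcase (equivalently, from the Euclidean-algorithm/continued-fraction expansion of $p/q$): the integer $\lfloor iq/p\rfloor$ counts how many times that line has crossed the seam by the time it reaches the $i$-th lattice point, and each crossing of the seam flips the sign of the overpass.

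With the diagram in hand I would compute its Wirtinger presentation: generators $a,b$ for the bridges together with one meridian for each segment of the underpassing strand, and one relation $c_i=g_i^{\epsilon_i}c_{i-1}g_i^{-\epsilon_i}$ per undercrossing, where $g_i\in\{a,b\}$ is the overstrand and alternates with $i$. Iterating these relations rewrites every underpassing meridian as a conjugate of $a$ (or of $b$) by an initial segment of the word $a^{\epsilon_0}b^{\epsilon_1}\cdots$; carrying out these substitutions together with the standard Tietze moves, and discarding the one redundant Wirtinger relation, eliminates all generators but $a$ and $b$ and leaves the single relator $w=a^{\epsilon_0}b^{\epsilon_1}\cdots a^{\epsilon_{2p-2}}b^{\epsilon_{2p-1}}$. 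In the knot case this relator is the symmetric repackaging of the relation $w_0^{-1}aw_0=b$ (with $w_0$ the length-$(p-1)$ subword running from $A$ to $B$) asserting that the two bridge meridians are conjugate; when $p$ is even the two bridges lie on different components, $w$ abelianizes to $0$ so that $H_1(S^3-L)\cong\Z^2$, and the same computation applies verbatim. A quick count of generators versus relations confirms that exactly two generators and one relator survive and that nothing further is imposed.

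The main obstacle is the combinatorial identity in the first step — matching the geometric data of the Schubert diagram (which bridge sits over the $i$-th undercrossing, and with which sign) to the arithmetic function $(-1)^{\lfloor iq/p\rfloor}$; everything after that is a routine Wirtinger/Tietze computation. An alternative route, which merely relocates this same difficulty into tangle calculus, is van Kampen's theorem applied to the splitting of $S^3$ along the Conway sphere of the rational tangle $p/q$: each side contributes a rank-$2$ free group on bridge meridians, the two are amalgamated over the free group of the $4$-punctured sphere, and one must identify the automorphism induced by the rational tangle — which again unwinds to the floor-function bookkeeping above. Since the statement is classical and due to Schubert, simply citing \cite{Schu56} is equally legitimate; the sketch above indicates how one would reprove it.
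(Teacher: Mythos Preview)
The paper does not prove this proposition at all: it is stated with attribution to Schubert and the citation \cite{Schu56}, and the argument proceeds immediately to the Reidemeister--Schreier computation. So there is nothing to compare your argument against; your closing remark that ``simply citing \cite{Schu56} is equally legitimate'' is exactly what the paper does.

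That said, your sketch is a correct outline of the standard derivation. The Wirtinger presentation of the Schubert normal form really does have two bridge generators and a chain of underpass meridians, each conjugate to the previous by $a^{\pm1}$ or $b^{\pm1}$ according to which bridge is overhead; iterated substitution and the usual Tietze moves collapse everything to $\langle a,b\mid w\rangle$. The one place where you are appropriately cautious is the identification of the crossing signs with $(-1)^{\lfloor iq/p\rfloor}$: this is indeed the nontrivial combinatorial content, and it comes from tracking the understrand on the pillowcase model (or equivalently from Schubert's original analysis). For a written account one can point to Schubert's paper or to standard references such as Burde--Zieschang; the paper itself is content to cite and move on.
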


Let $Y$ be the Alexander subgroup of $L$.
A group presentation for $Y$ can be obtained using the Reidemeister-Schreier rewriting procedure,
developed by Reidemeister \cite{Reid32} and Schreier \cite{Schr27},
which is described in detail in section 2.3 of the text by Karrass, Magnus, and Solitar \cite{KMS66}.
The application of this procedure to the situation at hand is discussed below.

Consider $\mathcal{A}:=\{a^k\}_{k\in\Z}$ as a set of coset representatives for $\pi(L)/Y$.
Given an element $x$ in $\pi(L)$, let $\overline{x}$ be the coset representative of $x$ in $\mathcal{A}$.
For each $x\in\{a,b\}$ and $k\in\Z$, define
\[
    \gamma(a^k,x):=a^kx(\overline{a^kx})^{-1}.
\]
Note that $\gamma(a^k,a)=1$, and $\gamma(a^k,b)=a^kba^{-k-1}$.
Given a word $u=x_1^{s_1}x_2^{s_2}\cdots x_n^{s_n}$ with $x_i\in\{a,b\}$ and $s_i\in\{1,-1\}$ for all $i$, define
\[
    \tau(u):=\gamma(\overline{t_1},x_1)^{s_1}\gamma(\overline{t_2},x_2)^{s_2}\cdots\gamma(\overline{t_n},x_n)^{s_n}
\]
where
\[
    t_i:=
    \left\{\begin{array}{ll}
        x_1^{s_1}\cdots x_{i-1}^{s_{i-1}}\text{ (possibly trivial)},&s_i=1\\
        x_1^{s_1}\cdots x_i^{s_i},&s_i=-1\\
    \end{array}\right.
    .
\]

For each integer $k$, define
\[
    S_k:=\gamma(a^k,b).
\]
and define
\[
	\mathcal{S}:=\{S_k\}_{k\in\Z}
\]
Since, for all $k$, $\gamma(a^k,a)=1$, for each word $u$, $\tau(u)$ is a product $S_{k_1}S_{k_2}\cdots S_{k_l}$.
For each integer $k$, define
\[
    R_k:=\tau(a^kwa^{-k}).
\]

Define
\begin{equation}\label{sigmas}
    \sigma_i:=\left\{
    \begin{array}{ll}
        \sum_{j=0}^{i-1}\epsilon_j & \text{when }i> 0\\
        \sum_{j=i}^{-1}\epsilon_j & \text{when }i< 0\\
        0&\text{when }i=0
    \end{array}
    \right.
\end{equation}
for each integer $i$.

\begin{prop} \label{subscriptsprop}
    Suppose $R_0=\tau(w)=S_{i_1}^{\eta_1}S_{i_2}^{\eta_2}\ldots S_{i_{n}}^{\eta_{n}}$
    where each $i_j$ is an integer
    and each $\eta_j$ is $\pm1$.
    Then,
    \begin{enumerate}[label=(\alph*)]
        \item $n=p$, \label{ssplen}
        \item $\eta_j=\epsilon_{2j-1}$, for each $j=1,\ldots,p$ \label{sspexp},
        \item $i_j=\sigma_{2j}$ if $\eta_j=1$ and $i_j=\sigma_{2j+1}$ if $\eta_j=-1$ for each $j=1,\ldots,p$, and \label{sspsub}
        \item for every integer $k$, $R_k=S_{i_1+k}^{\eta_1}S_{i_2+k}^{\eta_2}\ldots S_{i_{p+k}}^{\eta_{p}}$. \label{ssprel}
    \end{enumerate}
\end{prop}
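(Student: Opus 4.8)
The plan is to trace the Reidemeister–Schreier rewriting process on the relator word $w = a^{\epsilon_0}b^{\epsilon_1}\cdots a^{\epsilon_{2p-2}}b^{\epsilon_{2p-1}}$ directly, keeping careful track of which syllables contribute a generator $S_k$ and with what exponent. First I would recall the setup: the Schreier transversal is $\mathcal{A} = \{a^k\}_{k\in\Z}$, and for a word $u = x_1^{s_1}\cdots x_m^{s_m}$ the rewriting $\tau(u)$ is the product of the $\gamma$-terms indexed by the partial-word coset representatives, with the crucial facts that $\gamma(a^k,a)=1$ (so the $a$-syllables of $w$ contribute nothing to $\tau(w)$) and $\gamma(a^k,b) = a^kba^{-k-1} = S_k$. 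Hence $\tau(w)$ is a product over exactly the $b$-syllables of $w$, of which there are $p$ (the syllables $b^{\epsilon_1}, b^{\epsilon_3}, \ldots, b^{\epsilon_{2p-1}}$); this gives \ref{ssplen}. For the $j$-th $b$-syllable $b^{\epsilon_{2j-1}}$, the exponent it contributes to $\tau(w)$ is exactly $\epsilon_{2j-1}$, which is \ref{sspexp}.

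Next I would compute the subscript of the generator contributed by the $j$-th $b$-syllable. By the definition of $\tau$, that subscript is the exponent-sum of $a$ in the coset representative $\overline{t_j}$, where $t_j$ is the prefix $x_1^{s_1}\cdots x_{i-1}^{s_{i-1}}$ of $w$ up to (but not including) that $b$-syllable when $\epsilon_{2j-1}=1$, and the prefix up to and including it when $\epsilon_{2j-1}=-1$. Since $w$ alternates $a$-syllables and $b$-syllables starting with $a^{\epsilon_0}$, the prefix of $w$ ending just before the $j$-th $b$-syllable is $a^{\epsilon_0}b^{\epsilon_1}\cdots a^{\epsilon_{2j-2}}$, whose $a$-exponent-sum is $\epsilon_0 + \epsilon_2 + \cdots + \epsilon_{2j-2} = \sum_{\ell=0}^{2j-1}\epsilon_\ell$ restricted to even indices. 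The content here is that the $b$-syllables do not change the $a$-exponent-sum of the coset representative, so this sum equals $\sigma_{2j}$ in the notation of \eqref{sigmas} — one must check that $\sum_{\ell \text{ even}, 0\le \ell < 2j}\epsilon_\ell$ really equals $\sigma_{2j} = \sum_{\ell=0}^{2j-1}\epsilon_\ell$; this uses the standard symmetry $\epsilon_{2i} = \epsilon_{2i+1}$ coming from the structure of the $\epsilon_i$ (i.e.\ $\lfloor 2iq/p\rfloor$ and $\lfloor(2i+1)q/p\rfloor$ have the same parity, since $q$ is odd and $|q| < p$). When $\epsilon_{2j-1}=-1$, the prefix also includes $b^{\epsilon_{2j-1}}$, but since $b$-syllables don't affect the $a$-exponent-sum, one checks instead that the rule for $s_i=-1$ shifts the index by one: the coset representative is $\overline{t_j \cdot b^{-1}}$-type bookkeeping that yields $\sigma_{2j+1}$ rather than $\sigma_{2j}$. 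Working out exactly how the $s_i=-1$ case in the definition of $\tau$ produces the $+1$ shift in the subscript is the step I expect to be the main obstacle — it is a careful but finicky unwinding of the definitions, and the place where Mayland's notes reportedly contain errors.

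Finally, part \ref{ssprel} is immediate from the definition $R_k := \tau(a^k w a^{-k})$: conjugating $w$ by $a^k$ inside $\tau$ shifts every coset-representative partial word by $a^k$, hence shifts every subscript by $k$, while leaving the exponents $\eta_j$ untouched; the flanking $a^{\pm k}$ contribute trivially since $\gamma(a^\ell,a)=1$. I would state this shift invariance as a one-line consequence of how $\tau$ interacts with left-multiplication by transversal elements. Throughout, I would keep the running example $L(17/13)$ from Section~\ref{ProofExample} in mind as a sanity check, verifying that the displayed relators $R_{-1}, R_0, R_1$ there match the formulas in \ref{ssplen}–\ref{ssprel}.
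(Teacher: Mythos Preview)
Your treatment of parts \ref{ssplen}, \ref{sspexp}, and \ref{ssprel} matches the paper's: the $a$-syllables contribute trivially, the $b$-syllables give exactly $p$ factors with exponents $\epsilon_{2j-1}$, and conjugation by $a^k$ shifts subscripts by $k$.

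The argument for part \ref{sspsub}, however, contains a genuine error. You compute the coset representative $\overline{t_j}$ by summing only the exponents of the $a$-letters in the prefix, asserting that ``the $b$-syllables do not change the $a$-exponent-sum of the coset representative.'' This is false. The Alexander subgroup $Y$ is the kernel of the map $\pi(L)\to\Z$ sending \emph{both} $a$ and $b$ to the generator, so $a\equiv b$ modulo $Y$ and $\overline{u}=a^s$ where $s$ is the \emph{total} exponent sum of $u$ (counting both $a$'s and $b$'s). The paper makes exactly this observation: ``Since $a=b$ modulo $Y$, then for any word $u$ in $a$ and $b$, $\overline{u}=a^s$ where $s$ is the sum of the exponents of the $a$'s and $b$'s in $u$.'' With this in hand, the subscript of the $j$-th factor is immediately a $\sigma$-value, with no further identity needed.

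Your attempted repair, the ``standard symmetry $\epsilon_{2i}=\epsilon_{2i+1}$,'' is also false in general: already for $(p,q)=(3,1)$ one has $\epsilon_2=1$ but $\epsilon_3=-1$. And even if such a symmetry held, it would make the even-index partial sum equal to \emph{half} of $\sigma_{2j}$, not $\sigma_{2j}$ itself. So this route cannot be salvaged; the correct and much simpler observation is the one above about $a\equiv b\bmod Y$. Once you use total exponent sums, the ``finicky unwinding'' you anticipate for the $s_i=-1$ case disappears: including the extra $b^{-1}$ in the prefix just subtracts $1$ from the total, which is exactly the shift from one $\sigma$-index to the next.
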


\begin{proof}
    Since $\gamma(a^k,a)$ is trivial, the $S_i$-generators in $R_0$ come from the $b$-generators in $w$.
    For \ref{ssplen}, notice that the length of the word $R_0$ is the number of times $b$ and $b^{-1}$ appear in $w$ which is equal to $p$.
    By definition $\eta_j$ is equal to the exponent of the corresponding $b$ or $b^{-1}$ in $w$ which is $\epsilon_{2j-1}$ showing \ref{sspexp}.
    Since $a=b$ modulo $Y$, then for any word $u$ in $a$ and $b$, $\overline{u}=a^s$ where $s$ is the sum of the exponents of the $a$'s and $b$'s in $u$.
    Thus, both \ref{sspsub} and \ref{ssprel} follow by a straightforward computation.
\end{proof}

\begin{prop}[{Karrass-Magnus-Solitar \cite[Theorem 2.9]{KMS66}}]\label{reishrei}
    \[
        Y\cong\langle \{S_k\}_{k\in\Z}\; | \;\{R_k\}_{k\in\Z}\rangle
    \]
\end{prop}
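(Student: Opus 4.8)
The plan is to apply the Reidemeister--Schreier rewriting procedure directly to Schubert's one-relator presentation $\pi(L)\cong\langle a,b\mid w\rangle$ with respect to the subgroup $Y=\ker(\varphi\circ h)$, using the Schreier transversal $\mathcal{A}=\{a^k\}_{k\in\Z}$. First I would confirm that $\mathcal{A}$ is in fact a Schreier transversal for $Y$ in $\pi(L)$: the map $\varphi\circ h$ sends both $a$ and $b$ to a generator of $\Z$ (since meridians are identified and each of $a,b$ is a meridian in Schubert's form), so $\pi(L)/Y\cong\Z$ is generated by the image of $a$, the cosets are exactly $\{Ya^k\}_{k\in\Z}$, and $\{a^k\}$ is prefix-closed, hence Schreier. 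The standard Reidemeister--Schreier theorem (Theorem~2.9 of \cite{KMS66}, quoted as the statement to be proved) then says $Y$ is generated by the nontrivial elements $\gamma(a^k,x)=a^kx(\overline{a^kx})^{-1}$ for $x\in\{a,b\}$, $k\in\Z$, with defining relators the rewrites $\tau(a^k w a^{-k})$ of the conjugates of $w$ by the transversal elements.

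The key steps are then bookkeeping. Since $\gamma(a^k,a)=a^k a (a^{k+1})^{-1}=1$ for every $k$, the only surviving Schreier generators are $S_k:=\gamma(a^k,b)=a^kba^{-k-1}$, giving the generating set $\mathcal{S}=\{S_k\}_{k\in\Z}$. The defining relators are $R_k:=\tau(a^kwa^{-k})$ for $k\in\Z$; a priori the Reidemeister--Schreier theorem produces one relator for each transversal element and each relator of the ambient presentation (here a single relator $w$), and conjugating $w$ by $a^k$ and rewriting is exactly $\tau(a^kwa^{-k})$, so the relator set is precisely $\{R_k\}_{k\in\Z}$. Because $\tau$ is the Reidemeister rewriting map, each $R_k$ is a word in the $S_i^{\pm1}$; one invokes Proposition~\ref{subscriptsprop}(d) (or simply re-derives it) to see $R_k$ is the word $R_0$ with every subscript shifted by $k$, but this refinement is not actually needed for the bare isomorphism statement of Proposition~\ref{reishrei} --- it suffices that $\{R_k\}$ is the rewritten relator set. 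Assembling these facts gives the presentation $Y\cong\langle\{S_k\}_{k\in\Z}\mid\{R_k\}_{k\in\Z}\rangle$.

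The only genuinely delicate point is making sure the transversal is Schreier and that the coset-representative function $\overline{(\cdot)}$ behaves as claimed, i.e. that $\overline{a^kx}=a^{k+1}$ when $x\in\{a,b\}$. This rests on the observation, already recorded in the proof of Proposition~\ref{subscriptsprop}, that $a\equiv b\pmod Y$, so for any word $u$ in $a,b$ one has $\overline{u}=a^{s}$ where $s$ is the total exponent sum; I would spell this out once. Given that, there is no real obstacle: the statement is a direct, essentially formal, application of a cited theorem, and the proof is a short verification that the hypotheses of that theorem are met and a computation of which Schreier generators and rewritten relators survive.
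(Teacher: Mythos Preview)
Your proposal is correct and matches the paper's approach exactly: the paper does not give an independent proof of this proposition but simply cites \cite[Theorem 2.9]{KMS66} after having set up the Schreier transversal $\mathcal{A}=\{a^k\}$, the generators $S_k=\gamma(a^k,b)$, and the relators $R_k=\tau(a^kwa^{-k})$ in the preceding paragraphs. Your write-up spells out precisely the verification (that $\mathcal{A}$ is Schreier, that $\gamma(a^k,a)=1$ so only the $S_k$ survive, and that the rewritten relators are the $R_k$) which the paper leaves implicit in its setup and citation.
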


\subsection{Group Presentation Properties}

This group presentation of $Y$ has a few notable properties which will be of use.

Given a word $W$ in $\mathcal{S}$,
let $[W]$ denote the class of $W$ in the free abelian group generated by $\mathcal{S}$.
For each integer $k$, define $S'_k:=[S_k]$.
Denote the maximal and minimal subscripts of $S$ appearing in the word $R_0$ by $M$ and $m$ respectively so that
\[
    [R_0]=a_MS'_{M}+a_{M-1}S'_{M-1}+\cdots+a_{m+1}S'_{m+1}+a_mS'_{m}.
\]
for some integers $a_m,\ldots,a_M$.

\begin{prop}\label{alexanderprop}
    Suppose $L$ is a two-bridge link, and
    suppose $Y$ is the Alexander subgroup of $L$ with presentation as defined in section \ref{reidschpres}.
    \begin{enumerate}[label=(\alph*)]
        \item \label{alexalln} For each integer $n$,
	        \[
	            [R_n]=a_MS'_{M+n}+a_{M-1}S'_{M-1+n}+\cdots+a_{m+1}S'_{m+1+n}+a_nS'_{m+n}.
	        \]
	    \item \label{alexmaxmin} Let $g$ be the genus of $L$.
	    When $L$ is a knot, $M-m=2g$, and when $L$ is a link, $M-m=2g+1$.
	    \item \label{alexcoef} For all $j=m,\ldots,M$
        \[
            a_j=\left\{
            \begin{array}{ll}
                \underline{a}_{g+m-j} & \text{if } m\leq j \leq m+g \\
                \underline{a}_{g+j-M} & \text{if } M-g\leq j \leq M
            \end{array}
            \right.
        \]
        where
        \[
            \Delta_L(t)=\underline{a}_gt^{2g}+\cdots+\underline{a}_0t^g+\cdots+\underline{a}_g
        \]
        when $L$ is a knot, and
        \[
            \Delta_L(t)=\underline{a}_gt^{2g+1}+\cdots+\underline{a}_0t^{g+1}+\underline{a}_0t^{g}+\cdots+\underline{a}_g
        \]
        when $L$ is a link.
        In particular, for all $j=0,\ldots, M-m$,
        \[
            a_{M-j}=a_{m+j}.
        \]
    \end{enumerate}
\end{prop}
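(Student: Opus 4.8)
The plan is to reduce parts (b) and (c) to the identification of $[R_0]$ --- read as a Laurent polynomial in the shift variable $t$ --- with the Alexander polynomial $\Delta_L(t)$, and then to invoke two classical facts: the degree of $\Delta_L$ for two-bridge links, and Fox--Milnor duality. Part (a) comes for free: by Proposition~\ref{subscriptsprop}\ref{ssprel}, $R_n$ is $R_0$ with every subscript increased by $n$, so abelianizing and replacing each $S'_k$ by $S'_{k+n}$ in the formula for $[R_0]$ gives the displayed expression for $[R_n]$.

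For (b) and (c), I would view $Y^{\mathsf{ab}}$ as a module over $\Lambda:=\Z[t,t^{-1}]$ with $t$ acting by the shift $S'_k\mapsto S'_{k+1}$ (conjugation by a lift of a generator of $\pi(L)/Y$). By Proposition~\ref{reishrei} and part (a), $Y^{\mathsf{ab}}$ is the cyclic module $\Lambda/(P)$, where
\[
P(t)=a_Mt^{M-m}+a_{M-1}t^{M-m-1}+\cdots+a_{m+1}t+a_m
\]
is a polynomial of degree exactly $M-m$, since $a_M$ and $a_m$ are nonzero by the definition of $M$ and $m$. On the other hand $Y^{\mathsf{ab}}$ is $H_1$ of the infinite cyclic cover of $S^3-L$ determined by $\varphi\circ h$, whose order ideal is generated by $\Delta_L(t)$; hence $P(t)\doteq\Delta_L(t)$ up to a unit $\pm t^j$ of $\Lambda$. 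Comparing degrees gives $M-m=\deg\Delta_L(t)$, and this degree equals the length $n$ of the continued fraction $p/(p-q)=[2k_1,\dots,2k_n]$ --- read off from the (nonsingular, since all $k_i\neq 0$) Seifert form of the standard genus-$g$ plumbing surface --- which is $2g$ when $L$ is a knot and $2g+1$ when $L$ is a two-component link; this proves (b).

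For (c), I would combine $P(t)\doteq\Delta_L(t)$ with Fox--Milnor duality $\Delta_L(t)\doteq\Delta_L(t^{-1})$ to get $P(t)\doteq P(t^{-1})$; since $P$ has degree $d:=M-m$ with $P(0)\neq 0$, this forces $P(t)=\pm\,t^{d}P(t^{-1})$, i.e.\ $P$ equals its own reverse up to an overall sign, so $a_{M-j}=\pm a_{m+j}$ for every $j$. The sign is pinned down by matching $P$ against the symmetric normal form of $\Delta_L$: it is $+1$ for a knot (already forced by $\Delta_K(1)=\pm 1\neq 0$) and $-1$ for a two-component link, consistent with the alternation of signs in the coefficients of the Alexander polynomial of an alternating link. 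Reading the coefficients off the displayed form of $\Delta_L(t)$ then gives the stated expression for each $a_j$ in terms of the $\underline a_i$.

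The routine inputs here are $Y^{\mathsf{ab}}\cong\Lambda/(P)$, which is immediate from the presentation, and Fox--Milnor duality, which is classical. The step I expect to demand real care is the degree count $\deg\Delta_L=2g$ (resp.\ $2g+1$), which rests on the minimality and nonsingularity of the genus-$g$ Seifert surface arising from the $[2k_1,\dots,2k_n]$ plumbing and hence on all $k_i\neq 0$; keeping track of the unit $\pm t^j$ so that the coefficients match the stated (anti)symmetric normalization of $\Delta_L$, especially in the link case, is the other delicate point. Alternatively, one could prove (b) and the symmetry in (c) entirely combinatorially by computing $M=\max_j i_j$ and $m=\min_j i_j$ directly from the partial sums $\sigma_i$ of the sequence $\epsilon_i=(-1)^{\lfloor iq/p\rfloor}$ and exploiting its reflection symmetry, trading the topological input for more intricate bookkeeping.
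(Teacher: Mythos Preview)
Your argument is essentially correct but takes a different route from the paper. The paper proves (b) and (c) by a direct Fox calculus computation: it writes out $\partial w/\partial b$ explicitly, pushes it through the abelianization $\pi'$, and obtains
\[
\Delta_L(t)=\sum_{i=1}^{p}\epsilon_{2i-1}\,t^{\sigma_{f(i)}},\qquad
[R_0]=\sum_{i=1}^{p}\epsilon_{2i-1}\,S'_{\sigma_{f(i)}},
\]
where $f(i)=2i-\theta(\epsilon_{2i-1})$. These two sums are literally the same expression with $t^k$ replaced by $S'_k$, so the coefficients $a_j$ are \emph{on the nose} the coefficients of $\Delta_L$ in this normalization; no unit ambiguity ever appears. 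Parts (b) and (c) then amount to reading off the degree and the symmetry of $\Delta_L$.

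Your approach instead identifies $Y^{\mathsf{ab}}$ abstractly as the cyclic $\Lambda$-module $\Lambda/(P)$ and invokes $P\doteq\Delta_L$ plus the classical degree and symmetry properties. This is perfectly valid, and it has the virtue of making clear what is really being used (that the Alexander module of a two-bridge link is cyclic). The cost, which you correctly flag, is the unit $\pm t^j$: the module-theoretic argument only determines $P$ up to this unit, so matching the $a_j$ to specific $\underline a_i$ and fixing the sign in $a_{M-j}=\pm a_{m+j}$ requires extra work that the paper's Fox calculus computation sidesteps entirely. One terminological quibble: the symmetry $\Delta_L(t)\doteq\Delta_L(t^{-1})$ is not ``Fox--Milnor duality'' (that name is reserved for the factorization condition on slice knots); it is simply the symmetry of the Alexander polynomial, coming from Poincar\'e duality on the infinite cyclic cover or from the Seifert form.
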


\begin{proof}
    Part \ref{alexalln} follows from Proposition \ref{subscriptsprop}\ref{ssprel}.
    
    For each $i=1,\ldots,2p$, denote by $w_i$ the word obtained from the first $i$ generators of the relation $w$.
    Also, define
    \[
	    \theta(s):=
	    \left\{\begin{array}{ll}
	        1 & \text{if }s=1\\
	        0 & \text{if }s=-1
	    \end{array}
	    \right. .
    \]	
    We compute the Alexander polynomial by performing Fox calculus on $w$ with respect to $b$ (see \cite[Section 3]{Fox53}),
    \begin{align*}
        \frac{\partial w}{\partial b}=&a^{\epsilon_0}\Big(\frac{\partial}{\partial b}(b^{\epsilon_1})
        +b^{\epsilon_1}a^{\epsilon_2}\Big(\frac{\partial}{\partial b}(b^{\epsilon_3})+\cdots+
        b^{\epsilon_{2p-3}}a^{\epsilon_{2p-2}}\Big(\frac{\partial}{\partial b}(b^{\epsilon_{2p-1}})
        \Big)\\
        =&\sum_{i=1}^{p}w_{2i-1}\frac{\partial}{\partial b}(b^{\epsilon_{2i-1}})\\
        =&\sum_{i=1}^{p}\epsilon_{2i-1} w_{f(i)}
    \end{align*}
    where
    \[
        f(i)=2i-\theta(\epsilon_{2i-1}).
    \]

    For each $i=1,\ldots,2p$, $\overline{w_i}=a^{\sigma_i}$.
    Let $t$ the generator of $\pi(L)/Y$ which is identified with $\overline{a}=\overline{b}$.
    Under the quotient map $\pi\circ h$ from (\ref{extendingmap}).
    Up to multiplication by powers of $t$,
    \begin{equation}\label{foxcalc}
        \Delta_L(t)=\pi'\Big(\frac{\partial w}{\partial b}\Big)=\sum_{i=1}^{p}\epsilon_{2i-1} t^{\sigma_{f(i)}}
    \end{equation}
    where $\pi':\Z[\pi(L)]\to\Z[t]$ is the map induced by $\pi\circ h$.

    By Proposition \ref{subscriptsprop},
    \[
        R_k=S_{\sigma_{f(1)}}^{\epsilon_1}S_{\sigma_{f(2)}}^{\epsilon_3}
        \cdots S_{\sigma_{f(p)}}^{\epsilon_{2p-1}}
    \]
    so
    \begin{equation}\label{abelrk}
        \begin{split}
            [R_k]=&\epsilon_1S'_{\sigma_{f(1)}}+\epsilon_3S'_{\sigma_{f(2)}}+
            \cdots+\epsilon_{2p-1}S'_{\sigma_{f(p)}}\\
            =&\sum_{i=1}^{p}\epsilon_{2i-1}S'_{\sigma_{f(i)}}.
        \end{split}
    \end{equation}
    Parts \ref{alexmaxmin} and \ref{alexcoef} follow from (\ref{foxcalc}) and (\ref{abelrk}).
\end{proof}

\subsection{An Ascending Chain of Subgroups}

With the group presentation from Proposition \ref{reishrei}, we can describe $Y$ as an ascending chain of subgroups.

Define $Y_0$ to be the free group
\begin{equation}\label{Ys1}
	Y_0:=\la S_m,S_{m+1},\ldots,S_{M-1}\ra,
\end{equation}
and define $Y_n$ to be the group with presentation
\begin{equation}\label{Ys2}
	Y_n:=\la S_{m-n},S_{m-n+1},\ldots,S_{M+n-1}\; | \;R_{-n},\ldots,R_{n-1}\ra.
\end{equation}
for each positive integer $n$.

$Y_{n+1}$ is $Y_n$ with two extra generators, $S_{m-n-1}$ and $S_{M+n}$, and two extra relators, $R_{-n-1}$ and $R_n$.
It turns out that all of the appearances of $S_{M+n}$ in $R_{n}$ are contained in nested repeating patterns of words.
Similarly, all of the appearances of $S_{m-n-1}$ in $R_{-n-1}$ are contained in nested repeating patterns of words.
Given an explicit two-bridge link, one can find these patterns easily, as we did in section \ref{ProofExample} for $L(17/13)$,
yet showing that these patterns exist for any two-bridge knot is much more complicated.

Once it is established that these patterns exists, however, it follows that for each non-negative integer $n$, $Y_{n+1}$ is $Y_n$ after adjoining roots a finite number of times.
This implies that each $Y_n$ embeds into $Y_{n+1}$.
Since $Y$ is the direct limit of the sequence of $Y_n$'s,
$Y$ is the union of the ascending chain of $Y_n$'s.
When the coefficients of $\Delta_L$ are relatively prime, the elements whose roots are adjoining are homologically primitive.

The following lemma explicitly describes the relator $R_0$ (and hence any $R_n$ by Proposition \ref{subscriptsprop}) as nested patterns of repeating words.

\begin{lem}\label{nestedword}
There exist a positive integer $N$, sequences of words in $\mathcal{S}$,
\[
\widehat{A}_0,\widehat{A}_1,\ldots,\widehat{A}_N,
\]
and
\[
\widehat{V}_1,\ldots,\widehat{V}_N,
\]
and a sequence of positive integers $n_1,\ldots,n_N$
such that all of the following hold:
\begin{enumerate}[label=(M\arabic*)]
\item $R_0=\widehat{A}_0$, \label{nwstart}
\item $\widehat{A}_N=S_M^{\pm1}$, \label{nwend}
\item for each $i=1,\ldots,N$,
$\widehat{A}_{i-1}=\widehat{A}_i^{n_i}\widehat{V}_i$ (up to conjugation), \label{nwrec}
\item for each $i=1,\ldots,N$,
$S_M^{\pm1}$ does not appear in $\widehat{V}_i$, and  \label{nwvsm}
\item for each $i=1,\ldots,N$,
there is some $l$ with $m<l\leq M$ and integers $b_l,\ldots,b_M$ (which depend on $i$) such that
\[
[\widehat{A}_i]=\sum_{j=l}^Mb_jS'_j=b_lS'_{l}+b_{l+1}S'_{l+1}+\cdots+b_MS'_M
\]
with $|b_{l+j}|=|b_{M-j}|$.\label{nwsym}
\end{enumerate}
Also, there are sequences
\[
\widecheck{A}_0,\widecheck{A}_1,\ldots,\widecheck{A}_N,
\]
and
\[
\widecheck{V}_1,\ldots,\widecheck{V}_N,
\]
such that
\begin{enumerate}[label=(m\arabic*)]
\item $R_0=\widecheck{A}_0$, \label{nwstart2}
\item $\widecheck{A}_N=S_m^{\pm1}$, \label{nwend2}
\item for each $i=1,\ldots,N$,
$\widecheck{A}_{i-1}=\widecheck{A}_i^{n_i}\widecheck{V}_i$ (up to conjugation), \label{nwrec2}
\item for each $i=1,\ldots,N$,
$S_m^{\pm1}$ does not appear in $\widecheck{V}_i$, and  \label{nwvsm2}
\item for each $i=1,\ldots,N$,
there is some $l'$ with $m\leq l'< M$, and integers $b_m,\ldots,b_{l'}$ (which depend on $i$) such that
\[
[\widecheck{A}_i]=\sum_{j=m}^{l'}b_jS'_j=b_mS'_{m}+\cdots+b_{l'}S'_{l'}
\]
with $|b_{m+j}|=|b_{l'-j}|$.\label{nwsym2}
\end{enumerate}
\end{lem}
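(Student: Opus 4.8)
The plan is to reduce the statement to the combinatorics of the cyclic word
$R_0=S_{\sigma_{f(1)}}^{\epsilon_1}S_{\sigma_{f(2)}}^{\epsilon_3}\cdots S_{\sigma_{f(p)}}^{\epsilon_{2p-1}}$
from the proof of Proposition \ref{alexanderprop}, where the subscript sequence $j\mapsto\sigma_{f(j)}$ is a discretized line whose slope is governed by $q/p$. The key point is that this sequence, read cyclically, is essentially a Christoffel (Sturmian) word, so its fine structure is controlled by the continued fraction expansion of $p/q$, that is, by the Euclidean algorithm on the pair $(p,q)$. I would make this precise by building a graph — the \emph{cycle graph} of $L(p/q)$ — whose vertices record the syllables of $R_0$ and whose edges record cyclic adjacency together with the subscript increments. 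In this language the positions at which the subscript attains its maximum $M$ form a single orbit under a natural shift, and the arcs of $R_0$ lying strictly below level $M$ between consecutive maximal syllables are all cyclic conjugates of a fixed word; this is the mechanism that turns Mayland's informal recursion into a rigorous one.

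The steps, in order. First I would pin down the explicit form of $R_0$ from (\ref{abelrk}), identifying $M$, $m$, and the block structure in which $S_M^{\pm1}$ occurs, establishing the base data for \ref{nwstart} and \ref{nwend}. Next I would prove the central \emph{contraction lemma}: collapsing the $S_M$-syllables of the cycle graph yields the cycle graph of a strictly simpler two-bridge link, corresponding to one step of the Euclidean algorithm on $(p,q)$. Iterating the contraction $i$ times produces the words $\widehat{A}_i$, and reading off what a single contraction does to the word gives $\widehat{A}_{i-1}=\widehat{A}_i^{n_i}\widehat{V}_i$ up to conjugation — condition \ref{nwrec} — where $n_i$ is the $i$-th partial quotient and $\widehat{V}_i$ is the leftover arc, which by construction contains no $S_M^{\pm1}$, giving \ref{nwvsm}; the recursion terminates at $\widehat{A}_N=S_M^{\pm1}$ once the contracted graph is a single edge, giving \ref{nwend}. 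Finally, for \ref{nwsym} I would use the palindromic symmetry of $R_0$ (the word-level shadow of the symmetry of $\Delta_L$ recorded in Proposition \ref{alexanderprop}\ref{alexcoef}) together with the observation that each contraction step preserves this symmetry, so that every $\widehat{A}_i$ is, up to conjugation and inversion, a palindrome, which forces $|b_{l+j}|=|b_{M-j}|$. The ``check'' statements \ref{nwstart2}--\ref{nwsym2} then follow by running the same argument on the reversed word — equivalently, by the symmetry $\lfloor iq/p\rfloor\leftrightarrow\lfloor(p-i)q/p\rfloor$ of (\ref{epsilons}) — which swaps the roles of $M$ and $m$; the integers $n_1,\dots,n_N$ are unchanged because both peelings are governed by the same continued fraction of $p/q$.

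I expect the main obstacle to be the contraction lemma and the extraction of the precise nested form from it: showing that collapsing the maximal syllables genuinely realizes one Euclidean step, and that the resulting decomposition is \emph{exactly} $\widehat{A}_i^{n_i}\widehat{V}_i$ with $\widehat{V}_i$ avoiding $S_M$ and with the palindrome symmetry intact. This is the step at which Mayland's recursive argument was not obviously valid; carrying it out requires a careful analysis of how $\lfloor iq/p\rfloor$ changes across one Euclidean step while simultaneously tracking which syllables are maximal, the exact leftover word $\widehat{V}_i$, and the symmetry needed for \ref{nwsym}. Once this is in hand, the remaining verifications of \ref{nwstart}--\ref{nwsym} and \ref{nwstart2}--\ref{nwsym2} are routine bookkeeping.
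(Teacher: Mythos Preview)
Your proposal is correct and is essentially the paper's approach: the paper encodes $R_0$ in a cycle graph $\oGamma(p,q)$, proves a reduction lemma (Lemma~\ref{redpair}) showing that eliminating the short segments and contracting the long ones yields $\oGamma(p^*,q^*)$ for a simpler pair coming from two Euclidean steps, reformulates Lemma~\ref{nestedword} as a graph condition (``pre-RTFN pair''), and inducts, with the symmetry condition \ref{nwsym} handled exactly as you suggest via an order-reversing involution on the cycle graph. Two points where the paper's execution differs from your sketch: first, the $n_i$ are not the partial quotients of $p/q$ --- a single reduction step does not produce one $\widehat{A}_i$, but rather the entire sequence $\Gamma^*_0,\dots,\Gamma^*_{N^*}$ for the reduced pair is \emph{expanded} back to give $\Gamma_0,\dots,\Gamma_{N^*}$, and then one or two further terms are appended to peel the summits inside a single expanded block down to $\Gammatop$; second, the paper's contraction is on the segment structure (short vs.\ long runs in the $\epsilon$-sequence) rather than directly on the $S_M$-syllables, and the induction runs via an expansion operator $E$ inverse to $R$ rather than by iterated contraction --- this is what makes the bookkeeping for \ref{ngrec} and \ref{ngsym} (your \ref{nwrec} and \ref{nwsym}) tractable.
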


\begin{remark}
$Y_1$ is obtained from $Y_0$ by adding $2N$ roots.
In order of increasing index, each $\widehat{A}_i$ is added as the $n_i$th root of some element,
then each $\widecheck{A}_i$ is added as an $n_i$th root.
The conditions \ref{nwsym} and \ref{nwsym2} are used to show that the elements whose roots are added are homologically primitive.
\end{remark}

Lemma \ref{nestedword} is proven in section \ref{LemmaProof}.

\begin{prop} \label{propadjoiningroots}
The Alexander subgroup $Y$ of any oriented two-bridge link is a union of an ascending chain of subgroups
\[
	Y_0<Y_1<Y_2<\cdots<Y_i<\cdots<\bigcup_{n=1}^{\infty}Y_n\cong Y
\]
where $Y_{n+1}$ is obtained from $Y_n$ by adjoining a finite number of roots.
\end{prop}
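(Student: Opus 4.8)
The plan is to derive Proposition \ref{propadjoiningroots} from Lemma \ref{nestedword} together with the elementary fact that adjoining a root to a group never collapses it; concretely, this is the general version of the computation performed for $L(17/13)$ in Section \ref{ProofExample}. First I would record the observation that makes this a direct-limit argument: by Proposition \ref{reishrei} the Alexander subgroup has presentation $Y\cong\langle\{S_k\}_{k\in\Z}\mid\{R_k\}_{k\in\Z}\rangle$, while by (\ref{Ys1}) and (\ref{Ys2}) the group $Y_n$ is presented on the generators $S_{m-n},\dots,S_{M+n-1}$ and relators $R_{-n},\dots,R_{n-1}$. Since the intervals $[m-n,\,M+n-1]$ and $[-n,\,n-1]$ exhaust $\Z$, the group $Y$ is the direct limit of the system $Y_0\to Y_1\to Y_2\to\cdots$ with bonding maps $S_k\mapsto S_k$. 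It therefore suffices to show that each bonding map $Y_n\to Y_{n+1}$ is injective and that $Y_{n+1}$ is obtained from the image of $Y_n$ by adjoining finitely many roots in the sense of Proposition \ref{baumextpropr}; the ascending-union conclusion then follows formally, and $Y_0$ is already the free group of (\ref{Ys1}).

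The core of the argument is to build, for each $n\ge 0$, a tower of root adjunctions $Y_n=G_0\le G_1\le\cdots\le G_{2N}=Y_{n+1}$. Passing from $Y_n$ to $Y_{n+1}$ adds precisely the generators $S_{M+n}$, $S_{m-n-1}$ and the relators $R_n$, $R_{-n-1}$; note that $S_{M+n}$ does not occur in $R_{-n-1}$, nor does $S_{m-n-1}$ occur in $R_n$. To install $S_{M+n}$ I would use the ``hat'' data of Lemma \ref{nestedword} with every subscript raised by $n$, which describes $R_n$ because $R_n$ is $R_0$ with subscripts raised by $n$ (Proposition \ref{subscriptsprop}\ref{ssprel}). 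Writing $\widehat A_i^{(n)},\widehat V_i^{(n)},n_i$ for the shifted data, we have $\widehat A_0^{(n)}=R_n$, $\widehat A_N^{(n)}=S_{M+n}^{\pm1}$, $\widehat A_{i-1}^{(n)}=(\widehat A_i^{(n)})^{n_i}\widehat V_i^{(n)}$ up to conjugation, and by the construction of Lemma \ref{nestedword} together with \ref{nwvsm} each $\widehat V_i^{(n)}$ is a word in the generators of $Y_n$. Starting from $Y_n$ with $t_0:=1$, for $i=1,\dots,N-1$ adjoin a fresh $n_i$-th root $t_i$ of a suitable conjugate of $t_{i-1}(\widehat V_i^{(n)})^{-1}$, and at the final step adjoin $S_{M+n}$ as an $n_N$-th root of a suitable conjugate of $t_{N-1}(\widehat V_N^{(n)})^{-1}$. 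Identifying $t_i$ with $\widehat A_i^{(n)}$ and eliminating $t_1,\dots,t_{N-1}$ by Tietze transformations (each $t_i$ appears to the first power in the relation that adjoins the next root, so it can be solved for), using $\widehat A_{i-1}^{(n)}=(\widehat A_i^{(n)})^{n_i}\widehat V_i^{(n)}$ repeatedly and finally $\widehat A_0^{(n)}=R_n$, collapses $G_N$ onto $\langle Y_n, S_{M+n}\mid R_n\rangle$. Running the same construction with the ``check'' data shifted down by $n+1$ — so that $\widecheck A_N^{(-n-1)}=S_{m-n-1}^{\pm1}$ and, by \ref{nwvsm2}, each $\widecheck V_i^{(-n-1)}$ is again a word in the generators of $Y_n\le G_N$ — extends the tower to $G_{2N}\cong\langle Y_n,S_{M+n},S_{m-n-1}\mid R_n,R_{-n-1}\rangle=Y_{n+1}$.

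It remains to see that each step of the tower is an embedding. A step has the form $G\to\langle G,t\mid t^k=u\rangle$ for some $u\in G$; letting $d$ be the order of $u$ in $G$ (possibly $\infty$), the relation $t^k=u$ renders $t^{kd}$ redundant, so $\langle G,t\mid t^k=u\rangle\cong G*_{\langle u\rangle}\langle t\mid t^{kd}\rangle$ is a genuine free product with amalgamation (the second factor being infinite cyclic when $d=\infty$), in which $G$ embeds; replacing $u$ by a conjugate does not affect the isomorphism type, so the ``up to conjugation'' in \ref{nwrec} and \ref{nwrec2} causes no trouble. Composing the $2N$ embeddings yields $Y_n\hookrightarrow Y_{n+1}$, and passing to the direct limit proves the proposition. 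The one genuinely difficult ingredient is Lemma \ref{nestedword} itself — producing the nested decomposition of $R_0$ for an arbitrary two-bridge link and, above all, verifying the support conditions \ref{nwvsm}/\ref{nwvsm2} and the symmetry conditions \ref{nwsym}/\ref{nwsym2} — which is exactly what the cycle-graph construction of Sections \ref{CycleGraphs} and \ref{SecLemmaProof} is built to supply. Granting Lemma \ref{nestedword}, the work that remains above is the largely clerical bookkeeping of conjugations through the Tietze reductions and the verification of the base case $n=0$, where the hat data is used with zero shift and the check data with a shift of $-1$.
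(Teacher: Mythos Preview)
Your proposal is correct and follows essentially the same approach as the paper: build the tower $Y_n=H_0\hookrightarrow H_1\hookrightarrow\cdots\hookrightarrow H_{2N}\cong Y_{n+1}$ using the hat data of Lemma~\ref{nestedword} shifted by $n$ and the check data shifted by $-n-1$, then pass to the direct limit. Your embedding step is in fact slightly more robust than the paper's, which simply asserts that each $H_{i+1}$ is an amalgamated free product along \emph{infinite} cyclic subgroups without checking that $\widehat h_i$ has infinite order in $H_{i-1}$; your observation that $\langle G,t\mid t^k=u\rangle\cong G*_{\langle u\rangle}\langle t\mid t^{kd}\rangle$ (with $d$ the order of $u$) sidesteps this issue entirely.
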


\begin{proof}
Define the sequence $Y_0,Y_1,Y_2,\ldots$ as in (\ref{Ys1}) and (\ref{Ys2}).
Consider $Y_n$ for some non-negative integer $n$.
\[
    	Y_n=\la S_{m-n},\ldots,S_{M+n-1}\; | \;R_{-n},\ldots,R_{n-1}\ra
\]
and
\[
    	Y_{n+1}=\la S_{m-n-1},\ldots,S_{M+n}\; | \;R_{-n-1},\ldots,R_{n}\ra .
\]

By Proposition \ref{subscriptsprop}\ref{ssprel} and Lemma \ref{nestedword} there is an integer $N$, sequences of words
\[
	\widehat{A}_0,\ldots,\widehat{A}_N,
\]
and
\[
	\widehat{V}_1,\ldots,\widehat{V}_N,
\]
and a sequence of integers
\[
	n_1,\ldots,n_N.
\]
such that
\[
	\widehat{A}_0=R_n,
\]
\[
	\widehat{A}_N=S^{\pm}_{M+n},
\]
and for some $\widehat{W}_i$,
\[
	\widehat{W}_i^{-1}\widehat{A}_{i-1}\widehat{W}_i=\widehat{A}_i^{n_i}\widehat{V}_i
\]
for each $i=1,\ldots,N$.

Let $\langle t_i\rangle$ be an infinite cyclic group generated by $t_i$ for each $i=1,\ldots,N$.
Also, let $t_0$ be trivial in $Y_n$.

Define 
\begin{equation} \label{H0}
H_0=Y_n,
\end{equation}
and for each $i=1,\ldots,N$, recursively define
\begin{equation} \label{Hs}
H_i=H_{i-1}\underset{\widehat{h}_i=t_i^{n_i}}{*}\langle t_i\rangle
\end{equation}
where
\[
\widehat{h}_i=\widehat{W}_i^{-1}t_{i-1}\widehat{W}_i\widehat{V}_i^{-1}.
\]
Thus,
\begin{align*}
    H_{N}\cong\la S_{m-n},\ldots,S_{M+n}, t_1,\ldots,t_N\; | \; & R_{-n},\ldots,R_{n-1}, \\
     & \{\widehat{h}_i^{-1}t_i^{n_i}\}_{i=2}^{N}, \\
     & \widehat{V}_1t_1^{n_1}, t_{N}^{-1}\widehat{A}_{N} \ra.
\end{align*}
By backwards substitution using \ref{nwstart}, \ref{nwend}, and \ref{nwrec} of Lemma \ref{nestedword},
\begin{align*}
    H_N\cong&\la S_{m-n},\ldots,S_{M+n}, t_1,\ldots,t_N\; | \; R_{-n},\ldots,R_{n-1},\widehat{A}_0, t_1^{-1}\widehat{A}_1, \ldots, t_N^{-1}\widehat{A}_N \ra \\
    \cong&\la S_{m-n},\ldots,S_{M+n}\; | \;R_{-n},\ldots,R_n\ra.
\end{align*}

Likewise, by Proposition \ref{subscriptsprop}\ref{ssprel} and Lemma \ref{nestedword} there are sequences of words
\[
	\widecheck{A}_0,\ldots,\widecheck{A}_N,
\]
and
\[
	\widecheck{V}_1,\ldots,\widecheck{V}_N,
\]
such that
\[
	\widecheck{A}_0=R_{-n-1},
\]
\[
	\widecheck{A}_N=S^{\pm}_{m-n-1},
\]
and for some $\widecheck{W}_i$,
\[
	\widecheck{W}_i^{-1}\widecheck{A}_{i-1}\widecheck{W}_i=(\widecheck{A}_i)^{n_i}\widecheck{V}_i
\]
for each $i=1,\ldots,N$.

For each $i=1,\ldots,N$, define
\begin{equation} \label{Hs2}
H_{i+N}=H_{i+N-1}\underset{\widecheck{h}_i=t_i^{n_i}}{*}\langle t_i\rangle
\end{equation}
where
\[
    \widecheck{h}_i=\widecheck{W}_i^{-1}t_{i-1}\widecheck{W}_i\widecheck{V}_i^{-1}.
\]

\begin{align*}
    H_{2N}\cong\la S_{m-n-1},\ldots,S_{M+n}, t_1,\ldots,t_N\; | \; & R_{-n},\ldots,R_n, \\
     & \{\widecheck{h}_i^{-1}t_i^{n_i}\}_{i=2}^{N}, \\
     & \widecheck{V}_1t_1^{n_1}, t_{N}^{-1}\widecheck{A}_{N} \ra.
\end{align*}
By backwards substitution using \ref{nwstart2}, \ref{nwend2}, and \ref{nwrec2} of Lemma \ref{nestedword},
\begin{equation} \label{Hinduct}
\begin{split}
    H_{2N}\cong&\la S_{m-n-1},\ldots,S_{M+n}, t_1,\ldots,t_N\; | \; R_{-n},\ldots,R_{n-1}, \\
    &\hspace{5cm} \widecheck{A}_0, t_1^{-1}\widecheck{A}_1, \ldots, t_N^{-1}\widecheck{A}_N \ra \\
    \cong&\la S_{m-n-1},\ldots,S_{M+n}\; | \;R_{-(n+1)},\ldots,R_n\ra \\
    \cong&Y_{n+1}.
\end{split}
\end{equation}

Consider $Y_n$ and $Y_{n+1}$ for a non-negative integer $n$.
For each $i=0,\ldots,2N-1$, $H_i$ embeds into $H_{i+1}$ since $H_{i+1}$ is a free product of $H_i$ and $\Z$ amalgamated along infinite cyclic subgroups.
Let $\varphi_i:H_i\to H_{i+1}$ be the embedding which maps $S_k\mapsto S_k$ and $t_k\mapsto t_k$ for all $k$.
The composition $f_n=\varphi_{2N-1}\circ\cdots\circ\varphi_{0}$ is an embedding of $Y_n$ into $Y_{n+1}$ which maps $S_k\mapsto S_k$ for all $k$.

Thus, we have the following sequence of embeddings.
\[
	Y_0\overset{f_0}{\longhookrightarrow} Y_1\overset{f_1}{\longhookrightarrow}Y_2\overset{f_2}{\longhookrightarrow}\cdots\overset{f_{n-1}}{\longhookrightarrow}Y_n\overset{f_n}{\longhookrightarrow}\cdots
\]
The Alexander subgroup $Y$ is the direct limit of this sequence, since each $f_n$ is an embedding,
$Y$ is a union of an ascending chain of subgroups as desired.
\end{proof}

\subsection{Proof of Lemma \ref{mainlem}} \label{Mainproof}

We now turn our attention to proving Lemma \ref{mainlem}.
First, we state a more precise and detailed version of Lemma \ref{mainlem}.

\begin{lem}
Suppose that $Y$ is the Alexander subgroup of a two-bridge link whose Alexander polynomial has relatively prime coefficients so that $Y$ is an ascending chain of subgroups
\[
	Y_0<Y_1<Y_2<\cdots<Y=\bigcup_{n=1}^{\infty}Y_n
\]
as defined in (\ref{Ys1}) and (\ref{Ys2}).
For each $n$,
\begin{enumerate}[label=(\alph*)]
\item $Y_n$ is parafree of the rank $M-m$ and \label{mainindr}
\item $|Y_{n+1}:Y_n[Y_{n+1},Y_{n+1}]|=\underline{a}_g^2$ where $\underline{a}_g$ is the leading coefficient of the Alexander polynomial of $L$. \label{mainquotr}
\end{enumerate}
\end{lem}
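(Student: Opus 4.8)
The plan is to prove the two claims by an induction on $n$ that mirrors the worked example in section \ref{ProofExample}, using Proposition \ref{propadjoiningroots} to organize the bookkeeping and Proposition \ref{baumextpropr} as the engine that propagates parafreeness. The base case $n=0$ is immediate: by \eqref{Ys1}, $Y_0$ is a free group on $S_m,\ldots,S_{M-1}$, which has rank $M-m$, and free groups are parafree. For the inductive step, I would assume $Y_n$ is parafree of rank $M-m$ and work through the chain of HNN-type extensions $H_0=Y_n, H_1,\ldots,H_{2N}\cong Y_{n+1}$ constructed in the proof of Proposition \ref{propadjoiningroots} (equations \eqref{H0}, \eqref{Hs}, \eqref{Hs2}). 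At each stage $H_i$ is obtained from $H_{i-1}$ by adjoining an $n_i$-th root of a specific element $\widehat h_i$ (or $\widecheck h_i$), so by Proposition \ref{baumextpropr} it suffices to show that element is homologically primitive in $H_{i-1}$; then $H_i$ is parafree of the same rank $M-m$, and after $2N$ steps so is $Y_{n+1}$.

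The heart of the argument is therefore the homological primitivity of each $\widehat h_i$ and $\widecheck h_i$, and this is where conditions \ref{nwsym} and \ref{nwsym2} of Lemma \ref{nestedword} and the relative primality hypothesis enter. Following the example, I would compute the abelianization $H_{i-1}^{\mathsf{ab}}$ explicitly: it is presented by the $S'_k$ together with the already-adjoined $t'_j$, with relators coming from $[R_{-n}],\ldots,[R_{n-1}]$ (via Proposition \ref{alexanderprop}\ref{alexalln}) and from the roots adjoined so far, namely $[\widehat h_j]=n_j t'_j$. Killing $[\widehat h_i]$ and using the $t'_j$-relators to eliminate the $t'_j$ variables, one is left with a presentation matrix built from the band matrix with rows $(a_m,\ldots,a_M)$ — the Alexander-polynomial coefficients — bordered by the rows $[\widehat A_i]$. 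I would then invoke Lemma \ref{helpfullemma}: the relevant quotient is free abelian of the expected rank precisely when the gcd of the maximal minors of this matrix is a unit, and that gcd divides on one hand a power of $a_M=\underline a_g$ (delete the columns past $S'_M$, leaving a triangular block) and on the other hand the corresponding power of $a_m=\underline a_g$ from the other end — but by Proposition \ref{alexanderprop}\ref{alexcoef}, $\gcd$ of the full coefficient set is $1$, so combining the two bounds forces the gcd to be $1$. The symmetry conditions $|b_{l+j}|=|b_{M-j}|$ in \ref{nwsym} are exactly what make the "other end" minor computation match up, just as $B_1$ in the example had $-4^{2n+1}$ from one side and an odd determinant from the other.

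For part \ref{mainquotr} I would compute $|Y_{n+1}:Y_n[Y_{n+1},Y_{n+1}]|$ directly as the order of the cokernel of the map $\Z^{M-m}\cong Y_n^{\mathsf{ab}}\to (Y_{n+1})^{\mathsf{ab}}/\langle\text{image}\rangle$, or equivalently as the index measured by the two "new" relators $R_{-n-1}$ and $R_n$ relative to the two new generators $S_{m-n-1}$ and $S_{M+n}$. Passing to abelianizations, $Y_{n+1}^{\mathsf{ab}}$ modulo $Y_n$'s image is presented by the $2\times 2$ block formed by the coefficients of $S'_{M+n}$ in $[R_n]$ and of $S'_{m-n-1}$ in $[R_{-n-1}]$, together with the pieces of those relators landing in the old part; the determinant of the essential $2\times 2$ corner is $a_M\cdot a_m=\underline a_g^2$ by Proposition \ref{alexanderprop}\ref{alexcoef}, and the rest contributes nothing to the index since those columns can be cleared using the (now invertible over $\Q$, in fact the relevant minors being $1$) structure established in part \ref{mainindr}. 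I expect the main obstacle to be the careful reduction of the bordered presentation matrix in part \ref{mainindr} — tracking which rows and columns survive the elimination of the $t'_j$'s and verifying that after this reduction the two extreme minors are a power of $\underline a_g$ and a unit multiple of it — since doing this cleanly for general $N$, rather than the specific $N$ of the example, requires a uniform statement and an auxiliary induction on $i$ within the step; the symmetry hypotheses \ref{nwsym}, \ref{nwsym2} should make it go through, but writing it so the gcd argument applies at every $i$ simultaneously is the delicate point.
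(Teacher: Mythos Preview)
Your overall architecture matches the paper's proof exactly: induction on $n$, with the inductive step decomposed into the chain $H_0,\ldots,H_{2N}$ from Proposition \ref{propadjoiningroots}, primitivity checked via Lemma \ref{helpfullemma} on a bordered band matrix, and part \ref{mainquotr} handled by a direct abelian computation (the paper simply kills $S'_{m-n},\ldots,S'_{M+n-1}$ in $Y_{n+1}^{\mathsf{ab}}$ and reads off $(\Z/\underline a_g\Z)^2$).

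The gap is in your gcd argument for primitivity. You propose two bounds: one minor is a power of $a_M=\underline a_g$, and ``from the other end'' another minor is a power of $a_m=\underline a_g$. But these are the \emph{same} bound, since $a_M=a_m$; together they only give $C\mid \underline a_g^{\text{power}}$, which is not $C=1$. In the worked example the second minor was \emph{odd}, not a power of $4$, and that oddness came from the specific shape of the bottom row modulo $2$ --- it is not a symmetric ``other end'' phenomenon. The paper's actual argument is: any prime $d\mid C$ must divide $a_m$ (from the triangular minor), so by coprimality there is a minimal index $i$ with $d\nmid a_{m+i}$; one then builds a minor not divisible by $d$ via a two-case analysis depending on whether $d$ also divides some $n_s$ or some $b_j$ in $[\widehat A_k]$. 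Case 1 uses a triangular block with $a_{m+i}$ on the diagonal; Case 2 is where the symmetry $|b_{l+j}|=|b_{M-j}|$ from \ref{nwsym} is genuinely needed, to locate a column whose last entry $c_{M-i'}$ is not divisible by $d$ while the entries above it are. You have correctly flagged this reduction as the delicate point, but the mechanism you sketched does not work; you need this finer case split.
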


\begin{proof}
First we show \ref{mainindr}.
$Y_0$ is a parafree of rank $M-m$ since it's a rank $M-m$ free group.
Suppose that for some $n\geq 0$, $Y_n$ is parafree of rank $M-m$.
Define $H_0,\ldots,H_{2N}$ as in (\ref{H0}), (\ref{Hs}), and (\ref{Hs2}) so $H_{2N}\cong Y_{n+1}$ as in (\ref{Hinduct}).

Suppose $H_{k-1}$ is parafree of rank $M-m$ for some $k$ such that $0<k\leq N$ so $H_{k-1}^{\mathsf{ab}}\cong\Z^{M-m}$.
Define
\[
    B:=\frac{H_{k-1}}{\la \widehat{h}_k \ra[H_{k-1},H_{k-1}]}\cong\Z^{M-m-1}\oplus\frac{\Z}{C\Z}
\]
where
\[
    \widehat{h}_k=\widehat{W}_k^{-1}t_{k-1}\widehat{W}_k\widehat{V}_k^{-1}
\]
and $C$ is an integer.
If $B\cong \Z^{M-m-1}$, then $\widehat{h}_k$ is homologically primitive in $H_{k-1}$, and inductively, by Proposition \ref{baumextpropr}, each $H_k$ is parafree of rank $M-m$.

By Proposition \ref{alexanderprop}, $H^{\mathsf{ab}}_0=Y^{\mathsf{ab}}_n$ has $2n\times 2n+M-m$ presentation matrix
\[
\left(
\begin{array}{ccccccccccccc}
a_m & a_{m+1} & \cdots & a_{M-1} & a_M \\
& \ddots & & \ddots & & \ddots \\
& & a_m & a_{m+1} & \cdots & a_{M-1} & a_M 
\end{array}\right) .
\]
$H_{k-1}$ is $H_0$ with the $n_j$ root of $\widehat{h}_j$ added for each $j=1,\ldots,k-1$.
Thus, $B$ is $H^{\mathsf{ab}}_0$ after killing the classes $[\widehat{h}_j^{-1}t_j^{n_j}]$ for each $j=1,\ldots,k-1$.
$B$ is generated by $S'_{m-n},\ldots,S'_{M+n-1},t'_1,\ldots,$ $t'_{k-1}$
where $t'_j$ is the class $[t_j]$.
Using these generators, $B$ has the following $(2n+k)\times(2n+k+M-m-1)$ presentation matrix.
\[
\left(
\begin{array}{ccccccccccccc}
a_m & a_{m+1} & \cdots & a_{M-1} & a_M \\
& \ddots & & \ddots & & \ddots \\
& & a_m & a_{m+1} & \cdots & a_{M-1} & a_M \\
& & 0 & \multicolumn{4}{c}{\xleftarrow{\hspace{0.75cm}} [\widehat{V}_1]\xrightarrow{\hspace{0.75cm}}} & n_1 \\
& & 0 & \multicolumn{4}{c}{\xleftarrow{\hspace{0.75cm}}[\widehat{V}_2]\xrightarrow{\hspace{0.75cm}}} & -1 & n_2 \\
& & 0 & \multicolumn{4}{c}{\xleftarrow{\hspace{0.75cm}}[\widehat{V}_3]\xrightarrow{\hspace{0.75cm}}} & 0 & -1 & n_3 \\
& & & \multicolumn{4}{c}{\vdots} & & & \ddots & \ddots\\
& & 0 & \multicolumn{4}{c}{\xleftarrow{\hspace{0.6cm}}[\widehat{V}_{k-1}]\xrightarrow{\hspace{0.6cm}}} & 0 & \cdots & 0 & -1 & n_{k-1}\\
& & 0 & \multicolumn{4}{c}{\xleftarrow{\hspace{0.75cm}}[\widehat{V}_k]\xrightarrow{\hspace{0.75cm}}} & 0 & \multicolumn{2}{c}{\cdots} & 0 & -1 \\
\end{array}\right) .
\]
Applying the row operations $\mathbf{row}_{j}+n_{j+1}\mathbf{row}_{j+1}\to \mathbf{row}_j$ for each row $j=k-1,\ldots, 1$ results in the matrix
\[
\left(
\begin{array}{ccccccccccccc}
a_m & a_{m+1} & \cdots & a_{M-1} & M_g \\
& \ddots & & \ddots & & \ddots \\
& & a_m & a_{m+1} & \cdots & a_{M-1} & a_M \\
& & 0 & \multicolumn{4}{c}{\xleftarrow{\hspace{0.75cm}}[U_1]\xrightarrow{\hspace{0.75cm}}} & 0 \\
& & 0 & \multicolumn{4}{c}{\xleftarrow{\hspace{0.75cm}}[U_2]\xrightarrow{\hspace{0.75cm}}} & -1 & 0 \\
& & 0 & \multicolumn{4}{c}{\xleftarrow{\hspace{0.75cm}}[U_3]\xrightarrow{\hspace{0.75cm}}} & 0 & -1 & 0 \\
& & & \multicolumn{4}{c}{\vdots} & & & \ddots & \ddots\\
& & 0 & \multicolumn{4}{c}{\xleftarrow{\hspace{0.6cm}}[U_{k-1}]\xrightarrow{\hspace{0.6cm}}} & 0 & \cdots & 0 & -1 & 0\\
& & 0 & \multicolumn{4}{c}{\xleftarrow{\hspace{0.75cm}}[U_k]\xrightarrow{\hspace{0.75cm}}} & 0 & \multicolumn{2}{c}{\cdots} & 0 & -1 \\
\end{array}\right)
\]
where 
\[
[U_j]=[\widehat{V}_j]+n_1([\widehat{V}_{j+1}]+n_2([\widehat{V}_{j+2}]+\cdots+n_{k-2}([\widehat{V}_{k-1}] +n_{k-1}[\widehat{V}_k])\cdots)).
\]
Eliminating the last $k-1$ rows and columns results in $(2n+1)\times(2n+M-m)$ the presentation matrix $D$
\[
D=\left(
\begin{array}{cccccccccccccc}
a_m & a_{m+1} & \cdots & a_{M-1} & a_M \\
& a_m & a_{m+1} & \cdots & a_{M-1} & a_M \\
& & \ddots & & \ddots & & \ddots \\
& & & a_m & a_{m+1} & \cdots & a_{M-1} & a_M \\
& & & & c_{m} & c_{m+1} & \cdots & c_{M-1} \\
\end{array}\right)
\]
where
\[
[U_1]=c_{m}S'_{m+n} + c_{m+1}S'_{m+n+1} + \cdots + c_{M-1}S'_{M+n-1}.
\]

By Lemma \ref{nestedword}\ref{nwsym}, for some $l$ with $m<l\leq M$, there are integers $b_l,\ldots,b_M$ such that
\begin{equation} \label{awidehat}
[\widehat{A}_k]=\sum_{j=l}^Mb_jS'_{j+n}
\end{equation}
and $|b_{l+j}|=|b_{M-j}|$.
\bigskip{}

\begin{claim}{ 1}
    For each $j=m,\ldots,M-1$,
    \[
        c_j=\left\{
        \begin{array}{ll}
            a_{j} & \text{when }m\leq j<l\\
            a_{j} - (\prod_{s=1}^kn_s)b_{j}& \text{when }l\leq j< M-1
        \end{array}\right. .
    \]
\end{claim}

From the row operations,
\begin{align*}
[U_1]=&[\widehat{V}_1]+n_1([\widehat{V}_2]+n_2([\widehat{V}_3]+\cdots+n_{k-2}([\widehat{V}_{k-1}] +n_{k-1}[\widehat{V}_k])\cdots))\\
=&[\widehat{V}_1]+n_1[\widehat{V}_2]+n_1n_2[\widehat{V}_3]+\cdots+(\prod_{s=1}^{k-2}n_s)[\widehat{V}_{k-1}]+(\prod_{s=1}^{k-1}n_s)[\widehat{V}_k]\\
=&\sum_{j=1}^k(\prod_{s=1}^{j-1}n_s)[\widehat{V}_j].
\end{align*}
By Lemma \ref{nestedword}\ref{nwrec}, $\widehat{V}_j=\widehat{A}_j^{-n_j}\widehat{W}_j^{-1}\widehat{A}_{j-1}\widehat{W}_j$ so $[\widehat{V}_j]=[\widehat{A}_{j-1}]-n_j[\widehat{A}_j]$.
Thus,
\begin{align*}
    \sum_{j=1}^k(\prod_{s=1}^{j-1}n_s)[\widehat{V}_j]=&\sum_{j=1}^k(\prod_{s=1}^{j-1}n_s)([\widehat{A}_{j-1}]-n_j[\widehat{A}_j])\\
    =&\sum_{j=1}^k(\prod_{s=1}^{j-1}n_s)[\widehat{A}_{j-1}]-\sum_{j=1}^k(\prod_{s=1}^jn_s)[\widehat{A}_j]\\
    =&[\widehat{A}_0]-(\prod_{s=1}^kn_s)[\widehat{A}_k].
\end{align*}
Therefore, since $\widehat{A}_0=R_n$,
\begin{equation} \label{sumvirnsumai}
    [U_1]=[R_n]-(\prod_{s=1}^kn_s)[\widehat{A}_k].
\end{equation}
The statement of the claim follows from Proposition \ref{alexanderprop}\ref{alexalln}, (\ref{awidehat}), and (\ref{sumvirnsumai}).
\bigskip{}

By Lemma \ref{helpfullemma}, $C$ is the $\gcd$ of all the $(2n+1)\times (2n+1)$ minors of $D$.
Suppose a prime $d$ divides $C$
so $d$ divides the determinant of every $(2n+1)\times (2n+1)$ submatrix of $D$.
The determinant of the submatrix of $D$ given by the first $2n+1$ columns is $-a_m^{2n+1}$
so $d$ divides $a_m$.
\bigskip{}

\begin{claim}{ 2}
    There is some $(2n+1)\times (2n+1)$ submatrix of $D$ whose determinant is, not divisible by $d$.
\end{claim}
\bigskip{}

By Proposition \ref{alexanderprop}\ref{alexcoef},
the integers $a_m,\ldots,a_M$ are the coeffiecients of the Alexander polynomial.
Since the coefficients of $\Delta_L(t)$ are relatively prime, there is some coefficient that $d$ does not divide.
Let $m+i$ be the minimal index such that $d$ does not divide $a_{m+i}$.
We prove this claim in two cases.
\bigskip{}

\noindent\textit{Case 1.} Suppose $m+i<l$, $d$ divides some $n_s$ with $s\leq k$,
or $d$ divides $b_{j}$ for all $j=l,\ldots,i$.
Then, either $m+i<l$ or $d$ must divide $(\prod_{s=1}^kn_s)b_{j}$ for all $j=l,\ldots,m+i$.
By Claim 1, $d$ divides $c_j$ when $j<m+i$ and $d$ doesn't divide $c_{m+i}$.

Let $E$ be the $(2n+1)\times(2n+1)$ submatrix of $D$ consisting of the $n+1$ consecutive columns starting with the first row which with $a_{m+i}$ (or $c_{m+i}$ if $n=0$) at the top.
Thus, working modulo $d$, we have the following submatrix.
\[
E=\left(
\begin{array}{cccccc}
a_{m+i} & * & * & \cdots & * & *\\
0 & a_{m+i} & * & \cdots & * & *\\
0 & 0 & a_{m+i} & \cdots & * & *\\
\vdots & \vdots & \vdots  & \ddots & \vdots & \vdots\\
0 & 0 & 0 & \cdots & a_{m+i} & *\\
0 & 0 & 0 & \cdots & 0 & c_{m+i}\\
\end{array}\right).
\]
Since $d$ doesn't divide $a_{m+i}$ or $c_{m+i}$, $d$ cannot divide $\det(E)$.
\bigskip{}

\noindent\textit{Case 2.} Suppose that $l\leq m+i$, $d$ does not divide any $n_s$ with $s\leq k$,
and there is some $j\leq m+i$ such that $d$ does not divide $b_j$.

Let $F_1$ be the $(2n+1)\times 2n$ submatrix given by the $n$ consecutive columns with the coefficient $a_{M-i}$.
By Proposition \ref{alexanderprop}\ref{alexcoef}, $a_{m+j}=a_{M-j}$ for all $j=0,M-m$
so $M-i$ is the maximal index such that $d$ divides $a_{M-i}$.
Thus, modulo $d$, $F_1$ has the following form.
\[
F_1=\left(
\begin{array}{ccccc}
a_{M-i} & 0 & 0 & \cdots  & 0\\
* & a_{M-i} & 0 & \cdots & 0\\
* & * & a_{M-i} & \cdots & 0\\
\vdots & \vdots & \vdots & \ddots & \vdots\\
* & * & * & \cdots & a_{M-i}\\
* & * & * & \cdots & *\\
\end{array}\right).
\]
We need to find a column in $D$ with the first $n$ entries divisible by $d$ and the last entry not divisible by $d$.

Let $l+i'$ be the minimal index such that $d$ does not divide $b_{l+i'}$ so $l+i'\leq m+i$.

Since $d$ does not divide $b_{l+i'}$ and $b_{l+i'}=b_{M-i'}$, $d$ does not divide $b_{M-i'}$.
By Lemma \ref{nestedword}\ref{nwvsm}, for all $j$, the coefficient of $S'_{M+n}$ in $[\widehat{V}_j]$ is zero so by (\ref{sumvirnsumai}),
\[
a_M=b_M\prod_{s=1}^kn_s.
\]
Since $a_m=a_M$ and $d$ divides $a_m$, $d$ must also divide $b_M$.
Therefore, $d$ divides $b_l$ so $i'>0$ and $M-i'\leq M-1$.

Since $M-i'\leq M-1$ there is some column $F_2$ which ends with $c_{M-i'}$.
Every other entry in $F_2$ is 0 or $a_j$ for some $j>M-i'$.
Since $l+i'\leq m+i$ and $m<l$,
\[
    0 < l - m \leq i-i'
\]
so $M-i<M-i'$.
Thus, by Claim 1, $d$ does not divide $c_{M-i'}$, and
for all $j>M-i'$, $d$ divides $a_j$.

Combine $F_1$ and $F_2$ to get an $(2n+1)\times(2n+1)$ submatrix $F$ of $D$.
Working modulo $d$, we have the submatrix.
\[
F=\left(
\begin{array}{cccccc}
a_{M-i} & 0 & 0 & \cdots & 0 & 0\\
* & a_{M-i} & 0 & \cdots & 0 & 0\\
* & * & a_{M-i} & \cdots & 0 & 0\\
\vdots & \vdots & \vdots & \ddots & \vdots & \vdots\\
* & * & * & \cdots & a_{M-i} & 0\\
* & * & * & \cdots & * & c_{M-i'}\\
\end{array}\right).
\]
Since $d$ doesn't divide $a_{M-i}$ or $c_{M-i'}$, $d$ cannot divide $\det(F)$.

In conclusion, there are no primes which divide every determinant of $(2n+1)\times(2n+1)$ submatrices of $D$
so $C=1$.
Thus, $B\cong\Z^{M-m-1}$, and $H_k$ is parafree of rank $M-m$.
By induction, $H_N$ is parafree of rank $M-m$.

By a similar induction argument, $H_N,\ldots,H_{2N}$ are also parafree of rank $M-m$.
Therefore, $Y_{n+1}\cong H_{2N}$ is parafree of rank $M-m$
so by induction $Y_n$ is parafree of rank $M-m$ for each non-negative integer $n$.
\bigskip{}

For \ref{mainquotr}, consider the group $Y_{n+1}/Y_n[Y_{n+1},Y_{n+1}]$ which is an abelian group with the following presentation.
\[
    \frac{Y_{n+1}}{Y_n[Y_{n+1},Y_{n+1}]}\cong \la S'_{m-n-1},\ldots,S'_{M+n}\; | \;[R_{-n-1}],\ldots,[R_{n}],S'_{m-n},\ldots,S'_{M+n-1}\ra 
\]

By Proposition \ref{alexanderprop},
\[
	[R_j]=\underline{a}_gS'_{M+j}+\underline{a}_{g-1}S'_{M-1+j}+\cdots+\underline{a}_{g-1}S'_{m+1+j}+\underline{a}_gS'_{m+j}.
\]
After eliminating the generators $S'_{m-n},\ldots,S'_{M+n-1}$,
we have that
\[
    \frac{Y_{n+1}}{Y_n[Y_{n+1},Y_{n+1}]}\cong \la S'_{m-n-1},S'_{M+n}\; | \;\underline{a}_gS'_{M-n-1},
    \underline{a}_gS'_{m+n}\ra
\]
so
\[
\Big|Y_{n+1}/Y_n[Y_{n+1},Y_{n+1}]\Big|=\Big|\frac{\Z}{\underline{a}_g\Z}\oplus\frac{\Z}{\underline{a}_g\Z}\Big|=\underline{a}_g^2.
\]
\end{proof}

%%%%%%%%%%%%%%%%%%%%%%%%%%%%%%%%%%%%%%%%%%%%%%%%%%%%%%%%%%%%%%%%%%%%%%
\section{Cycle Graphs} \label{CycleGraphs}
%%%%%%%%%%%%%%%%%%%%%%%%%%%%%%%%%%%%%%%%%%%%%%%%%%%%%%%%%%%%%%%%%%%%%%

Explicitly, Lemma \ref{nestedword} is about nested patterns of repeating words in the relator $R_0$.
However, this pattern is inherited from patterns in the sequences of $\epsilon_i$'s and $\sigma_i$'s defined in (\ref{epsilons}) and (\ref{sigmas}).
In the spirit of Hirasawa and Murasugi \cite{HirMur07}, graphs are used in order to gain intuition about how the sequences of $\epsilon_i$'s and $\sigma_i$'s behave;
however, the construction here slightly differs from the one Hirasawa and Murasugi used.

\subsection{Incremental Paths and Cycles}

A \emph{graded directed graph} is a directed graph $\Gamma$ with map $\gr:V(\Gamma)\to\Z$ called the \emph{grading}.
Here $V(\Gamma)$ denotes the set of vertices of $\Gamma$.
Two graded directed graphs $\Gamma$ and $\Gamma'$ are \emph{isomorphic} if there is a directed graph isomorphism $f:\Gamma\to\Gamma'$ such that for every vertex $P$ in $\Gamma$, $\gr(f(P))=\gr(P)$.
$\Gamma$ and $\Gamma'$ are called \emph{relatively isomorphic} if there is a directed graph isomorphism $f:\Gamma\to\Gamma'$ and an integer $k$ such that for every vertex $P$ in $\Gamma$, $\gr(f(P))=\gr(P)+k$.

An \emph{incremental path} is a graded directed path graph $\Gamma$ where the gradings of adjacent vertices differ by $\pm1$.
Similarly, an \emph{incremental cycle} is a graded directed cycle graph $\Gamma$ where the gradings of adjacent vertices differ by $\pm1$.

Let $\Gamma$ and $\Gamma'$ be two incremental paths
in which the grading of the last vertex in $\Gamma$ is equal to the grading of the first vertex in $\Gamma'$.
Define the \emph{concatenation} of $\Gamma$ and $\Gamma'$, denoted $\Gamma*\Gamma'$, to be the graded directed graph obtained by identifying the last vertex in $\Gamma$ with the first vertex in $\Gamma'$ (see Figure \ref{figconcat}).

\begin{figure}[t]
\includegraphics{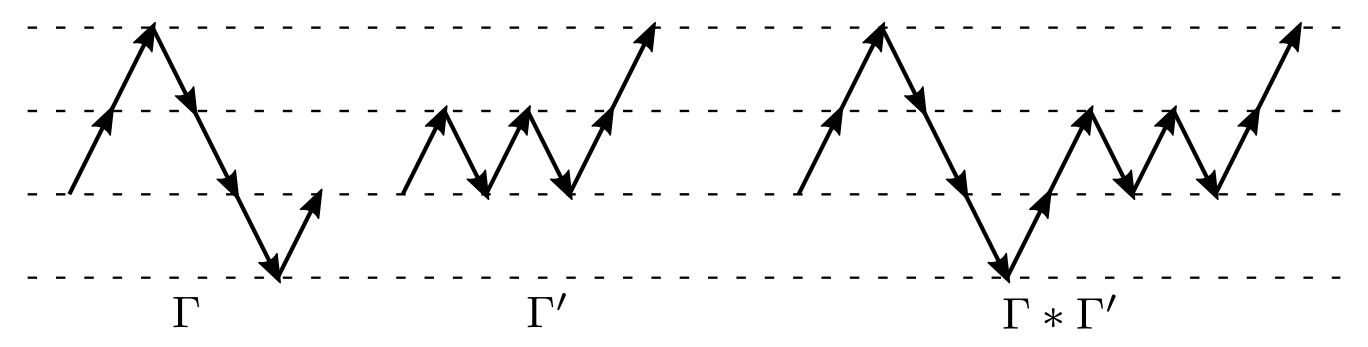}
\caption{The concatenation of $\Gamma$ and $\Gamma'$}
\label{figconcat}
\end{figure}

If the grading of the first and last vertices in $\Gamma$ are the same,
$\Gamma$ is called \emph{closable} and the \emph{closure of $\Gamma$}, $\cl(\Gamma)$,is defined to be the incremental cycle obtained by identifying the first and last vertex in $\Gamma$.

\subsection{Cycle Graphs of Co-prime Pairs} \label{cycledef}

Ultimately, Lemma \ref{nestedword} is a statement about the sequences of $\epsilon_i$'s and $\sigma_i$'s for co-prime pairs of integers.
As computed in Proposition \ref{subscriptsprop}, the $i$th $S$-generator in $R_0$ is determined by the values of $\sigma_{2i-1}$ and $\sigma_{2i}$.
Here we construct a graph to analyze the sequences of $\epsilon_i$'s and $\sigma_i$'s.

\begin{figure}[b]
\includegraphics{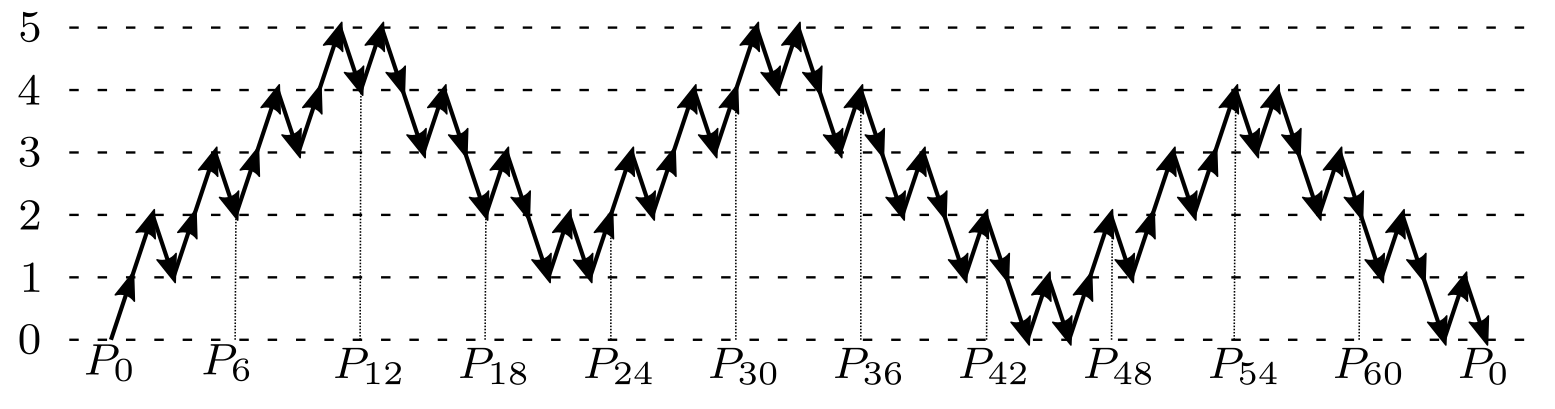}
\caption{$\oGamma(33,23)$}
\label{gamma3323}
\end{figure}

Let $(p,q)$ denote a co-prime pair of integers $p$ and $q$ such that $p$ is positive, $q$ is odd and $p>|q|>0$.
Define the sequences $\epsilon_i$ and $\sigma_i$ as in (\ref{epsilons}) and (\ref{sigmas}) for each integer $i$.
Define the incremental path $\Gamma(p,q)$ as follows.
The vertex set of $\Gamma(p,q)$ is $\{P_0,\ldots,P_{2p}\}$, and
the edge set of $\Gamma(p,q)$ is 
$$E(\Gamma(p,q))=\{(P_0,P_1),(P_1,P_2),\ldots,(P_{2p-1},P_{2p})\}.$$
The grading of each vertex is defined by $\gr(P_i)=\sigma_i$.
$\Gamma(p,q)$ is always closable, and
the \emph{cycle graph of $p$ and $q$}, $\oGamma(p,q)$ is defined to be $\cl(\Gamma(p,q))$.
When studying $\oGamma(p,q)$, it's convenient to think of its vertices $\{P_0,\ldots,P_{2p-1}\}$ being indexed by elements of $\Z/(2p\Z)$. See Figure \ref{gamma3323} for an example.

\begin{prop} \label{negrelsio}
Let $(p,q)$ be a co-prime pair.
The cycle graphs $\oGamma(p,q)$ and $\oGamma(p,-q)$ are relatively isomorphic.
\end{prop}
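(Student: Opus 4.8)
The plan is to exhibit an explicit relative isomorphism between $\oGamma(p,q)$ and $\oGamma(p,-q)$ by tracking how the defining sequences $\epsilon_i$ and $\sigma_i$ transform when $q$ is replaced by $-q$. Write $\epsilon_i^+$, $\sigma_i^+$ for the quantities attached to $(p,q)$ and $\epsilon_i^-$, $\sigma_i^-$ for those attached to $(p,-q)$. The first step is a purely arithmetic lemma: since $\lfloor -x \rfloor = -\lceil x \rceil = -\lfloor x\rfloor - 1$ whenever $x\notin\Z$, and $iq/p\notin\Z$ for $0<i<p$ (coprimality), we get $\epsilon_{i}^- = (-1)^{\lfloor -iq/p\rfloor} = (-1)^{-\lfloor iq/p\rfloor - 1} = -\epsilon_i^+$ for $i\not\equiv 0 \pmod p$, while $\epsilon_0^\pm = 1$. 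Thus reversing the sign of $q$ flips every $\epsilon_i$ except at multiples of $p$.

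Next I would feed this into the definition of $\sigma_i$. For $0 < i \le p$ we have $\sigma_i^- = \sum_{j=0}^{i-1}\epsilon_j^- = \epsilon_0^- + \sum_{j=1}^{i-1}\epsilon_j^- = 1 - \sum_{j=1}^{i-1}\epsilon_j^+ = 1 - (\sigma_i^+ - 1) = 2 - \sigma_i^+$ (using $\epsilon_0^+=1$), and a symmetric computation handles the range $p < i \le 2p$, where the term $j=p$ again contributes $+1$ rather than $-1$; one checks the same relation $\sigma_i^- = 2 - \sigma_i^+$ persists for $p<i<2p$ and that $\sigma_{2p}^\pm = 0$ forces consistency at the closure point. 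So, up to the sign flip $\sigma \mapsto -\sigma$ and the shift by the constant $2$, the grading sequences of the two cycle graphs agree index-by-index. I would then define $f:\oGamma(p,q)\to\oGamma(p,-q)$ to be the identity on vertices $P_i \mapsto P_i$ (both indexed by $\Z/2p\Z$) but with the edge directions left unchanged — here one must check that the directed-graph structure of an incremental cycle is determined entirely by the grading sequence (an edge $P_{i-1}\to P_i$ is present regardless of whether the grading goes up or down, since both $\Gamma(p,q)$ and $\Gamma(p,-q)$ are built on the same underlying path), so $f$ is automatically a directed graph isomorphism. Finally, $\gr(f(P_i)) = \sigma_i^- = 2 - \sigma_i^+ = -\gr(P_i) + 2$; composing with the grading-negating symmetry, or rather observing that "relatively isomorphic" as defined only permits an additive shift and not a sign change, I would instead use the orientation-reversing relabeling $P_i \mapsto P_{-i}$ which, combined with $\gr(P_{-i}) = \sigma_{-i} = -\sigma_i$ coming from the evenness/oddness structure of the $\epsilon$ and $\sigma$ sequences, turns $\sigma_i^- = 2-\sigma_i^+$ into $\gr(f(P_i)) = \gr(P_i) + k$ for the appropriate constant $k$.

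The main obstacle I anticipate is the bookkeeping at the two "seam" indices $i = 0$ and $i = p$: the relation $\epsilon_i^- = -\epsilon_i^+$ fails exactly there (both equal $+1$), and this is what injects the constant shift. One must verify carefully that these two exceptional terms conspire to produce a global additive constant in the grading rather than a local defect that breaks the isomorphism — i.e., that after the sign-flip the discrepancy is genuinely constant along the whole cycle and not merely piecewise constant. A clean way to organize this is to prove the identity $\sigma_i^+ + \sigma_i^- = c$ for a single constant $c$ by induction on $i$, checking the base case and confirming the inductive step goes through across $i=p$ using $\epsilon_p^+ = \epsilon_p^- = 1$; once that identity is in hand the relative isomorphism is immediate. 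I expect the rest — that the vertex bijection respects edges, and that closability is preserved — to be routine given the definitions of concatenation and closure already set up in the paper.
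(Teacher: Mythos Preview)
Your approach has a structural problem that cannot be repaired by bookkeeping. A directed graph isomorphism between two directed $2p$-cycles is necessarily a rotation $P_i \mapsto P_{i+s}$; the relabeling $P_i \mapsto P_{-i}$ you invoke at the end reverses every edge and is therefore not a directed graph isomorphism at all. So any identity of the shape $\sigma_i^- = -\sigma_i^+ + c$, even if it held globally, would not produce a relative isomorphism in the sense defined in the paper (which requires $\gr(f(P)) = \gr(P)+k$, no sign flip allowed). The sign flip on the grading is exactly what your floor-function computation produces, and there is no legal move to undo it.

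Separately, the identity you aim for does not hold with a single constant. Since $q$ is odd, $\epsilon_p^{\pm} = (-1)^{\pm q} = -1$, not $+1$ as you assert; and $\epsilon_0^{\pm} = +1$. Tracking these two exceptional indices gives
\[
\sigma_i^+ + \sigma_i^- \;=\; \begin{cases} 0,& i=0,\\ 2,& 1\le i\le p,\\ 0,& p+1\le i\le 2p, \end{cases}
\]
so the ``constant'' jumps twice around the cycle. This is precisely the ``local defect'' you were worried about, and it is real.

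The paper avoids both issues by producing a rotation rather than a reflection. It chooses $q'$ with $qq' \equiv p-1 \pmod{2p}$ and checks directly that $\varepsilon_i = \epsilon_{i+q'}$ for all $i \in \Z/2p\Z$; summing gives $\varsigma_i = \sigma_{i+q'} - \sigma_{q'}$, so $P_i \mapsto P_{i+q'}$ is a directed graph isomorphism with grading shift $k = -\sigma_{q'}$. The point you are missing is that the $\epsilon$-sequence for $-q$ is a \emph{cyclic shift} of the $\epsilon$-sequence for $q$, not merely its negation off a set of exceptional indices.
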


\begin{proof}
Let $\{\epsilon_i\}_{i\in\Z}$ be the sequence of signs of $(p,q)$ defined in (\ref{epsilons}).
For each integer $i$, define
\[
	\varepsilon_i=(-1)^{\lfloor \frac{-iq}{p}\rfloor}
\]
which is the sequence of signs of $(p,-q)$.
Let $q'$ be the unique integer such that $0<q'<2p$ and $q'q\cong p-1$ modulo $2p$.
Then
\begin{equation} \label{negepsilons}
    \varepsilon_i=\epsilon_{i+q'}
\end{equation}
for every $i$ in $\Z/(2p\Z)$.
For each integer $i=0,\ldots,2p$, define
\[
	\varsigma_i:=
	\sum_{j=0}^{i-1}\varepsilon_i ,
\]
which are the gradings of the vertices of $\oGamma(p,-q)$.
By (\ref{negepsilons}),
\[
    \varsigma_i=\sigma_{i+q'}-\sigma_{q'}
\]
for every positive integer $i$.
Since the $\sigma_i$'s are the gradings of the vertices of $\oGamma(p,q)$,
it follows that $\oGamma(p,q)$ and $\oGamma(p,-q)$ are relatively isomorphic.
\end{proof}

\subsection{Structure of $\oGamma(p,q)$}

Given an incremental cycle $\Gamma$, a \emph{positive(negative) $k$-segment} is a set of $k$ consecutive positive(negative) increment edges in $\Gamma$ which are followed and preceded by negative(positive) increment edges; see Figure \ref{sb:segment}.
For each co-prime integer pair $(p,q)$,
$\oGamma(p,q)$ is the closure of the concatenation of segments of alternating sign as follows.
\[
    \oGamma(p,q)=\cl(\Lambda_0*\Lambda_1*\cdots*\Lambda_{n-1})
\]
As a convention, let $\Lambda_0$ denote the segment in $\oGamma(p,q)$ containing the edge which corresponds to $\epsilon_0$.

\begin{figure}[b]
    \begin{subfigure}[t]{4cm}
        \centering
        \includegraphics{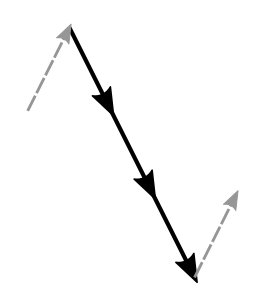}
        \caption{A negative 3-segment}
        \label{sb:segment}
    \end{subfigure}
    \hspace{1cm}
    \begin{subfigure}[t]{4cm}
        \centering
        \includegraphics{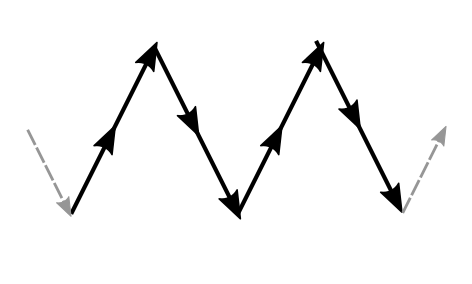}
        \caption{A 2-block of length 4}
        \label{sb:block}
    \end{subfigure}
    \caption{}
    \label{segsandblocks}
\end{figure}

Proposition \ref{segsizeprop} and Proposition \ref{blocksizeprop} are analogs of the properties proved in section 6 of Hirasawa and Murasugi's paper \cite{HirMur07}.

\begin{prop} \label{segsizeprop}
    Let $(p,q)$ be a co-prime pair with $q>0$.
    Let $P_0,\ldots,P_{2p-1}$ be the vertices of $\oGamma(p,q)$ as defined in section \ref{cycledef}, and let
    \[
        \oGamma(p,q)=\cl(\Lambda_0*\Lambda_1*\cdots*\Lambda_{n-1})
    \]
    where $\Lambda_0,\ldots,\Lambda_{n-1}$ are segments.
    Also, let $\kappa$ and $\xi$ be integers such that $p=\kappa q+\xi$ and $0<\xi<q$.
    \begin{enumerate}[label=(\alph*)]
        \item The number of segments $n$ in $\oGamma(p,q)$ is equal to $2q$. \label{segnum}
        \item $P_i$ is at the beginning of a segment precisely when $iq\modop p <q$. \label{segstart}
        \item When $\xi\leq iq\modop p <q$, $P_i$ is at the beginning of a $\kappa$-segment, and
        when $iq\modop p <\xi$, $P_i$ is at the beginning of a $(\kappa+1)$-segment. \label{segtype}
        \item $\Lambda_0$ is a $(\kappa+1)$-segment. \label{segbegin}
        \item There are a total of $2\xi$, $(\kappa+1)$-segments in $\oGamma(p,q)$. \label{segbig}
    \end{enumerate}
\end{prop}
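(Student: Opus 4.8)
The plan is to analyze the sequence $\{iq \bmod p\}_{i=0}^{2p-1}$ directly, since by construction the grading $\sigma_i$ increments by $\epsilon_i = (-1)^{\lfloor iq/p\rfloor}$, and a change of sign in $\epsilon$ — hence the boundary between two segments — occurs exactly when $\lfloor iq/p\rfloor$ changes parity, which is governed entirely by how $iq \bmod p$ behaves as $i$ increases by $1$. Writing $r_i := iq \bmod p$, we have $r_{i+1} = r_i + q$ if $r_i + q < p$ (no carry, $\lfloor (i+1)q/p\rfloor = \lfloor iq/p\rfloor$, so $\epsilon$ unchanged) and $r_{i+1} = r_i + q - p$ if $r_i + q \geq p$ (one carry, parity of $\lfloor iq/p\rfloor$ flips, so $\epsilon$ flips). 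Thus a sign change of $\epsilon$ — the start of a new segment at $P_{i}$ in the cyclic picture — happens precisely at those $i$ where a carry occurred going from $i-1$ to $i$, i.e. where $r_i < q$ (since $r_i = r_{i-1} + q - p \in [0,q)$ after a carry, and $r_i \geq q$ otherwise, as $\gcd(p,q)=1$ forces $r_i \neq 0$ for $0 < i < p$ and $r_i \neq$ anything that would collide). Running $i$ from $0$ to $2p-1$ doubles everything; note that $\epsilon_i$ has period $2p$ but not $p$ in general, because over $0 \le i < p$ the value $\lfloor iq/p\rfloor$ runs through $0,1,\dots,q-1$, accumulating $q-1$ carries, which is one short of giving the same $\epsilon$-pattern on the second block unless $q$ is... — this is exactly why one works mod $2p$. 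This establishes \ref{segstart}: $P_i$ begins a segment iff $r_i < q$ (interpreting $r_i = iq \bmod p$; for $i$ in the range $[p, 2p)$ this is $(i-p)q \bmod p$, same residues).

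For \ref{segnum}, I would count the $i \in \{0, 1, \dots, 2p-1\}$ with $iq \bmod p < q$. Over one period $0 \le i < p$, the map $i \mapsto iq \bmod p$ is a bijection of $\{0,\dots,p-1\}$ (as $\gcd(p,q)=1$), so exactly $q$ values of $i$ land in $\{0,1,\dots,q-1\}$; over $\{p,\dots,2p-1\}$ another $q$. Hence $n = 2q$. For \ref{segtype} and \ref{segbegin}: if $P_i$ begins a segment, the segment has length equal to the number of steps until the next carry, i.e. the smallest $k \ge 1$ with $r_i + kq \ge p$; since $0 \le r_i < q$, we have $r_i + \kappa q < \kappa q + q = p + (\xi - (q - r_i))$... more cleanly, $r_i + kq \ge p = \kappa q + \xi$ first happens at $k = \kappa$ if $r_i \ge \xi$ and at $k = \kappa + 1$ if $r_i < \xi$. (Check: $r_i \ge \xi \Rightarrow r_i + \kappa q \ge \kappa q + \xi = p$; $r_i < \xi \Rightarrow r_i + \kappa q < p$ but $r_i + (\kappa+1)q = r_i + \kappa q + q \ge q > \xi$ ... need $\ge p$: $r_i + (\kappa+1)q \ge (\kappa+1)q = p + (q - \xi) > p$.) This gives \ref{segtype}. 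Then \ref{segbegin} is the case $i = 0$: $r_0 = 0 < \xi$, so $\Lambda_0$ is a $(\kappa+1)$-segment. For \ref{segbig}: a segment starting at $P_i$ is a $(\kappa+1)$-segment iff $r_i < \xi$; over $0 \le i < p$ the bijection $i \mapsto iq\bmod p$ puts exactly $\xi$ values of $i$ into $\{0,\dots,\xi-1\}$, doubling to $2\xi$ over the full cycle mod $2p$.

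The one point requiring genuine care — and the main obstacle — is bookkeeping the passage from the period-$p$ structure of $iq \bmod p$ to the period-$2p$ structure of $\epsilon_i$: the residues $iq \bmod p$ repeat with period $p$, but $\epsilon_i = (-1)^{\lfloor iq/p\rfloor}$ does not, since $\lfloor (i+p)q/p \rfloor = \lfloor iq/p\rfloor + q$ shifts the exponent by $q$, which is \emph{odd} — so $\epsilon_{i+p} = -\epsilon_i$. I must verify that this sign flip is consistent with the segment count doubling (rather than, say, the two halves gluing into longer segments at the seam $i = p$): concretely, one checks that $P_0$ and $P_p$ both begin segments (true, since $r_0 = r_p = 0 < q$) and that the total increment $\sigma_{2p} - \sigma_0 = \sum_{i=0}^{2p-1}\epsilon_i = 0$, which is what makes $\Gamma(p,q)$ closable — this vanishing is exactly $\sum_{i=0}^{p-1}\epsilon_i + \sum_{i=p}^{2p-1}\epsilon_i = \sum_{i=0}^{p-1}\epsilon_i - \sum_{i=0}^{p-1}\epsilon_i = 0$. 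Once the mod-$2p$ indexing is pinned down, parts \ref{segnum}--\ref{segbig} are the short counting arguments above; I would present \ref{segstart} and \ref{segtype} first as the structural heart, then read off \ref{segnum}, \ref{segbegin}, \ref{segbig} as corollaries.
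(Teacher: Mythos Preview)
Your proposal is correct and follows essentially the same approach as the paper: both track the residues $r_i = iq \bmod p$, identify segment starts with the condition $r_i < q$ (part \ref{segstart}), compute segment lengths as the least $k$ with $r_i + kq \ge p$ to get the $\kappa$/$(\kappa+1)$ dichotomy (part \ref{segtype}), and read off \ref{segbegin} and \ref{segbig} as immediate consequences. The only cosmetic difference is in part \ref{segnum}: the paper counts the distinct values of $\lfloor iq/p\rfloor$ over $0\le i<2p$ (which run through $0,\ldots,2q-1$), while you count segment-start indices via the bijection $i\mapsto iq\bmod p$ on each half-period; these are two faces of the same count, and your extra care with the mod-$2p$ versus mod-$p$ bookkeeping (using $\epsilon_{i+p}=-\epsilon_i$ since $q$ is odd) is a welcome clarification that the paper leaves implicit.
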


\begin{proof}
    For \ref{segnum}, notice that the segments of $\oGamma(p,q)$ correspond to the number of distinct floored quotients $\lf\frac{iq}{p}\rf$ there are when $i=0,\ldots,2p-1$.
    Since $p>q$, these quotients range from $0$ to $2q-1$ without skipping so there are exactly $2q$ segments.
    
    A segment begins when
    \[
        \lf\frac{(i-1)q}{p}\rf\neq\lf\frac{iq}{p}\rf,
    \]
    which happens when $(iq\modop p)< q$, proving \ref{segstart}.
    
    For \ref{segtype}, suppose $P_i$ is the beginning of a $k$-segment.
    $k$ is the smallest positive integer such that 
    \[
        \lf\frac{iq}{p}\rf\neq\lf\frac{(i+k)q}{p}\rf .
    \]
    so
    \[
        (iq\modop p) + (k - 1)q< p
    \]
    and
    \[
        (iq\modop p) + kq \geq p.
    \]
    When $\xi\leq (iq\modop p) < q$, $k=\kappa$.
    Likewise, when $(iq\modop p)<\xi$, $k=\kappa+1$.
    
    Parts \ref{segbegin} and \ref{segbig} immediately follow from \ref{segtype}.
\end{proof}

A \emph{$k$-block of length $l$} in $\oGamma(p,q)$ is a sequence of $l$ consecutive $k$-segments that is not proceeded or followed by a $k$-segment; see Figure \ref{sb:block}.
A $k$-block of length $1$ is called an \emph{isolated block}.

\begin{prop} \label{blocksizeprop}
    Let $(p,q)$ be a co-prime pair with $q>0$, and
    let $P_0,\ldots,P_{2p-1}$ be the vertices of $\oGamma(p,q)$ as defined in section \ref{cycledef}.
    Define $\kappa$, $\xi$, $\kappa'$, and $\xi'$ be integers such that
    \begin{equation} \label{eucdivpq}
        p=\kappa q+\xi\text{ with }0<\xi<q
    \end{equation}
    and
    \begin{equation}\label{eucdivqr}
        q=\kappa'\xi+\xi'\text{ with }0<\xi'<\xi.
    \end{equation}
    \begin{enumerate}[label=(\alph*)]
        \item All of the $\kappa$-blocks in $\oGamma(p,q)$ have length $\kappa'$ or $\kappa'-1$. \label{blocksizes}
        \item If $P_j$ is the start of a $\kappa$-block, then when
        \[
            q-\xi'\leq jq\modop p < q ,
        \]
        the $\kappa$-blocks has length $\kappa'$
        and when
        \[
            q-\xi\leq jq\modop p < q-\xi' ,
        \]
        the $\kappa$-blocks has length $\kappa'-1$. \label{blockstart}
        \item If $\kappa'\geq 2$ then all the $(\kappa+1)$-blocks in $\oGamma(p,q)$ are isolated. \label{blockkg2}
        \item If $\kappa'=1$ then all the $\kappa$-blocks in $\oGamma(p,q)$ are isolated. \label{blockke1}
    \end{enumerate}
\end{prop}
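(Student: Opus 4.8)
The plan is to run the same Euclidean-algorithm bookkeeping that powered Proposition \ref{segsizeprop}, but one level deeper. By Proposition \ref{segsizeprop}\ref{segtype}, the vertex $P_i$ begins a $\kappa$-segment exactly when $\xi \leq iq \modop p < q$, and begins a $(\kappa+1)$-segment exactly when $iq \modop p < \xi$. So the block structure is governed entirely by the walk of the residues $r_i := iq \modop p$ around $\Z/p\Z$ as $i$ advances past successive segment-starts. The key observation is that when $P_i$ starts a $k$-segment (with $k \in \{\kappa, \kappa+1\}$), the next segment starts at $P_{i+k}$, and the residue updates by $r_{i+k} = r_i + kq - p$; since $p = \kappa q + \xi$, a $\kappa$-step subtracts $\xi$ from the residue (mod $p$) and a $(\kappa+1)$-step adds $q - \xi$. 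Thus, restricting attention to the sequence of residues $r_j$ at consecutive \emph{segment}-starts (not vertex indices), each step decreases the residue by $\xi$ as long as the residue stays $\geq \xi$ (these are the $\kappa$-segments), and when the residue drops below $\xi$ we get a single $(\kappa+1)$-segment which then adds $q-\xi$ back. This is exactly the Euclidean subtraction of $\xi$ into $q$: a maximal run of $\kappa$-segments corresponds to repeatedly subtracting $\xi$ from a residue lying in $[\,0,q)$ until it falls below $\xi$. The length of such a run starting from residue $r \in [\xi, q)$ is $\lceil (r - \xi + 1)/\xi \rceil$-ish — more precisely it is determined by how many times $\xi$ fits, which by \eqref{eucdivqr} is either $\kappa'$ or $\kappa'-1$ depending on whether $r$ lies above or below $q - \xi'$. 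This gives \ref{blocksizes} and \ref{blockstart} after carefully tracking the boundary cases (the residue interval for a $\kappa$-block start is $[q-\xi, q)$, which splits at $q - \xi'$ exactly as claimed).

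For \ref{blockkg2} and \ref{blockke1}, I would analyze consecutive $(\kappa+1)$-segments directly. Two $(\kappa+1)$-segments are adjacent iff a residue $r < \xi$ is followed, after adding $q - \xi$, by another residue $r' = r + q - \xi$ that is again $< \xi$ — impossible when $q - \xi \geq \xi$, i.e.\ when $\kappa' \geq 1$ and $\xi' < \xi$, more carefully impossible when $q \geq 2\xi$, i.e.\ $\kappa' \geq 2$. That handles \ref{blockkg2}. For \ref{blockke1}, $\kappa' = 1$ means $q = \xi + \xi'$ with $0 < \xi' < \xi$; then $q - \xi = \xi' < \xi$, so every $(\kappa+1)$-step lands in $[\,0,\xi)$ — wait, that is wrong; rather, after a $\kappa$-segment the residue drops by $\xi$, and since $q < 2\xi$ any residue in $[\xi, q)$ drops into $[\,0, q - \xi) = [\,0,\xi')$, which is $< \xi$, forcing the very next segment to be $(\kappa+1)$; hence no two $\kappa$-segments are ever adjacent, so all $\kappa$-blocks are isolated.

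The main obstacle I expect is the off-by-one and boundary bookkeeping: carefully justifying that the residue at a segment-start lies in the half-open interval $[\,0,q)$, that the update rule $r \mapsto r - \xi$ (resp.\ $r \mapsto r + q - \xi$) is exact in $\Z$ (not just mod $p$) within a single block, and that the split point in \ref{blockstart} is $q - \xi'$ and not $q - \xi' \pm 1$. To keep this clean I would prove a single lemma describing the ``segment-start residue sequence'' $r_j$ as a deterministic dynamical system on $[\,0,q) \cap \Z$: from $r \in [\xi, q)$ go to $r - \xi$ (a $\kappa$-segment), from $r \in [\,0,\xi)$ go to $r + q - \xi$ (a $(\kappa+1)$-segment). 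Everything in the proposition then becomes an elementary statement about orbits of this map, and \eqref{eucdivpq}, \eqref{eucdivqr} feed in as the two successive divisions controlling how $\xi$ partitions $[\,0,q)$. Once that lemma is in place, parts \ref{blocksizes}--\ref{blockke1} follow by inspecting which residues give runs of length $\kappa'$ versus $\kappa'-1$ and when two segments of the same type can be consecutive.
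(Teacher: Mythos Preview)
Your proposal is correct and follows essentially the same approach as the paper: both track the residues $iq\modop p$ at successive segment-starts, observe that a $\kappa$-segment subtracts $\xi$ and a $(\kappa{+}1)$-segment adds $q-\xi$, and then read off the block lengths from how many subtractions of $\xi$ it takes to fall below $\xi$. Your packaging of this as a single deterministic map on $[0,q)\cap\Z$ is a slightly cleaner way to organize the case analysis, but the substance---including the split at $q-\xi'$ for \ref{blockstart} and the inequality $q\geq 2\xi$ for \ref{blockkg2}---is identical to the paper's argument; the paper derives \ref{blockke1} as an immediate consequence of \ref{blocksizes} rather than by your direct residue check, but both are valid.
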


\begin{proof}
Similar to the proof of Proposition \ref{segsizeprop},
this proposition is just matter of determining when $\kappa$-blocks and $(\kappa+1)$-blocks appear is $\oGamma(p,q)$.

Suppose $P_i$ is the beginning of a $(\kappa+1)$-segment.
The next segment begins at $P_j$ where $j=i+\kappa+1$, and
by (\ref{eucdivpq}),
\begin{align*}
    jq\modop p=&((i+\kappa+1)q)\modop p\\
    =&(iq+\kappa q +q)\modop p\\
    =&(iq+p-\xi+q)\modop p\\
    =&((iq\modop p)+q-\xi)\modop p.
\end{align*}
Since $P_i$ is the beginning of a $(\kappa+1)$-segment, $(iq\modop p)<\xi$ by Proposition \ref{segsizeprop}\ref{segtype} so
\begin{equation} \label{jqmodpbound}
    q-\xi \leq (iq\modop p)+q-\xi<q<p .
\end{equation}
Thus,
\begin{equation} \label{jqmodp}
    jq\modop p=(iq\modop p)+q-\xi.
\end{equation}

For \ref{blocksizes} and \ref{blockstart}, suppose a $\kappa$-block starts at vertex $P_j$.
The length of the $\kappa$-block starting at $P_j$ is the smallest positive integer $n$,
such that $P_{s(n)}$ is the start of a $(\kappa+1)$-block where $s(k)=j+k\kappa$ so
$n$ is the smallest positive integer such that
\[
    0\leq s(n)q\modop p\xi < \xi.
\]

By (\ref{eucdivpq}),
\begin{align*}
    s(k)q\modop p=&(j+k\kappa)q\modop p\\
    =&(jq+k\kappa q)\modop p\\
    =&(jq+kp-k\xi)\modop p\\
    =&((jq\modop p)-k\xi)\modop p .
\end{align*}

By (\ref{jqmodpbound}) and (\ref{jqmodp}), since $P_j$ is the beginning of a $\kappa$-segment,
\[
    q-\xi\leq jq\modop p < q.
\]

We compute the length $n$ for each of the two cases $q-\xi\leq (jq\modop p) < q-\xi' $ and $q-\xi'\leq (jq\modop p) < q $.

Suppose that
\begin{equation} \label{pj1}
    q-\xi'\leq jq\modop p < q.
\end{equation}
By (\ref{eucdivqr}),
\[
    ((jq\modop p)-\kappa'\xi=((jq\modop p)-q+\xi'
\]
and
\[
0\leq ((jq\modop p)-q+\xi'<\xi'
\]
so
\[
    0\leq s(\kappa')q\modop p<\xi'<\xi .
\]
Thus, $n\leq \kappa'$.

Suppose $k\leq\kappa'-1$.
By (\ref{eucdivqr}) and (\ref{pj1}),
\begin{align*}
    \xi \leq & ((jq\modop p)-q+\xi'+\xi\\
    =&((jq\modop p)-\kappa'\xi+\xi\\
    =&((jq\modop p)-(\kappa'-1)\xi
\end{align*}
so
\[
    \xi\leq ((jq\modop p)-k\xi<q .
\]
Thus,
\[
    \xi\leq s(k)q\modop p<q
\]
so $n\geq \kappa'$.
Therefore, $n=\kappa'$.

Suppose 
\[
    q-\xi\leq (jq\modop p)< q-\xi',
\]
By (\ref{eucdivqr}),
\[
    ((jq\modop p)-(\kappa'-1)\xi=((jq\modop p)-q+\xi'+\xi
\]
and
\[
0\leq \xi'\leq ((jq\modop p)-q+\xi'+\xi<\xi
\]
so
\[
    0\leq s(\kappa'-1)q\modop p<\xi .
\]
Thus, $n\leq \kappa'-1$.

Suppose $k\leq\kappa'-2$.
By (\ref{eucdivqr}) and (\ref{pj1}),
\begin{align*}
    \xi \leq & ((jq\modop p)-q+\xi'+2\xi\\
    =&((jq\modop p)-(\kappa'-2)\xi
\end{align*}
so
\[
    \xi\leq ((jq\modop p)-k\xi<q .
\]
Thus,
\[
    \xi\leq s(k)q\modop p<q
\]
so $n\geq \kappa'-1$.
Therefore,
$n=\kappa'-1$.
Thus, all of the $\kappa$-blocks have length $\kappa'$ or $\kappa'-1$.

For \ref{blockkg2}, suppose that $\kappa'\geq 2$.
By (\ref{eucdivqr}),
\[
    q-\xi=(\kappa'-1)\xi+\xi' ,
\]
and since $\kappa'\geq2$,
\[
	\xi\leq\xi+\xi'\leq q-\xi
\]
so by (\ref{jqmodpbound}),
\[
    \xi\leq (iq\modop p)+q-\xi <q.
\]
Thus, by (\ref{jqmodp}),
\[
    \xi\leq jq\modop p <q.
\]
By Proposition \ref{segsizeprop}\ref{segtype}, $P_j$ must be the beginning of a $\kappa$-segment so $(\kappa+1)$-segments cannot occur consecutively.
Therefore, $(\kappa+1)$-blocks are isolated.

Statement \ref{blockke1} follows immediately from $\ref{blocksizes}$.
\end{proof}

\subsection{Reducing Cycle Graphs}

Let $(p,q)$ be a co-prime pair with $q>0$.
Let $\kappa$, $\xi$, $\kappa'$ and $\xi'$ be defined as in Proposition \ref{blocksizeprop},
and let the decomposition of $\oGamma(p,q)$ be
\begin{equation} \label{segmentdecomp}
\oGamma(p,q)=\cl(\Lambda_0*\cdots*\Lambda_{2q-1}).
\end{equation}
Define a reduction of $\oGamma(p,q)$, denoted $R(\oGamma)(p,q)$, by
\begin{enumerate}
\item eliminating all $\kappa$-segments,
\item replacing each $(\kappa+1)$-segment with a positive or negative increment according to the sign of the segment, and
\item setting the grading of the vertex preceding the edge corresponding to $\Lambda_0$ equal to zero.
\end{enumerate}
For an example, see Figure \ref{redgamma3323}.

\begin{figure}[t]
    \begin{subfigure}[t]{10.5cm}
        \centering
        \includegraphics{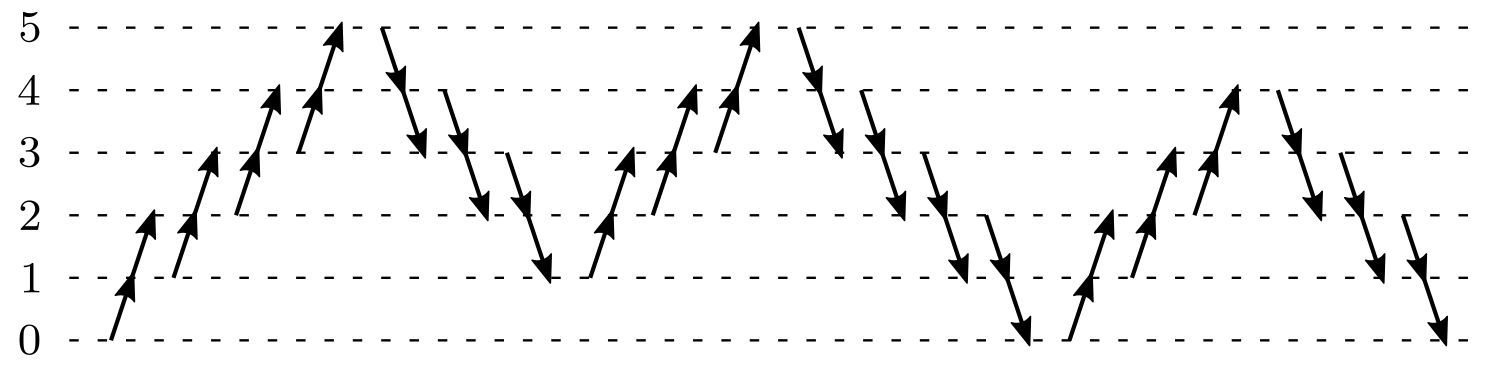}
        \caption{All the 1-segments have been removed from $\oGamma(33,23)$; see Figure     \ref{gamma3323}.}
        \label{rd:a}
    \end{subfigure}
    \begin{subfigure}[t]{10.5cm}
        \centering
        \includegraphics{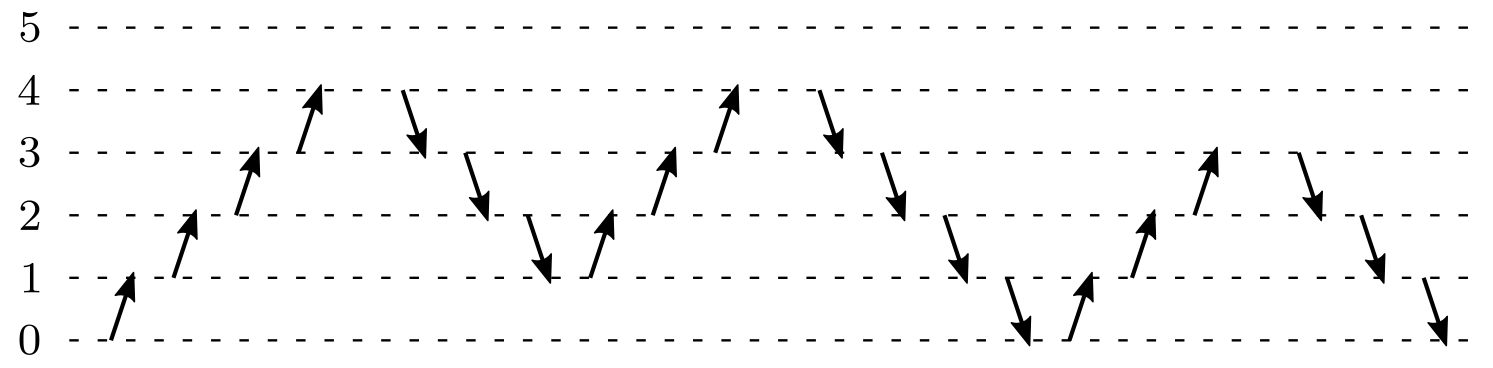}
        \caption{The 2-segments have been replaced by edges.}
        \label{rd:b}
    \end{subfigure}
    \begin{subfigure}[t]{10.5cm}
        \centering
        \includegraphics{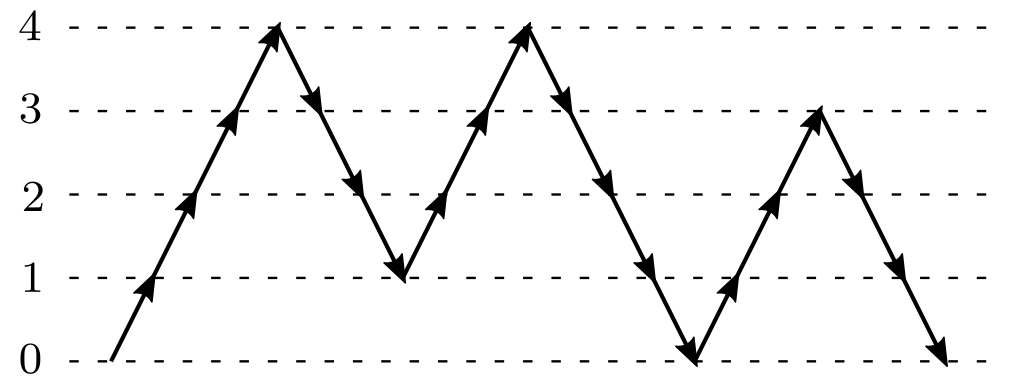}
        \caption{The resulting graph $R(\oGamma)(33,23)$ is isomorphic to $\oGamma(10,3)$.}
        \label{rd:c}
    \end{subfigure}
    \caption{Reducing $\oGamma(33,23)$}
    \label{redgamma3323}
\end{figure}

\begin{lem}\label{redpair}
    Let $(p,q)$ be a co-prime pair with $q>1$ and $\xi>1$.
    Define $p^*$ to be $\xi$, and define $q^*$ as follows.
    \[
        q^*=\left\{
        \begin{array}{ll}
            \xi' & \text{when }\kappa'\text{ is even}\\
            \xi'-\xi & \text{when }\kappa'\text{ is odd}
        \end{array}
        \right.
    \]
    \begin{enumerate}[label=(\alph*)]
        \item $p^*$ is always positive and $q^*$ is always odd. \label{rednew}
        \item $R(\oGamma)(p,q)$ is isomorphic to                    $\oGamma(p^*,q^*)$. \label{rediso}
    \end{enumerate}
\end{lem}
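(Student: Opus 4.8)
Part (a) is a direct divisibility-and-parity check, while part (b) amounts to matching the cyclic sequence of signed edges of $R(\oGamma)(p,q)$ with the increment sequence of $\oGamma(p^*,q^*)$.

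For (a): from $p=\kappa q+\xi$ and $q=\kappa'\xi+\xi'$, repeated use of $\gcd(a+cb,b)=\gcd(a,b)$ together with $\gcd(p,q)=1$ gives $\gcd(\xi,\xi')=1$, hence $\gcd(p^*,q^*)=1$ in either case. Since $\xi>1$ we have $p^*=\xi>0$, and $0<\xi'<\xi$ gives $0<|q^*|<\xi=p^*$ in both cases, so $(p^*,q^*)$ meets the standing hypotheses for a cycle graph. Reducing $q=\kappa'\xi+\xi'$ modulo $2$: if $\kappa'$ is even then $\xi'\cong q$ is odd, and if $\kappa'$ is odd then $\xi+\xi'\cong q$ is odd, so $\xi'-\xi$ is odd; in each case $q^*$ is odd.

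For (b): By Proposition \ref{segsizeprop}\ref{segbig} there are exactly $2\xi=2p^*$ $(\kappa+1)$-segments in $\oGamma(p,q)$, so $R(\oGamma)(p,q)$ is an incremental cycle with $2p^*$ edges; step 3 of the reduction sets the grading of the vertex before the image of $\Lambda_0$ to $0$, and $\Lambda_0$ is a positive $(\kappa+1)$-segment (Proposition \ref{segsizeprop}\ref{segbegin}, and $\epsilon_0=1$), so its image is a positive increment. Writing $\epsilon^*_i$ and $\sigma^*_i$ for the data of $\oGamma(p^*,q^*)$, it therefore suffices to show that the sign of the $j$-th $(\kappa+1)$-segment of $\oGamma(p,q)$, read cyclically starting from $\Lambda_0$, equals $\epsilon^*_j=(-1)^{\lf jq^*/\xi\rf}$ for every $j$; then mapping the $j$-th vertex of $R(\oGamma)(p,q)$ to $P_j$ is a grading-preserving isomorphism onto $\oGamma(p^*,q^*)$ (the cycle closes up consistently because $(p^*,q^*)$ is a genuine co-prime pair, for which $\Gamma(p^*,q^*)$ is closable; for $q^*<0$ one may also invoke Proposition \ref{negrelsio}). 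Let $i_j$ be the index of that segment in the decomposition $\oGamma(p,q)=\cl(\Lambda_0*\cdots*\Lambda_{2q-1})$; since segments alternate sign and $\Lambda_0$ is positive, its sign is $(-1)^{i_j}$, so the task is to compute $i_j\modop 2$.

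The computation of $i_j$ is the technical core. By Proposition \ref{blocksizeprop}\ref{blockkg2}, \ref{blockke1}, \ref{blocksizes}, consecutive $(\kappa+1)$-segments are separated by a run of either $\kappa'-1$ or $\kappa'$ many $\kappa$-segments; writing $g_r$ for the $r$-th such run and $\beta_r:=g_r-(\kappa'-1)\in\{0,1\}$, one gets $i_j=j\kappa'+\sum_{r=0}^{j-1}\beta_r$ by induction on $j$. To evaluate $\beta_r$, let $x_r$ be the residue modulo $p$ of $q$ times the vertex index of the $r$-th $(\kappa+1)$-segment; Proposition \ref{segsizeprop}\ref{segtype} gives $0\le x_r<\xi$ and $x_0=0$. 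The vertex index advances by $(\kappa+1)+g_r\kappa$, so using $p=\kappa q+\xi$ and $q=\kappa'\xi+\xi'$ a short modular computation shows the vertex starting the next run has residue $x_r+q-\xi$; by Proposition \ref{blocksizeprop}\ref{blockstart} (checked directly against Proposition \ref{segsizeprop}\ref{segtype} in the degenerate subcase $\kappa'=1$, $g_r=0$) this forces $\beta_r=1$ iff $x_r\ge\xi-\xi'$, and then $x_{r+1}=(x_r+\xi'-\beta_r\xi)\modop p$, which lands in $[0,\xi)$ and satisfies $x_{r+1}\cong x_r+\xi'\;(\mathsf{mod}\,\xi)$. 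Hence $x_r=r\xi'\modop\xi$ and $\beta_r=\lf(r+1)\xi'/\xi\rf-\lf r\xi'/\xi\rf$, so by telescoping $\sum_{r<j}\beta_r=\lf j\xi'/\xi\rf$ and $i_j=j\kappa'+\lf j\xi'/\xi\rf$. Comparing parities: when $\kappa'$ is even, $i_j\cong\lf j\xi'/\xi\rf=\lf jq^*/\xi\rf$; when $\kappa'$ is odd, $i_j\cong j+\lf j\xi'/\xi\rf$ while $\lf jq^*/\xi\rf=\lf j\xi'/\xi\rf-j$, so again $i_j\cong\lf jq^*/\xi\rf\;(\mathsf{mod}\,2)$. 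This is exactly the sign identity needed, and it matches the two cases in the definition of $q^*$, completing the proof.

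\textbf{Main obstacle.} I expect the delicate point to be the bookkeeping in the recursion for $x_r$: tracking the exact number of edges when passing from one $(\kappa+1)$-segment to the next, invoking Proposition \ref{blocksizeprop}\ref{blockstart} correctly, and handling the degenerate case $\kappa'=1$ (where a ``run of $\kappa'-1=0$ segments'' means two $(\kappa+1)$-segments are adjacent and there is no $\kappa$-block, so the length dichotomy must be read off directly from Proposition \ref{segsizeprop}\ref{segtype} rather than from Proposition \ref{blocksizeprop}).
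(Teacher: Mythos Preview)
Your proof is correct and follows essentially the same route as the paper. Both arguments reduce part (b) to tracking the residues $x_r=l_rq\modop p$ of the start-vertices of the $(\kappa+1)$-segments, establishing the key identity $x_r=r\xi'\modop\xi$ (the paper's Claim~3), using it together with Proposition~\ref{blocksizeprop} to determine the length of each intervening $\kappa$-block (the paper's Claim~2), and then comparing parities in the two cases for $\kappa'$. The only cosmetic difference is that the paper packages the final comparison as an inductive sign-flip argument ($\varepsilon_{i+1}=\pm\varepsilon_i$ versus $\eta_{i+1}=\pm\eta_i$), whereas you telescope the block-length indicators $\beta_r$ to the closed form $i_j=j\kappa'+\lf j\xi'/\xi\rf$ and read off the parity directly; the underlying computation is identical.
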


\begin{proof}
For \ref{rednew},
we see that $\xi>0$ since $p$ and $q$ are co-prime.
Also, notice that $q$ is odd and
\[
	\xi'=q-\kappa'\xi.
\]
If $\kappa'$ is even then $q^*=\xi'$ is odd.
If $\kappa'$ is odd then $\xi'$ and $\xi$ must have opposite parities so $q^*=\xi'-\xi$ is odd.
\bigskip{}

For \ref{rediso}, consider $\oGamma(p,q)$.
By Proposition \ref{segsizeprop}\ref{segbig}, we know that $\oGamma(p,q)$ has $2\xi$ $(\kappa+1)$-segments so
$R(\oGamma)(p,q)$ has $2\xi$ edges and $2\xi$ vertices.
Let $\{Q_0,\ldots,Q_{2\xi-1}\}$ be the vertex set of $R(\oGamma)(p,q)$,
and $\{P^*_0,\ldots,P^*_{2\xi-1}\}$ be the vertex set of $\oGamma(p^*,q^*)$.
Since $R(\oGamma)(p,q)$ and $\oGamma(p^*,q^*)$ are cycle graphs with the same number of vertices, there is a unique ungraded directed graph isomorphism between them by mapping $Q_i\mapsto P^*_i$.
Since $\gr(Q_0)$ and $\gr(P^*_0)$ are both 0 by definition, it only remains to show
\[
	\gr(Q_{i+1})-\gr(Q_i)=\gr(P^*_{i+1})-\gr(P^*_i)
\]
for each $i=0,\ldots,2\xi-1$.

For $i=0,\ldots,2\xi-1$, define
$$\varepsilon_i:=\gr(Q_{i+1})-\gr(Q_i)$$
and
$$\eta_i:=(-1)^{\lfloor \frac{i\xi'}{\xi}\rfloor}.$$
If $q^*=\xi'$, then
\[
    \gr(P^*_{i+1})-\gr(P^*_i)=\eta_i,
\]
and if $q^*=\xi'-\xi$, then
\[
    \gr(P^*_{i+1})-\gr(P^*_i)=(-1)^{\lfloor\frac{i(\xi'-\xi)}{\xi} \rfloor}=(-1)^i\eta_i.
\]

\begin{figure}[t]
\includegraphics{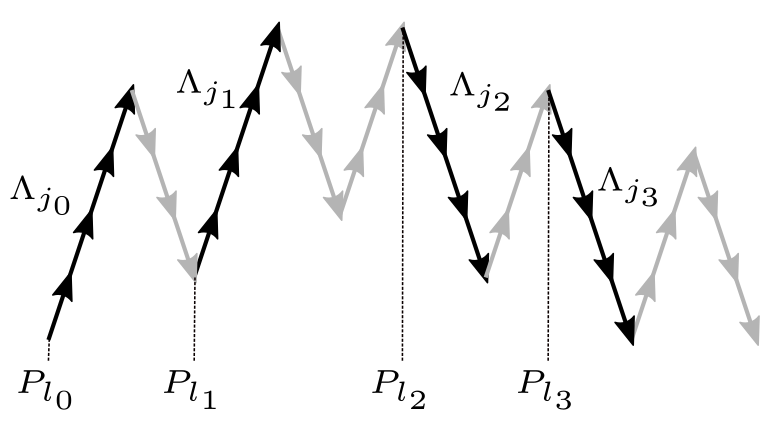}
\caption{The $(\kappa+1)$-segments of $\oGamma(17,5)$. The indices of the segments are $j_0=0$, $j_1=2$, $j_2=5$, and $j_3=7$. The indices of the vertices at the beginning of each $(\kappa+1)$-segment are $l_0=0$, $l_1=7$, $l_2=17$, and $l_3=24$.}
\label{kappap1segs}
\end{figure}

Let $j_0,\ldots,j_{2\xi-1}$ be the indices in ascending order of the $(\kappa+1)$-segments in the decomposition in (\ref{segmentdecomp}), and
let $l_i$ be the index of the vertex in $\oGamma(p,q)$ at the beginning of $\Lambda_{j_i}$; see Figure \ref{kappap1segs}.
By definition of $R(\oGamma)(p,q)$, $\varepsilon_i$ is positive precisely when $\Lambda_{j_i}$ is a positive segment.
Thus, $\varepsilon_{i+1}=\varepsilon_i$ when $\Lambda_{j_i}$ and $\Lambda_{j_{i+1}}$ are separated by an even number of $\kappa$-segments, and
$\varepsilon_{i+1}=-\varepsilon_i$ when $\Lambda_{j_i}$ and $\Lambda_{j_{i+1}}$ are separated by an odd number of $\kappa$-segments.
The desired result will follow from three claims.
\bigskip{}

\begin{claim}{ 1}
    Whenever $0\leq (i\xi'\modop\xi)<\xi-\xi'$,
    \[
        \eta_{i+1}=\eta_i,
    \]
    and whenever $(i\xi'\modop\xi)\geq\xi-\xi'$, 
    \[
        \eta_{i+1}=-\eta_i.
    \]
\end{claim}
\bigskip{}

When $0\leq (i\xi'\modop\xi)<\xi-\xi'$,
there are integers $s$ and $t$ with
\[
    i\xi'= s\xi+t \text{ and } 0\leq t<\xi-\xi'
\]
so
\[
    s\xi\leq (i+1)\xi'=s\xi+t+\xi'<(s+1)\xi .
\]
Thus,
\[
    \eta_{i+1}=(-1)^{s}=\eta_i.
\]

When $(i\xi'\modop\xi)\geq\xi-\xi'$,
there are integers $s$ and $t$ with
\[
    i\xi'= s\xi+t \text{ and } \xi-\xi'\leq t<\xi
\]
so
\[
    (s+1)\xi\leq (i+1)\xi'=s\xi+t+\xi'<(s+1)\xi+\xi'<(s+2)\xi .
\]
Thus,
\[
    \eta_{i+1}=(-1)^{s+1}=-\eta_i.
\]

\begin{claim}{ 2}
    The segments $\Lambda_{j_i}$ and $\Lambda_{j_{i+1}}$ are separated by a $\kappa$-block of length $\kappa'$ when
    \[
        \xi-\xi'\leq (l_iq\modop p)<\xi
    \]
    and a $\kappa$-block of length $\kappa'-1$ (possibly zero) when
    \[
        0\leq (l_iq\modop p)<\xi-\xi'.
    \]
\end{claim}

By Proposition \ref{blocksizeprop}\ref{blockstart}, every $\kappa$-block begins at a vertex $P_l$ where 
\[
    q-\xi\leq (lq\modop p)<q .
\]
The length of the block is $\kappa'$ when
\begin{equation}\label{bigblock}
    q-\xi'\leq (lq\modop p)<q,
\end{equation}
and the length is $\kappa'-1$ when
\begin{equation}\label{smallblock}
    q-\xi \leq (lq\modop p)<q-\xi'.
\end{equation}

The vertex at the end of the segment $\Lambda_{j_i}$ is the same as the vertex at the beginning the segment $\Lambda_{j_i+1}$ so
$\Lambda_{j_i+1}$ begins at the vertex with index $l':=l_i+\kappa+1$.
By Proposition \ref{segsizeprop}\ref{segstart},
\[
    0\leq l_iq\modop p+q-\xi<q<p
\]
so
\begin{align*}
    l'q\modop p=&(l_i+\kappa+1)q\modop p\\
    =&(l_iq\modop p+q-\xi)\modop p \\
    =&l_iq\modop p+q-\xi .
\end{align*}
By (\ref{bigblock}), $\Lambda_{j_i}$ and $\Lambda_{j_{i+1}}$ are separated by a $\kappa$-block of length $\kappa'$ when
\[
    q-\xi'\leq(l'q\modop p)<q
\]
so
\[
    \xi-\xi'\leq (l_iq\modop p)<\xi.
\]
By (\ref{smallblock}),
$\kappa$-block of length $\kappa'-1$ when
\[
    q-\xi\leq(l'q\modop p)<q-\xi'
\]
so
\[
    0\leq (l_iq\modop p)<\xi-\xi'.
\]

\begin{claim}{ 3}
    For each $i=0,\ldots,2\xi-1$
    \[
    l_iq\modop p=i\xi'\modop\xi.
    \]
\end{claim}

$P_{l_i}$ and $P_{l_{i+1}}$ are separated by a $(\kappa+1)$-segment and a $\kappa$-block.
Therefore, when the length of the $\kappa$-block is $\kappa'$,
\[
    l_{i+1}=l_i+(\kappa+1)+\kappa'\kappa
\]
so
\begin{align*}
    l_{i+1}q\modop p=&(l_iq+\kappa q+q+\kappa'\kappa q)\modop p\\
    =&(l_iq\modop p+\xi'-\xi)\modop p
\end{align*}
where last equality follows from (\ref{eucdivpq}) and (\ref{eucdivqr}).
By Claim 2,
\[
    0\leq l_iq\modop p+\xi'-\xi<\xi'<p .
\]
Therefore, 
\begin{equation}\label{clm3a}
    l_{i+1}q\modop p=l_iq\modop p+\xi'-\xi .
\end{equation}

When the length of the $\kappa$-block is $\kappa'-1$,
\[
    l_{i+1}=l_i+(\kappa + 1) + (\kappa'-1)\kappa=l_i+1+\kappa'\kappa
\]
so
\begin{align*}
    l_{i+1}q\modop p=&(l_iq+q+\kappa'\kappa q)\modop p\\
    =&(l_iq\modop p+\xi')\modop p .
\end{align*}
By Claim 2,
\[
    0<\xi'\leq l_iq\modop p+\xi'<\xi<p .
\]
Therefore, 
\begin{equation}\label{clm3b}
    l_{i+1}q\modop p=l_iq\modop p+\xi'.
\end{equation}

In either the case (\ref{clm3a}) or (\ref{clm3b}),
\[
    l_{i+1}q\modop p=(l_iq\modop p +\xi')\modop \xi
\]
so since $l_0=0$,
\[
    l_iq\modop p=i\xi'\modop\xi
\]
for each $i=0,\ldots, 2\xi-1$ by induction.
This completes the proof of the claim.
\bigskip{}

Suppose $\kappa'$ is even.
When $\Lambda_{i+1}$ and $\Lambda_i$ are separated by a $\kappa$-block of length $\kappa'-1$,
$\Lambda_{i+1}$ and $\Lambda_i$ have the same sign so
\[
    \varepsilon_{i+1}=\varepsilon_i.
\]
By the three claims,
\[
    0\leq (i\xi'\modop\xi)<\xi-\xi'
\]
so
\[
    \eta_{i+1}=\eta_i.
\]

When $\Lambda_{i+1}$ and $\Lambda_i$ are separated by a $\kappa$-block of length $\kappa'$,
$\Lambda_{i+1}$ and $\Lambda_i$ have opposite signs so
\[
    \varepsilon_{i+1}=-\varepsilon_i.
\]
By the three claims,
\[
    (i\xi'\modop\xi)\geq\xi-\xi'
\]
so
\[
    \eta_{i+1}=-\eta_i.
\]

Since $\varepsilon_0=\eta_0=1$, for every $i=0,\ldots,2\xi-1$,
\[
    \varepsilon_i=\eta_i
\]
so when $q^*=\xi'$,
\[
    \gr(P^*_{i+1})-\gr(P^*_i)=\eta_i=\varepsilon_i=\gr(Q_{i+1})-\gr(Q_i).
\]

Suppose $\kappa'$ is odd.
When $\Lambda_{i+1}$ and $\Lambda_i$ are separated by a $\kappa$-block of length $\kappa'$, then $\varepsilon_{i+1}=\varepsilon_i$.
When $\Lambda_{i+1}$ and $\Lambda_i$ are separated by a $\kappa$-block of length $\kappa'-1$,
then $\varepsilon_{i+1}=-\varepsilon_i$.

Thus, by the claims, $\varepsilon_{i+1}=\varepsilon_i$ when $\eta_{i+1}=-\eta_i$, and
$\varepsilon_{i+1}=-\varepsilon_i$ when $\eta_{i+1}=\eta_i$.
Again, $\varepsilon_0=\eta_0=1$.
Therefore, for every $i=0,\ldots,2\xi-1$, 
\[
    \varepsilon_i=(-1)^i\eta_i
\]
so when $q^*=\xi'-\xi$, then
\[
    \gr(P^*_{i+1})-\gr(P^*_i)=(-1)^i\eta_i=\varepsilon_i=\gr(Q_{i+1})-\gr(Q_i).
\]
\end{proof}

\begin{example}
    Consider the co-prime pair $(33,23)$.
    $R(\oGamma)(33,23)$ is isomorphic to $\Gamma(10,3)$ (see Figure \ref{redgamma3323}).
\end{example}

\subsection{Expanding Cycle Graphs}

We can also reverse the reduction process $R$.
Let $\Gamma$ be an incremental path with vertices $P_0,\ldots,P_n$ indexed such that $(P_i,P_{i+1})$ is an edge in $\Gamma$ for each $i=0,\ldots,n-1$.
Let $s$ and $b$ be positive integers,
and let $e=\pm1$.
Define $\tilde{E}(\Gamma,s,b,e)$ to be the incremental path graph constructed as follows:
\begin{enumerate}
	\item Create a $(s+1)$-segment, $\Lambda_i$, for each edge $(P_i,P_{i+1})$ in $\Gamma$.
	Choose $\Lambda_i$ to be positive or negative according to the sign of the edge $(P_i,P_{i+1})'$.
	\item Between each pair $\Lambda_i$ and $\Lambda_{i+1}$, for $i=0,\ldots, n-2$, add a $s$-block of length $b$ or $b-1$.
    The length of the $s$-block is odd if the edges $\Lambda_i$ and $\Lambda_{i+1}$ have the same sign, and
    the length is even if $\Lambda_i$ and $\Lambda_{i+1}$ have opposite signs.
    Also, the first $s$-segment in the block has sign opposite of the sign of $\Lambda_i$.
	\item Add another $s$-block to the beginning of $\Lambda_i$ of length $b$ or $b-1$ depending on the signs of $\Lambda_0$ and $e$ following the same convention as the previous step.
	Also, the first $s$-segment in the block has sign opposite of $e$.
	\item Finally, set the grading of the first vertex $Q_0$ as follows.
	\begin{equation}\label{expandstart}
	    \gr(Q_0)=\left\{
	    \begin{array}{ll}
	        \gr(P_0)+s & \text{when } e\text{ and }(P_0,P_1)\text{ are both positive}\\
            \gr(P_0)-s & \text{when } e\text{ and }(P_0,P_1)\text{ are both negative}\\
            \gr(P_0) & \text{when } e\text{ and }(P_0,P_1)\text{ have opposite sign}\\
	    \end{array}
	    \right.
	\end{equation}
\end{enumerate}
For an example, see Figure \ref{figexpand}.

\begin{figure}[t]
    \begin{subfigure}[t]{10cm}
        \centering
        \includegraphics{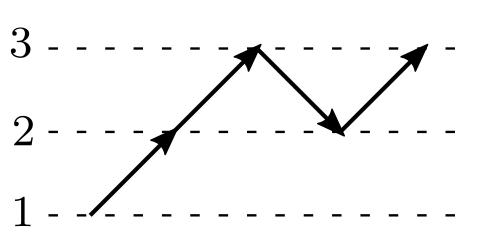}
        \caption{An incremental path $\Gamma$}
        \label{fe:gamma}
    \end{subfigure}
    \begin{subfigure}[t]{10cm}
        \centering
        \includegraphics{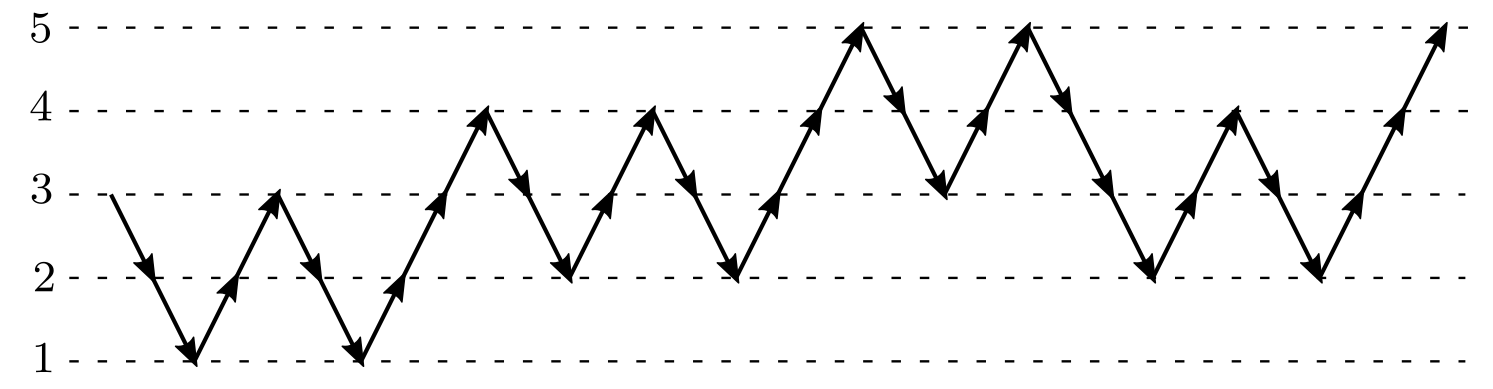}
        \caption{$\tilde{E}(\Gamma,2,3,+)$}
        \label{fe:egamma}
    \end{subfigure}
    \caption{}
    \label{figexpand}
\end{figure}

By construction, the following property holds

\begin{lem} \label{expandiso}
    Suppose $\Gamma$ and $\Gamma'$ are isomorphic incremental paths.
    For any positive integers $s$ and $b$ and any sign $e=\pm1$,
    \[
        \tilde{E}(\Gamma,s,b,e)\cong \tilde{E}(\Gamma',s,b,e).
    \]
\end{lem}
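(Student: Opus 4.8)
The plan is to verify directly that the output of the construction $\tilde{E}(\Gamma,s,b,e)$ depends on $\Gamma$ only through data that a graded isomorphism of incremental paths preserves. First I would record that an isomorphism $f\colon\Gamma\to\Gamma'$ of incremental paths is forced to be the unique order-preserving vertex bijection: an isomorphism of graded directed graphs is in particular an isomorphism of the underlying directed path graphs, and a finite directed path graph has no nontrivial directed graph automorphism (the source vertex is the unique vertex of in-degree $0$, hence must be fixed, and then each successive vertex is pinned down as well). Writing the vertices of $\Gamma$ as $P_0,\ldots,P_n$ and those of $\Gamma'$ as $P_0',\ldots,P_n'$, the grading-preservation condition then gives $\gr(P_i)=\gr(P_i')$ for every $i$; in particular $\gr(P_0)=\gr(P_0')$ and the edge $(P_i,P_{i+1})$ is positive if and only if $(P_i',P_{i+1}')$ is positive for every $i$.

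Next I would run through the four steps defining $\tilde{E}$ and check at each step that the result is determined by $s$, $b$, $e$, the number of edges of $\Gamma$, the sign pattern of those edges, and $\gr(P_0)$ — all of which coincide for $\Gamma$ and $\Gamma'$. In step (1) the sign of each $(s+1)$-segment $\Lambda_i$ is dictated by the sign of the edge $(P_i,P_{i+1})$. In steps (2) and (3) the length of each interpolating $s$-block is prescribed to be $b$ or $b-1$ according to a parity condition, and since exactly one of $b$ and $b-1$ has any prescribed parity the length is forced; the sign of the first $s$-segment of each block is prescribed outright (opposite to $\Lambda_i$, respectively opposite to $e$), and the signs within a block alternate, so the whole sequence of edge signs of $\tilde{E}$ is determined by the signs of the $\Lambda_i$ together with $e$. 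Finally, step (4) sets $\gr(Q_0)$ as an explicit function of $\gr(P_0)$, $e$, and the sign of $(P_0,P_1)$. Hence each of these construction steps produces, for $\Gamma$ and for $\Gamma'$, the same combinatorial output.

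Therefore $\tilde{E}(\Gamma,s,b,e)$ and $\tilde{E}(\Gamma',s,b,e)$ are incremental paths with the same number of edges, the same sequence of edge signs, and the same grading on their initial vertices; since an incremental path is completely determined by the grading of its first vertex together with the signs of its edges in order, the order-preserving vertex bijection between them is a graded directed graph isomorphism, which is exactly the assertion. I expect no serious obstacle: the argument is pure bookkeeping, and the only point that deserves care is confirming that every choice left implicit in the description of $\tilde{E}$ (the block lengths, the segment signs, the base grading) is in fact uniquely pinned down by the listed data rather than genuinely free — which is precisely why the lemma holds "by construction." A minor point worth flagging in the write-up is that the conclusion is genuine isomorphism, not merely relative isomorphism, and this uses $\gr(P_0)=\gr(P_0')$ from the first paragraph.
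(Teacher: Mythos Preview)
Your proposal is correct and is exactly the content of the paper's proof, which consists of the single remark ``By construction, the following property holds.'' You have simply unpacked what ``by construction'' means: the isomorphism of directed paths is forced to be the order-preserving bijection, so the grading sequence and hence the edge-sign sequence agree, and every choice in the four-step definition of $\tilde{E}$ is determined by that sequence together with $s$, $b$, $e$, and $\gr(P_0)$.
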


We begin by investigating the gradings of the vertices in $\tilde{E}(\Gamma,s,b,e)$.
Let $Q_0$ be the vertex at the beginning of $\tilde{E}(\Gamma,s,b,e)$.
For $i=1,\ldots,n$,
let $Q_{i}$ be the vertex at the end of $(s+1)$-segment $\Lambda_{i-1}$ as defined in the definition of $\tilde{E}$.

\begin{lem} \label{tegradings}
For each $i=1,\ldots,n$,
\begin{enumerate}[label=(\alph*)]
    \item if the sign of $\Lambda_i$ and $e$ are the same, then
    \[
        \gr(Q_i)-\gr(Q_0)=\gr(P_i)-\gr(P_0),
    \]
    \item if $\Lambda_{i-1}$ is positive and $e$ is negative, then
    \[
        \gr(Q_i)-\gr(Q_0)=\gr(P_i)-\gr(P_0)+s, and
    \]
    \item if $\Lambda_{i-1}$ is negative and $e$ is positive, then
    \[
        \gr(Q_i)-\gr(Q_0)=\gr(P_i)-\gr(P_0)-s.
    \]

\end{enumerate}
\end{lem}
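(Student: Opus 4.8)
The plan is to compute $\gr(Q_i)-\gr(Q_0)$ directly, by walking along the unique path in $\tilde{E}(\Gamma,s,b,e)$ from $Q_0$ to $Q_i$ and summing the grading changes of the pieces it traverses. By the construction of $\tilde{E}$, this path consists, in order, of the initial $s$-block placed in front of $\Lambda_0$, then the $(s+1)$-segment $\Lambda_0$, then the $s$-block inserted between $\Lambda_0$ and $\Lambda_1$, then $\Lambda_1$, and so on, ending with $\Lambda_{i-1}$ (recall $Q_i$ is the vertex at the end of $\Lambda_{i-1}$). Write $\lambda_j\in\{+1,-1\}$ for the sign of $\Lambda_j$; by definition of $\tilde{E}$ this is the sign of the edge $(P_j,P_{j+1})$, and since $\Gamma$ is incremental we have $\lambda_j=\gr(P_{j+1})-\gr(P_j)$. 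The grading change across the segment $\Lambda_j$ is $\lambda_j(s+1)$.

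The one elementary ingredient is the net grading change across an $s$-block. An $s$-block of length $\ell$ is a string of $\ell$ $s$-segments of strictly alternating sign, so along it the grading changes by $\delta s,-\delta s,\delta s,\dots$, where $\delta$ is the sign of its first $s$-segment. Hence an even-length $s$-block contributes $0$ to the grading, and an odd-length one contributes $\delta s$. Now I read off the conventions of the definition: the $s$-block between $\Lambda_j$ and $\Lambda_{j+1}$ has odd length exactly when $\lambda_j=\lambda_{j+1}$ and its first $s$-segment has sign $-\lambda_j$, while the initial $s$-block has odd length exactly when $e=\lambda_0$ and its first $s$-segment has sign $-e$. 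Checking the two cases in each instance, the net contribution of the block between $\Lambda_j$ and $\Lambda_{j+1}$ equals $-s\,\frac{\lambda_j+\lambda_{j+1}}{2}$, and that of the initial block equals $-s\,\frac{e+\lambda_0}{2}$.

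Summing the contributions along the path yields
\[
\gr(Q_i)-\gr(Q_0)=-\,s\,\frac{e+\lambda_0}{2}+(s+1)\sum_{j=0}^{i-1}\lambda_j-s\sum_{j=0}^{i-2}\frac{\lambda_j+\lambda_{j+1}}{2}.
\]
Two telescoping identities finish the computation: $\sum_{j=0}^{i-1}\lambda_j=\gr(P_i)-\gr(P_0)$, and, splitting the second sum and reindexing, $\sum_{j=0}^{i-2}\frac{\lambda_j+\lambda_{j+1}}{2}=(\gr(P_i)-\gr(P_0))-\frac{\lambda_0+\lambda_{i-1}}{2}$. Substituting, the $\lambda_0$ contributions cancel and one is left with
\[
\gr(Q_i)-\gr(Q_0)=(\gr(P_i)-\gr(P_0))+\frac{s}{2}(\lambda_{i-1}-e).
\]
Parts (a), (b), (c) then follow by inspecting the sign relation between $\Lambda_{i-1}$ and $e$: if they agree the correction term vanishes; if $\Lambda_{i-1}$ is positive and $e$ negative it is $+s$; and if $\Lambda_{i-1}$ is negative and $e$ positive it is $-s$.

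I do not expect a genuine obstacle: the proof is a finite grading-change count. The only delicate points are keeping the indexing straight --- $Q_i$ sits at the end of $\Lambda_{i-1}$, not $\Lambda_i$, so all three cases are in fact controlled by the sign of $\Lambda_{i-1}$ relative to $e$ --- and correctly interpreting the phrase ``following the same convention as the previous step'' in the definition of the initial $s$-block, which means $e$ plays the role for the initial block that $\lambda_j$ plays for the interior blocks. Once these are settled, the displayed identity does everything.
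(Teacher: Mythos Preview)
Your proof is correct. The approach differs from the paper's in an instructive way. The paper argues by counting: letting $D^{\pm}$ and $d^{\pm}$ be the numbers of positive/negative $(s+1)$-segments and $s$-segments between $Q_0$ and $Q_i$, it observes that since the segments alternate in sign, the total $D^++d^+$ equals, or differs by one from, $D^-+d^-$ according to whether the first and last signs ($e$ and $\lambda_{i-1}$) agree; the result then drops out of $\gr(Q_i)-\gr(Q_0)=(D^++d^+)s-(D^-+d^-)s+D^+-D^-$. Your route instead computes the contribution of each individual $s$-block in closed form as $-s\,\tfrac{\lambda_j+\lambda_{j+1}}{2}$, sums, and telescopes to the clean identity $\gr(Q_i)-\gr(Q_0)=(\gr(P_i)-\gr(P_0))+\tfrac{s}{2}(\lambda_{i-1}-e)$. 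The paper's counting argument is a bit slicker and avoids any algebra, while your explicit formula unifies the three cases into a single equation and makes the dependence on $\lambda_{i-1}$ and $e$ transparent. You are also right that part~(a) of the statement should read $\Lambda_{i-1}$ rather than $\Lambda_i$; the paper's own proof uses $\Lambda_{i-1}$ there as well.
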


\begin{proof}
Since the vertices $Q_0$ and $Q_i$ are separated some number of segments.
Let $D^+$ and $D^-$ be the number of positive or negative $(s+1)$-segments.
Likewise, let $d^+$ and $d^-$ be the number of positive or negative $s$-segments.
Note that $D^+$ and $D^-$ are also the number of positive and negative edges in $\Gamma$ so
\[
    D^+-D^-=\gr(P_i)-\gr(P_0).
\]

Suppose $\Lambda_{i-1}$ and $e$ are have the same sign, then the number of positive segments in $\tilde{E}(\Gamma,s,b,e)$ is equal to the number of negative segments so
\[
    D^++d^+=D^-+d^- .
\]
Thus,
\begin{align*}
    \gr(Q_i)-\gr(Q_0)=&D^+(s+1)-D^-(s+1)+d^+s-d^-s\\
    =&(D^++d^+)s-(D^-+d^-)s+D^+-D^-\\
    =&D^+-D^-\\
    =&\gr(P_i)-\gr(P_0).
\end{align*}

Suppose $\Lambda_{i-1}$ is positive and $e$ is negative, then the total number of positive segments in $\tilde{E}(\Gamma,s,b,e)$ is one more than the total number of negative segments so
\begin{align*}
    \gr(Q_i)-\gr(Q_0)=&D^+(s+1)-D^-(s+1)+d^+s-d^-s\\
    =&(D^++d^+)s-(D^-+d^-)s+D^+-D^-\\
    =&s+D^+-D^-\\
    =&\gr(P_i)-\gr(P_0)+s.
\end{align*}

Suppose $\Lambda_{i-1}$ is negative and $e$ is positive, then the total number of positive segments in $\tilde{E}(\Gamma,s,b,e)$ is one less than the total number of negative segments so
\begin{align*}
    \gr(Q_i)-\gr(Q_0)=&D^+(s+1)-D^-(s+1)+d^+s-d^-s\\
    =&(D^++d^+)s-(D^-+d^-)s+D^+-D^-\\
    =&-s+D^+-D^-\\
    =&\gr(P_i)-\gr(P_0)-s.
\end{align*}
\end{proof}

From this, we can show that concatenation behaves well under expansion.

\begin{lem} \label{expandconcat}
    Suppose $\Gamma$ and $\Gamma'$ are incremental paths where the last vertex in $\Gamma$ has the same grading as the first vertex in $\Gamma'$.
    Let $e'$ be the sign of the last edge in $\Gamma$.
    For any positive integers $s$ and $b$ and any sign $e=\pm1$,
    \[
        \tilde{E}(\Gamma*\Gamma',s,b,e)\cong\tilde{E}(\Gamma,s,b,e)*\tilde{E}(\Gamma',s,b,e') .
    \]
\end{lem}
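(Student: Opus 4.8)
The plan is to exploit the fact that the construction of $\tilde{E}$ is almost entirely \emph{local}: each edge of the input incremental path produces one $(s+1)$-segment of the same sign, each consecutive pair of edges produces one intervening $s$-block whose length (one of $b$, $b-1$) and whose sign pattern depend only on that pair of signs, and the only nonlocal ingredients are the leading $s$-block, prescribed by step (3) from $e$ and the sign of the first edge, and the grading of the initial vertex $Q_0$, prescribed by $(\ref{expandstart})$. First I would list the edges of $\Gamma*\Gamma'$ as those of $\Gamma$ (say $n$ of them, giving segments $\Lambda_0,\dots,\Lambda_{n-1}$) followed by those of $\Gamma'$ (say $m$ of them, giving segments $\Lambda_n,\dots,\Lambda_{n+m-1}$), the identification point being the last vertex of $\Gamma$ and the first vertex of $\Gamma'$. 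This exhibits the segment-and-block skeleton of $\tilde{E}(\Gamma*\Gamma',s,b,e)$ as a ``$\Gamma$-piece'' (its leading $s$-block, the segments $\Lambda_0,\dots,\Lambda_{n-1}$, and the $s$-blocks among them) followed by a ``$\Gamma'$-piece'' (the segments $\Lambda_n,\dots,\Lambda_{n+m-1}$ with the $s$-blocks among them), joined by exactly one $s$-block, the one between $\Lambda_{n-1}$ and $\Lambda_n$.

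Second I would identify these two pieces with $\tilde{E}(\Gamma,s,b,e)$ and $\tilde{E}(\Gamma',s,b,e')$ as ungraded signed directed graphs. The $\Gamma$-piece is visibly $\tilde{E}(\Gamma,s,b,e)$. For the $\Gamma'$-piece, the only datum that $\tilde{E}(\Gamma',s,b,e')$ carries beyond what is internal to that piece is its own leading $s$-block, which by step (3) has length and first sign determined by $\sigma(\Lambda_n)=\sigma(\mathrm{first}(\Gamma'))$ and by $e'$; since $e'$ was chosen to be the sign of the last edge of $\Gamma$, that is $\sigma(\Lambda_{n-1})$, this leading block is \emph{exactly} the segment word that step (2) prescribes for the $s$-block between $\Lambda_{n-1}$ and $\Lambda_n$. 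Hence the underlying signed directed graph of $\tilde{E}(\Gamma*\Gamma',s,b,e)$ is the graph-theoretic concatenation of those of $\tilde{E}(\Gamma,s,b,e)$ and $\tilde{E}(\Gamma',s,b,e')$.

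Third I would promote this to an isomorphism of \emph{graded} directed graphs. An incremental path is determined up to graded isomorphism by the grading of its first vertex together with its sequence of $\pm1$ increments, and the increment sequences already agree, so it remains to check that the two first-vertex gradings agree and that the right-hand concatenation is legitimate, i.e.\ that the last vertex of $\tilde{E}(\Gamma,s,b,e)$ and the first vertex of $\tilde{E}(\Gamma',s,b,e')$ receive the same grading. The first point is immediate from $(\ref{expandstart})$, since $\mathrm{first}(\Gamma*\Gamma')=\mathrm{first}(\Gamma)$. For the second, Lemma $\ref{tegradings}$, applied with index $n$ inside $\tilde{E}(\Gamma*\Gamma',s,b,e)$, expresses the grading at the end of $\Lambda_{n-1}$ — the junction of the two pieces — as the grading of the corresponding vertex of $\Gamma*\Gamma'$ plus a correction governed by the sign of $\Lambda_{n-1}$, namely $e'$, against $e$; and $(\ref{expandstart})$ expresses the initial grading of $\tilde{E}(\Gamma',s,b,e')$ as the grading of $\mathrm{first}(\Gamma')$ plus a correction governed by $e'$ against $\sigma(\mathrm{first}(\Gamma'))$. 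Because the last vertex of $\Gamma$ and the first vertex of $\Gamma'$ have equal grading (the defining condition for $\Gamma*\Gamma'$), equality of these two gradings reduces to a finite case check on the signs of $e$, $e'$ and the first edges, which I would carry out directly; Lemma $\ref{expandiso}$ then lets one replace each factor by the chosen representative.

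I expect this third step to be the crux. The combinatorial matching in the first two steps is essentially forced once one sees the role of the choice $e'=\sigma(\mathrm{last}(\Gamma))$, but verifying that the grading shift $(\ref{expandstart})$ imposes at the front of the second factor is exactly cancelled by the correction term Lemma $\ref{tegradings}$ records at the back of the first factor requires carefully reconciling the sign conventions of steps (3) and (4) with the three cases of Lemma $\ref{tegradings}$, and this is where care is needed.
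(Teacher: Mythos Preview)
Your proposal is correct and follows essentially the same approach as the paper: the paper's proof also observes that the combinatorial (ungraded) matching holds ``by definition of the expansion procedure'' and then reduces everything to verifying that the last vertex of $\tilde{E}(\Gamma,s,b,e)$ and the first vertex of $\tilde{E}(\Gamma',s,b,e')$ have the same grading, which is checked case-by-case using Lemma~\ref{tegradings} and equation~(\ref{expandstart}). Your writeup is more explicit about the ungraded step, but the crux and the method are the same.
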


\begin{proof}
The conclusion will be true by definition of the expansion procedure as long as $\tilde{E}(\Gamma,s,b,e)$ and $\tilde{E}(\Gamma',s,b,e')$ can be concatenated.
Thus, our goal is to show that the last vertex in $\tilde{E}(\Gamma,s,b,e)$ has the grading as the first vertex in $\tilde{E}(\Gamma',s,b,e')$.
This can be done by computing $\tilde{E}(\Gamma*\Gamma',s,b,e)$ for many cases depending on the signs of $e$, the last edge in $\Gamma$, and the first edge in $\Gamma'$.

For example, suppose $e$, the last edge in $\Gamma$, and the first edge in $\Gamma'$ are all positive.
Let $P_0$ and $P_n$ be the first and last vertices of $\Gamma$.
Let $P'_0$ be the first vertex in $\Gamma'$ so $\gr(P_n)=\gr(P'_0)$.
Let $Q_0$ and $Q_n$ be the first and last vertices of $\tilde{E}(\Gamma,s,b,e)$.
Finally, let $Q'_0$ be the first vertex in $\tilde{E}(\Gamma',s,b,e')$.

By (\ref{expandstart}),
\[
    \gr(Q'_0)=\gr(P'_0)+s=\gr(P_n)+s
\]
By Lemma \ref{tegradings},
\begin{align*}
    \gr(Q_n)=&\gr(P_n)-\gr(P_0)+\gr(Q_0)\\
    =&\gr(Q'_0)-s-\gr(P_0)+\gr(P_0)+s\\
    =&\gr(Q'_0).
\end{align*}
The proofs of all the other cases are similar.
\end{proof}

Let $\Gamma$ be a closable incremental path, and 
let $e$ be the sign of the last edge in $\Gamma$.
For any two positive integers $s$ and $b$, define
\[
    E(\Gamma,s,b):=\tilde{E}(\Gamma,s,b,e).
\]
When $\Gamma$ is closable, $E(\Gamma,s,b)$ is also closable.

Suppose $\Gamma'$ is a closable incremental path such that $\cl(\Gamma)\cong\cl(\Gamma')$.
By construction,
\begin{equation} \label{expandcloseiso}
    \cl(E(\Gamma,s,b))\cong\cl(E(\Gamma',s,b))
\end{equation}
for all positive integers $s$ and $b$.

For a incremental cycle $\oGamma$,
define
\[
    E(\oGamma,s,b):=\cl(E(\Gamma,s,b)).
\]
where $\Gamma$ is any incremental path such that $\cl(\Gamma)\cong\oGamma$.
By (\ref{expandcloseiso}), $E(\oGamma,s,b)$ is well-defined.

By construction reduction and expansion natural opposite operations.

\begin{prop} \label{reduceexpand}
    Suppose $(p,q)$ is a co-prime pair with $q>0$.
    Define $\kappa$ and $\kappa'$ as in (\ref{eucdivpq}) and (\ref{eucdivqr}).
    \[
        E(R(\oGamma)(p,q),\kappa,\kappa')\cong\oGamma(p,q)
    \]
\end{prop}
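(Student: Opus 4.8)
The plan is to reduce the statement to a comparison of cyclic sequences of signed segments. An incremental cycle is determined up to isomorphism by its cyclic sequence of $\pm1$ increments together with the grading of a single vertex, so it suffices to show that the incremental cycles $E(R(\oGamma)(p,q),\kappa,\kappa')$ and $\oGamma(p,q)$ have the same cyclic sequence of signed segments and agree at one vertex. We may assume $q\geq 2$ and $\xi\geq 2$, the degenerate cases being immediate. To set up, cut $\oGamma^{*}:=R(\oGamma)(p,q)$ at the vertex whose grading was set to $0$ in the definition of $R$ — the tail of the edge replacing $\Lambda_0$ — to obtain an incremental path $\Gamma^{*}$. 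By the well-definedness of $E$ on incremental cycles, (\ref{expandcloseiso}), we have $E(R(\oGamma)(p,q),\kappa,\kappa')=\cl\bigl(\tilde E(\Gamma^{*},\kappa,\kappa',e)\bigr)$, where $e$ is the sign of the last edge of $\Gamma^{*}$. By construction of $R$, the edges of $\Gamma^{*}$ are, in order and with signs, exactly the $(\kappa+1)$-segments $\Lambda_{j_0},\dots,\Lambda_{j_{2\xi-1}}$ of $\oGamma(p,q)$ (there are $2\xi$ of them by Proposition \ref{segsizeprop}\ref{segbig}), with $\Lambda_{j_0}=\Lambda_0$ positive, so $e$ is the sign of $\Lambda_{j_{2\xi-1}}$.

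Next I would match segment structures. The first step of the expansion replaces each edge of $\Gamma^{*}$ by a $(\kappa+1)$-segment of the same sign, matching the $\Lambda_{j_i}$ one for one. For the $\kappa$-segments, note that in $\oGamma(p,q)$ the $\kappa$-segments lying between consecutive $(\kappa+1)$-segments $\Lambda_{j_i}$ and $\Lambda_{j_{i+1}}$ form a $\kappa$-block of length $\kappa'$ or $\kappa'-1$ by Proposition \ref{blocksizeprop}\ref{blocksizes}; since all segments of $\oGamma(p,q)$ alternate in sign, $\Lambda_{j_i}$ and $\Lambda_{j_{i+1}}$ have the same sign exactly when this block has odd length, and the first $\kappa$-segment of the block has sign opposite to $\Lambda_{j_i}$. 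This is precisely the rule used in the second and third steps of the expansion to choose the length of each inserted $\kappa$-block (one of $\kappa',\kappa'-1$, hence determined by its parity) and its signs, the wrap-around block being produced by the third step from the datum $e$. Hence $\cl\bigl(\tilde E(\Gamma^{*},\kappa,\kappa',e)\bigr)$ and $\oGamma(p,q)$ have identical cyclic sequences of signed segments, so they are relatively isomorphic.

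To upgrade this to an honest isomorphism, it is enough to check that the two cycles agree at the vertex beginning the $(\kappa+1)$-segment corresponding to $\Lambda_0$, which in $\oGamma(p,q)$ is $P_0$ with $\gr(P_0)=\sigma_0=0$. In $\tilde E(\Gamma^{*},\kappa,\kappa',e)$ that vertex is reached from the initial vertex $Q_0'$ by traversing the leading $\kappa$-block inserted by the third step. Since the first edge of $\Gamma^{*}$ (the $\Lambda_0$-edge) is positive with grading-$0$ tail, (\ref{expandstart}) gives $\gr(Q_0')=\kappa$ when $e=+1$ and $\gr(Q_0')=0$ when $e=-1$; and since the leading block has its first $\kappa$-segment of sign opposite $e$ and has odd length exactly when $e=+1$, its net increment is $-\kappa$ when $e=+1$ and $0$ when $e=-1$. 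In either case the vertex beginning the $\Lambda_0$-segment has grading $0$, so the relative isomorphism is an isomorphism, proving Proposition \ref{reduceexpand}.

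The step I expect to be the main obstacle is this last one: reconciling the grading normalization built into $R$ (grading $0$ at a chosen seam vertex) with the normalization (\ref{expandstart}) built into $\tilde E$, made awkward by the fact that the expanded path begins with a $\kappa$-block rather than at the $\Lambda_0$-segment, so the seam lands in a different place — tracking the net increment of that leading block through the two sign cases for $e$ is the delicate bookkeeping. The parity identification of block lengths with adjacent-segment signs used in the second paragraph also needs care, but it follows cleanly from Proposition \ref{blocksizeprop} together with the sign-alternation of segments.
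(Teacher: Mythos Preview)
Your argument is correct and is precisely the verification the paper omits: the paper provides no proof of this proposition, only the one-line assertion that ``by construction reduction and expansion [are] natural opposite operations,'' and your argument unwinds exactly that. Your bookkeeping on the leading $\kappa$-block and the grading normalization in (\ref{expandstart}) is right; note only that the degenerate cases $q=1$ or $\xi=1$ are not so much ``immediate'' as outside the scope of the statement, since $\kappa$ (resp.\ $\kappa'$) is not defined there.
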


Given an arbitrary co-prime pair $(p^*,q^*)$ and integers $s$ and $b$,
$E(\oGamma(p^*,q^*),s,b)$ may not be $\oGamma(p,q)$ for any co-prime $(p,q)$ with $q$ odd.
Consider the pair $(5,3)$.
Suppose $E(\oGamma(5,3),2,3)\cong\oGamma(p,q)$ for some pair $(p,q)$.
Then, $q=3(5)+3=18$.

%%%%%%%%%%%%%%%%%%%%%%%%%%%%%%%%%%%%%%%%%%%%%%%%%%%%%%%%%%%%%%%%%%%%%%
\section{Proof of Lemma \ref{nestedword}} \label{SecLemmaProof}
%%%%%%%%%%%%%%%%%%%%%%%%%%%%%%%%%%%%%%%%%%%%%%%%%%%%%%%%%%%%%%%%%%%%%%

In this section, we reinterpret Lemma \ref{nestedword} as set of properties of the cycle graph $\oGamma(p,q)$.
These properties will hold for simple co-prime pairs $(p,q)$ with $q=1$ or $p\modop q=1$.
Then, it is shown these conditions hold for any co-prime pair of integers $p$ and $q$ with $p$ positive and $q$ odd by a strong induction argument using the relative isomorphism between $\oGamma(p,q)$ and $\oGamma(p,-q)$ and the reduction from $\oGamma(p,q)$ to $R(\oGamma)(p,q)$.

\subsection{Making Words From Graphs}

Given an incremental path $\Gamma$, a word $\rho(\Gamma)$ in $\mathcal{S}$ can be defined as follows.
Let $\{P_1,\ldots,P_n\}$ be the vertices of $\Gamma$ indexed so that the edge $(P_i,P_{i+1})$ is in $\Gamma$.
For $i=2,\ldots,n$, let $s_i=\gr(P_i)-\gr(P_{i-1})$ and
let $N_i=\gr(Q_i)+\theta(s_i)$
where $\theta(1)=1$ and $\theta(-1)=0$.
Define
\begin{equation}\label{tauprime}
\rho(\Gamma):=\left\{\begin{array}{ll}
S_{N_3}^{s_3}S_{N_5}^{s_5}\cdots S_{N_k}^{s_k}&\text{if }n>2\text{ and }\gr(P_1)\text{ is even}\\
S_{N_2}^{s_2}S_{N_4}^{s_4}\cdots S_{N_k}^{s_k}&\text{if }n>1\text{ and }\gr(P_1)\text{ is odd}\\
1&\text{otherwise}
\end{array}\right.
\end{equation}
where $k=n-1$ if $n\equiv\gr(P_1)$ modulo 2, and 
$k=n$ if $n\not\equiv\gr(P_1)$ modulo 2.
Given a two-bridge link $L(p/q)$, by Proposition \ref{subscriptsprop}, $\rho(\Gamma(p,q))$ is the word $R_0$.

\begin{lem}\label{taulem}
Given incremental paths $\Gamma$ and $\Gamma'$ such that the last vertex of $\Gamma$ has the same grading as the first vertex of $\Gamma'$,
\[
    \rho(\Gamma*\Gamma')=\rho(\Gamma)\rho(\Gamma').
\]
\end{lem}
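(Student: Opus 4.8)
\smallskip
\noindent\textbf{Proof proposal.}
The plan is to collapse the piecewise definition (\ref{tauprime}) into one uniform product and then observe that concatenation of paths becomes concatenation of the corresponding index sets. Write $P_1,\dots,P_n$ for the vertices of $\Gamma$ and, as in (\ref{tauprime}), let $s_i=\gr(P_i)-\gr(P_{i-1})$ with $N_i$ the associated subscript, for $i=2,\dots,n$; the only feature I will use is that $s_i$ and $N_i$ are determined by $\gr(P_{i-1})$ and $\gr(P_i)$ alone. The first step is to check that all cases of (\ref{tauprime}), including the two choices of $k$ and the trivial clause, amount to
\[
\rho(\Gamma)=\prod_{\substack{2\le i\le n\\ i\not\equiv\gr(P_1)\ (\mathrm{mod}\ 2)}}S_{N_i}^{s_i},
\]
the product taken with $i$ increasing. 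Indeed, when $\gr(P_1)$ is even this selects the odd indices $3,5,\dots$ up through $n$ (the last being $k$), when $\gr(P_1)$ is odd it selects the even indices $2,4,\dots$ up through $n$, and when $n\le 2$ (resp. $n\le 1$) the index set is empty, agreeing with ``$1$ otherwise''.

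Next I will record how the data transform under concatenation. Let $\Gamma'$ have vertices $P_1',\dots,P_m'$ with $\gr(P_n)=\gr(P_1')$, and write $\Gamma*\Gamma'$ with vertices $P_1'',\dots,P_{n+m-1}''$, where $P_i''=P_i$ for $i\le n$ and $P_{n-1+j}''=P_j'$ for $1\le j\le m$ (consistent at $i=n$, since forming $\Gamma*\Gamma'$ identifies the last vertex of $\Gamma$ with the first of $\Gamma'$). No new edge is created at the splice, so $(s_i'')_{i=2}^{n+m-1}=(s_2,\dots,s_n,s_2',\dots,s_m')$, and likewise $N_i''=N_i$ for $2\le i\le n$ while $N_{n-1+j}''=N_j'$ for $2\le j\le m$; the value $s_1'$ is simply never referenced. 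Also $\gr(P_1'')=\gr(P_1)$, and — since each edge changes the parity of the grading — $\gr(P_1')=\gr(P_n)$ has parity opposite to $\gr(P_1)$ exactly when $n$ is even, i.e. $\gr(P_1')\equiv\gr(P_1)+n-1\pmod 2$.

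Finally I will split the product for $\rho(\Gamma*\Gamma')$ at $i=n$. The terms with $2\le i\le n$ and $i\not\equiv\gr(P_1'')=\gr(P_1)$ reproduce $\rho(\Gamma)$ letter for letter. For the terms with $n+1\le i\le n+m-1$, substitute $i=n-1+j$ with $2\le j\le m$: the selection condition $i\not\equiv\gr(P_1)$ becomes $j\not\equiv\gr(P_1)+n-1\pmod 2$, which by the parity identity above is $j\not\equiv\gr(P_1')\pmod 2$; so these terms reproduce $\rho(\Gamma')$. Since in the product for $\rho(\Gamma*\Gamma')$ the block of indices $\le n$ precedes the block of indices $>n$, this yields $\rho(\Gamma*\Gamma')=\rho(\Gamma)\rho(\Gamma')$.

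The only step requiring genuine care is the first one — verifying that the case split in (\ref{tauprime}) and the $k=n$ versus $k=n-1$ alternative really do assemble into the single uniform product, and that the degenerate ranges behave correctly; after that the argument is pure index arithmetic whose one substantive input is the parity shift $\gr(P_1')\equiv\gr(P_1)+n-1\pmod 2$.
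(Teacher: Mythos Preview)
Your argument is correct. The one-line product formula
\[
\rho(\Gamma)=\prod_{\substack{2\le i\le n\\ i\not\equiv\gr(P_1)\ (\mathrm{mod}\ 2)}}S_{N_i}^{s_i}
\]
does subsume all branches of (\ref{tauprime}), including the choice of $k$ and the degenerate ranges, and after that the splitting at $i=n$ together with the parity identity $\gr(P_1')\equiv\gr(P_1)+n-1\pmod 2$ finishes the job cleanly.

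The paper's proof reaches the same conclusion but by a direct case split: it treats each combination of parities of $\gr(P_1)$ and $n$ (hence of $\gr(P_1')$) separately, works out one case explicitly, and declares the rest ``similar.'' The substantive ingredient is identical---the parity shift of the starting grading under concatenation---but your unification of the three clauses of (\ref{tauprime}) into a single parity-filtered product eliminates the case analysis entirely. This is a genuine streamlining: the paper must in principle check several branches, whereas your version handles them all at once with one piece of index arithmetic.
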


\begin{proof}
Let $\{P_1,\ldots,P_n\}$ and $\{P_1',\ldots,P'_{n'}\}$ be the vertex sets for incremental paths $\Gamma$ and $\Gamma'$ respectively.
Also, define $N_2,\ldots,N_n$ and $s_2,\ldots,s_n$ for $\Gamma$ as in the definition of $\rho$.
Similarly, define $N_2',\ldots,N'_{n'}$ and $s_2',\ldots,s'_{n'}$ for $\Gamma'$.
Let $\Gamma''=\Gamma*\Gamma'$, which has length $n+n'-1$, and
define $N_2'',\ldots,N''_{n+n'-1}$ and $s_2'',\ldots,s''_{n+n'-1}$ for $\Gamma''$ as the analogous integers are defined for $\Gamma$ and $\Gamma'$.

This result is just a matter of computing $\rho(\Gamma*\Gamma')$ for each case of (\ref{tauprime}) for $\Gamma$ and $\Gamma'$.
For example, suppose $\gr(P_1)$ and $n$ are even, $n>2$, and $n'>1$.
Then, since $n$ is even,
\[
    \gr(P'_1)=\gr(P_n)\equiv(\gr(P_1)+n-1)\equiv\gr(P_1)+1\hspace{0.7cm}(\mathsf{mod}\;2)
\]
so since $\gr(P_1)$ is even, $\gr(P'_1)$ is odd.
Thus,
\[
    \rho(\Gamma)=S_{N_3}^{s_3}S_{N_5}^{s_5}\cdots S_{N_{n-1}}^{s_{n-1}}
\]
and
\[
    \rho(\Gamma')=S_{N_2}^{s_2}S_{N_4}^{s_4}\cdots S_{N_k}^{s_k}
\]
where $k=n'$ when $n'$ is even and $k=n'-1$ when $n'$ is odd.

For each $i=1,\ldots,n+n'-1$,
\[
    \gr(P''_i)=\left\{
    \begin{array}{ll}
    \gr(P_i)&\text{when }1\leq i\leq n\\
    \gr(P'_{i-n+1})&\text{when }n\leq i\leq n+n'-1
    \end{array}
    \right.
.
\]
Thus, when $2\leq i\leq n$, $s''_i=s_i$ and $N''_i=N_i$, and
when $n+1\leq i\leq n+n'-1$, $s''_i=s_{i-n+1}$ and $N''_i=N_{i-n+1}$.
Therefore,
\[
\rho(\Gamma*\Gamma')=S_{N_3}^{s_3}S_{N_5}^{s_5}\cdots S_{N_{n-1}}^{s_{n-1}}S_{N_2'}^{s_2'}S_{N_4'}^{s_4'}\cdots S_{N'_{k}}^{s'_{k}}=\rho(\Gamma)\rho(\Gamma')
\]

The proofs of all the other cases are similar.
\end{proof}

\begin{lem}\label{taucyc}
Given two closable incremental paths $\Gamma$ and $\Gamma'$ such that $\cl(\Gamma)$ is isomorphic to $\cl(\Gamma')$,
there is a subgraph $\Upsilon$ of $\Gamma$ such that
$$\rho(\Gamma')=\rho(\Upsilon)^{-1}\rho(\Gamma)\rho(\Upsilon).$$
\end{lem}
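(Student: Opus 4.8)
The plan is to recognize that $\Gamma$ and $\Gamma'$, having isomorphic closures, are two ``unrollings'' of one and the same incremental cycle $C:=\cl(\Gamma)\cong\cl(\Gamma')$, with $\Gamma'$ obtained from $\Gamma$ by cyclically moving the vertex at which $C$ is cut open; Lemma \ref{taulem} then converts this cyclic shift into the asserted conjugation. Concretely, index the vertices of $\Gamma$ as $P_1,\dots,P_n$ with $(P_i,P_{i+1})$ an edge, so that $\cl(\Gamma)$ is $C$ with $P_1$ and $P_n$ identified. Let $g\colon\cl(\Gamma)\to\cl(\Gamma')$ be the given graded directed graph isomorphism, let $v'$ denote the image in $\cl(\Gamma')$ of the initial vertex of $\Gamma'$, and pick a vertex $P_j$ of $\Gamma$ lying over $g^{-1}(v')$ under the quotient $\Gamma\to\cl(\Gamma)$ (take $j=1$ if $g^{-1}(v')$ is the class of $P_1=P_n$). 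Let $\Upsilon$ be the subpath $P_1\to\cdots\to P_j$ of $\Gamma$ and $\Xi$ the subpath $P_j\to\cdots\to P_n$, so that $\Gamma=\Upsilon*\Xi$.

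First I would verify that $\Xi*\Upsilon$ is a well-defined closable incremental path which is isomorphic, as a graded directed graph, to $\Gamma'$. The concatenation makes sense because the last vertex of $\Xi$ is $P_n$, and $\gr(P_n)=\gr(P_1)$ is the grading of the first vertex of $\Upsilon$; moreover $\Xi*\Upsilon$ is precisely $C$ cut open at $P_j$. Under $g$ this corresponds to $\cl(\Gamma')$ cut open at $v'$, which by the choice of $v'$ is $\Gamma'$, so $g$ induces a grading-preserving directed-path isomorphism from $\Xi*\Upsilon$ onto $\Gamma'$. Since the word $\rho$ of an incremental path, as defined in (\ref{tauprime}), depends only on the ordered sequence of vertex gradings, it is invariant under such isomorphisms, and hence $\rho(\Gamma')=\rho(\Xi*\Upsilon)$. (This is exactly where the hypothesis that $\cl(\Gamma)$ is isomorphic to $\cl(\Gamma')$, rather than merely relatively isomorphic, is used: a nonzero grading shift would translate every subscript occurring in $\rho$, and the conjugacy relation would break down.)

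Then Lemma \ref{taulem}, applied to the splittings $\Gamma=\Upsilon*\Xi$ and $\Xi*\Upsilon$ (whose gluing gradings are $\gr(P_j)$ and $\gr(P_1)=\gr(P_n)$ respectively), yields $\rho(\Gamma)=\rho(\Upsilon)\rho(\Xi)$ and $\rho(\Gamma')=\rho(\Xi)\rho(\Upsilon)$. Therefore
\[
\rho(\Upsilon)^{-1}\rho(\Gamma)\rho(\Upsilon)=\rho(\Upsilon)^{-1}\rho(\Upsilon)\rho(\Xi)\rho(\Upsilon)=\rho(\Xi)\rho(\Upsilon)=\rho(\Gamma'),
\]
which is the claimed identity, with $\Upsilon$ the initial subpath of $\Gamma$ constructed above. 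The one step that needs genuine care is the identification of $\Gamma'$ with the re-cut path $\Xi*\Upsilon$ exactly, matching not merely the underlying directed graph but also the gradings; once that is in place, the statement is a formal consequence of Lemma \ref{taulem} together with the fact that $\rho$ is an isomorphism invariant.
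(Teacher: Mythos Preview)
Your argument is correct and follows essentially the same route as the paper's proof: split $\Gamma=\Upsilon*\Xi$ at the vertex corresponding to the start of $\Gamma'$, identify $\Gamma'$ with $\Xi*\Upsilon$, and apply Lemma~\ref{taulem} twice. You are in fact a bit more careful than the paper, which writes $\Gamma'=\Omega*\Upsilon$ directly rather than noting that this is an isomorphism of graded paths and that $\rho$ is invariant under such isomorphisms.
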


\begin{proof}
If $\cl(\Gamma)\cong\cl(\Gamma')$ then there are some graphs $\Upsilon$ and $\Omega$ such that $\Gamma=\Upsilon*\Omega$ and $\Gamma'=\Omega*\Upsilon$ (see Figure \ref{figtaucyc} for an example).
Therefore,
\[
\rho(\Gamma')=\rho(\Omega)\rho(\Upsilon)=\rho(\Upsilon)^{-1}\rho(\Upsilon)\rho(\Omega)\rho(\Upsilon)=\rho(\Upsilon)^{-1}\rho(\Gamma)\rho(\Upsilon)
\]
\end{proof}

\begin{figure}[t]
\includegraphics{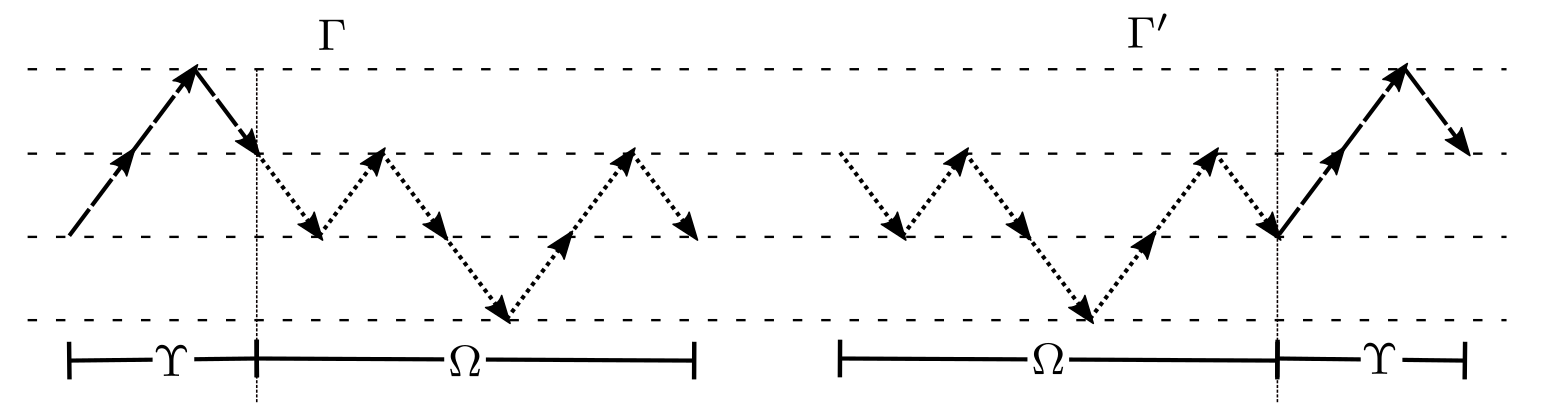}
\caption{Closable graphs $\Gamma$ and $\Gamma'$ with isomorphic closures with the subgraphs $\Upsilon$ (dashed) and $\Omega$ (dotted) shown.}
\label{figtaucyc}
\end{figure}

\subsection{Summits and Bottoms in Cycle Graphs}
Let $(p,q)$ be a co-prime pair, and define $M$ and $m$ for $L(p/q)$ as in section \ref{GroupPres}.
In Lamma \ref{nestedword}, we are primarily interest in the appearances of $S^{\pm}_M$ and $S^{\pm}_m$ in the word $R_0$.
When $M$ is odd, the $i$th $S$-generator of $R_0$ is $S^{\pm}_M$ precisely when $\sigma_{2i}=M+1$, and
when $M$ is even, the $i$th $S$-generator of $R_0$ is $S^{\pm}_M$ when $\sigma_{2i-1}=M+1$.
Thus, appearances $S^{\pm}_M$ in $R_0$ correspond to the indices when $\sigma_i$ is maximal.
Similarly, the $i$th $S$-generator of $R_0$ is $S^{\pm}_m$ precisely when $\sigma_{2i-1}=m$ when $m$ is odd or $\sigma_{2i}=m$ when $m$ is even.
Thus, appearances $S^{\pm}_m$ in $R_0$ correspond to the indices when $\sigma_i$ is minimal.

A vertex, $P$, in a graded graph $\Gamma$ is called a \emph{summit} if $\gr(P)\geq\gr(Q)$ for any vertex $Q$ in $\Gamma$.
Similarly, $P$ is called a \emph{bottom} if $\gr(P)\leq\gr(Q)$ for any vertex $Q$ in $\Gamma$.
For each co-prime pair $(p,q)$ the grading of a summit of $\Gamma(p,q)$ is always $M+1$
and the grading of a bottom of $\Gamma(p,q)$ is always $m$.
Furthermore, the appearances of $S_M$ in $R_0$ correspond precisely to the summits in $\Gamma(p,q)$,
and the appearances of $S_m$ correspond to bottoms.

\begin{figure}[b]
\includegraphics{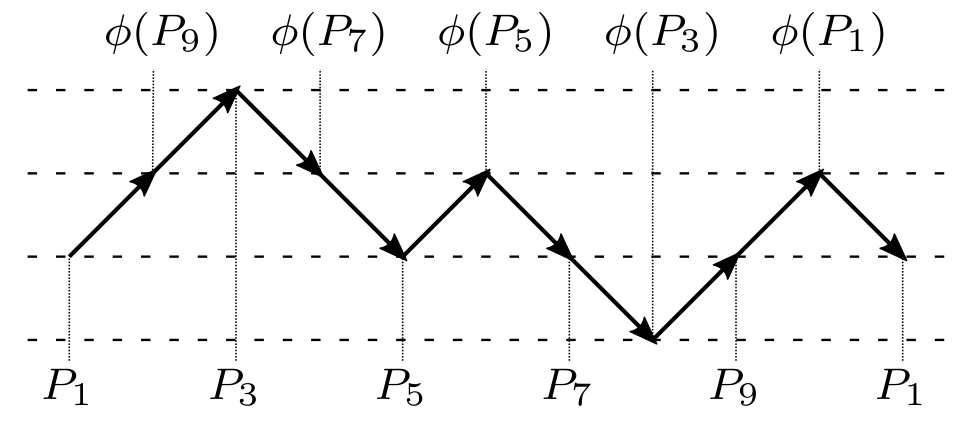}
\caption{A symmetric incremental cycle. The first and last vertices are identified. $\phi$ is the unique order reversing bijection defined by $\phi(P_1)=P_{10}$.}
\label{figsymmetric}
\end{figure}

\subsection{Symmetric Incremental Paths and Cycles}

It is useful to know when an incremental cycle is relatively isomorphic to itself after rotating $180^{\circ}$ and reversing its edges.
More precsiely, we call an incremental cycle $\Gamma$ \emph{symmetric} if there is a bijection $\phi:V(\Gamma)\to V(\Gamma)$ such that
\begin{enumerate}
	\item $(P,Q)$ is an edge of $\Gamma$ if and only if $(\phi(Q),\phi(P))$ is an edge of $\Gamma$ for any two vertices $P$ and $Q$ in $\Gamma$ and
	\item for some integer $k$, $\gr(P)+\gr(\phi(P))=k$ for every vertex $P$ in $\Gamma$.
\end{enumerate}
An incremental path $\Gamma$ is called \emph{symmetric} if $\cl(\Gamma)$ is symmetric (see Figure \ref{figsymmetric}).
The symmetry of incremental paths and cycles plays an important role in investigating properties \ref{nwsym} and \ref{nwsym2} of Lemma \ref{nestedword}.

\subsection{Reinterpretation of Lemma \ref{nestedword}}

Here we reinterpret Lemma \ref{nestedword} in terms of incremental paths and cycles.
Given a closable incremental path $\Gamma$ and positive integer $n$, define $\Gamma^n$ to be the concatenation of $n$ copies of $\Gamma$.
We call a co-prime pair $(p,q)$ an \emph{pre-RTFN pair} if there is a positive integer $N$, sequences of incremental paths
\[
    \Gamma_0,\ldots,\Gamma_N
\]
and
\[
    \Upsilon_0,\ldots,\Upsilon_N
\]
and a sequence of positive integers
\[
    n_0,\ldots,n_N
\]
such that the following conditions are satisfied:
\begin{figure}[t]
    \includegraphics{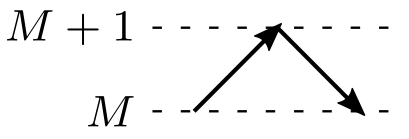}
    \caption{The graph $\Gammatop$}
    \label{gammatop}
\end{figure}
\begin{enumerate}[label=(R\arabic*)]
    \item $\Gamma_0=\Gamma(p,q)$, \label{ngstart}
    \item $\Gamma_N$ is isomorpic to the graph $\Gammatop$ defined in Figure \ref{gammatop}. \label{ngend}
    \item for each $i=1,\ldots,N$,
    \[
        \cl(\Gamma_{i-1})\cong\cl(\Gamma_i^{n_i}*\Upsilon_i) ,
    \] \label{ngrec}
    \item for each $i=1,\ldots,N$, no summits appear in $\Upsilon_i$, and \label{ngvsm}
    \item for each $i=0,\ldots,N$, $\Gamma_i$ is symmetric,
    and when $i\geq 1$,
    $\Gamma_i$ contains no bottoms. \label{ngsym}
\end{enumerate}
For an example, Figure \ref{prertfn} demonstrates that
$(33,23)$ is a pre-RTFN pair.

\begin{figure}[b]
    \begin{subfigure}[t]{10cm}
        \centering    
        \includegraphics{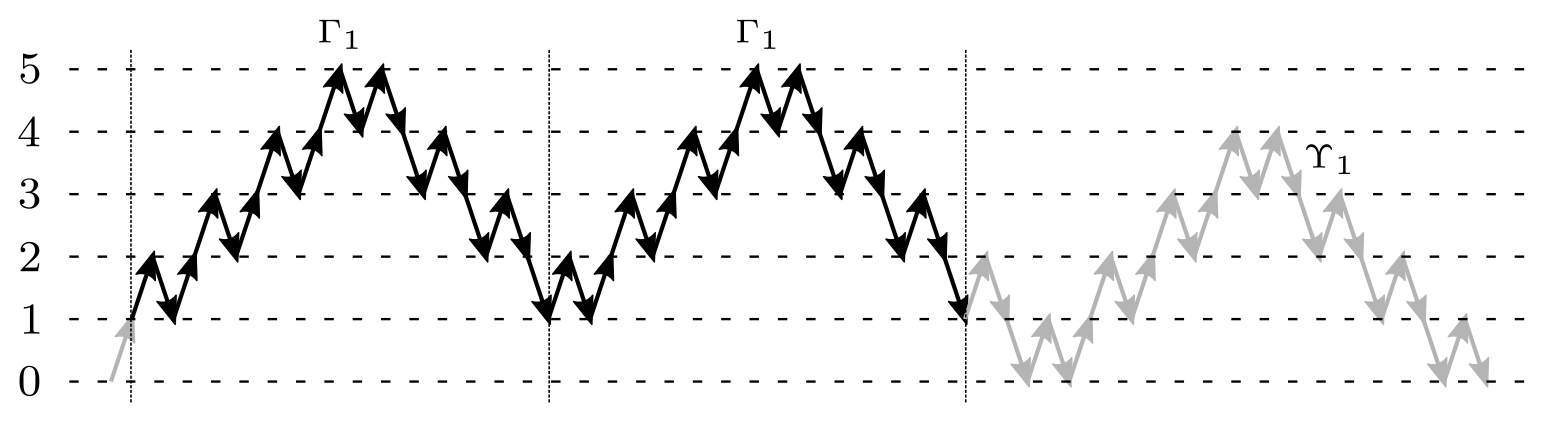}
        \caption{$\Gamma_0=\oGamma(33,23)$ with $\Upsilon_1$ in gray}
        \label{pr:a}
    \end{subfigure}
    \begin{subfigure}[t]{10cm}
        \centering    

        \includegraphics{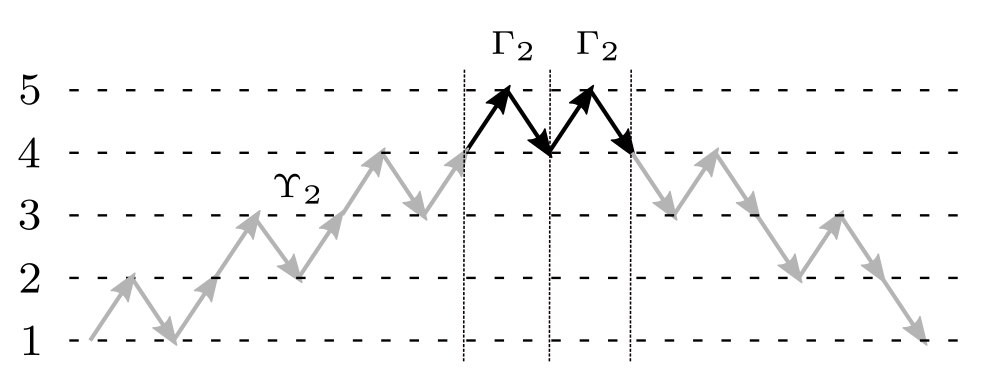}
        \caption{$\Gamma_1$ with $\Upsilon_2$ in gray}
        \label{pr:b}
    \end{subfigure}
    \caption{$(33,23)$ is a pre-RTFN pair.}
\label{prertfn}
\end{figure}

\begin{lem}\label{negsignscor}
    $(p,q)$ is a pre-RTFN pair if and only if $(p,-q)$ is a pre-RTFN pair.
\end{lem}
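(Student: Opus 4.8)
The plan is to observe that the definition of a pre-RTFN pair is a purely graph-theoretic condition on the cycle graph $\oGamma(p,q)$ which is insensitive to a uniform shift of all gradings, and then to invoke Proposition \ref{negrelsio}, which says precisely that $\oGamma(p,q)$ and $\oGamma(p,-q)$ are relatively isomorphic. Since $-(-q)=q$, it suffices to prove one direction: assuming $(p,q)$ is a pre-RTFN pair, I will produce from its witnessing data a set of witnessing data for $(p,-q)$.

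So let $\Gamma_0,\ldots,\Gamma_N$, $\Upsilon_0,\ldots,\Upsilon_N$, $n_0,\ldots,n_N$ witness that $(p,q)$ is pre-RTFN, and let $\psi\colon\oGamma(p,q)\to\oGamma(p,-q)$ be a directed graph isomorphism together with an integer $k$ such that $\gr(\psi(P))=\gr(P)+k$ for every vertex $P$, as provided by Proposition \ref{negrelsio}. For a graded directed graph $\Delta$, let $\Delta[k]$ denote $\Delta$ with the grading of every vertex increased by $k$; the operation $\Delta\mapsto\Delta[k]$ commutes with concatenation and with $\cl(\cdot)$, and $\psi$ exhibits $\oGamma(p,-q)\cong\oGamma(p,q)[k]$. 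Define witnessing data for $(p,-q)$ by $\Gamma_0':=\Gamma(p,-q)$, $\Gamma_i':=\Gamma_i[k]$ and $\Upsilon_i':=\Upsilon_i[k]$ for $i=1,\ldots,N$, and $n_i':=n_i$ for all $i$.

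Next I would check conditions \ref{ngstart}--\ref{ngsym} in turn. Condition \ref{ngstart} is immediate. For \ref{ngrec} with $i\geq 2$, applying $(\cdot)[k]$ to the isomorphism $\cl(\Gamma_{i-1})\cong\cl(\Gamma_i^{n_i}*\Upsilon_i)$ yields $\cl(\Gamma_{i-1}')\cong\cl((\Gamma_i')^{n_i}*\Upsilon_i')$; for $i=1$ the same works once one records that $\cl(\Gamma_0')=\oGamma(p,-q)\cong\oGamma(p,q)[k]=\cl(\Gamma_0)[k]$. For \ref{ngend}, $\Gamma_N'=\Gamma_N[k]\cong\Gammatop[k]$, and $\Gammatop[k]$ is again a copy of $\Gammatop$, now sitting at the summit grading $M+1+k$ of $\oGamma(p,-q)$; hence $\Gamma_N'\cong\Gammatop$. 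For \ref{ngvsm}, shifting all gradings by $k$ moves the maximal grading and each individual grading by the same amount, so a vertex of $\Upsilon_i'=\Upsilon_i[k]$ is a summit of the ambient cycle iff the corresponding vertex of $\Upsilon_i$ is; thus $\Upsilon_i'$ contains no summit. For \ref{ngsym}, a uniform grading shift preserves symmetry: an order-reversing involution $\phi$ of $\cl(\Gamma_i)$ also serves as one for $\cl(\Gamma_i')=\cl(\Gamma_i)[k]$, the constant in the defining identity changing from $c$ to $c+2k$; and for $i=0$ the symmetry of $\oGamma(p,-q)$ follows from that of $\oGamma(p,q)$ by conjugating the symmetry involution of the latter by $\psi$. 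Finally, ``contains no bottoms'' is preserved by the shift by the same reasoning used for summits, so $\Gamma_i'$ has no bottoms for $i\geq 1$. This establishes \ref{ngstart}--\ref{ngsym} for the primed data, so $(p,-q)$ is a pre-RTFN pair.

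The substance of the argument is Proposition \ref{negrelsio}; the only thing requiring care here is that each clause of the definition of a pre-RTFN pair is genuinely invariant under a uniform grading shift. I expect the single spot needing a sentence of justification to be condition \ref{ngend}, since $\Gammatop$ is a fixed graph and one must verify that the shift $\psi$ carries the ``top'' of $\oGamma(p,q)$ to the ``top'' of $\oGamma(p,-q)$ — which is immediate once one observes that the summit and bottom gradings $M+1$ and $m$ transform by $k$ along with everything else.
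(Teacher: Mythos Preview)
Your proposal is correct and takes essentially the same approach as the paper: both arguments reduce the lemma to Proposition~\ref{negrelsio}, the relative isomorphism $\oGamma(p,q)\cong\oGamma(p,-q)$. The paper's proof is the single sentence ``This follows immediately from Proposition~\ref{negrelsio},'' whereas you have carefully unpacked why each clause \ref{ngstart}--\ref{ngsym} of the pre-RTFN definition is invariant under a uniform grading shift; your verification is sound, including the only mildly delicate point that $\Gamma_0'$ must be taken to be $\Gamma(p,-q)$ itself (not $\Gamma(p,q)[k]$) while only its closure need agree with $\cl(\Gamma_0)[k]$.
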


\begin{proof}
    This follows immediately from Proposition \ref{negrelsio}.
\end{proof}

\begin{lem}\label{nestedgraphs}
    Suppose $(p,q)$ is a co-prime pair.
    If $(p,q)$ is a pre-RTFN pair, then $L(p/q)$ satisfies Lemma \ref{nestedword}.
\end{lem}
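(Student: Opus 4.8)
The plan is to convert the graph‑theoretic data supplied by the pre‑RTFN hypothesis into the word‑theoretic data required by Lemma~\ref{nestedword}, via the word‑assignment $\rho$. Given a pre‑RTFN pair $(p,q)$ with witnessing data $N$, $\Gamma_0,\dots,\Gamma_N$, $\Upsilon_1,\dots,\Upsilon_N$, $n_1,\dots,n_N$ satisfying \ref{ngstart}--\ref{ngsym}, I would set
\[
    \widehat{A}_i:=\rho(\Gamma_i)\quad(i=0,\dots,N),\qquad \widehat{V}_i:=\rho(\Upsilon_i)\quad(i=1,\dots,N),
\]
keeping the same $N$ and $n_1,\dots,n_N$. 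Two preliminary remarks are needed: (i) $\rho$ depends only on the grading sequence (and the number of vertices) of an incremental path, hence is invariant under graded isomorphism; and (ii) unwinding \ref{ngstart} and \ref{ngrec} inductively shows that each $\Gamma_i$ and each $\Upsilon_i$ is graded‑isomorphic to a subpath of some power of $\Gamma(p,q)$, so that -- by the discussion of summits and bottoms above together with Lemma~\ref{taulem} -- an occurrence of $S_M^{\pm1}$ in $\rho(\Gamma_i)$ or $\rho(\Upsilon_i)$ corresponds exactly to a summit of the relevant path, and an occurrence of $S_m^{\pm1}$ to a bottom. Granting (i) and (ii): \ref{nwstart} holds because $\Gamma_0=\Gamma(p,q)$ and $\rho(\Gamma(p,q))=R_0$; \ref{nwend} holds since $\widehat{A}_N=\rho(\Gamma_N)=\rho(\Gammatop)=S_M^{\pm1}$ by \ref{ngend}, (i), and inspection of Figure~\ref{gammatop} (the summit grading is $M+1$ throughout, as the isomorphisms in \ref{ngrec} are graded); \ref{nwrec} follows by applying $\rho$ to \ref{ngrec}, using Lemma~\ref{taulem} to split $\rho(\Gamma_i^{n_i}*\Upsilon_i)=\widehat{A}_i^{n_i}\widehat{V}_i$ and Lemma~\ref{taucyc} to produce a subpath $\Xi_i$ of $\Gamma_{i-1}$ with $\widehat{A}_i^{n_i}\widehat{V}_i=\rho(\Xi_i)^{-1}\widehat{A}_{i-1}\rho(\Xi_i)$; and \ref{nwvsm} follows from \ref{ngvsm} and (ii), since $\Upsilon_i$ contains no summit.

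The condition \ref{nwsym} is the crux. Symmetry of $\Gamma_i$ (part of \ref{ngsym}) furnishes an order‑ and grading‑reversing self‑isomorphism $\phi$ of $\cl(\Gamma_i)$ with $\gr(P)+\gr(\phi P)$ equal to some constant $c$; since $\phi$ interchanges up‑edges with down‑edges, tracking the index shift $\theta$ in the definition of $\rho$ shows that $[\widehat{A}_i]=\sum_j b_jS'_j$ satisfies $|b_j|=|b_{c-j}|$. For $i<N$ the summit of $\Gamma_i$ contributes the top index $M$ to this sum (for $i=N$ the word is a single generator and \ref{nwsym} is trivial), while \ref{ngsym} forbids bottoms in $\Gamma_i$ when $i\ge1$, so no $S_m^{\pm1}$ occurs and the support of $[\widehat{A}_i]$ lies in an interval $[l,M]$ with $m<l\le M$; since $\phi$ carries the summit (grading $M+1$) to the vertex of least grading in $\Gamma_i$, which is the source of the lowest index $l$, the palindrome constant is $c=l+M$, which is exactly the assertion $|b_{l+j}|=|b_{M-j}|$.

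For the sequences $\widecheck{A}_i,\widecheck{V}_i$ required by \ref{nwstart2}--\ref{nwsym2}, I would reflect the whole construction. By \ref{ngsym} at $i=0$, $\Gamma_0=\Gamma(p,q)$ is symmetric, so $\oGamma(p,q)$ carries an order‑ and grading‑reversing self‑isomorphism $\psi$ (compare Proposition~\ref{negrelsio}); $\psi$ extends over powers of $\Gamma(p,q)$ and, applied to the chain, carries $\Gamma_i,\Upsilon_i$ to incremental paths $\widecheck{\Gamma}_i,\widecheck{\Upsilon}_i$ witnessing the mirror of \ref{ngstart}--\ref{ngsym}: summits become bottoms, $\Gammatop$ becomes the ``bottom'' graph whose $\rho$‑word is $S_m^{\pm1}$, each $\widecheck{\Gamma}_i$ is again symmetric, $\widecheck{\Gamma}_i$ has no summit for $i\ge1$, and the integers $n_i$ are unchanged (reflection alters no segment or concatenation count). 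Setting $\widecheck{A}_0:=R_0$ and $\widecheck{A}_i:=\rho(\widecheck{\Gamma}_i)$, $\widecheck{V}_i:=\rho(\widecheck{\Upsilon}_i)$ for $i\ge1$, the verification of \ref{nwstart2}--\ref{nwsym2} is word‑for‑word that of \ref{nwstart}--\ref{nwsym} with the roles of $S_M$ and $S_m$, and of ``summit'' and ``bottom'', interchanged; the only wrinkle is that $\psi$ a priori yields $\widecheck{A}_0$ only as a cyclic conjugate of $R_0$, but this conjugation is absorbed harmlessly into the $i=1$ instance of \ref{nwrec2} (which is stated only up to conjugation), just as the words $\widecheck{W}_i$ are absorbed in the proof of Proposition~\ref{propadjoiningroots}.

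I expect the main obstacle to be \ref{nwsym} (and its mirror \ref{nwsym2}): carrying the combinatorial symmetry of each $\Gamma_i$ through $\rho$ and its index shift $\theta$ to obtain the precise palindromic identity $|b_{l+j}|=|b_{M-j}|$, and in particular verifying that the palindrome axis is $l+M$ -- equivalently, that the reflection exchanges the summit of $\Gamma_i$ with its lowest‑grading vertex. A secondary point needing care is to check that the global reflection $\psi$ descends coherently along the chain, so that the $\widecheck{}$‑data genuinely shares the same $N$ and the same $n_1,\dots,n_N$ as the $\widehat{}$‑data, as the statement of Lemma~\ref{nestedword} demands.
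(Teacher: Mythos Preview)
Your proposal is correct and follows essentially the same route as the paper: define $\widehat{A}_i=\rho(\Gamma_i)$, $\widehat{V}_i=\rho(\Upsilon_i)$, deduce \ref{nwstart}--\ref{nwvsm} from \ref{ngstart}--\ref{ngvsm} via Lemmas~\ref{taulem} and~\ref{taucyc}, and obtain the $\widecheck{}$-data by applying the symmetry $\overline{\phi}$ of $\Gamma_0=\Gamma(p,q)$ (your $\psi$) to the entire chain. For the crux \ref{nwsym}, the paper makes your sketch precise by classifying vertices of $\cl(\Gamma_i)$ into four types $(++),(+-),(-+),(--)$, expressing $|b_n|$ as a count of such vertices at the appropriate grading, and then combining the effect of $\phi$ on vertex types with two elementary counting identities (adjacent-grading and up/down balance) across the four parity cases of $l+j$ and $M-j$; your heuristic ``$\phi$ interchanges up-edges with down-edges'' is not literally what happens (the order reversal sends type $(a,b)$ to type $(b,a)$), so you will want this explicit bookkeeping rather than a one-line appeal to $\theta$.
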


\begin{proof}
Let $(p,q)$ be a pre-RNTF pair.
For each $i=0,\ldots, N$, define
\[
	\widehat{A}_i:=\rho(\Gamma_{i}),
\]
and when $i>0$, define
\[
	\widehat{V}_i:=\rho(\Upsilon_{N-i}) .
\]

\textit{Proof of \ref{nwstart} and \ref{nwend}.}
By \ref{ngstart} and \ref{ngend},
\[
    \widehat{A}_0=\rho(\Gamma_0)=\rho(\Gamma(p,q))=R_0,
\]
and
\[
    A_N=\rho(\Gamma_N)=S^{\pm1}_M .
\]

\textit{Proof of \ref{nwrec}.}
Suppose $i$ is an integer with $1\leq i\leq N$.
By \ref{ngrec},
\[
    \cl(\Gamma_{i-1})\cong\cl(\Gamma_i^{n_i}*\Upsilon_i)
\]
so by Lemma \ref{taucyc}, there exists a word $W$ such that
\[
    \rho(\Gamma_{i-1})=W^{-1}\rho(\Gamma_i^{n_i}*\Upsilon_i)W.
\]
Therefore,
\begin{align*}
    \widehat{A}_{i-1}=&\rho(\Gamma_{i-1})\\
    =&W^{-1}\rho(\Gamma_i^{n_i}*\Upsilon_i)W\\
    =&W^{-1}\widehat{A}_{i}^{n_i}\widehat{V}_{i}W .
\end{align*}

\begin{figure}[b]
\includegraphics{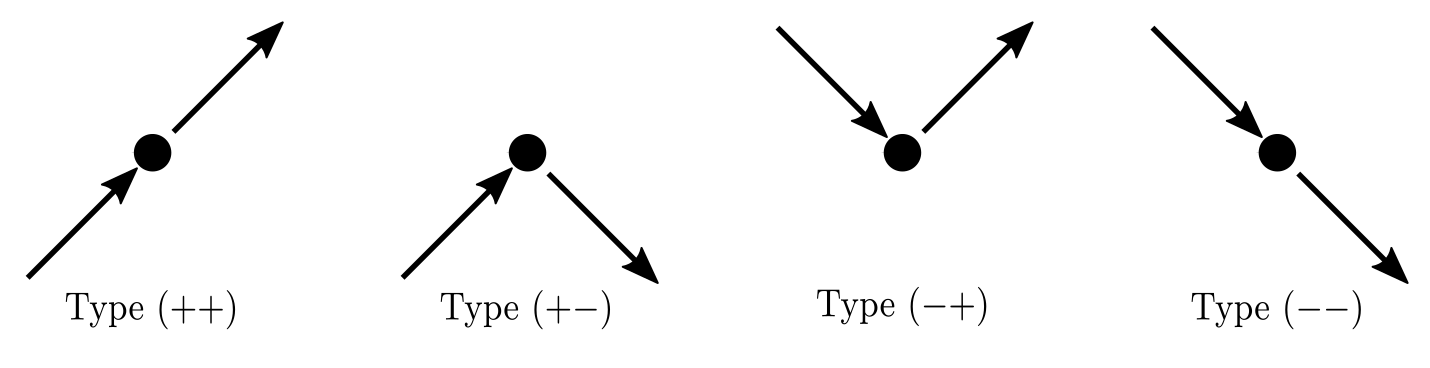}
\caption{The four vertex types}
\label{vertextypes}
\end{figure}

\textit{Proof of \ref{nwvsm}.}
For each $i=1,\ldots,N$,
since no summits appear in $\Upsilon_i$,
$S^{\pm1}_M$ cannot appear in $\widehat{V}_i$.

\textit{Proof of \ref{nwsym}.}
Suppose $i$ is an integer with $0\leq i\leq N$.
The maximum grading of a vertex in $\Gamma_i$ is $M+1$.
Let $l$ be the minimum grading of a vertex in $\Gamma_i$.
For some integer coefficients $b_l,b_{l+1}\ldots,b_M$,
\[
    [\rho(\Gamma_i)]=b_lS'_l+b_{l+1}S'_{l+1}+\cdots+b_MS'_M.
\]
Our goal is to show that for each $j=0,\ldots,M-l$, $|b_{l+j}|=|b_{M-j}|$.

The vertices of $\cl(\Gamma_i)$ can be classified into four types according to Figure \ref{vertextypes}.
Define $v_{(**)}(n)$ to be the number vertices in $\cl(\Gamma_i)$ of type $(**)$ with grading $n$.

Suppose $n=l,\ldots, M$.
When $n$ is even,
$S_n$ always has exponent $-1$ in $\rho(\Gamma_i)$,
and $S^{-1}_n$ appears precisely when there is negative edge followed a vertex in $\cl(\Gamma_i)$ with grading $n$
so
\begin{equation} \label{beven}
	|b_{n}|=v_{(--)}(n)+v_{(-+)}(n).
\end{equation}
Similarly, When $n$ is odd,
$S_n$ always has exponent $1$ in $\rho(\Gamma_i)$,
and $S_n$ appears precisely when there is a vertex in $\cl(\Gamma_i)$ with grading $n$ followed by a positive edge
so
\begin{equation} \label{bodd}
	|b_{n}|=v_{(++)}(n+1)+v_{(+-)}(n+1).
\end{equation}

\begin{figure}[t]
\includegraphics{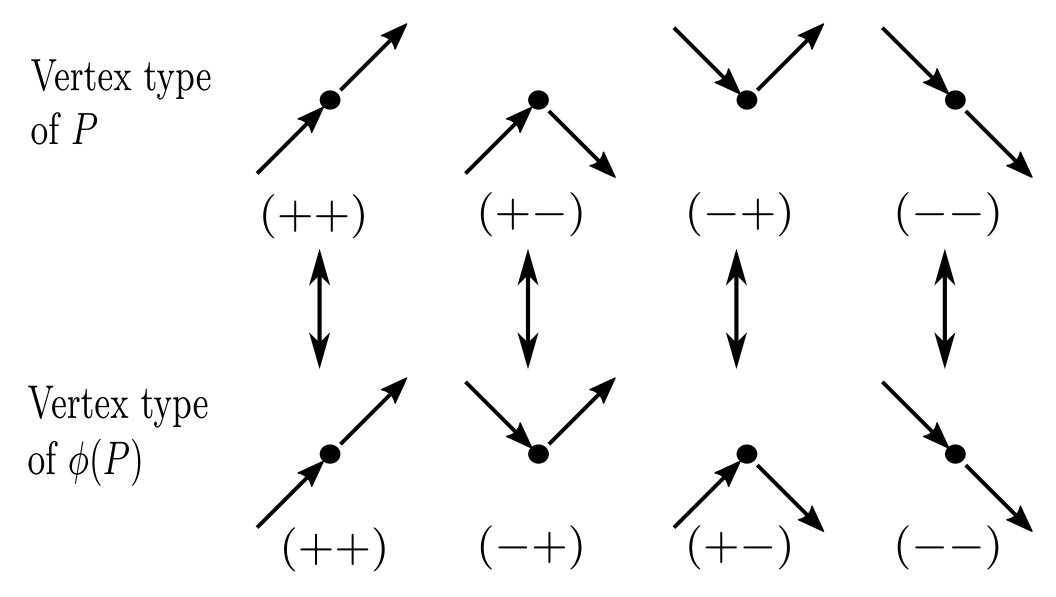}
\caption{The effect of $\phi$ on vertex type}
\label{symvertextypes}
\end{figure}

Since $\Gamma_i$ is symmetric by \ref{ngsym}, there is an order reversing bijection $\phi$ of the vertex set of $\cl(\Gamma_i)$ such that $\gr(P)+\gr(\phi(P))=l+M+1$ for each vertex $P$ in $\cl(\Gamma_i)$.
Furthermore, $P$ and $\phi(P)$ have types rotated $180^{\circ}$ with arrows reversed (see Figure \ref{symvertextypes}).
As a consequence,
\begin{equation} \label{numtypesphi}
    \begin{split}
        v_{(--)}(n)=v_{(--)}(l+M+1-n)\\
        v_{(-+)}(n)=v_{(+-)}(l+M+1-n)\\
	    v_{(++)}(n)=v_{(++)}(l+M+1-n)\\
	    v_{(+-)}(n)=v_{(-+)}(l+M+1-n)
    \end{split}
    .
\end{equation}
Each positive edge connects a vertex of type $(*+)$ to a vertex of type $(+*)$.
Likewise, each negative edge connects a vertex of type $(*-)$ to a vertex of type $(-*)$ (see Figure \ref{figadj}).
Thus,
\begin{equation} \label{adjacenttypes}
    \begin{split}
	v_{(++)}(n)+v_{(-+)}(n)=v_{(++)}(n+1)+v_{(+-)}(n+1)\\
	v_{(--)}(n)+v_{(+-)}(n)=v_{(--)}(n-1)+v_{(-+)}(n-1)
	\end{split}
	.
\end{equation}
Since $\Gamma_i$ is closable and the gradings of adjacent vertices differ by $\pm1$,
every time $\Gamma_i$ passes from below to above some grading level at a vertex, $\Gamma_i$ must pass from above to below the same grading level at some other vertex.
Thus, in each grading $n$,
\begin{equation} \label{ppequalsnn}
	v_{(++)}(n)=v_{(--)}(n).
\end{equation}

\begin{figure}[b]
\includegraphics{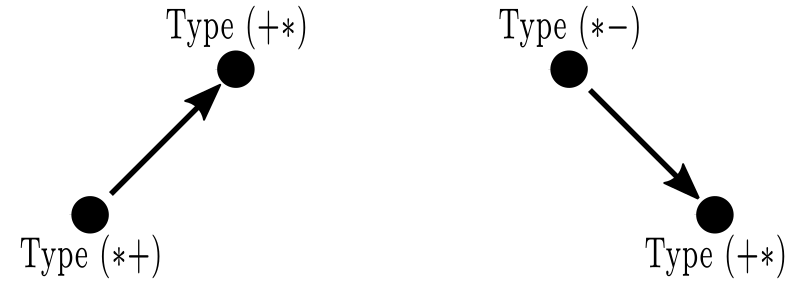}
\caption{}
\label{figadj}
\end{figure}

Now, we show that $|b_{l+j}|=|b_{M-j}|$.
Let $j$ be an integer such that $0\leq j \leq M-l$.
When $l+j$ and $M-j$ are both even, by (\ref{beven}), (\ref{numtypesphi}), and (\ref{adjacenttypes}),
\begin{align*}
	|b_{l+j}|=&v_{(--)}(l+j)+v_{(-+)}(l+j)\\
	=&v_{(--)}(M-j+1)+v_{(+-)}(M-j+1)\\
	=&v_{(--)}(M-j)+v_{(-+)}(M-j)\\
	=&|b_{M-j}|.
\end{align*}

When $l+j$ and $M-j$ are odd, by (\ref{bodd}), (\ref{numtypesphi}), and (\ref{adjacenttypes})
\begin{align*}
	|b_{l+j}|=&v_{(++)}(l+j+1)+v_{(+-)}(l+j+1)\\
	=&v_{(++)}(M-j)+v_{(-+)}(M-j)\\
	=&v_{(++)}(M-j+1)+v_{(+-)}(M-j+1)\\
	=&|b_{M-j}|.
\end{align*}

When $l+j$ is even and $M-j$ is odd, by (\ref{beven}), (\ref{numtypesphi}), (\ref{ppequalsnn}), and (\ref{bodd}),
\begin{align*}
	|b_{l+j}|=&v_{(--)}(l+j)+v_{(-+)}(l+j)\\
	=&v_{(--)}(M-j+1)+v_{(+-)}(M-j+1)\\
	=&v_{(++)}(M-j+1)+v_{(+-)}(M-j+1)\\
	=&|b_{M-j}|.
\end{align*}

When $l+j$ is odd and $M-j$ is even, by (\ref{bodd}), (\ref{numtypesphi}), (\ref{ppequalsnn}), and (\ref{beven}),
\begin{align*}
	|b_{l+j}|=&v_{(++)}(l+j+1)+v_{(+-)}(l+j+1)\\
	=&v_{(++)}(M-j)+v_{(-+)}(M-j)\\
	=&v_{(--)}(M-j)+v_{(-+)}(M-j)\\
	=&|b_{M-j}|.
\end{align*}

When $i\geq 1$, no bottoms appear in $\Gamma_i$ so $l>m$.

\textit{Proof of \ref{nwstart2}, \ref{nwend2}, \ref{nwrec2}, \ref{nwvsm2}, and \ref{nwsym2}.}
Since $\Gamma_0=\Gamma(p,q)$ is symmetric, there is an order reversing bijection $\overline{\phi}$ on the vertices of $\oGamma$ such that 
\[
	\gr(P)+\gr(\overline{\phi}(P))=m+M+1
\]
for each vertex $P$ in $\oGamma(p,q)$
Thus, $\overline{\phi}$ induces a map on the subgraphs of $\oGamma(p,q)$.

For each $i=0,\ldots, N$, define
\[
    \widecheck{A}_i:=\rho(\overline{\phi}(\Gamma_{N-i})),
\]
and when $i>0$, define
\[
	\widecheck{V}_i:=\rho(\overline{\phi}(\Upsilon_{N-i})).
\]

\ref{nwstart2}, \ref{nwend2}, \ref{nwrec2}, \ref{nwvsm2}, and \ref{nwsym2} follow from proofs similar to the those used for \ref{nwstart}, \ref{nwend}, \ref{nwrec}, \ref{nwvsm}, and \ref{nwsym}.
\end{proof}

\subsection{Using Reductions for Induction}

Suppose $(p,q)$ is a co-prime pair with $q>1$ and with $(p\modop q)\neq 1$.
By Lemma \ref{redpair}, $R(\oGamma)(p,q)$ is isomorphic to $\oGamma(p^*,q^*)$ for some co-prime pair $(p^*,q^*)$
so along with Lemma \ref{negsignscor}, $\oGamma(p,q)$ can be simplified through a sequence of reductions and relative isomorphisms to $\oGamma(p_0,q_0)$ such that $q_0=1$ or $(p\modop q)=1$.
\begin{example}
\[
\oGamma(119,43)\overset{R}{\rightarrow}\oGamma(33,-23)\overset{rel.}{\cong}\oGamma(33,23)\overset{R}{\rightarrow}\oGamma(10,3)
\]
\end{example}

The goal now is to show that when $(p^*,q^*)$ is a pre-RTFN pair, $(p,q)$ is also a pre-RTFN pair.

\subsection{Leading and Trailing Vertices}
Call a vertex in $\oGamma(p,q)$ at the end of a $(\kappa+1)$-segment a \emph{leading vertex}, and
any vertex at the beginning of a $(\kappa+1)$-segment a \emph{trailing vertex} (see Figure \ref{figleadtrail}).
Let $P$ be a leading vertex in $\oGamma(p,q)$,
and let $\Lambda_L$ be the $(\kappa+1)$-segment of $\oGamma(p,q)$ immediately preceding $P$.
Define $f_L(P)$ to be the vertex at the end of the edge in $R(\oGamma)(p,q)$ corresponding to $\Lambda_L$.
Let $P$ be a trailing vertex in $\oGamma(p,q)$,
and let $\Lambda_T$ be the $(\kappa+1)$-segment of $\oGamma(p,q)$ immediately following $P$.
Define $f_T(P)$ to be the vertex at the beginning of the edge in $R(\oGamma)(p,q)$ corresponding to $\Lambda_T$.

$f_L$ is a bijection from the leading vertices of $\Gamma(p,q)$ to the vertex set of $R(\oGamma)(p,q)$, and
$f_T$ is a bijection from the trailing vertices of $\Gamma(p,q)$ to the vertex set of $R(\oGamma)(p,q)$.
Let $P^*$ be a vertex in $R(\oGamma)(p,q)$.
Since $f_L^{-1}(P^*)$ and $f_T^{-1}(P^*)$ are separated by a $\kappa$-block of length $\kappa'$ or $\kappa'-1$, the gradings of $f_L^{-1}(P^*)$ and $f_T^{-1}(P^*)$ are either the same of differ by $\pm\kappa$.

\begin{figure}[t]
    \includegraphics{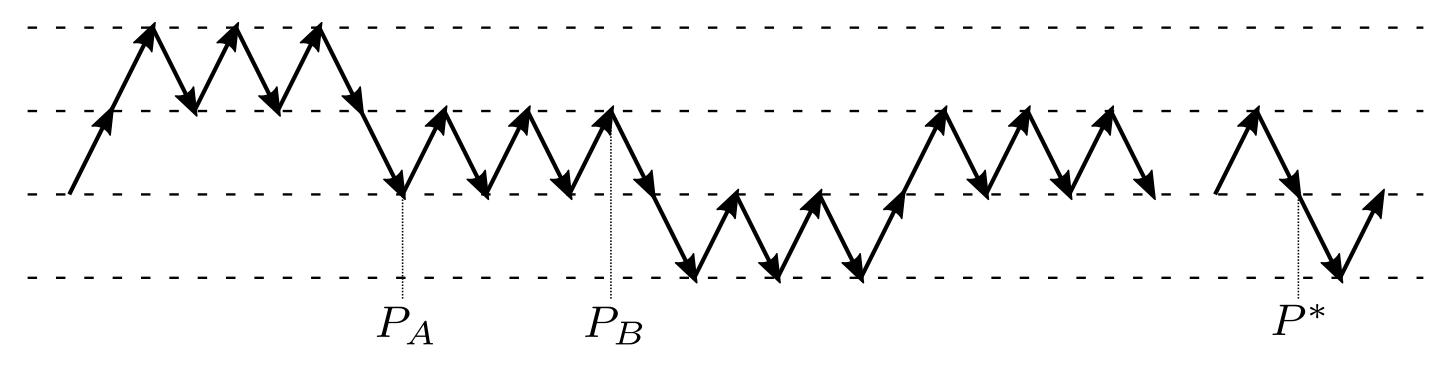}

    \caption{$P_A$ is a leading vertex of $\oGamma(13,11)$, and $P_B$ is a trailing vertex of $\oGamma(13,11)$ (left). $f_L(P_A)=f_T(P_B)=P^*$ in $R(\oGamma)(13,11)$ (right).}
    \label{figleadtrail}
\end{figure}
Any vertex in $\oGamma(p,q)$ at the end of a positive (or negative) segment is called a \emph{peak} (resp. \emph{valley}).
There is a relationship between the gradings of the vertices in $\oGamma(p,q)$ and $R(\oGamma)(p,q)$.

\begin{prop} \label{phigradings}
Let $P$ and $Q$ be leading vertices of $\oGamma(p,q)$.
\begin{enumerate}
    \item If $P$ and $Q$ are both peaks or both valleys, then
    \[
    \gr(f_L(P))-\gr(f_L(Q))=\gr(P)-\gr(Q).
    \]
    \item If $P$ is a valley and $Q$ is a peak, then
    \[
    \gr(f_L(P))-\gr(f_L(Q))=\gr(P)-\gr(Q)+\kappa.
    \]
    \item If $P$ is a peak and $Q$ is a valley, then
    \[
    \gr(f_L(P))-\gr(f_L(Q))=\gr(P)-\gr(Q)-\kappa.
    \]
\end{enumerate}
\end{prop}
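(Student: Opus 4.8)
The plan is to reduce to the case of two \emph{adjacent} leading vertices and then telescope. Recall that the leading vertices of $\oGamma(p,q)$ are exactly the endpoints of its $(\kappa+1)$-segments, that there are $2\xi$ of them (Proposition \ref{segsizeprop}), and that $f_L$ carries them bijectively onto the vertex set of $R(\oGamma)(p,q)$; order them cyclically as $P^{(0)},\dots,P^{(2\xi-1)}$ so that $\Lambda^{(i)}$, the $(\kappa+1)$-segment ending at $P^{(i)}$, is followed — after one $\kappa$-block — by $\Lambda^{(i+1)}$. Write $\tau(P)=+1$ when $P$ is a peak and $\tau(P)=-1$ when $P$ is a valley, so $\tau(P^{(i)})$ equals the sign of $\Lambda^{(i)}$. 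The three displayed identities are together the single assertion that
\[
\Phi(P):=2\bigl(\gr(f_L(P))-\gr(P)\bigr)+\kappa\,\tau(P)
\]
takes the same value at every leading vertex, so it suffices to prove $\Phi(P^{(i)})=\Phi(P^{(i+1)})$ for each $i$; the general statement then follows by summing these equalities along the arc of the cycle joining $Q$ to $P$, the correction term $\tfrac{\kappa}{2}(\tau(Q)-\tau(P))$ being a cocycle in $\tau$.

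For the adjacent case I would compute both grading differences directly. On the reduced side this is immediate: in $R(\oGamma)(p,q)$ the only edge between $f_L(P^{(i)})$ and $f_L(P^{(i+1)})$ is the one obtained from $\Lambda^{(i+1)}$, so $\gr(f_L(P^{(i+1)}))-\gr(f_L(P^{(i)}))=\tau(P^{(i+1)})$ by the definition of the reduction. On the $\oGamma$ side, the walk from $P^{(i)}$ to $P^{(i+1)}$ crosses a $\kappa$-block $\mathcal B$ and then $\Lambda^{(i+1)}$; the displacement across $\Lambda^{(i+1)}$ is $\tau(P^{(i+1)})(\kappa+1)$, and, because consecutive segments in the decomposition alternate in sign, the $\kappa$-segments of $\mathcal B$ alternate in sign with the first of them opposite to $\Lambda^{(i)}$. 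Using Proposition \ref{blocksizeprop}: if $\mathcal B$ has even length (including length $0$), its net displacement is $0$ and $\tau(P^{(i+1)})=-\tau(P^{(i)})$; if $\mathcal B$ has odd length, its net displacement is $-\tau(P^{(i)})\kappa$ and $\tau(P^{(i+1)})=\tau(P^{(i)})$. Substituting each case into the formula for $\Phi$ and simplifying yields $\Phi(P^{(i+1)})-\Phi(P^{(i)})=0$, which finishes the argument.

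The routine but error-prone step — and the one to get right — is the sign bookkeeping in the $\kappa$-block computation: correctly pinning down the signs of the first and last $\kappa$-segments of $\mathcal B$ relative to $\Lambda^{(i)}$ and $\Lambda^{(i+1)}$, and checking that this is precisely what couples the parity of the block length to whether $\tau$ flips. The degenerate subcases should be verified as instances of the general ones: a length-$0$ block (two consecutive $(\kappa+1)$-segments) falls under the even case, and when $\kappa'=1$ all $\kappa$-blocks are isolated (length $1$) and fall under the odd case; both are covered by Proposition \ref{blocksizeprop}.
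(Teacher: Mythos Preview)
Your argument is correct. The reformulation via the potential $\Phi(P)=2\bigl(\gr(f_L(P))-\gr(P)\bigr)+\kappa\,\tau(P)$ is clean, and the adjacent-vertex computation checks out in both parity cases (including the degenerate length-$0$ block when $\kappa'=1$).

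The paper's proof is a one-liner: it simply invokes Lemma~\ref{tegradings}, which was proved earlier for the expansion operator $\tilde E(\Gamma,s,b,e)$ by a global count of positive versus negative segments between two marked endpoints. Since $\oGamma(p,q)\cong E(R(\oGamma)(p,q),\kappa,\kappa')$, the leading vertices of $\oGamma(p,q)$ are exactly the $Q_i$ of that lemma and $f_L$ sends them to the $P_i$, so the three cases of Proposition~\ref{phigradings} drop out of the three cases of Lemma~\ref{tegradings}. Your route instead re-derives the same grading comparison locally, by telescoping over consecutive leading vertices and reading off the block parity from Proposition~\ref{blocksizeprop}. What you gain is self-containment---no appeal to the expansion machinery---at the cost of redoing in situ a computation the paper has already packaged. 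The paper's version is shorter because the bookkeeping was paid for once in Lemma~\ref{tegradings} and is being reused here.
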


\begin{proof}
    This follows immediately from Lemma \ref{tegradings}
    by consider the unique path subgraph of $R(\oGamma)(p,q)$ beginning with $f_L(P)$ and ending $f_L(Q)$.
\end{proof}

\begin{cor}
$P$ is a leading summit of $\oGamma(p,q)$ if and only if $f_L(P)$ is a summit of $R(\oGamma)(p,q)$.
\end{cor}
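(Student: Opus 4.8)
The plan is to reduce the statement to a comparison of gradings and then read it off from Proposition~\ref{phigradings}. First recall that $f_L$ is a bijection from the leading vertices of $\oGamma(p,q)$ onto the vertex set of $R(\oGamma)(p,q)$, and that a vertex of the incremental cycle $R(\oGamma)(p,q)$ is a summit exactly when its grading is maximal; such a vertex necessarily has an \emph{incoming positive edge} (both neighbours have strictly smaller grading), so it is a peak. By the definition of $f_L$, the edge of $R(\oGamma)(p,q)$ entering $f_L(P)$ is the one corresponding to the $(\kappa+1)$-segment $\Lambda_L$ of $\oGamma(p,q)$ ending at $P$, and by the definition of the reduction $R$ that edge is a positive increment if and only if $\Lambda_L$ is a positive segment, i.e.\ if and only if $P$ is a peak of $\oGamma(p,q)$. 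Hence $f_L(P)$ is a summit of $R(\oGamma)(p,q)$ iff $P$ is a peak and $\gr(f_L(P))$ is maximal among all leading vertices; it remains to show this happens iff $\gr(P)=M+1$ (note any leading vertex with $\gr(P)=M+1$ is automatically a peak).

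Next I would unwind Proposition~\ref{phigradings}. Define, for a leading vertex $P$, the shifted grading $h(P):=\gr(P)$ if $P$ is a peak and $h(P):=\gr(P)+\kappa$ if $P$ is a valley. Then Proposition~\ref{phigradings}, applied with a fixed reference leading vertex $Q_0$, gives $\gr(f_L(P))-\gr(f_L(Q_0))=h(P)-h(Q_0)$ in all cases (peak/peak, valley/peak, peak/valley, and valley/valley), so $P\mapsto\gr(f_L(P))$ and $P\mapsto h(P)$ differ by a constant and have the same maximizers. Now estimate $h$: for a peak leading vertex, $h(P)=\gr(P)\le M+1$, with equality iff $P$ is a summit of $\oGamma(p,q)$; for a valley leading vertex the $(\kappa+1)$-segment ending at $P$ is a downward run of length $\kappa+1$ starting at grading at most $M+1$, so $\gr(P)\le (M+1)-(\kappa+1)=M-\kappa$ and hence $h(P)\le M<M+1$. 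Thus $\max_P h(P)\le M+1$, with the value $M+1$ attained precisely at the leading vertices that are summits of $\oGamma(p,q)$.

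To finish I need that this maximum $M+1$ is actually attained, i.e.\ that $\oGamma(p,q)$ possesses a leading summit: equivalently, that the positive segment of $\oGamma(p,q)$ which reaches grading $M+1$ has length $\kappa+1$. I expect this to be the main obstacle. Concretely I would locate an index $i$ with $\sigma_i=M+1$, look at the segment ending at $P_i$ (which starts at index $i-\kappa$ or $i-\kappa-1$), and use Proposition~\ref{segsizeprop}\ref{segtype}--\ref{segstart} to show the relevant residue $(i-\kappa-1)q\modop p$ lies in $[0,\xi)$, forcing that segment to be a $(\kappa+1)$-segment; alternatively, use the symmetry of $\oGamma(p,q)$ together with the fact that $\Lambda_0$ is a $(\kappa+1)$-segment (Proposition~\ref{segsizeprop}\ref{segbegin}) to transport the structure at the ``zero level'' to the summit. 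A cleaner route is to argue by contradiction: if a summit $S$ were at the end of a $\kappa$-segment, then (since the grading cannot exceed $M+1$) the preceding negative segment must also have length exactly $\kappa$ and its initial vertex is again a summit; iterating this around the finite cycle would confine the entire walk to the band of heights $[M+1-\kappa,\,M+1]$ using only $\kappa$-segments, contradicting both $M-m\ge 1$ and the block structure of Proposition~\ref{blocksizeprop}. Once a leading summit is known to exist, the maximizers of $\gr(f_L(\cdot))$ are exactly the leading summits, which is the claimed equivalence.
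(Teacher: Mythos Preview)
Your argument is correct and is precisely the unpacking the paper has in mind: the corollary is stated immediately after Proposition~\ref{phigradings} with no proof, and your reduction via the auxiliary function $h(P)=\gr(P)$ for peaks and $h(P)=\gr(P)+\kappa$ for valleys is exactly how one reads the statement off from that proposition. One small remark: in your contradiction argument for the existence of a leading summit, the appeal to ``$M-m\ge 1$'' is not the operative contradiction (confining the walk to $[M+1-\kappa,\,M+1]$ only gives $M-m\le\kappa-1$, which need not fail); what actually closes the argument is that the backward iteration forces \emph{every} segment in the cycle to be a $\kappa$-segment, contradicting Proposition~\ref{segsizeprop}\ref{segbig}, which guarantees $2\xi\ge 2$ segments of length $\kappa+1$.
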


\subsection{Proof of Lemma \ref{nestedword}} \label{LemmaProof}

We now have everything we need to show that every co-prime pair $(p,q)$ with $p$ positive and $q$ odd is a pre-RTFN pair.
For each co-prime pair,
we need to find a positive integer $N$, subgraphs
\[
    \Gamma_0,\ldots,\Gamma_N
\]
and
\[
    \Upsilon_1,\ldots,\Upsilon_N
\]
and integers
\[
    n_1,\ldots,n_N
\]
satisfying \ref{ngstart},\ref{ngend},\ref{ngrec},\ref{ngvsm}, and \ref{ngsym}.
We prove this using a strong induction starting with the base cases below.

Given a subgraph $\Upsilon$ of a incremental cycle $\oGamma$,
define $\oGamma-\Upsilon$ to be the incremental path obtained by removing the edges and the interior vertices of $\Upsilon$ from $\oGamma$; see Figure \ref{subtraction}.

\begin{figure}[t]
\includegraphics{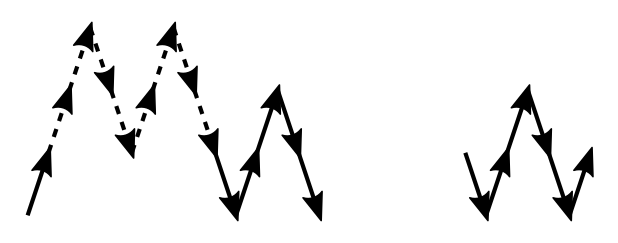}
\caption{A graph $\oGamma$ (left) with subgraph $\Upsilon$ (dashed) and $\oGamma-\Upsilon$ (right).}
\label{subtraction}
\end{figure}

\begin{figure}[b]
    \begin{subfigure}[t]{4cm}
        \centering
        \includegraphics{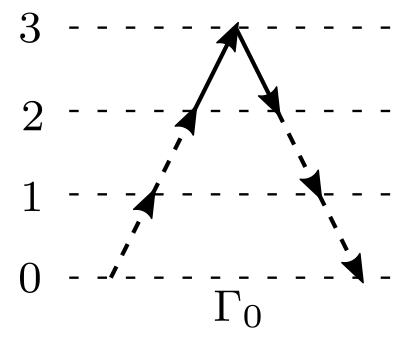}
        \caption{$\Gamma(3,1)$ (left) only has one summit.
        The solid arrows indicate $\Gamma_1$.}
        \label{bc:g31}
    \end{subfigure}
    \hspace{1cm}
    \begin{subfigure}[t]{7cm}
        \centering
        \includegraphics{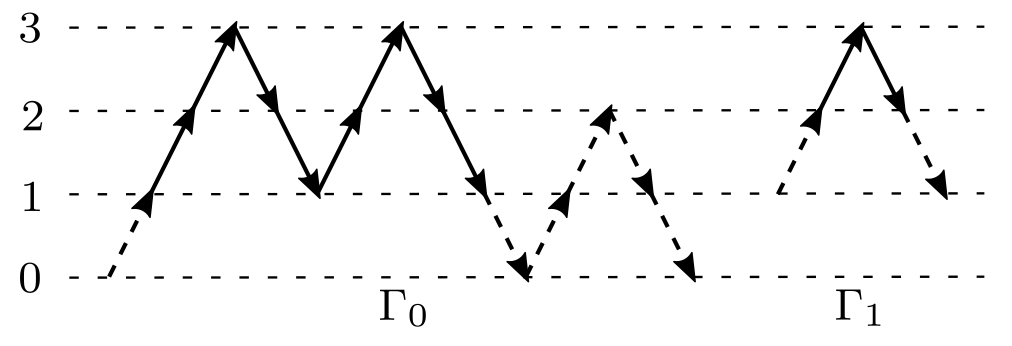}
        \caption{$\Gamma(7,3)$ (right) has two summits both in one 2-block of length 2.
        The solid arrows indicate $\Gamma_1$ and $\Gamma_2$ (in $\Gamma_1$).}
        \label{bc:g73}
    \end{subfigure}
    \caption{}
    \label{basecasefig}
\end{figure}

\begin{lem}\label{basecase}
Let $(p,q)$ be a co-prime pair with $p$ and $q$ positive and $q$ odd.
If $q=1$ or $(p\modop q)=1$ then $(p,q)$ is a pre-RTFN pair.
\end{lem}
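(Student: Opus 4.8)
The plan is to prove the reformulated statement: that such a pair $(p,q)$ is a pre-RTFN pair, i.e.\ to exhibit an integer $N$, incremental paths $\Gamma_0,\dots,\Gamma_N$ and $\Upsilon_1,\dots,\Upsilon_N$, and positive integers $n_1,\dots,n_N$ satisfying \ref{ngstart}--\ref{ngsym}. In both sub-cases the cycle graph $\oGamma(p,q)$ is completely explicit, so the strategy is simply to write it down and read the decomposition off of it.

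\textbf{When $q=1$.} Here $\epsilon_i=+1$ for $0\le i<p$ and $\epsilon_i=-1$ for $p\le i<2p$, so $\Gamma(p,1)$ is a single ``tent'': a positive $p$-segment climbing from grading $0$ to grading $p$, then a negative $p$-segment descending back to $0$, closed up. It has a unique summit (grading $p=M+1$) and a unique bottom (grading $0=m$), and the reflection $P_i\mapsto P_{p-i}$ shows it is symmetric. I would take $N=1$ and $n_1=1$, let $\Gamma_1$ be the subpath of $\oGamma(p,1)$ around the summit isomorphic to $\Gammatop$, and let $\Upsilon_1=\oGamma(p,1)-\Gamma_1$ be the complementary subpath, which passes through the bottom. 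Then \ref{ngstart} and \ref{ngend} are immediate; \ref{ngrec} holds because $\Gamma_1*\Upsilon_1$ is just $\oGamma(p,1)$ cut open at an endpoint of $\Gamma_1$; the path $\Upsilon_1$ stays strictly below grading $M+1$, so it has no summits, giving \ref{ngvsm}; and $\Gamma_1\cong\Gammatop$ is symmetric and stays strictly above grading $m$, so it has no bottoms, giving \ref{ngsym}. The only thing to check is that $p$ is large enough for $\Gammatop$ to fit inside the tent; the finitely many remaining small values are settled by direct inspection.

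\textbf{When $p\modop q=1$.} Write $p=\kappa q+1$, so $\xi=1$. By Proposition \ref{segsizeprop}, $\oGamma(p,q)$ has $2q$ segments, of which exactly $2\xi=2$ are $(\kappa+1)$-segments and the other $2q-2$ are $\kappa$-segments; tracking signs (consecutive segments have opposite signs, $\Lambda_0$ positive) pins down the shape: a ``high half'' $\Lambda_0*\Lambda_1*\cdots*\Lambda_{q-1}$, namely the positive $(\kappa+1)$-segment $\Lambda_0$ (rising from $0$ to $\kappa+1$) followed by $q-1$ strictly alternating $\kappa$-segments oscillating between gradings $1$ and $\kappa+1$, and then a ``low half'' $\Lambda_q*\cdots*\Lambda_{2q-1}$, its mirror image, with $\Lambda_q$ the negative $(\kappa+1)$-segment (falling from $\kappa+1$ to $0$) followed by $q-1$ alternating $\kappa$-segments oscillating between $0$ and $\kappa$. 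In particular $M=\kappa$ and $m=0$, all summits lie in the high half and all bottoms in the low half. First I would cut $\oGamma(p,q)$ open at the grading-$1$ vertex inside $\Lambda_0$; this writes it as $\Gamma_1^{\,n_1}*\Upsilon_1$ with $n_1=(q+1)/2$, where $\Gamma_1$ is the single ``bump'' that climbs $\kappa$ steps to grading $\kappa+1$ and falls $\kappa$ steps back, and $\Upsilon_1$ is the low piece (the remainder of $\Lambda_q$, then $\Lambda_{q+1},\dots,\Lambda_{2q-1}$, then the first edge of $\Lambda_0$). This gives \ref{ngrec}; $\Upsilon_1$ never reaches grading $\kappa+1$, so it has no summits \ref{ngvsm}; and $\Gamma_1$ never reaches grading $0$ and is symmetric by reflection, so \ref{ngsym} holds. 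Now $\cl(\Gamma_1)$ is a symmetric cycle built from just two $\kappa$-segments, with its summit still at grading $\kappa+1=M+1$, so one more step --- exactly the $q=1$ argument applied to it, cutting around the summit with $\Gamma_2\cong\Gammatop$ and the rest in $\Upsilon_2$ --- finishes the construction.

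\textbf{The main obstacle.} Nothing above is deep, and the real work is bookkeeping. The delicate points are: checking, via the $\modop$-arithmetic of Proposition \ref{segsizeprop}, that when $\xi=1$ the two $(\kappa+1)$-segments genuinely sit diametrically opposite with opposite signs and the $\kappa$-segments strictly alternate; verifying that each claimed isomorphism $\cl(\Gamma_{i-1})\cong\cl(\Gamma_i^{\,n_i}*\Upsilon_i)$ respects gradings and not merely the underlying directed graphs; and confirming that the chosen $\Upsilon_i$ avoid all summits while the chosen $\Gamma_i$ (for $i\ge1$) avoid all bottoms and stay symmetric at every stage. The parity of $M=\kappa$ --- equivalently, whether $L(p/q)$ is a knot or a link --- is what may split the endgame into two sub-cases when matching the last graph to $\Gammatop$, and the smallest pairs, where there is barely room for $\Gammatop$, are handled separately by inspection.
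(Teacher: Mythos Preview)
Your proposal is correct and follows essentially the same approach as the paper: in both sub-cases you write down $\oGamma(p,q)$ explicitly and read off the nested decomposition. Your description of the $p\bmod q=1$ case (two $(\kappa+1)$-segments at $\Lambda_0$ and $\Lambda_q$, with the high half oscillating between gradings $1$ and $\kappa+1$ and the low half between $0$ and $\kappa$) is exactly right, and your choice $n_1=(q+1)/2$ with $\Gamma_1$ the single $\kappa$-bump matches the paper's.

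The only small discrepancy is in where the case split falls. You anticipate a split on the \emph{parity} of $\kappa$, but the paper actually splits on whether $\kappa=1$ or $\kappa>1$: when $\kappa=1$ the bump $\Gamma_1$ already \emph{is} $\Gammatop$, so one takes $N=1$ rather than $N=2$. Your uniform $N=2$ construction still works there (the second step is just degenerate), so this is a cosmetic difference rather than an error. Likewise, your worry about ``small pairs where there is barely room for $\Gammatop$'' is unnecessary: since $p>|q|\ge1$ forces $p\ge2$, the tent always has at least four edges and $\Gammatop$ fits without exception.
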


\begin{proof}
Define $\kappa$ as in Proposition \ref{segsizeprop}.

When $q=1$, $\oGamma(p,q)$ is the closure of a positive $p$-segment followed by a negative $p$-segment
so $\oGamma(p,q)$ only has one summit;
see Figure \ref{bc:g31}.
It can be clearly seen that $(p,q)$ is a pre-RTFN pair by making the following choice.
\begin{itemize}
    \item Let $N=1$.
    \item Let $\Gamma_0=\Gamma(p,q)$.
    \item Let $\Gamma_1=\Gammatop$.
    \item Let $n_1=1$.
    \item Let $\Upsilon_1=\oGamma(p,q)-\Gammatop$.
\end{itemize}

When $p\modop q=1$, $\oGamma(p,q)$ is the closure of a positive $(\kappa+1)$-segment, a $\kappa$-block of length $q-1$, a negative $(\kappa+1)$-segment, followed by another $\kappa$-block of length $q-1$
so $\oGamma(p,q)$ has $(q+1)/2$ summits all contained in the same $\kappa$-block;
see Figure \ref{bc:g73}.

Again, it's not hard to see that $(p,q)$ is a pre-RTFN pair.

When $\kappa=1$, make the following choices.
\begin{itemize}
    \item Let $N=1$.
    \item Let $\Gamma_0=\Gamma(p,q)$.
    \item Let $\Gamma_1=\Gammatop$.
    \item Let $n_1=(q+1)/2$.
    \item Let $\Upsilon_1$ be the subgraph of $\oGamma(p,q)$ with the all summits and their incident edges removed.
\end{itemize}

When $\kappa>1$, make the following choices.
\begin{itemize}
    \item Let $N=2$.
    \item Let $\Gamma_0=\Gamma(p,q)$.
    \item Let $\Gamma_1$ be a positive $\kappa$-segment followed by a negative $\kappa$-segment with a summit between them.
    \item Let $\Gamma_2=\Gammatop$.
    \item Let $\Upsilon_1$ be the subgraph of $\oGamma(p,q)$ with the $\kappa$-block containing all the bottoms along with the edges immediately preceding and following the block.
    \item Let $\Upsilon_2$ be $\cl(\Gamma_1)-\Gammatop$.
    \item Let $n_1=(q+1)/2$.
    \item Let $n_2=1$.
\end{itemize}
\end{proof}

Let $(p,q)$ be a co-prime pair with $q>0$,
and $(p^*,q^*)$ be the co-prime pair defined by Lemma \ref{redpair}.
Suppose $(p^*,q^*)$ is a pre-RTFN pair
so there is a positive integer $N^*$ subgraphs
\[
    \Gamma^*_0,\ldots,\Gamma^*_N
\]
and
\[
    \Upsilon^*_1,\ldots,\Upsilon^*_N
\]
and integers
\[
    n^*_1,\ldots,n^*_N
\]
satisfying \ref{ngstart},\ref{ngend},\ref{ngrec},\ref{ngvsm}, and \ref{ngsym}.

Define $\kappa$ and $\kappa'$ as in (\ref{eucdivpq}) and (\ref{eucdivqr}) so
$\oGamma(p,q)\cong E(\oGamma(p^*,q^*),\kappa,\kappa')$ by Proposition \ref{reduceexpand}.
For simplicity of notation, define
\[
    E(\Gamma^*):=E(\Gamma^*,\kappa,\kappa')
\]
for any closable subgraph $\Gamma^*$ of $\oGamma(p^*,q^*)$.

To show that $(p,q)$ is a pre-RTFN pair, we need to define $N$, the subgraphs $\{\Gamma_i\}^N_0$ and $\{\Upsilon_i\}^N_1$, and the integers $\{n_i\}^N_1$ for $(p,q)$.
This choice depends on how expansion effects the nested repeating pattern of summits in $\oGamma(p^*,q^*)$.

In general, we want to define $\Gamma_i$ to be $E(\Gamma^*_i)$.
By \ref{ngrec}, $(\Gamma^*_i)^{n^*_i}$ is a subgraph of $\Gamma^*_{i-1}$ for all $i=1,\ldots,N^*$.
It follows that for all $i=1,\ldots,N^*$, $E((\Gamma^*_i)^{n^*_i})$ is a subgraph of $E(\Gamma^*_{i-1})$.
We want $\Gamma_{i}^{n_i}$ to be a subgraph of $\Gamma_{i-1}$ which is equal to $E(\Gamma^*_{i-1})$.
However, if $\Gamma_i$ is $E(\Gamma^*_i)$, then $\Gamma_i^{n_i}$ is $(E(\Gamma^*_i))^{n^*_i}$, and
$E((\Gamma^*_i)^{n^*_i})$ may not be equal to $(E(\Gamma^*_i))^{n^*_i}$.
Nevertheless, $(E(\Gamma^*_i))^{n^*_i}$ is a subgraph of $E(\Gamma^*_{i-0})$ by adding or removing $\kappa$ edges.

While the leading summits of $\oGamma(p,q)$ corresponds to the summits of $\oGamma(p^*,q^*)$, we must also consider the non-leading summits in $\oGamma(p,q)$.
Let $d$ be $\kappa'$ or $\kappa'-1$ whichever is even.
Let $\Gammatop^*$ be the subgraph of a summit in $\oGamma(p^*,q^*)$ with its two adjacent vertices.
$E(\Gammatop^*)$ is always the concatenation of a $\kappa$-block of even length, positive $(\kappa+1)$-segment, another $\kappa$-block of even length, and a negative $(\kappa+1)$-segment.
It follows that every summit in $\oGamma(p^*,q^*)$ corresponds to $d/2+1$ summits in $\Gamma(p,q)$.

We define $N$, $\{\Gamma_i\}^N_0$, and $\{n_i\}^N_1$ as follows.

Suppose $\kappa'=1$ or $\kappa=1$.
\begin{itemize}
    \item Let $N=N^*+1$.
    \item For each $i=0,\ldots,N^*$, let 
    $\Gamma_i=E(\Gamma^*_i)$.
    \item For each $i=1,\ldots,N^*$, let $n_i=n^*_i$.
    \item Let $\Gamma_{N}=\Gammatop$.
    \item Let $n_{N}=d/2+1$.
\end{itemize}

Suppose $\kappa'>1$ and $\kappa>1$.
\begin{itemize}
    \item Let $N=N^*+2$.
    \item For each $i=0,\ldots,N^*$, let 
    $\Gamma_i=E(\Gamma^*_i)$.
    \item For each $i=1,\ldots,N^*$, let $n_i=n^*_i$.
    \item Let $\Gamma_{N-1}$ be a positive $\kappa$-segment followed by a negative $\kappa$-segment.
    \item Let $n_{N-1}=d/2+1$.
    \item Let $\Gamma_{N}=\Gammatop$.
    \item Let $n_{N}=1$.
\end{itemize}

In either case, define $\Upsilon_i=\cl(\Gamma_{i-1})-(\Gamma_i^{n_i})$
for $i=1,\ldots,N$; see figures \ref{expandeffect}.

\begin{figure}[t]
	\begin{subfigure}[t]{10cm}
        \centering
        \includegraphics{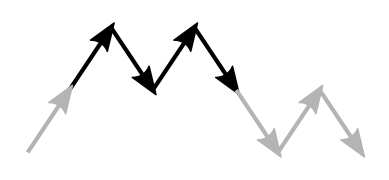}
        \caption{The graph $R(\oGamma)(26,11)=\oGamma(4,3)$ with $(\Gamma^*_1)^2$ in black and $\Upsilon^*_1$ in gray.}
        \label{ee:before}
	\end{subfigure}

	\begin{subfigure}[t]{10cm}
        \centering
        \includegraphics{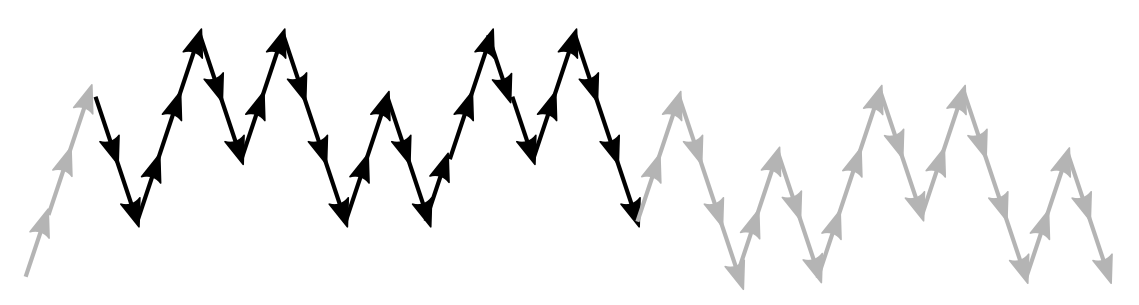}
        \caption{The graph $\oGamma(26,11)=E(\oGamma(4,3))$ with $E((\Gamma^*_1)^2)$ in black and $E(\Upsilon^*_1)$ in gray.}
        \label{ee:after1}
	\end{subfigure}

	\begin{subfigure}[t]{10cm}
        \centering
        \includegraphics{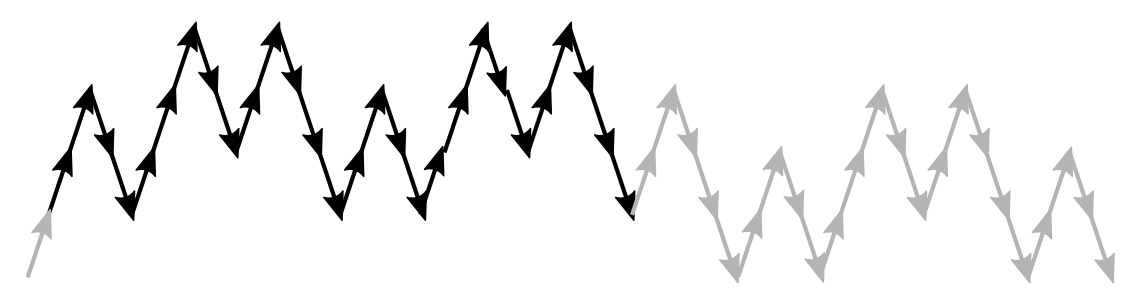}
        \caption{The graph $\Gamma_0=\oGamma(26,11)$ with $\Gamma_1^2=(E(\Gamma^*_1))^2$ in black and $\Upsilon_1$ in gray.}
        \label{ee:after2}
	\end{subfigure}
	\caption{Expanding $\oGamma(4,3)$ to $\oGamma(26,11)$}
    \label{expandeffect}
\end{figure}

\begin{lem} \label{inductionstep1}
    The integers $\{n_i\}^N_1$ and the subgraphs $\{\Gamma_i\}^N_0$ and $\{\Upsilon_i\}^N_1$ satisfy
    \ref{ngstart}, \ref{ngend}, \ref{ngrec} and \ref{ngvsm}.
\end{lem}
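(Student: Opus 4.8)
The plan is to read the four conditions directly off the construction, splitting the layers into the \emph{transported} layers $i=1,\dots,N^*$, where $\Gamma_i=E(\Gamma^*_i)$ and $n_i=n^*_i$, and the one or two \emph{new} layers at the top; the two cases of the construction ($\kappa'=1$ or $\kappa=1$ versus $\kappa,\kappa'>1$) differ only in whether the cluster of summits produced by expansion is peeled off in one step or two, so they run in parallel. The observation that turns \ref{ngrec} into bookkeeping is that $\Upsilon_i$ is \emph{defined} to be $\cl(\Gamma_{i-1})-\Gamma_i^{n_i}$; hence once one knows $\Gamma_i^{n_i}$ occurs as an incremental subpath of $\cl(\Gamma_{i-1})$, the relation $\cl(\Gamma_{i-1})\cong\cl(\Gamma_i^{n_i}*\Upsilon_i)$ holds automatically, and \ref{ngvsm} reduces to checking that the complementary arc contains no vertex of grading $M+1$. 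Conditions \ref{ngstart} and \ref{ngend} are then quick: $\Gamma_N=\Gammatop$ by construction gives \ref{ngend}, and for \ref{ngstart} one has $\Gamma_0=E(\Gamma^*_0)=E(\Gamma(p^*,q^*),\kappa,\kappa')$, so Lemma~\ref{redpair}\ref{rediso} and Proposition~\ref{reduceexpand} give $\cl(\Gamma_0)\cong E(\oGamma(p^*,q^*),\kappa,\kappa')\cong\oGamma(p,q)=\cl(\Gamma(p,q))$; since $E$ on paths is only defined up to isomorphism (Lemma~\ref{expandiso} and (\ref{expandcloseiso})) and the remaining conditions are isomorphism-invariant, we may simply take $\Gamma_0=\Gamma(p,q)$.

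The core is \ref{ngrec} and \ref{ngvsm} for $1\le i\le N^*$. I would begin from the pre-RTFN data of $(p^*,q^*)$: the relation $\cl(\Gamma^*_{i-1})\cong\cl((\Gamma^*_i)^{n^*_i}*\Upsilon^*_i)$ means $(\Gamma^*_i)^{n^*_i}$ is an incremental subpath of $\cl(\Gamma^*_{i-1})$ with complementary arc $\Upsilon^*_i$. Applying $E(-,\kappa,\kappa')$ and iterating Lemma~\ref{expandconcat}, $\cl(\Gamma_{i-1})$ contains a subpath equal to the concatenation of $n^*_i$ expanded copies of $\Gamma^*_i$ followed by an expanded copy of $\Upsilon^*_i$. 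As noted before the lemma, this is not literally $(E(\Gamma^*_i))^{n^*_i}*E(\Upsilon^*_i)$: the first of these copies inherits a starting sign from the last edge of the \emph{preceding} piece rather than from the last edge of $\Gamma^*_i$, so the $\kappa$-block the expansion prepends to it has one $\kappa$-segment more or fewer, a discrepancy of exactly $\kappa$ edges. The key step is to check these $\kappa$ edges lie in a $\kappa$-block that is \emph{not} an even-length $\kappa$-block flanking a leading summit, so they carry no summit; this uses Proposition~\ref{blocksizeprop} together with the corollary of Proposition~\ref{phigradings} identifying leading summits of $\oGamma(p,q)$ with summits of $R(\oGamma)(p,q)$. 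Folding those $\kappa$ edges into the complement yields $\Gamma_i^{n_i}=(E(\Gamma^*_i))^{n^*_i}$ as a genuine subpath of $\cl(\Gamma_{i-1})$, proving \ref{ngrec}; and $\Upsilon_i$ is then a concatenation of those summit-free $\kappa$ edges with $E(\Upsilon^*_i)$, the latter summit-free because $\Upsilon^*_i$ contains no summit of $\oGamma(p^*,q^*)$ while every new non-leading summit sits in a $\kappa$-block interior to the expansion of $(\Gamma^*_i)^{n^*_i}$, giving \ref{ngvsm}.

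Finally the top layers. By the discussion preceding the lemma, $\Gamma_{N^*}=E(\Gammatop^*)$ is the closure of a $\kappa$-block of even length $d$, a positive $(\kappa+1)$-segment, a second $\kappa$-block of length $d$, and a negative $(\kappa+1)$-segment, and its $d/2+1$ summits occur consecutively inside one $\kappa$-block, successive ones separated by a valley of depth $\kappa$. When $\kappa=1$ (or $\kappa'=1$, where $d=0$ and $n_N=1$) such a valley is a single down-up, so $\Gamma_N^{n_N}=\Gammatop^{\,n_N}$ with $n_N=d/2+1$ is precisely the subpath through the cluster and $\Upsilon_N=\cl(\Gamma_{N^*})-\Gammatop^{\,n_N}$ is summit-free. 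When $\kappa,\kappa'>1$ the valleys have depth $\kappa$, so $\Gamma_{N-1}^{\,d/2+1}$ (with $\Gamma_{N-1}$ an up-$\kappa$/down-$\kappa$ bump around one summit) is the cluster subpath of $\cl(\Gamma_{N-2})$, leaving $\Upsilon_{N-1}$ summit-free, and then $\cl(\Gamma_{N-1})$ has a single summit so $\cl(\Gamma_{N-1})\cong\cl(\Gammatop*\Upsilon_N)$ with $\Upsilon_N$ summit-free and $n_N=1$. I expect the main obstacle to be exactly the bookkeeping of the previous paragraph: locating the $\kappa$-edge discrepancy of the first expanded copy precisely enough to guarantee it never lands in a summit-bearing even-length $\kappa$-block, which is where the segment- and block-structure results of Section~\ref{LemmaProof} (Propositions~\ref{segsizeprop} and~\ref{blocksizeprop} and the leading/trailing-vertex analysis) do the real work.
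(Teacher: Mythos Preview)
Your proposal is correct and follows essentially the same route as the paper: \ref{ngstart} and \ref{ngend} are read off the definitions, \ref{ngrec} is automatic from $\Upsilon_i:=\cl(\Gamma_{i-1})-\Gamma_i^{n_i}$, and \ref{ngvsm} for $1\le i\le N^*$ comes from observing that $\Upsilon_i$ differs from $E(\Upsilon^*_i)$ by at most $\kappa$ edges, with the top layers handled by direct inspection of $E(\Gammatop^*)$. The one place you diverge in emphasis is the $\kappa$-edge discrepancy: you flag it as the main obstacle and propose invoking Propositions~\ref{segsizeprop}, \ref{blocksizeprop}, and the leading/trailing analysis to pin down where those edges sit, whereas the paper dispatches this in a single sentence (``the edges added or removed from $E(\Upsilon^*_i)$ to get $\Upsilon_i$ are not summits''), relying implicitly on the fact that those boundary edges lie at the start of a $\kappa$-block prepended to a $(\kappa+1)$-segment whose corresponding edge in $\oGamma(p^*,q^*)$ is already inside $\Upsilon^*_i$ and hence not adjacent to a summit there---so no leading summit, and therefore no summit at all, can occur among them.
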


\begin{proof}
    Since $\Gamma^*_0\cong\Gamma(p^*,q^*)$,
    \[
        \Gamma_0\cong E(\Gamma(p^*,q^*))\cong\Gamma(p,q)
    \]
    so \ref{ngstart} is satisfied.

    By definition, $\Gamma_N\cong\Gammatop$
    so \ref{ngend} is satisfied.

    For each $i=1,\ldots,N$,
    $\Upsilon_{i}=\cl(\Gamma_{i-1})-(\Gamma_i^{n_i})$
    so
    \[
        \cl(\Gamma_{i-1})\cong \cl(\Gamma_i^{n_i}*\Upsilon_{i}) .
    \]
    Therefore, \ref{ngrec} is satisfied.

    When $i>N^*$, all of the summits in $\Gamma_{i-1}$ are contained in $\Gamma_i^{n_i}$ by construction
    so $\Gamma_{i}=\Gamma_{i-1}-\Gamma_i^{n_i}$ has no summits.
    
    For each $i=1,\ldots,N^*$,
    \[
        \Upsilon_{i}=\cl(\Gamma_{i-1})-(\Gamma_i^{n_i})
        =\cl(\Gamma_{i-1})-(E(\Gamma^*_i)^{n^*_i})
    \]
    and
    \[
        E(\Upsilon^*_{i})=\cl(E(\Gamma^*_{i-1}))-E((\Gamma^*_i)^{n^*_i})
        =\cl(\Gamma_{i-1})-E((\Gamma^*_i)^{n^*_i}).
    \]
    $E((\Gamma^*_i)^{n^*_i})$ is $(E(\Gamma^*_i))^{n^*_i}$ possibly with $\kappa$ edges added of removed.
    It follows that $\Upsilon_i$ is $E(\Upsilon^*_i)$ with possibly $\kappa$ edges added or removed; see Figure \ref{expandeffect}.
    Since no summits are in $\Upsilon^*_i$,
    there are no summits $E(\Upsilon^*_i)$.
    The edges added or removed from $E(\Upsilon^*_i)$ to get $\Upsilon_i$ are not summits.
    Thus, there are no summits in $\Upsilon_i$
    Therefore, \ref{ngvsm} is satisfied.
\end{proof}

\begin{lem} \label{inductionstep2}
    The subgraphs $\{\Gamma_i\}^N_0$ satisfy
    \ref{ngsym}.
\end{lem}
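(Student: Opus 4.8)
The plan is to verify (R5) — that each $\Gamma_i$ is symmetric, and that $\Gamma_i$ contains no bottom for $i\ge 1$ — for three kinds of graph in the list $\Gamma_0,\dots,\Gamma_N$: the initial graph $\Gamma_0$, the ``top'' graphs ($\Gamma_N=\Gammatop$, together with $\Gamma_{N-1}$ in the case $\kappa,\kappa'>1$), and the expanded graphs $\Gamma_i=E(\Gamma^*_i)$ for $1\le i\le N^*$. For $\Gamma_0$ only symmetry is needed, and $\cl(\Gamma_0)\cong\oGamma(p,q)$ is symmetric because the Schubert relator word of a two-bridge link is a palindrome up to inversion — equivalently the sequence $\epsilon_i=(-1)^{\lfloor iq/p\rfloor}$ is symmetric about its centre — the same fact used in the proof of Lemma \ref{nestedgraphs}. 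The top graphs are small fixed incremental cycles that are visibly symmetric, the relevant order-reversing bijection interchanging their two ends; and by Lemma \ref{inductionstep1} each $\Gamma_i$ with $i\ge 1$ is a subgraph of $\cl(\Gamma_{i-1})$, so once ``no bottom'' is known for $\Gamma_{N^*}$ it follows for all $\Gamma_i$ with $i>N^*$ as well. Thus the entire content is the claim that the expansion $E(\Gamma^*)=E(\Gamma^*,\kappa,\kappa')$ carries a symmetric, bottom-free closable subgraph of $\oGamma(p^*,q^*)$ to a symmetric, bottom-free closable subgraph of $\oGamma(p,q)$; I apply this to $\Gamma^*_i$, which is symmetric (and, for $i\ge 1$, bottom-free) by the inductive hypothesis (R5) for $(p^*,q^*)$.

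For symmetry I will use the reformulation that an incremental cycle $\oGamma$ is symmetric if and only if it is relatively isomorphic to its reverse $\mathrm{rev}(\oGamma)$, the incremental cycle obtained by reversing every edge and negating every grading; a relative isomorphism $\oGamma\to\mathrm{rev}(\oGamma)$ is exactly a bijection $\phi$ with $(P,Q)\in E(\oGamma)\iff(\phi Q,\phi P)\in E(\oGamma)$ and $\gr(P)+\gr(\phi P)$ constant. It then suffices to check that the construction $\tilde E$, hence $E$, commutes with $\mathrm{rev}$ up to relative isomorphism. Reversing a path negates increments and reverses the order of edges; under this a positive $(\kappa+1)$-segment becomes a positive $(\kappa+1)$-segment, every interleaved $\kappa$-block keeps its length (hence its parity), and the Step~2 rule ``the first $\kappa$-segment of an interior block is opposite to the preceding $(\kappa+1)$-segment'' turns into ``the last $\kappa$-segment of the block is opposite to the following $(\kappa+1)$-segment'', which a short parity computation against the block-length conventions shows to be the same constraint; the wrap-around block at the closure is handled identically. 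Hence $E(\mathrm{rev}(\cl(\Gamma^*)))$ and $\mathrm{rev}(E(\cl(\Gamma^*)))$ have the same underlying directed graph and sign pattern, and being incremental they agree up to a global grading shift. Combining this with (\ref{expandcloseiso}) and with Lemma \ref{tegradings} (which shows $E$ itself respects relative isomorphism, all gradings of the expansion being determined by differences once the starting grading is fixed), symmetry of $\Gamma^*_i$ gives that $\cl(\Gamma_i)=E(\cl(\Gamma^*_i))$ is relatively isomorphic to $\mathrm{rev}(E(\cl(\Gamma^*_i)))=\mathrm{rev}(\cl(\Gamma_i))$, so $\Gamma_i$ is symmetric.

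For the absence of bottoms I will establish a grading estimate dual to Proposition \ref{phigradings}: using Lemma \ref{tegradings} on the valleys of $E(\cl(\Gamma^*))$, every vertex of $E(\cl(\Gamma^*))$ has grading at least $\min_{P}\gr(P)-\kappa$, where the minimum runs over vertices $P$ of $\cl(\Gamma^*)$, and that lower value is attained only inside a $\kappa$-block immediately abutting a $(\kappa+1)$-segment whose endpoints descend from a minimum-grading vertex of $\cl(\Gamma^*)$. Consequently the bottoms of $\oGamma(p,q)=E(\oGamma(p^*,q^*))$ all lie in the $\kappa$-blocks flanking expansions of bottoms of $\oGamma(p^*,q^*)$. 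Since for $i\ge 1$ the graph $\Gamma^*_i$ contains no bottom of $\oGamma(p^*,q^*)$, the expanded graph $E(\Gamma^*_i)$ never descends to grading $m$, so $\Gamma_i$ contains no bottom of $\oGamma(p,q)$; the cases $i>N^*$ were already reduced, above, to the case $i=N^*$.

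I expect the main obstacle to be making this last grading estimate precise: one must follow, through Lemma \ref{tegradings} and the Step~4 normalisation in the definition of $\tilde E$, exactly how the grading of each valley of $E(\cl(\Gamma^*))$ compares with the gradings inside $\cl(\Gamma^*)$ — in particular the $\pm\kappa$ corrections that appear when a $(\kappa+1)$-segment and the reference sign $e$ disagree — and then identify the global minimum $m$ with the deepest point of a flanking $\kappa$-block via the block-length description in Proposition \ref{blocksizeprop}. Checking that $\tilde E$ commutes with $\mathrm{rev}$ is a second, more bookkeeping-heavy point, splitting into a handful of sign and parity cases for the interior and wrap-around $\kappa$-blocks, but each case is routine.
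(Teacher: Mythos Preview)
Your approach is correct and genuinely different from the paper's.  For symmetry of $\Gamma_i=E(\Gamma^*_i)$, the paper does not argue abstractly that expansion commutes with reversal; instead it explicitly constructs the order-reversing bijection $\phi$ on $\cl(\Gamma_i)$ from $\phi^*$ by declaring $\phi|_{V_L}=f_T^{-1}\circ\phi^*\circ f_L$ (so leading vertices go bijectively to trailing vertices) and then verifies $\gr(P)+\gr(\phi(P))=k$ in four cases, three for leading vertices according to the type of $f_L(P)$ and one for non-leading vertices, using Proposition~\ref{phigradings} to track the $\pm\kappa$ corrections.  Your commuting-with-$\mathrm{rev}$ argument is more conceptual and avoids this case analysis, at the cost of having to check the block-length parity conventions survive reversal (including the wrap-around block); this is routine but does require care, as you note.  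The paper's route has the advantage that the machinery ($f_L$, $f_T$, leading/trailing vertices, Proposition~\ref{phigradings}) is already built; yours has the advantage of being a one-line reduction once the commutation is established.

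Two small points.  First, your separate treatment of $\Gamma_0$ is unnecessary: since $\Gamma_0=E(\Gamma^*_0)$, the expansion argument already covers $i=0$, which is exactly how the paper handles it.  Your appeal to ``the same fact used in the proof of Lemma~\ref{nestedgraphs}'' is slightly circular --- that lemma \emph{uses} symmetry of $\Gamma_0$ as part of (R5) rather than proving it.  Second, for the absence of bottoms, the paper is actually terser than you: it simply asserts that $\Gamma^*_1$ having no bottoms forces $\Gamma_1=E(\Gamma^*_1)$ to have no bottoms, then propagates via $\Gamma_i\subset\cl(\Gamma_{i-1})$ down to all $i\ge 1$ at once.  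Your grading estimate via Lemma~\ref{tegradings} is the honest content behind that assertion and is worth writing out; once you have it for $i=1$ you can propagate as the paper does rather than treating each $i\le N^*$ separately.
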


\begin{proof}
    First, we show what $\Gamma_i$ has no bottoms for each $i=1,\ldots, N$.
    Since $N^*\geq 1$, $\Gamma_1=E(\Gamma^*_1)$.
    Since $\Gamma^*_1$ has no bottoms, $\Gamma_1$ does not have bottoms.
    When $1\leq i\leq N$,
    \[
        \Gamma_{i-1}\cong\cl(\Gamma_i^{n_i}*\Upsilon_i)
    \]
    so $\Gamma_i$ is a subgraph of $\Gamma_1$
    Therefore, $\Gamma_i$ has no bottoms.
    
    Suppose $0 \leq i \leq N$.
    Here we show that $\Gamma_i$ is symmetric.
    When $i>N^*$, $\Gamma_i$ is either the concatenation of a positive $\kappa$-segments and a negative $\kappa$-segment or $\Gammatop$.
    In both case, $\Gamma_i$ is clearly symmetric.

    Suppose $0\leq i\leq N^*$.
    In this case, $\Gamma_i=E(\Gamma^*_i)$.
    Our goal is to show that since $\Gamma^*_i$ is symmetric, $\Gamma_i$ is also symmetric.

    Since $\Gamma^*_i$ is symmetric,
    there is an order reversing bijection $\phi^*$ on the set of vertices of $\cl(\Gamma^*_i)$ and an integer $k^*$
    such that for each $P^*$ in $\cl(\Gamma^*_i)$,
    \[
	    \gr(P^*)+\gr(\phi^*(P^*))=k^*.
    \]
    Let $V_L$ and $V_T$ be the sets of leading and trailing vertices of $\cl(\Gamma_i)$ respectively,
    and let $V^*$ be the vertex set of $\cl(\Gamma^*_i)$.
    Define $\phi$ to be the unique order reversing bijection on the vertices of $\cl(\Gamma_i)$ such that the following diagram commutes,
    \[
        \begin{tikzcd}
            V_L \arrow{r}{\phi|_{V_L}} \arrow[swap]{d}{f_L}
            & V_T \arrow{d}{f_T} \\
            V^* \arrow{r}{\phi^*} & V^*
        \end{tikzcd}
    \]

    In particular, $\phi$ maps leading vertices bijectively to trailing vertices (see Figure \ref{figphistar}).
    Let $P_S$ be a leading summit of $\Gamma_i$, and
    let $P^*_S=f_L(P_S)$ in $\Gamma^*_i$.

   \begin{figure}[t]
        \includegraphics{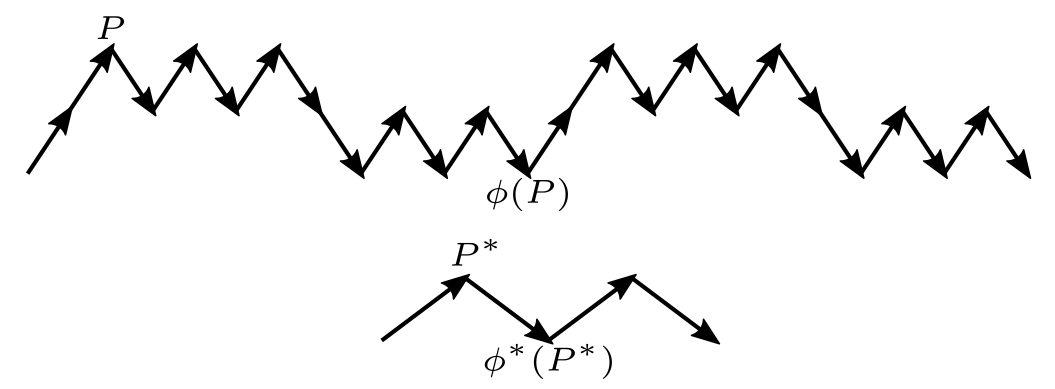}
        \caption{The incremental cycles $\cl(\Gamma_i)$ (top) and $\cl(\Gamma^*_i)$ (bottom) are shown. $P$ is a leading vertex, and $f_L(P)$ is denoted $P^*$. $\phi(P)$ is a trailing vertex, and $\phi^*(P^*)=f_T(\phi(P))$.}
        \label{figphistar}
    \end{figure}

Let $k=\gr(P_S)+\gr(\phi(P_S))$, and
let $P$ be an arbitrary vertex in $\Gamma_i$.
The goal is to show that $\gr(P)+\gr(\phi(P))=k$ which is done in four cases.

\textit{Case 1.} Suppose $P$ is a leading vertex and $P^*:=f_L(P)$ has the same type as $P$, either a peak (type $(-+)$) or valley (type $(+-)$).
If $P^*$ is of type $(-+)$, then $\phi^*(P^*)$ is of type $(+-)$, and
if $P^*$ is of type $(+-)$, then $\phi^*(P^*)$ is of type $(-+)$.
Therefore, either $f_L^{-1}(P^*)$ and $f_T^{-1}(P^*)$ are both peaks and $f_L^{-1}(\phi^*(P^*))$ and $f_T^{-1}(\phi^*(P^*))$ are both valleys or $f_L^{-1}(P^*)$ and $f_T^{-1}(P^*)$ are both valleys and $f_L^{-1}(\phi^*(P^*))$ and $f_T^{-1}(\phi^*(P^*))$ are both peaks.
In either case, 
\begin{equation} \label{equalgradings}
	\gr(f_L^{-1}(\phi^*(P^*)))=\gr(f_T^{-1}(\phi^*(P^*))).
\end{equation}
Thus,
\begin{align*}
    \gr(P)+\gr(\phi(P))-k=&\gr(P)-\gr(P_S)+\gr(\phi(P))-\gr(\phi(P_S))\\
    =&\gr(f_L^{-1}(P^*))-\gr(f_L^{-1}(P^*_S))\\
    &+\gr(\phi(f_L^{-1}(P^*)))-\gr(\phi(f_L^{-1}(P^*_S)))\\
    =&\gr(f_L^{-1}(P^*))-\gr(f_L^{-1}(P^*_S))\\
    &+\gr(f_T^{-1}(\phi^*(P^*)))-\gr(f_T^{-1}(\phi^*(P^*_S)))
\end{align*}
Summits are of type $(-+)$ so by (\ref{equalgradings}),
\[
    \gr(f_T^{-1}(\phi^*(P^*)))-\gr(f_T^{-1}(\phi^*(P^*_S)))=\gr(f_L^{-1}(\phi^*(P^*)))-\gr(f_L^{-1}(\phi^*(P^*_S)))
\]
By Proposition \ref{phigradings},
\begin{align*}
    \gr(P)+\gr(\phi(P))-k=&\gr(f_L^{-1}(P^*))-\gr(f_L^{-1}(P^*_S))\\
    &+\gr(f_L^{-1}(\phi^*(P^*)))-\gr(f_L^{-1}(\phi^*(P^*_S)))\\
    =&\gr(P^*)-\gr(P^*_S)+\gr(\phi^*(P^*))-\gr(\phi^*(P^*_S))\\
    =&\gr(P^*)+\gr(\phi^*(P^*))-(\gr(P^*_S)+\gr(\phi^*(P^*_S)))\\
    =&k^*-k^*=0.
\end{align*}
Therefore,
\[
    \gr(P)+\gr(\phi(P))=k.
\]

\textit{Case 2.} Suppose $P$ is a leading peak and $P^*:=f_L(P)$ has type $(++)$.
In this case, $f_L^{-1}(P^*)$ and $f_L^{-1}(\phi^*(P^*))$ are both peaks and $f_T^{-1}(P^*)$ and $f_T^{-1}(\phi^*(P^*))$ are both valleys.
Thus,
\[
\gr(f_L^{-1}(\phi^*(P^*)))=\gr(f_T^{-1}(\phi^*(P^*)))+\kappa,
\]
and
\begin{align*}
    \gr(P)+\gr(\phi(P))-k=&\gr(P)-\gr(P_S)+\gr(\phi(P))-\gr(\phi(P_S))\\
    =&\gr(f_L^{-1}(P^*))-\gr(f_L^{-1}(P^*_S))\\
    &+\gr(\phi(f_L^{-1}(P^*)))-\gr(\phi(f_L^{-1}(P^*_S)))\\
    =&\gr(f_L^{-1}(P^*))-\gr(f_L^{-1}(P^*_S))\\
    &+\gr(f_T^{-1}(\phi^*(P^*)))-\gr(f_T^{-1}(\phi^*(P^*_S)))\\
    =&\gr(f_L^{-1}(P^*))-\gr(f_L^{-1}(P^*_S))\\
    &+\gr(f_L^{-1}(\phi^*(P^*)))-\gr(f_L^{-1}(\phi^*(P^*_S)))-\kappa\\
    =&\gr(P^*)-\gr(P^*_S)+\gr(\phi^*(P^*))-\gr(\phi^*(P^*_S))-\kappa+\kappa\\
    =&0.
\end{align*}

\textit{Case 3.} Suppose $P$ is a leading valley and $P^*:=f_L(P)$ has type $(--)$.
In this case, $f_T^{-1}(P^*)$ and $f_T^{-1}(\phi^*(P^*))$ are both peaks and $f_L^{-1}(P^*)$ and $f_L^{-1}(\phi^*(P^*))$ are both valleys.
Thus,
\[
    \gr(f_L^{-1}(\phi^*(P^*)))=\gr(f_T^{-1}(\phi^*(P^*)))-\kappa,
\]
and
\begin{align*}
    \gr(P)+\gr(\phi(P))-k=&\gr(P)-\gr(P_S)+\gr(\phi(P))-\gr(\phi(P_S))\\
    =&\gr(f_L^{-1}(P^*))-\gr(f_L^{-1}(P^*_S))\\
    &+\gr(\phi(f_L^{-1}(P^*)))-\gr(\phi(f_L^{-1}(P^*_S)))\\
    =&\gr(f_L^{-1}(P^*))-\gr(f_L^{-1}(P^*_S))\\
    &+\gr(f_T^{-1}(\phi^*(P^*)))-\gr(f_T^{-1}(\phi^*(P^*_S)))\\
    =&\gr(f_L^{-1}(P^*))-\gr(f_L^{-1}(P^*_S))\\
    &+\gr(f_L^{-1}(\phi^*(P^*)))-\gr(f_L^{-1}(\phi^*(P^*_S)))+\kappa\\
    =&\gr(P^*)-\gr(P^*_S)+\gr(\phi^*(P^*))-\gr(\phi^*(P^*_S))+\kappa-\kappa\\
    =&0.
\end{align*}

\textit{Case 4.} Suppose $P$ is not a leading vertex.
Let $P'$ be the leading vertex in $\cl(\Gamma_i)$ such that the length of the path $\omega(P',P)$, the path in $\cl(\Gamma_i)$ from $P'$ to $P$, is minimal.
It follows that $\omega(P',P)$ is isomorphic to a subgraph of a $\kappa$-block as in Figure \ref{subblockfig}.
In particular, there are no leading vertices between $P'$ and $P$ in $\cl(\Gamma_i)$;
therefore, there are no trailing vertices between $\phi(P)$ and $\phi(P')$ in $\cl(\Gamma_i)$
so $\omega(\phi(P),\phi(P'))$, the path from $\phi(P)$ to $\phi(P')$ in $\cl(\Gamma_i)$, is also isomorphic to a subgraph of a $\kappa$-block.

\begin{figure}[t]
\includegraphics{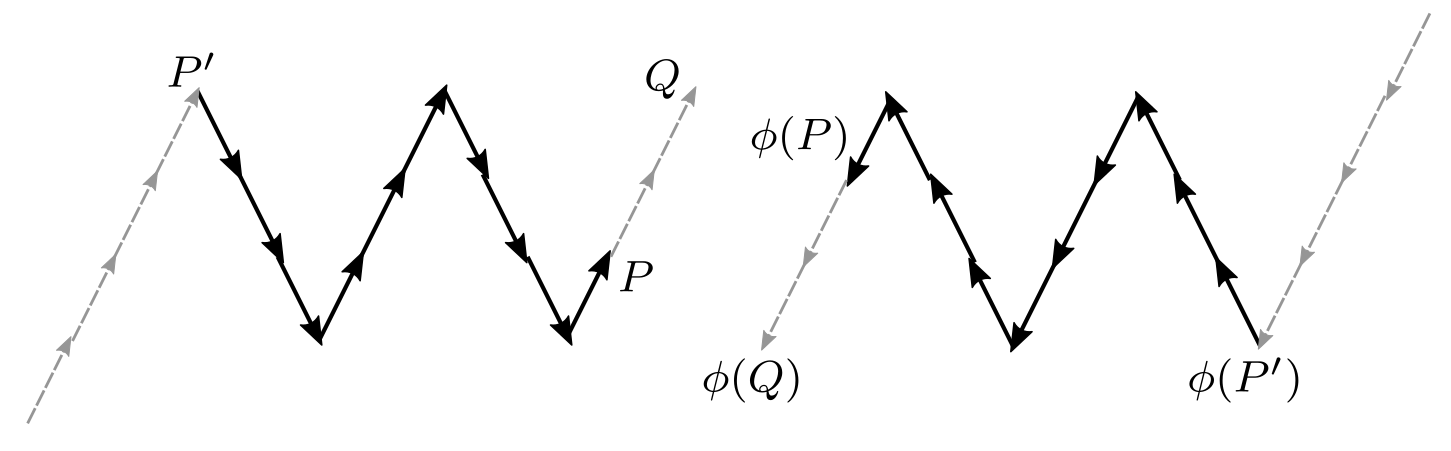}
\caption{$\omega(P',P)$ (left) and $\omega(\phi(P),\phi(P'))$ (right) are shown in solid black. The dashed gray arrows are other edges in $\cl(\Gamma_i$. The case shown is when $P'$ is a peak.}

\label{subblockfig}
\end{figure}

Let $Q$ be the closest vertex to $P$ with grading $\gr(Q)=\gr(P')$.
When $P'$ is a peak, $Q$ is a peak.
Likewise, when $P'$ is a valley, $Q$ is a valley.
Define $\delta$ be the distance from $P'$ to $Q$.
Since $P$ is in a $\kappa$-block which starts at $P`$, $Q$ and $P$ lie on the same segment so
\[
    \gr(Q)-\gr(P)=\left\{
    \begin{array}{ll}
    \delta&\text{when }Q\text{ is a peak}\\
    -\delta&\text{when }Q\text{ is a valley}
    \end{array}
    \right.
    .
\]
also, $\phi(Q)$ and $\phi(P)$ lie on the same segment so
\[
    \gr(\phi(Q))-\gr(\phi(P))=\left\{
    \begin{array}{ll}
    -\delta&\text{when }Q\text{ is a peak}\\
    \delta&\text{when }Q\text{ is a valley}
    \end{array}
    \right.
    .
\]

If $P'$ and $Q$ are peaks,
then
\[
	\gr(P)=\gr(Q)-\delta=\gr(P')-\delta
\]
and
\[
	\gr(\phi(P))=\gr(\phi(Q))+\delta=\gr(\phi(P'))+\delta.
\]
If $P'$ and $Q$ are valleys,
then
\[
	\gr(P)=\gr(Q)+\delta=\gr(P')+\delta
\]
and
\[
	\gr(\phi(P))=\gr(\phi(Q))-\delta=\gr(\phi(P'))-\delta.
\]
In both cases,
\[
	\gr(P)+\gr(\phi(P))=\gr(P')+\gr(\phi(P'))=k.
\]
Therefore, for every vertex $P$ in $\cl(\Gamma_i)$,
$\gr(P)+\gr(\phi(P))=k$
so $\Gamma_i$ is symmetric.
\end{proof}

\begin{proof}[Proof of Lemma \ref{nestedword}]
By Lemma \ref{nestedgraphs}, it is sufficient to show that every co-prime pair is a pre-RTFN pair.

Let $(p,q)$ be a co-prime pair with $p$ positive and $q$ odd.
If $q=1$ or $(p\modop q)=1$ with $q$ positive, then $(p,q)$ is a pre-RTFN pair by Lemma \ref{basecase}.
If $q=-1$ then $(p,q)$ is a pre-RTFN pair by Lemma \ref{negsignscor}.

Suppose $|q|\neq1$ and $(p\modop q)>1$, and
assume every co-prime pair $(p',q')$ with $|q'|<|q|$ is a pre-RTFN pair.
When $q$ is positive, define the co-prime pair $(p^*,q^*)$ as in Lemma \ref{redpair}.
Since $|q^*|<|q|$, $(p^*,q^*)$ is a pre-RTFN pair.
By Lemma \ref{inductionstep1} and Lemma \ref{inductionstep2}, $(p,q)$ is also pre-RTFN pair.
When $q$ is negative, the pair $(p,-q)$ is a pre-RTFN pair by the above argument.
Thus $(p,q)$ is a pre-RTFN pair by Lemma \ref{negsignscor}.

By strong induction, every co-prime pair $(p,q)$ with $p$ positive and $q$ odd is a pre-RTFN pair.
\end{proof}

%%%%%%%%%%%%%%%%%%%%%%%%%%%%%%%%%%%%
%\bibliographystyle{acm}

\printbibliography

\end{document}